\documentclass[11pt,twoside]{article}
\usepackage[T1]{fontenc}
\usepackage{lmodern}
\usepackage{amsmath,amsthm,amssymb}
\usepackage{enumerate}
\usepackage{graphicx}
\usepackage{cite}
\usepackage{comment}
\usepackage{oands}
\usepackage{tikz}
\usepackage{changepage}
\usepackage{bbm}
\usepackage{mathtools}
\usepackage[margin=1in]{geometry}
\usepackage[pagewise,mathlines]{lineno}
\usepackage{appendix}
\usepackage{stmaryrd}
\usepackage{microtype}
\usepackage[colorinlistoftodos]{todonotes}
\usepackage{dsfont}
\usepackage{mathrsfs}  
\usepackage{multirow}
\usepackage{graphicx,subfigure}
\usepackage{array}
\usepackage{bm}
\newcolumntype{P}[1]{>{\centering\arraybackslash}p{#1}}

% Color Settings/Commands
%\usepackage[x11names, rgb]{xcolor}
\definecolor{red}{HTML}{f54b1a}
\definecolor{pink}{HTML}{d19eb1}
\definecolor{orange}{HTML}{d3772e}
\definecolor{yellow}{HTML}{ebe85d}
\definecolor{green}{HTML}{0f6852}
\definecolor{lightblue}{HTML}{01abe9}
\definecolor{darkblue}{HTML}{1b346c}
\definecolor{tan}{HTML}{e5c39e}
\definecolor{darktan}{HTML}{af9e73}
\definecolor{grey}{HTML}{c3ced0}
\definecolor{darkgrey}{HTML}{9dadc4}
\definecolor{black}{HTML}{110d1b}
\definecolor{white}{HTML}{f1f8f1}

\usepackage[pdftitle={Surface sums for lattice Yang--Mills in the large-$N$ limit}, pdfauthor={Jacopo Borga, Sky Cao and Jasper Shogren-Knaak},
colorlinks=true,linkcolor=lightblue,urlcolor=black,citecolor=darkblue,bookmarks=true,bookmarksopen=true,bookmarksopenlevel=2,unicode=true,linktocpage]{hyperref}
\usepackage[capitalise,noabbrev]{cleveref}

\usepackage{libertine}

% header and footer elements
\usepackage{fancyhdr}
\pagestyle{fancy}

% Header and page number
\fancyhead{} % clear all header fields
\fancyhead[OC]{\scshape\nouppercase{surface sums for lattice yang--mills in the large-n limit}}
\fancyhead[EC]{\scshape\nouppercase{\MakeLowercase{jacopo borga, sky cao, and jasper shogren-knaak}}}
\fancyhead[LE,RO]{\textsc{\thepage}} % left even right odd

\fancyfoot{} % clear all footer fields

 % no header line

% only shows table of contences up to the subsection level
\setcounter{tocdepth}{2}

%%%  Definitions for theorem-like environments. 
\theoremstyle{plain}
\newtheorem{thm}{Theorem}[section]
\newtheorem{cor}[thm]{Corollary}
\newtheorem{lem}[thm]{Lemma}
\newtheorem{prop}[thm]{Proposition}

\newtheorem{rmk}[thm]{Remark}
\newtheorem{obs}[thm]{Observation}

\def\@rst #1 #2other{#1}
\newcommand\MR[1]{\relax\ifhmode\unskip\spacefactor3000 \space\fi
  \MRhref{\expandafter\@rst #1 other}{#1}}
\newcommand{\MRhref}[2]{\href{http://www.ams.org/mathscinet-getitem?mr=#1}{MR#2}}

\theoremstyle{definition}
\newtheorem{defn}[thm]{Definition}

%Equation numbers by section
\numberwithin{equation}{section}

\newcommand{\dsb}{\begin{adjustwidth}{2.5em}{0pt}
\begin{footnotesize}}
\newcommand{\dse}{\end{footnotesize}
\end{adjustwidth}}

\newcommand{\ssb}{\begin{adjustwidth}{2.5em}{0pt}}
\newcommand{\sse}{\end{adjustwidth}}

\newcommand{\aryb}{\begin{eqnarray*}}
\newcommand{\arye}{\end{eqnarray*}}
\def\alb#1\ale{\begin{align*}#1\end{align*}}
\def\allb#1\alle{\begin{align}#1\end{align}}
\newcommand{\eqb}{\begin{equation}}
\newcommand{\eqe}{\end{equation}}
\newcommand{\eqbn}{\begin{equation*}}
\newcommand{\eqen}{\end{equation*}}

\newcommand{\mcl}{\mathcal}

%Remove spacing before \left and \right
\let\originalleft\left
\let\originalright\right
\renewcommand{\left}{\mathopen{}\mathclose\bgroup\originalleft}
\renewcommand{\right}{\aftergroup\egroup\originalright}

% additional packages
\usepackage{ upgreek }

% mathbb commands

\def\DD{\mathbb{D}}
\def\EE{\mathbb{E}}

\def\MM{\mathbb{M}}
\def\NN{\mathbb{N}}

\def\RR{\mathbb{R}}
\def\SS{\mathbb{S}}

\def\ZZ{\mathbb{Z}}
% mathcal commands

\def\cM{\mathcal{M}}
\def\cN{\mathcal{N}}

\def\cP{\mathcal{P}}
\def\cQ{\mathcal{Q}}

\def\cS{\mathcal{S}}

\def\cU{\mathcal{U}}

% Probability commands

% Misc math commands
\newcommand{\sm}{\setminus}
 % weak to 
 % not to

\newcommand{\nWg}[1]{\overline{\text{Wg}}_{#1}}

\def\mob{\text{Möb}}
\def\gec{\eta}

 %macro for whatever constant we decide to use for the gec
\DeclareMathOperator{\cat}{Cat}
\DeclareMathOperator{\BF}{BF}

\def\npe{\cN\cP\cM}

\DeclareMathOperator{\area}{area}
\DeclareMathOperator{\Wg}{Wg}
\def\nWg{\overline{\Wg}}

\DeclareMathOperator{\Tr}{Tr}
\DeclareMathOperator{\tr}{tr}
\DeclareMathOperator{\AM}{\mathcal{A}\mathcal{M}}
\DeclareMathOperator{\ACP}{\mathsf{A}\mathsf{P}}
\DeclareMathOperator{\VM}{\mathcal{V}\mathcal{M}}
\DeclareMathOperator{\VCP}{\mathsf{V}\mathsf{P}}
\DeclareMathOperator{\IM}{\mathcal{I}\mathcal{M}}
\DeclareMathOperator{\ICP}{\mathsf{I}\mathsf{P}}

\DeclareMathOperator{\AF}{\mathcal{A}\mathcal{F}}
\DeclareMathOperator{\VF}{\mathcal{V}\mathcal{F}}

\DeclareMathOperator{\EL}{EL}
\DeclareMathOperator{\cpp}{\mathsf{P}\mathsf{n}}

\DeclareMathOperator{\pp}{\mathsf{p}\mathsf{n}}
\DeclareMathOperator{\upp}{\mathsf{u}\mathsf{P}\mathsf{n}}
\DeclareMathOperator{\un}{\mathsf{U}}

\def\<{\langle} \def\>{\rangle}

% Sky's macros
\newcommand{\unitary}{\mathrm{U}}
\newcommand{\SO}{\mathrm{S}\mathrm{O}}
\newcommand{\SU}{\mathrm{S}\mathrm{U}}
\newcommand{\symgrp}{\mathrm{S}}
\newcommand{\ra}{\rightarrow}

\title{Surface sums for lattice Yang--Mills in the large-$N$ limit}
\date{    }
% Jasper style author
\author{Jacopo Borga\thanks{Department of Mathematics, Massachusetts Institute of Technology, \texttt{\fontfamily{cmr}\selectfont \href{mailto:jborga@mit.edu}{jborga@mit.edu}}} \quad Sky Cao\thanks{Department of Mathematics, Massachusetts Institute of Technology, \texttt{\fontfamily{cmr}\selectfont \href{mailto:skycao@mit.edu}{skycao@mit.edu}}} \quad Jasper Shogren-Knaak\thanks{Courant Institute of Mathematical Sciences, New York University, \texttt{\fontfamily{cmr}\selectfont \href{mailto:jasper.shogren-knaak@cims.nyu.edu}{jasper.shogren-knaak@cims.nyu.edu}}}}

% Jacopo style author
% \author{
% \begin{tabular}{c} Jacopo Borga\\[-3pt]\small MIT \end{tabular}
% \begin{tabular}{c} Sky Cao\\[-3pt]\small MIT \end{tabular} 
% \begin{tabular}{c} Jasper Shogren-Knaak\\[-3pt]\small NYU \end{tabular} 
% }

\begin{document}

\maketitle

\begin{abstract}
	We give a sum over weighted \emph{planar} surfaces formula for Wilson loop expectations in the large-$N$ limit of strongly coupled lattice Yang--Mills theory, in any dimension. The weights of each surface are simple and expressed in terms of products of signed Catalan numbers. 
	
	In establishing our results, the main novelty is to convert a recursive relation for  Wilson loop expectations, known as the \emph{master loop equation}, into a new \emph{peeling exploration} of the planar surfaces. This exploration reveals hidden cancellations within the sums, enabling a deeper understanding of the structure of the planar surfaces. 
	
	We view our results as a continuation of the program initiated in \cite{cao2023random} to understand Yang--Mills theories via surfaces and as a refinement of the string trajectories point-of-view developed in \cite{chatterjee_rigorous_2019}. 
\end{abstract}

% table of contents
\tableofcontents

\section{Introduction}\label{sec: intro}

The construction of Euclidean Yang--Mills theories, for instance in dimensions three and four, is a famous open problem both in physics and mathematics~\cite{jaffe2006quantum}. As a first approximation to this continuum theory, one can consider a lattice discretization, which results in lattice Yang--Mills theory.
We refer the reader to \cite{chatterjee2018yangmillsprobabilists} and \cite[Section 1.1]{cao2023random} for a discussion of Yang--Mills theory and for an overview of the existing literature.

The recent work \cite{cao2023random} connected lattice Yang--Mills theory to certain surface sums, with the primary motivation being to eventually analyze them to prove new results about Yang--Mills. 

\begin{defn}
	A \textbf{surface sum} is a sum of the form $\sum_{M\in\cM}w(M)$ where $\cM$ denotes a  collection of planar or high genus maps,\footnote{See Section~\ref{sect:embedded-maps} for further details on maps.} sometimes referred to as \textbf{surfaces}, and $w(M)\in\mathbb{R}$ represents a \textbf{weight} associated with $M$.
\end{defn}

Notably, we do not require the weights to be non-negative, so surface sums do not necessarily correspond to probability measures on spaces of surfaces. The motivation for this is that the surface sums arising from Yang--Mills indeed have signed weights. This complicates things but also provides the opportunity for finding {\bf surface cancellations}, which play a crucial role in our paper.

\begin{defn}\label{def:surface-cancellation}
	We say that we have a \textbf{surface cancellation} when we are able to find a subset of surfaces $\cM'$ such that $\sum_{M\in \cM'}w(M)=0$.
\end{defn}

Besides proving new results, any new approach should also provide alternative perspectives on existing results, which is precisely the goal of the present paper. Then, by using this new perspective, in the companion paper \cite{bcsk2024area2d} we prove new results about lattice Yang--Mills. These new results improve on existing results of Basu--Ganguly \cite{Basu:2016dnp}.

More specifically, in the present paper, we relate Wilson loop expectations $\phi$ in the large-$N$ limit -- a certain limit of lattice Yang--Mills theory introduced in Section~\ref{subsec: lattice Yang--Mills} --  to new surface sums. The limit itself was previously analyzed in the works of Chatterjee \cite{chatterjee_rigorous_2019} and Jafarov \cite{jafarov2016wilson}. We view the main contribution of the present paper as providing new tools to study this limit -- see in particular the discussion after Theorem \ref{thm:main}. Moreover, as previously mentioned, these new tools will be used in \cite{bcsk2024area2d} to derive new results. To help understand the relation between our results and the existing works \cite{chatterjee_rigorous_2019, jafarov2016wilson}, it may be useful to have the following remark in mind when reading the paper.\footnote{The reader that is not familiar with the works~\cite{chatterjee_rigorous_2019, jafarov2016wilson} can skip this remark at first read.}

\begin{rmk}[\textsc{comparison between our surface sums and the string trajectories of} \cite{chatterjee_rigorous_2019, jafarov2016wilson}]\label{remark:comparison-with-chatterjee-vst}
	We discuss the conceptual difference between our surface sums and the ``string trajectories'' of \cite{chatterjee_rigorous_2019, jafarov2016wilson}. Schematically, both these works and our work study the solution $\phi$ to a fixed point equation (called master loop equation) of the form
	\begin{equation}\label{eq:phi-fixed-point-equation}
		\phi = G_\upbeta \phi + F,
	\end{equation}
	where $F$ is some explicit function,
	% on strings,
	and $G_\upbeta$ is some explicit map which depends on a parameter $\upbeta \geq 0$, to be introduced in Section \ref{subsec: lattice Yang--Mills}. When $\upbeta$ is small enough, \cite{chatterjee_rigorous_2019} essentially shows that $G_\upbeta$ satisfies an estimate of the form 
	\[ \|G_\upbeta \eta\| \leq \frac{1}{2} \|\eta\| , \]
	where $\|\cdot\|$ is some carefully defined norm on some Banach space and $\eta$ is any element of the Banach space. Given this, the solution to the fixed point equation \eqref{eq:phi-fixed-point-equation} is unique, and moreover it can be obtained by Picard iteration, which results in the following series formula for $\phi$:
	\[ \phi = \sum_{n=0}^\infty G_\upbeta^{n} F. \]
	The series on the right-hand side can be interpreted as a sum of so-called ``vanishing string trajectories'', thus giving the formulas of \cite{chatterjee_rigorous_2019,jafarov2016wilson}. By contrast, the approach of the current paper is to first guess an explicit formula for $\phi$ in terms of a weighted sum over planar maps, and then verify that our ansatz satisfies the fixed point equation \eqref{eq:phi-fixed-point-equation}. As we discuss in Section \ref{subsec: large N surface sum guess}, the form of our ansatz is heavily motivated by the finite-$N$ Wilson loop expectation surface sum result \cite[Theorem 1.8]{cao2023random}, although we emphasize that the present paper is self-contained; in particular, it does not rely on any of the results or techniques from \cite{cao2023random}.
\end{rmk}

Next, we discuss the benefits of our new surface sum perspective. The perspective on $\phi$ provided by \cite{chatterjee_rigorous_2019, jafarov2016wilson} is algebraic in nature, in the sense that $\phi$ is characterized as the unique function satisfying certain algebraic relations. On the other hand, the perspective that we provide is geometric in nature, in the sense that we write $\phi$ as an explicit surface sum.\footnote{We point out that while surfaces are mentioned in \cite[Figure 13]{chatterjee_rigorous_2019}, the surfaces there arise by suitable interpretations of sequences of strings, whereas the surfaces in our paper are genuine {\it maps} (i.e.\ gluings of polygons). 
} 
Our main contention is that these two points of view complement each other, and in particular, our geometric perspective informs the algebraic perspective by providing additional algebraic relations that $\phi$ must satisfy -- see Point 1 in the discussion after Theorem \ref{thm:main}. These additional algebraic relations are the new tools that we previously referred to, which we crucially use in the companion work~\cite{bcsk2024area2d}.
% \end{rmk}

Finally, we point out that our results can also be viewed as belonging to the area of planar maps, a field which by now has a vast literature -- see e.g. the lecture notes \cite{Curien2023} for more background and many references. From this point of view, our surface sum can be regarded as a new model of planar maps, with some significant twists: (1) the weight of a given map may be negative; (2) the maps are embedded in $\ZZ^d$ (as we will carefully explain in Section~\ref{sect:embedded-maps}). In order to prove our main results, we show that despite these twists, we are still able to understand several aspects of this model, particularly the surface cancellations. This is obtained via a new ``peeling exploration'' of our maps, introduced in Section~\ref{subsubsec: Splittings and Deformations on embedded maps}.

To summarize the rest of the paper, we begin by rigorously introducing $\unitary(N)$ lattice Yang--Mills theory in Section~\ref{subsec: lattice Yang--Mills}, then present (informally, at least) our main results in Section~\ref{sect:main-results} together with some further discussion on the main novelties of our work. 
Before formally stating our results in Section~\ref{subsec: large N limit surface sum formula}, we introduce various preliminary notions in Section~\ref{sec: Background}. Sections \ref{sect:fundamental-tools}-\ref{sec: cancellation lemma} contain proofs.

\subsection{Lattice Yang--Mills theory}\label{subsec: lattice Yang--Mills}

Let $\Lambda$ be some finite subgraph of $\ZZ^d$. We consider the set of oriented nearest neighbor edges in $\Lambda$ which we denote by $E_{\Lambda}$. We say that an edge $e\in E_{\Lambda}$ is positively oriented if the endpoint of the edge is greater than the initial point of the edge in lexicographical ordering. Let $E^+_{\Lambda}$ denote the set of positively oriented edges in $\Lambda$. For an edge $(u,v)=e\in E_{\Lambda}$ we let $e^{-1}=(v,u)$ denote the reverse direction. Whenever we consider a lattice edge, we always assume that such an edge is oriented, unless otherwise specified.

We will denote the group of $N\times N$ unitary matrices by $\unitary(N)$. The $\unitary(N)$-lattice Yang--Mills theory assigns a random matrix from $\unitary(N)$ to each oriented edge in $E_{\Lambda}$. We stipulate that this assignment must have \textbf{edge-reversal symmetry}. That is , if $Q_e$ is the matrix assigned to the oriented edge $e$ then $Q_{e^{-1}}=Q^{-1}_{e}$. 

We call an oriented cycle of edges $\ell$ a \textbf{loop}.\footnote{We stress that our definition of loop is different from the one in \cite{chatterjee_rigorous_2019}. Indeed, in the latter, work loops are defined so that they do not contain backtracks (see \eqref{eq:backt} for a definition).} We call the \textbf{null-loop} the loop with no edges, and denote it by $\emptyset$. For a loop $\ell=e_1e_2\hdots e_n$, we let $Q_{\ell} = Q_{e_1}Q_{e_2}\hdots Q_{e_n}$. We say that $\ell$ is a \textbf{simple loop} if the endpoints of all the $e_i$'s are distinct. We call a \textbf{string} (of cardinality $n$) any multiset of loops $\{\ell_1,\dots,\ell_n\}$ and denote it by $s$. We define $|\ell|$ to be the \textbf{length} of the loop $\ell$, and $|s|:= \sum_{i=1}^n|\ell_i|$ the \textbf{length} of the string $s$.

Define $\cP_{\Lambda}$ to be the collection of simple loops consisting of four edges (i.e.\ oriented squares). We call such loops \textbf{plaquettes}. We say that a plaquette $p\in \cP_{\Lambda}$ is positively oriented if the edge connecting the two smallest vertices of $p$ (with respect to lexicographical order) is positively oriented. In dimension two, this simply means that the leftmost edge of the square is oriented upwards. Let $\cP^+_{\Lambda}$ denote the set of positively oriented plaquettes in $\Lambda$. For $p\in \cP_{\Lambda}$, we let $p^{-1}$ denote the plaquette containing the same edges as $p$ but with opposite orientations. Whenever we consider a plaquette, we always assume that it is oriented, apart from when we explicitly say that we are considering its unoriented version.

We say that a loop $\ell$ has a \textbf{backtrack} if two consecutive edges of $\ell$ correspond to the same edge in opposite orientations. That is, $\ell$ has a backtrack if it is of the form 
\begin{equation}\label{eq:backt}
\ell=\pi_1 \, e  \, e^{-1}  \, \pi_2,
\end{equation} 
where $\pi_1$ and $\pi_2$ are two paths of edges and $e\in E_{\Lambda}$. Notice for such a loop, we can remove the $e\,e^{-1}$ backtrack and obtain a new loop $\pi_1 \, \pi_2$. We say that a loop $\ell$ is a \textbf{non-backtrack loop} if it has no backtracks. We say that a loop $\ell$ is a \textbf{trivial} loop if, after removing all backtracks from $\ell$, the resulting loop is the null-loop $\emptyset$. In particular, the null-loop is trivial, but the opposite is not true.

For an $N \times N$ matrix $Q$, define the \textbf{normalized trace} to be
\begin{equation*}
\tr(Q) := \frac{1}{N}\Tr(Q),
\end{equation*}
where $\Tr(Q)$ is the sum of the diagonal elements in $Q$, i.e.\ $\Tr(Q) = \sum_{i=1}^n Q_{i,i}$. Let $\cQ = (Q_e)_{e\in E^{+}_{\Lambda}}$ denote a \textbf{matrix configuration}, that is, an assignment of matrices to each positively oriented edge of $\Lambda$. The lattice Yang--Mills measure (with \emph{Wilson} action) is a probability measure for such matrix configurations. In particular, \begin{equation}\label{eq: lattice YM measure-2}
\hat{\mu}_{\Lambda,N,\beta} (\cQ):= \hat{Z}_{\Lambda,N, \beta}^{-1}\cdot\left(\prod_{p\in \cP_{\Lambda}}\exp\Big(\beta \cdot \,\Tr(Q_p)\Big)\right)\prod_{e\in E^+_{\Lambda}}dQ_{e},
\end{equation}
where $\hat{Z}_{\Lambda, N, \beta}$ is a normalizing constant\footnote{The normalizing constant $\hat{Z}_{\Lambda, N, \beta}$ is finite because $U(N)$ is a compact Lie group.} (to make $\hat{\mu}_{\Lambda,N,\beta}$ a probability measure), $\beta\in \RR$ is a parameter (often called the inverse temperature), and each $dQ_e$ denotes the Haar measure on $\unitary(N)$. We highlight that in \eqref{eq: lattice YM measure-2} we are considering both positively and negatively oriented plaquettes in $\cP_{\Lambda}$.

Since the goal of our paper is to consider the large-$N$ limit, we prefer to use the following more convenient rescaling, replacing $\beta$ by $\upbeta N$,
\begin{equation}\label{eq: lattice YM measure}
\mu_{\Lambda,N,\upbeta} (\cQ):= Z_{\Lambda,N, \upbeta}^{-1}\cdot\left(\prod_{p\in \cP_{\Lambda}}\exp\Big(\upbeta N\cdot\Tr(Q_p)\Big)\right)\prod_{e\in E^+_{\Lambda}}dQ_{e}.
\end{equation}

\begin{rmk}\label{rmk: beta factor of two}
In previous work, for instance \cite{chatterjee_rigorous_2019, chatterjee20161n,Basu:2016dnp}, $\beta$ in \eqref{eq: lattice YM measure-2} is simply replaced by $\beta N$ in \eqref{eq: lattice YM measure}, without any distinction between $\beta$ and $\upbeta$.
Moreover, the $\beta$ and $\upbeta$ terms appearing throughout this paper differ from those in previous work by a factor of $2$. That is, where we have $\beta$ or $\upbeta$, previous works would have $\beta/2$. This is because we are considering both positively and negatively oriented plaquettes. This will not be particularly important but should be noted and will be mentioned again when utilizing results from these previous works. 
\end{rmk}

The primary quantities of interest in lattice Yang--Mills are the Wilson loop observables. These observables are defined in terms of a matrix configuration $\cQ$ and a string $s$.  With this, we define \textbf{Wilson loop observables} as (note the normalized trace)
\begin{equation}\label{eq: Wilson loop observables}
W_s(\cQ) :=\prod_{\ell\in s}\tr(Q_{\ell}).
\end{equation}
Importantly, the Wilson loop observable for the null loop is defined to be $1$ for any matrix configuration $\cQ$, that is, $W_{\emptyset}(\cQ)=1$. Wilson loop observables are invariant up to adding or removing copies of the null-loop, i.e.\ $W_{\{\ell_1,\dots,\ell_n,\emptyset\}}(\cQ) = W_{\{\ell_1,\dots,\ell_n\}}(\cQ)$ for any matrix configurations $\cQ$ and any string $s=\{\ell_1,\dots,\ell_n\}$. Thus, throughout we will assume that all copies of the null loop are removed from all strings, unless the string is just the null loop, i.e.\ $s=\emptyset$. Moreover, Wilson loop observables are invariant under backtrack erasure, that is, $W_{\pi_1 \, e  \, e^{-1}  \, \pi_2}(\cQ)=W_{\pi_1 \pi_2}(\cQ)$. 

One of the fundamental questions of Yang--Mills theory is to understand the expectation of Wilson loop observables with respect to the lattice Yang--Mills measure; see~\cite{chatterjee2018yangmillsprobabilists} for further explanation. We denote this expectation by
\begin{align*}
\phi_{\Lambda, N, \upbeta}(s) := \EE_{\mu_{\Lambda,N,\upbeta}}[W_s(\cQ)].
\end{align*}
The rest of this paper is devoted to understanding the expectation of Wilson loop observables in the specific case when $N$ tends to infinity.

\begin{rmk}
As in \cite{cao2023random}, we define the Wilson loop observable with respect to the normalized trace. While this choice contrasts with some previous works, for instance \cite{chatterjee_rigorous_2019, chatterjee20161n,Basu:2016dnp}, this scaling will be natural in the large-$N$ limit.
\end{rmk}

\subsection{Informal statement of the main result}\label{sect:main-results}

We begin by informally stating the main result of our paper. It gives formulas for the large-$N$ limits of Wilson loop expectations in terms of weighted sums over \emph{planar} maps. The corresponding precise version is contained in Theorems \ref{thm: surface sum representation in 't hooft limit} and \ref{thm: fixed K 't Hooft master loop equation for surface sum}.

\begin{thm}[\textsc{$\unitary(\infty)$ Wilson loop expectations as surface sums -- informal statement}]\label{thm:main}
There exists a number $\upbeta_0(d)>0$, depending only on the dimension $d$, such that the following is true. Let $\Lambda_1\subseteq\Lambda_2\subseteq\hdots$ be any sequence of finite subsets of the lattice $\ZZ^d$ such that $\ZZ^d = \cup_{N=1}^{\infty}\Lambda_N$. If $|\upbeta|\leq \upbeta_0(d)$, then for any string $s=\{\ell_1,\dots,\ell_n\}$,
\begin{align}\label{eq:factor}
	\lim_{N\to\infty}\phi_{\Lambda_N,N,\upbeta}(s) =
	\prod_{i=1}^n\phi(\ell_i),
\end{align}
where
\begin{equation*}
	\phi(\ell)=\sum_{K:\cP_{\ZZ^d}\to\NN} \phi^K(\ell), \quad\text{with}\quad \phi^K(\ell)=\sum_{M\in\npe(\ell,K)}\upbeta^{\area(M)}w_{\infty}(M).
\end{equation*}
Here $\npe(\ell,K)$ is a finite set of connected planar maps with a single boundary component and embedded in the lattice $\mathbb Z^d$ (to be described in more detail in Sections \ref{sec: Background} and \ref{sec: Main Results})
% that contains only planar (non-separable) single components disks 
and $w_{\infty}(M)$ is a simple product of signed Catalan numbers depending on the perimeter of certain faces of $M$. Moreover, the infinite sum $\phi(\ell)$ is absolutely convergent,\footnote{The sum $\phi^K(\ell)$ is a finite sum for all $K$ and $\ell$.} in the sense that
\[ \sum_{K : \cP_{\ZZ^d} \rightarrow \NN} |\phi^K(\ell)| < \infty. \]
Finally, we also establish in Theorem~\ref{thm: fixed K 't Hooft master loop equation for surface sum} and Corollary~\ref{cor: 't Hooft master loop equation surface sum} an important recursive relation for $\phi(\ell)$, that goes under the name of master loop equation.
\end{thm}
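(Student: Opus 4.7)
\medskip

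\noindent\textbf{Proof proposal.}
The plan is to \emph{guess and verify}: we define a candidate function $\tilde\phi$ on loops by the explicit surface sum in the theorem, show that it satisfies the master loop equation with the correct ``boundary condition'' $\tilde\phi(\emptyset)=1$, and then invoke uniqueness of the fixed point of the operator $G_\upbeta$ (established in~\cite{chatterjee_rigorous_2019,jafarov2016wilson}) to identify $\tilde\phi$ with the genuine large-$N$ Wilson loop expectation $\phi$. The factorization \eqref{eq:factor} is not strictly part of our work: it is the classical large-$N$ factorization for Wilson loops that already appears in \cite{chatterjee_rigorous_2019}, so we would simply cite it and focus the proof on the one-loop formula $\phi(\ell)=\sum_K\phi^K(\ell)$ and on the master loop equation.

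First I would recall the Banach space framework of \cite{chatterjee_rigorous_2019}: for $|\upbeta|\le\upbeta_0(d)$ small, the map $G_\upbeta$ is a strict contraction in an appropriately weighted norm on functions of strings, and there is a unique solution to $\phi=G_\upbeta\phi+F$ with the normalization $\phi(\emptyset)=1$. Once this is in place, it suffices to check two things about the candidate $\tilde\phi(\ell):=\sum_K\sum_{M\in\npe(\ell,K)}\upbeta^{\area(M)}w_\infty(M)$: (i) absolute convergence with a quantitative bound compatible with the Banach norm, and (ii) the identity $\tilde\phi=G_\upbeta\tilde\phi+F$. For (i), I would bound the number of planar maps in $\npe(\ell,K)$ with prescribed perimeter and total area by a standard enumeration argument (using that each face's corners can be attached in only boundedly many ways in $\ZZ^d$ and that Catalan-type growth controls the planar combinatorics), and combine this with an elementary estimate $|w_\infty(M)|\le \prod_{f}C_{|f|}$ on the signed Catalan weights. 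The resulting bound is of the form $|\phi^K(\ell)|\le C(d,|\ell|)\,(C'|\upbeta|)^{|K|}$, which is summable for $|\upbeta|$ small enough and places $\tilde\phi$ inside the Banach space where $G_\upbeta$ acts.

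The technical heart of the argument, and the step I expect to be the main obstacle, is (ii): reinterpreting $G_\upbeta$ as a \emph{peeling exploration} of the planar maps in $\npe(\ell,K)$. Concretely, I would fix a marked edge on the boundary loop $\ell$ of $M$ and classify the face incident to that edge. Each case in the classification (the peeled edge is shared with a plaquette face sitting on one side or the other, bounds a deformation of $\ell$, induces a splitting of $\ell$ into two sub-loops, or causes a merger with another boundary component) must correspond \emph{term by term} to one of the operations (splitting, merging, twisting, deformation, expansion) appearing in the master loop equation $G_\upbeta\phi+F$. The weights $w_\infty(M)$ for the ``peeled'' and ``unpeeled'' maps should differ by exactly the ratio of signed Catalan numbers demanded by the algebraic coefficient in the master loop equation; this is where the surface cancellations from Definition~\ref{def:surface-cancellation} will do the heavy lifting, since several maps produced by peeling must combine with cancelling signs to reproduce the clean coefficients of the master loop equation.

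With (i) and (ii) in hand, $\tilde\phi$ is a fixed point of $G_\upbeta+F$ in the same Banach space as $\phi$ and satisfies $\tilde\phi(\emptyset)=1$ (the empty loop is the unique map in $\npe(\emptyset,0)$ with weight $1$), so the contraction property gives $\tilde\phi=\phi$. This simultaneously yields the closed-form surface sum and the master loop equation of the theorem, and Chatterjee's factorization result promotes the identification from single loops to arbitrary strings, giving \eqref{eq:factor}. I anticipate that the bulk of the subsequent sections will be devoted to formalizing the peeling exploration and bookkeeping the cancellations; the convergence bound and the appeal to uniqueness should be comparatively short.
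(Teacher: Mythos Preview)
Your proposal follows essentially the same strategy as the paper: define the candidate surface sum, verify it satisfies the large-$N$ master loop equation via a peeling exploration of the maps, and invoke the uniqueness result from \cite{chatterjee_rigorous_2019} to identify it with the genuine limit. Two implementation differences are worth flagging.

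First, for absolute convergence the paper does \emph{not} enumerate maps directly. It first proves the fixed-$K$ master loop equation (Theorem~\ref{thm: fixed K 't Hooft master loop equation for surface sum}) and then treats that recursion as a self-map on a weighted Banach space, obtaining $|\phi^K(s)|\le C^{|s|}(C\upbeta)^{\area(K)}$ from a contraction estimate (Lemma~\ref{lemma:M-contraction-map} and Proposition~\ref{prop:-technical-bound-on-phi-K}). Your direct enumeration would have to control simultaneously the number of non-separable planar embedded maps and the exponential growth of the Catalan weights, which is less transparent; the paper's bootstrap via the recursion is cleaner. Second, a small correction: mergers do not appear in the large-$N$ master loop equation, since they carry a factor $N^{-2}$ and vanish in the limit; only splittings and deformations survive.

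The peeling step is also more delicate than your sketch suggests. The non-separability condition on $\npe(\ell,K)$ means that inverting a deformation (gluing back a plaquette) can produce a \emph{separable} map not in $\npe$. The paper isolates these ``bad'' maps and proves a separate Master cancellation lemma (Lemma~\ref{lemma:master-cancellation}) showing that the bad maps arising from positive and negative deformations cancel exactly; this in turn rests on a structural result about pinchings of a single blue face (Theorem~\ref{thm:master-sum-blue-faces}). So the surface cancellations are not merely cosmetic coefficient-cleanup but are needed to repair a genuine mismatch between the map spaces before and after peeling, and they occupy the bulk of the technical work.
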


We emphasize here that in the large-$N$ limit, two important facts occur: (1) only planar maps appear; and (2) the weights of each map are essentially products of signed Catalan numbers, and thus are very simple and explicit (despite still being signed). This is what makes the surface sums appearing in Theorem \ref{thm:main} more tractable than the surface sums of \cite{cao2023random} for finite-$N$ Wilson loop expectations (which we review in Theorem \ref{thm: finite N surface sum}).

Surface sums which are in a sense dual to the ones appearing in Theorem \ref{thm:main} were previously proposed in the physics literature by Kostov \cite{KOSTOV1984445}. In the mathematics literature, the factorization in \eqref{eq:factor} was first established\footnote{See also~\cite{jafarov2016wilson} for the $\SU(N)$ case.} by Chatterjee~\cite{chatterjee_rigorous_2019} for the group $\SO(N)$ instead of $\unitary(N)$, but we point out that our surface sum
\begin{equation}\label{eq:ssum}
\phi(\ell)=\sum_{K:\cP_{\ZZ^d}\to\NN}\sum_{M\in\npe(\ell,K)}\upbeta^{\area(M)}w_{\infty}(M)
\end{equation}
is rather different from the ``sum over string trajectories'' $\sum_{X}w_{\upbeta}(X)$ appearing in \cite[Theorem 3.1]{chatterjee_rigorous_2019}. Indeed, the surface sum~\eqref{eq:ssum} is a refinement of the sum over string trajectories and there are a few advantages to considering the former sum compared to the latter:
\begin{enumerate}
\item It yields a stronger master loop equation (Theorem~\ref{thm: fixed K 't Hooft master loop equation for surface sum} and Corollary~\ref{cor: 't Hooft master loop equation surface sum});
\item It allows to explicitly compute Wilson loop expectations in two dimensions in a simplified way and for a larger class of loops; as shown in our companion paper \cite{bcsk2024area2d}.
\item It offers a new geometric perspective that aids in identifying surface cancellations; as shown for instance in Lemmas~\ref{lemma: single vertex pinching cancellations}~and~\ref{lemma: backtrack cancellations}, the Master surface cancellation lemma~\ref{lemma:master-cancellation}, and Theorem~\ref{thm:master-sum-blue-faces}.
\item It leads to a natural ``peeling exploration'' (Section~\ref{subsubsec: Splittings and Deformations on embedded maps}) of planar maps, a powerful tool in the study of random planar maps~\cite{Curien2023}, which we hope to use in future work for studying scaling limits of these maps.
\item It allows to explore both surfaces and trajectories in a very local way (Section~\ref{subsubsec: Splittings and Deformations on embedded maps}), without the need to erase the backtracks as in \cite{chatterjee_rigorous_2019}.
\item It possibly explains and clarifies the relation, found in \cite{Basu:2016dnp}, between large-$N$ Yang--Mills and non-crossing partitions. As mentioned in Theorem \ref{thm:main}, the weights of each surface involve a product of signed Catalan numbers, and the Catalan numbers also count non-crossing partitions. 
\end{enumerate}

Finally, while we do not state this as a theorem or proposition, we remark that our formula for $\phi(\ell)$ in Theorem \ref{thm:main} also gives the large-$N$ limit for Wilson loop expectations in $\SO(N)$ and $\SU(N)$ lattice Yang--Mills theories (for small $\upbeta$). This is because the limiting master loop equation for these other groups is the same as the limiting master loop equation for $\unitary(N)$ lattice Yang--Mills (compare \cite[Section 16]{jafarov2016wilson} with our Section \ref{sec:mlq-large-N}).

\paragraph{Acknowledgments.}~We thank Ron Nissim and Scott Sheffield for many helpful discussions. J.B.\ was partially supported by the NSF under Grant No.\ DMS-2441646. S.C.\ was partially supported by the NSF under Grant No.\ DMS-2303165.

\section{Background}\label{sec: Background}

Before formally stating our results in Section~\ref{subsec: large N limit surface sum formula}, we introduce various preliminary concepts. First, in Section~\ref{sect:embedded-maps}, we introduce embedded maps, which will constitute the surfaces used in the surface sums. In Section~\ref{subsec: sum surfaces lattice Yang--Mills}, we review the results from \cite{cao2023random}, explaining how to interpret Wilson loop expectations in terms of surface sums. Finally, Section~\ref{subsec: Finite N master loop equation} is devoted to reviewing a recursive relation for Wilson loop expectations, the so-called master loop equation.

\subsection{Embedded maps}\label{sect:embedded-maps}

In the recent work~\cite{cao2023random}, Cao, Park, and Sheffield showed how to interpret Wilson loop expectations for the $\unitary(N)$-lattice Yang--Mills measure (and other groups of $N\times N$ matrices) as surface sums. Our goal is now to review this result. 

The key objects for such interpretation are embedded maps, which we introduce in this section after recalling the more classical definitions of maps and maps with boundary. 

\subsubsection{Maps and maps with boundary}\label{sect:orientable-surfaces}

A \textbf{surface with boundary} is a non-empty Hausdorff topological space in which every point has an open neighborhood homeomorphic to
some open subset of the upper half-plane $\RR \times \RR_+$. Its \textbf{boundary} is the set of points having a neighborhood
homeomorphic to a neighborhood of the origin $(0, 0)$ in the upper half-plane. We consider orientable\footnote{The Wikipedia page on ``Orientability'' is a good reference to recall the notion of orientable surfaces}
compact connected surfaces with a (possibly empty) boundary. The classification theorem states that these surfaces are characterized (up to homeomorphisms) by two non-negative integers: The genus $g$ and the number $b$ of connected components of the boundary. The set of compact orientable surfaces of genus $g$ with $b$ boundary components can be obtained from the connected union of $g$ tori (or from the
sphere when $g = 0$) by removing $b$ disjoint open disks whose boundaries are pairwise disjoint circles. See the left-hand side of Figure~\ref{fig-map-with-boundary} for an example.

\medskip

A \textbf{map} is a proper embedding of a finite connected graph\footnote{Our graphs admit multiple
edges but will have no loops.}  into a compact connected orientable surface without boundary. Proper means that 
\begin{enumerate}
\item edges can intersect only at vertices;
\item faces (i.e., connected components of the complement of the edges) are homeomorphic to 2-dimensional open disks.
\end{enumerate}
Maps will always be considered up
to orientation-preserving homeomorphisms of the surface into which they are embedded.
The genus of a map is defined as the genus of the surface into which it is embedded. \textbf{Planar maps} are maps of genus zero.

\medskip 

A \textbf{map with boundary}\footnote{In the literature, maps with boundary are sometime also called maps with holes.} is a proper embedding of a finite connected graph with $b$ distinct distinguished faces, called \textbf{external faces}, into a compact connected orientable surface with boundary having $b$ connected components such that:

\begin{enumerate}
\item[3.] each connected component of the boundary of the surface is contained in one distinct external face.\footnote{Note that we are not imposing that the edges of each external face are properly embedded to the boundary of the surface. Indeed, this won't be possible in general, since the boundaries of the external faces are typically neither pairwise disjoint nor simple curves. See Figure~\ref{fig-map-with-boundary} for further explanations.}
\end{enumerate}
In this case, we say that the map has \textbf{a boundary with $b$ connected components}. Every face of a map with boundary that is not an external face is called an \textbf{internal face}. 
Maps with boundary will also always be considered up
to orientation-preserving homeomorphisms of the surface into which they are embedded.
The genus of a map with boundary is defined as the genus of the surface into which it is embedded.
See the right-hand side of Figure~\ref{fig-map-with-boundary} for an example.

\begin{figure}[ht!]
\begin{center}
	\includegraphics[width=.99\textwidth]{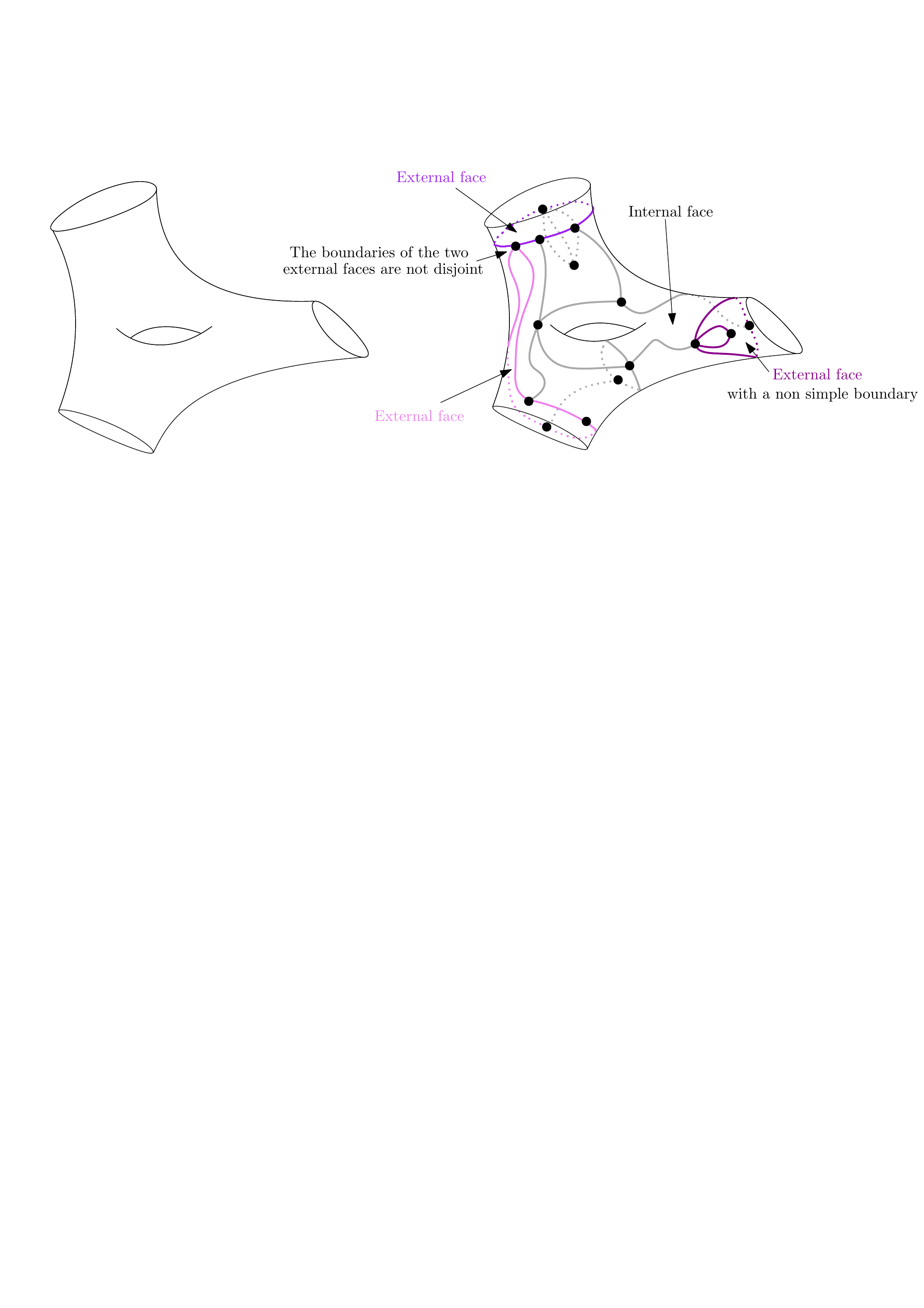}  
	\caption{\label{fig-map-with-boundary}\textbf{Left:} A compact orientable surfaces of genus $1$ with $3$ boundary components. \textbf{Right:} A map with boundary embedded in the surface on the left. The boundary of the map has $3$ connected components corresponding to the $3$ external faces, highlighted in purple, pink and magenta. Note that two external faces are not disjoint. The third one has a boundary that is not a simple curve. The map is properly embedded in the compact orientable surface. Indeed, each connected component of the boundary of the surface is contained in one distinct external face. This map has genus $1$.}
\end{center}
\vspace{-3ex}
\end{figure}

\medskip

From now on, given a map or a map with boundary, we will refer to its embedding as the \textbf{surface embedding}. We finally recall the definition of the \textbf{Euler characteristic} $\chi(m)$ of a map $m$ (with or without boundary):
\begin{equation}\label{eq:euler-ch}
\chi(m):=V(m)-E(m)+F(m),
\end{equation}
where $V(m)$, $E(m)$, and $F(m)$ are respectively the numbers of vertices, edges, and faces of the map $m$. We recall that $\chi(m)$ satisfies the following relation with the genus $g(m)$ and the number $b(m)$ of components of the boundary:
\begin{equation}\label{eq:chi-gen-relation}
\chi(m):=2-2g(m)-b(m).
\end{equation}
We also remark that $\chi(\cdot)$, $g(\cdot)$ and $b(\cdot)$ are all additive quantities for a collection of maps or maps with boundaries $\{m_1,\dots,m_n\}$, some possibly having boundary, that is 	
\begin{align*}
\chi(\{m_1,\dots,m_n\})=\sum_{i=1}^n\chi(m_i),
\end{align*}
and similarly, $g(\{m_1,\dots,m_n\})=\sum_{i=1}^n g(m_i)$ and $b(\{m_1,\dots,m_n\})=\sum_{i=1}^n b(m_i)$.

Finally, we introduce the notation $c(\{m_1,\dots,m_n\})$ to denote the number of components of $\{m_1,\dots,m_n\}$, that is, $c(\{m_1,\dots,m_n\})=n$.

\subsubsection{Embedded maps} 

We now introduce the key objects for the surface sum interpretation of Wilson loop expectations, that is, embedded maps. We immediately stress that the word ``embedded'' does not refer to the surface embedding from the previous section, but it refers to a new lattice embedding that we are soon going to define. 

\medskip

We recall that a \textbf{graph homomorphism} $\psi:G\to H$ is a mapping between the vertex sets of two graphs $G$ and $H$ such that if two vertices $u$ and $v$ of $G$ are adjacent (i.e., connected by an edge) in $G$ then $\psi(u)$ and $\psi(v)$ are adjacent in $H$. Note that every graph homomorphism can be naturally extended to a map of the set of edges of the two graphs. 

If the graphs $G$ and $H$ are maps or maps with boundary (so that the notion of face is well-defined), we say that $\psi$ \textbf{sends a face $g$ of $G$ to a face $h$ of $H$} (or to an edge $e$ of $H$) if $\psi$ sends the vertices on the boundary of $g$ to the vertices on the boundary of $h$ (or to the two vertices of $e$).

\medskip

Recall that $\Lambda$ is the lattice where the lattice Yang--Mills measure has been defined.

\begin{defn}\label{defn:embedded-map}
An \textbf{embedded map} is a pair $M = (m,\psi)$ where
\begin{enumerate}
	\item $m=\{m_1,\dots,m_r\}$ is a multiset of $r$ maps (with or without boundary and of any genus) with the following two properties (see the top of Figure~\ref{fig-embedded-maps-exemp}):
	\begin{enumerate}
		\item[(1a)] The dual graph of each component of $m$ is bipartite. The faces of $m$ in one partite class are called \textbf{blue faces} and those in the other class are called \textbf{yellow faces}.
		\item[(1b)] The external faces of each component of $m$ with boundary are yellow faces.
	\end{enumerate}
	We call a multiset of maps (with or without boundary and of any genus) with these two properties a \textbf{YB-bipartite maps family}.
	\item $\psi:m \to \Lambda$ is a graph homomorphism  with the following two properties (see the bottom of Figure~\ref{fig-embedded-maps-exemp}):
	\begin{enumerate}
		\item[(2a)] $\psi$ sends each internal yellow face of $m$ isomorphically to an unoriented plaquette of $\Lambda$,
		\item[(2b)] $\psi$ sends each blue face of $m$ to a single unoriented edge of $\Lambda$.
	\end{enumerate}
	We call a graph homomorphism $\psi:m \to \Lambda$ with these two properties a \textbf{lattice embedding} of $m$.
\end{enumerate}
\end{defn}

We remark that embedded maps were referred to as ``edge-plaquette embeddings'' in \cite{cao2023random}. When we consider an embedded map $M=(m,\psi)$, we often refer to the internal yellow faces as \textbf{plaquette faces} and to the blue faces as \textbf{edge faces}. 

\medskip

The two properties satisfied by the lattice embedding $\psi:m \to \Lambda$ in Definition~\ref{defn:embedded-map} impose two immediate constraints on the blue faces and the internal yellow faces (recall that $\Lambda\subset \ZZ^d$):
\begin{itemize}
\item Each blue face has an even degree, i.e.\ an even number of edges on its boundary.
\item Each internal yellow face has degree four, i.e., it is a quadrangle.
\end{itemize}
We highlight that blue faces might have (multiple) vertices that are not distinguished; for instance, the blue face $e_2$ in Figure~\ref{fig-embedded-maps-exemp} has degree four but it has only two vertices. We also make the following observation for future reference.

\begin{obs}\label{obs:bipartite}
Since the vertices of the lattice $\Lambda\subset\ZZ{^d}$ are naturally bipartite, the vertices of an embedded map $M$ are bipartite.
\end{obs}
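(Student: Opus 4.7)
The plan is to pull back the natural bipartition of $\ZZ^d$ via the lattice embedding $\psi$. Concretely, I would first equip $\ZZ^d$ with the standard parity coloring: color a vertex $x = (x_1, \ldots, x_d)$ black if $\sum_{i=1}^{d} x_i$ is even and white otherwise. Since any two adjacent vertices of $\ZZ^d$ differ by a single unit vector, their coordinate sums differ by $\pm 1$, so this coloring is a proper 2-coloring of $\ZZ^d$ and, by restriction, of $\Lambda$.

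Next, I would define a coloring of the vertices of the underlying graph of $m$ by setting the color of $v$ to be the color of $\psi(v) \in \Lambda$. The only thing to check is that this induced coloring is proper on $m$. If $\{u, v\}$ is any edge of $m$, then by the definition of graph homomorphism recalled just before Definition~\ref{defn:embedded-map}, the images $\psi(u)$ and $\psi(v)$ are adjacent in $\Lambda$. By the bipartiteness of $\Lambda$ established in the previous paragraph, $\psi(u)$ and $\psi(v)$ lie in opposite color classes, hence so do $u$ and $v$. This exhibits the desired bipartition.

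Since $m$ may be a multiset of several components (some of which may be maps with boundary or of higher genus), I would simply note that the argument is purely local in the edges of $m$ and therefore applies uniformly to each component, giving a bipartition of the full vertex set of $M$. There is no genuine obstacle: the whole statement reduces to the functorial fact that the pullback of a proper 2-coloring under a graph homomorphism is again a proper 2-coloring. I would expect the written proof to occupy only a couple of lines.
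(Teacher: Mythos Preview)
Your proposal is correct and matches the paper's reasoning exactly: the paper states this as a bare observation with no separate proof, the justification being implicit in the ``Since\ldots'' clause, and your argument (pulling back the parity coloring of $\ZZ^d$ along the graph homomorphism $\psi$) is precisely the intended one-line reason.
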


\begin{figure}[ht!]
\begin{center}
	\includegraphics[width=.99\textwidth]{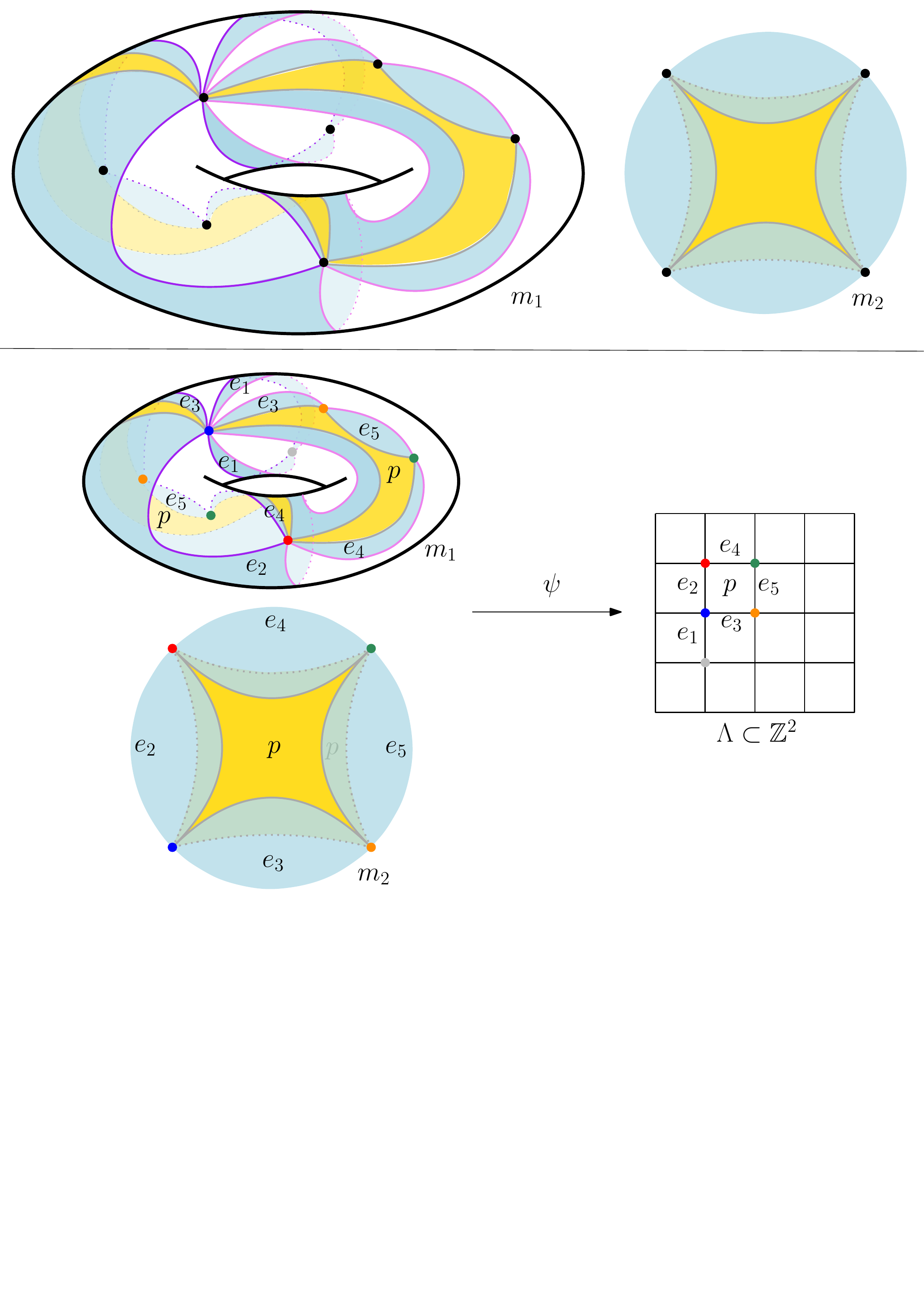}  
	\caption{\label{fig-embedded-maps-exemp}\textbf{Top:}  A YB-bipartite map family $m=\{m_1,m_2\}$  with one map $m_1$ with boundary of genus one and one map $m_2$ of genus zero. The boundary of $m_1$ has two components, one is colored in purple and one in pink. The dual graph of $m$ is bipartite. The blue faces of $m$ are colored in blue and the internal yellow faces of $m$ are colored in yellow.
		Note that the two external yellow faces of $m_1$ are not colored in the picture (but if they were colored in yellow this would still give a valid bipartion of the faces of the map, as requested in Definition~\ref{defn:embedded-map}). We always use this convention of not coloring the external yellow faces.
		\textbf{Bottom:} A lattice embedding $\psi$ of the map $m$ into the lattice $\Lambda$. Note that $\psi$ is defined by the color of the vertices: Vertices of one color in the map $m$ are sent to the vertex of $\Lambda$ of the same color. Note that $\psi$ sends the four internal yellow faces of $m$ (two in each component) isomorphically to the unoriented plaquette $p$. The internal yellow faces are labeled by the plaquette where they are sent by $\psi$. Moreover, $\psi$ sends each blue face of $m$ to a single unoriented edge of $\Lambda$. The blue faces are labeled by the edge where they are sent by $\psi$.
	}
\end{center}
\vspace{-3ex}
\end{figure}

\subsubsection{The orientation of embedded maps}

Recall from Definition~\ref{defn:embedded-map} that lattice embeddings send blue and yellow faces to \emph{unoriented} edges and plaquettes respectively. Further, recall that lattice Yang--Mills theory is defined on a lattice $\Lambda$ where all the plaquettes and the edges are oriented. Thus, to have these maps be sensible objects in lattice Yang--Mills they must ``inherit'' the orientation of the lattice. Indeed, the maps considered in \cite[Theorem 3.10]{cao2023random} (Theorem~\ref{thm: finite N surface sum} below) are such that edges in the map correspond to oriented edges of the lattice. Thus, in this section we will establish a convention for how we orient the plaquette faces and the edges of an embedded map.

Before doing this, we highlight a trivial but subtle aspect: to determine if a plaquette on the lattice is positively or negatively oriented, we considered a prior orientation of each plaquette (for instance, in two dimensions, this prior orientation is the natural clockwise orientation of each plaquette). We must also fix such a prior orientation on the embedded maps (recall their definition from Definition~\ref{defn:embedded-map}).

\medskip

\noindent\underline{\textbf{Prior orientation of an embedded map}}. The prior orientation of the edges on the boundary of each internal yellow face is defined to be their clockwise orientation.\footnote{Note that this is a natural definition since each internal yellow face is isomorphically mapped to a plaquette, which has a prior clockwise orientation (at least in dimension two).} Note that since the maps we are considering have bipartite faces, this imposes that all the edges on the boundary of the blue faces are counter-clockwise oriented; and this further imposes that the edges on the boundary of each external yellow face are also clockwise oriented. We stress that our maps or maps with boundary are always embedded in an orientable surface (recall Section \ref{sect:orientable-surfaces}) and so the clockwise/counter-clockwise orientation of the boundary of a face is well-defined. 

\medskip

\noindent\underline{\textbf{Edge orientation of an embedded map}}. We can now explain how to ``pull back'' the orientation of the edges to the embedded maps. 

Given an embedded map $M=(m,\psi)$, one can naturally ``pull back'' the orientation of the edges of $\Lambda$ to an orientation of the edges of $m$: Given two adjacent vertices $u$ and $v$ in $m$, we say that the edge between them is oriented from $u$ to $v$ if the edge between $\psi(u)$ and $\psi(v)$ in $\Lambda$ is oriented from $\psi(u)$ to $\psi(v)$.

Now, since $\psi$ is graph homeomorphism, we have that the edges along the boundary of each blue face must alternate between one direction and the opposite one. Fix an (oriented) lattice edge $e\in E^+_{\Lambda}$. Given a blue face of $m$ sent by $\psi$ to (the unoriented version of) $e$, we say that an edge of such blue face is \textbf{sent to $e$} if the orientation of this edge is consistent with the prior counter-clockwise orientation of the boundary of the blue face; otherwise, we say that the edge is sent \textbf{sent to $e^{-1}$}.  See Figure~\ref{fig-embedded-maps-exemp2} for an example.

\begin{figure}[ht!]
\begin{center}
	\includegraphics[width=.99\textwidth]{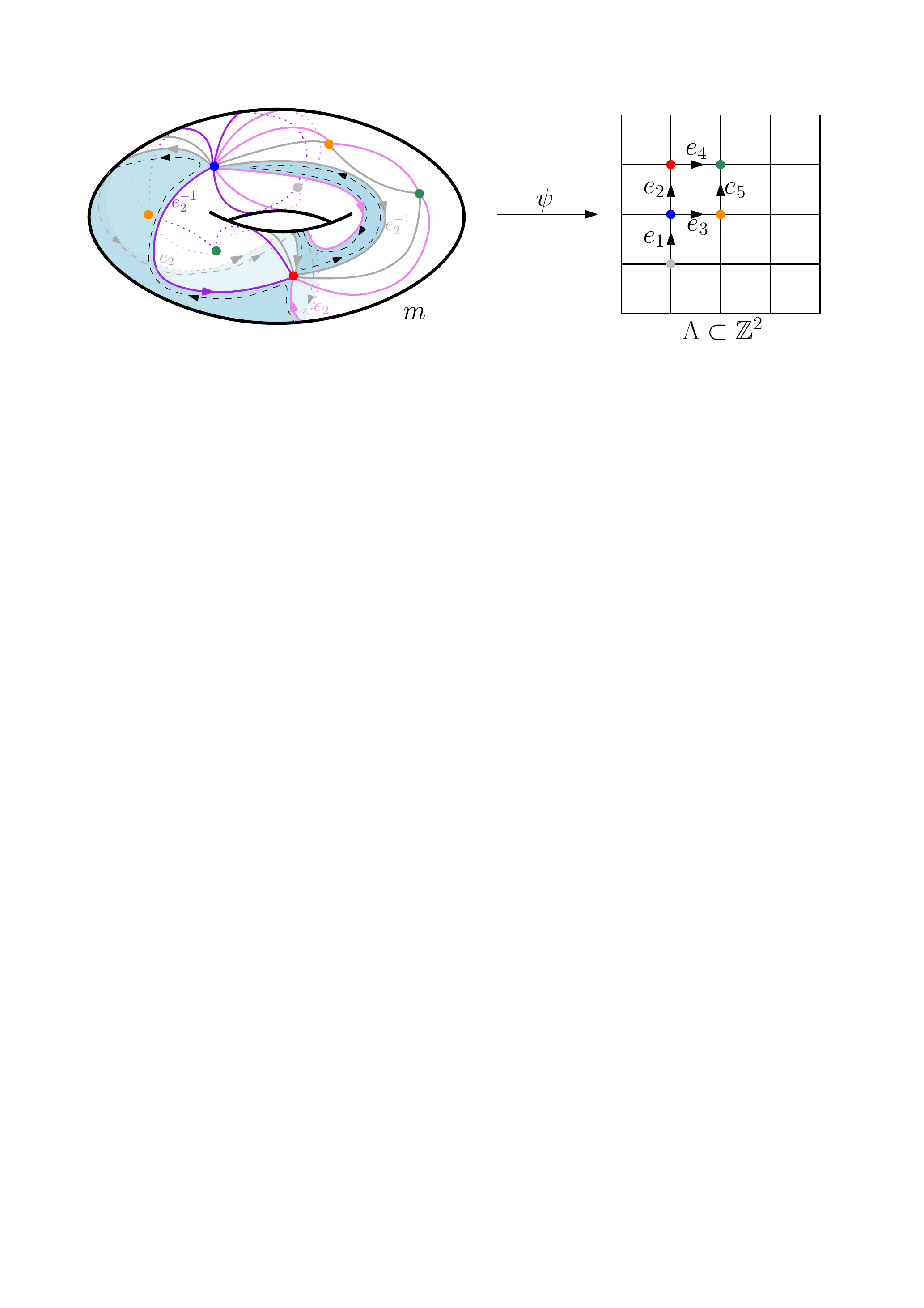}  
	\caption{\label{fig-embedded-maps-exemp2} An embedded map $M=(m,\psi)$ (obtained from the one in Figure~\ref{fig-embedded-maps-exemp} by forgetting the component $m_2$). We focus on the highlighted blue face of degree four sent to (the unoriented version of) $e_2$. The orientations of the four boundary edges of that face are ``pulled back'' from the orientation of the edge $e_2$ on the lattice $\Lambda$. We highlight -- with an oriented black dashed path --- the prior counter-clockwise orientation of the boundary of the blue face. The two edges of the blue face that have a consistent orientation with this prior counter-clockwise orientation are sent to $e_2$ by $\psi$ (and labeled by $e_2$ in the figure), while the two edges that have an inconsistent orientation with this prior counter-clockwise orientation are sent to $e_2^{-1}$ by $\psi$ (and labeled by $e_2^{-1}$ in the figure).}
\end{center}
\vspace{-3ex}
\end{figure}

From now on, given an (oriented) lattice edge $e\in E^+_{\Lambda}$, if we say that \textbf{a blue face of $m$ sent by $\psi$ to $e$}, we mean that the blue face of $m$ is sent by $\psi$ to (the unoriented version of) $e$ and the edges on the boundary of such blue face are sent to $e$ or $e^{-1}$ as explained above.

\medskip

\noindent\underline{\textbf{Plaquette face orientation of an embedded map}}. We finally explain how to ``pull back'' the orientation of plaquettes. Note that each plaquette $p$ in $\cP_\Lambda$  visits the four vertices on its boundary in a specific order and $p^{-1}$ visits these four vertices exactly in the opposite order. Given an embedded map $M=(m,\psi)$ and an internal yellow face sent by $\psi$ to (the unoriented version of) $p$, we say that such internal yellow face is \textbf{sent to} the (oriented) plaquette $p$ if the prior clockwise exploration of its boundary visits its four boundary vertices $u_1,u_2,u_3,u_4$ in the same order as $p$ visits $\psi(u_1),\psi(u_2),\psi(u_3),\psi(u_4)$; otherwise, we say that such internal yellow face is \textbf{sent to $p^{-1}$}. See Figure~\ref{fig-embedded-maps-exemp3} for an example. 

Note that this procedure gives an orientation to the internal yellow faces which is consistent with the orientation given in the previous paragraph to blue faces (actually one can easily check that one orientation determines the other).

\medskip

\begin{figure}[ht!]
\begin{center}
	\includegraphics[width=.99\textwidth]{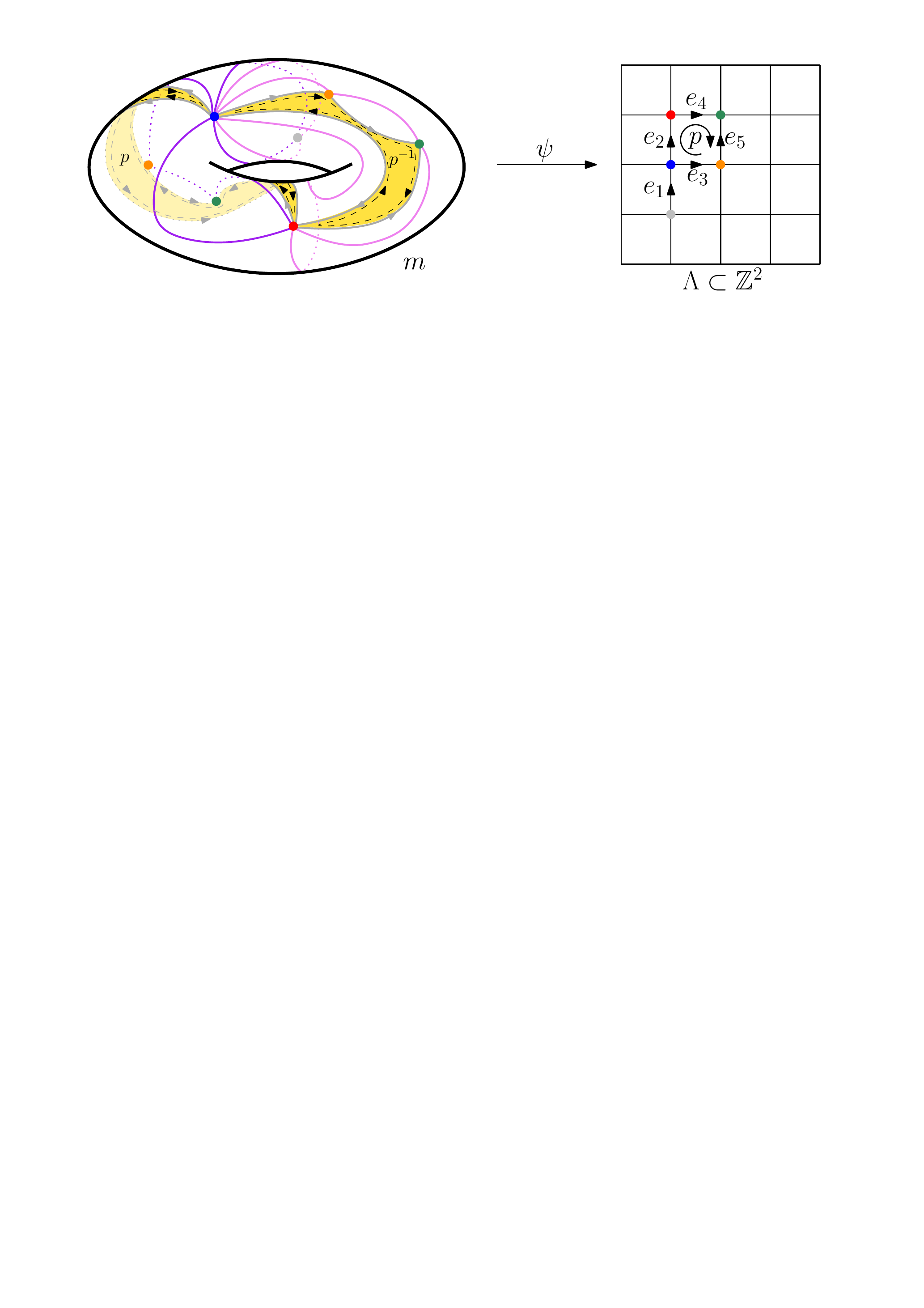}  
	\caption{\label{fig-embedded-maps-exemp3} The embedded map $M=(m,\psi)$ from Figure~\ref{fig-embedded-maps-exemp2}. We focus on the two internal yellow faces sent to (the unoriented version of) $p$. We highlight with two oriented black dashed path the prior clockwise orientation of the two boundaries of these yellow faces. The path of the yellow face on the left visits the four edges in this order: blue, red, green, and orange. Hence this face is sent to $p$ (and labeled by $p$ in the picture). The path of the yellow face on the right visits the four edges in this order: blue, orange, green, and red. Hence this face is sent to $p^{-1}$ (and labeled by $p^{-1}$ in the picture). }
\end{center}
\vspace{-3ex}
\end{figure}

\subsubsection{Embedded maps, plaquette assignments and strings}\label{subsubsec: Embedded maps, plaquette assignments, and strings}

Let $s=\{\ell_1,\dots,\ell_n\}$ be a string and consider a function $K:\cP_{\Lambda} \to \NN$, which we call a \textbf{plaquette assignment}. One should think of $K(p)$ as the number of copies of the plaquette $p$ we have available.

For such a pair $(s,K)$, we let $n_e(s,K)$ denote the number of times the oriented edge $e\in E_{\Lambda}$ appears in $(s,K)$ in the sense that 
\begin{align}\label{defn:ne}
n_e(s,K) := \text{\# copies of $e$ in $s$} + \sum_{p\in \cP_{\Lambda}(e)}K(p),
\end{align}
where $\cP_{\Lambda}(e)$ is the collection of plaquettes in $\cP_{\Lambda}$ containing $e$ as one of the four boundary edges (with the correct orientation). We say that the pair $(s,K)$ is \textbf{balanced}\footnote{The fact that we consider balanced pairs $(s,K)$ might look a bit mysterious to the reader. This is just a (non immediate) consequence of the fact that the expected trace of product of Haar-distributed matrices is zero if a matrix in the product does not appear the same number of times as its inverse.} if 
\begin{equation*}
n_e(s,K) = n_{e^{-1}}(s,K),\quad\text{for all $e\in E^+_{\Lambda}$}.
\end{equation*}

\begin{defn}\label{defn:embedded-maps-bound-plaq-ass}
For a balanced  pair $(s,K)$, we say that  $M=(m,\psi)$ is an \textbf{embedded map with boundary $s$ and plaquette assignment $K$} if $M$ is an embedded map (Definition~\ref{defn:embedded-map}) which satisfies the following three additional conditions:
\begin{enumerate}
	\item [3.] the number of internal yellow faces sent to $p$ by $\psi$ is equal to $K(p)$, for all $p\in \cP_{\Lambda}$.
	\item [4.] the total number of external yellow faces in $m$ is equal to the number of loops in $s$;
	\item [5.] the boundary of each external yellow faces of $m$ is isomorphically sent by $\psi$ to a distinct loop in $s$ preserving the orientation;\footnote{We remark that to determine if the boundary of an external yellow face of $m$ is sent to $\ell$ or $\ell^{-1}$ by $\psi$, one should use the same procedure used to determine if an internal yellow face is sent to $p$ or $p^{-1}$ by $\psi$. See also Figure~\ref{fig-embedded-maps-exemp4} for more explanations.}
\end{enumerate}
We denote by $\cM(s,K)$ the set of embedded maps with boundary $s$ and plaquette assignment $K$.
For a non-balanced pair $(s,K)$, we define $\cM(s,K)=\emptyset$. 
\end{defn}

\medskip

An embedded map in $\cM(s,K)$ can be thought of as an embedded map constructed from the plaquettes given by the plaquette assignment $K$ and with boundary components that are exactly equal to the loops in $s$. See Figure~\ref{fig-embedded-maps-exemp4} for an example.

\begin{figure}[ht!]
\begin{center}
	\includegraphics[width=.99\textwidth]{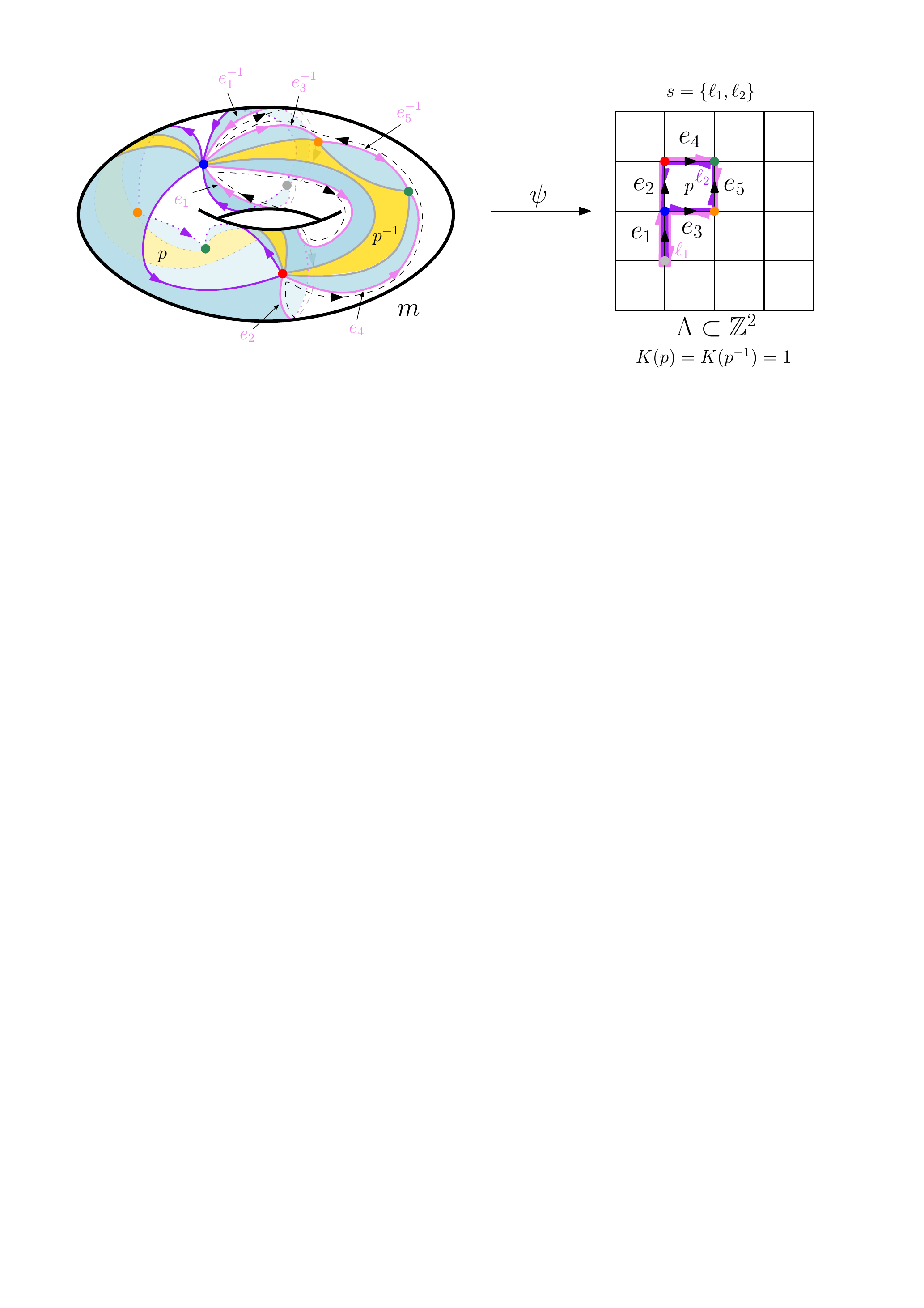}  
	\caption{\label{fig-embedded-maps-exemp4} The embedded map $M=(m,\psi)$ from Figure~\ref{fig-embedded-maps-exemp2}, which is an element of $\cM(s,K)$ with $s=\{\ell_1,\ell_2\}$ the string shown in the figure and $K$ the plaquette assignment such that $K(p)=K(p^{-1})=1$ and $K(q)=0$ for all $q\in\cP_{\Lambda}\setminus\{p,p^{-1}\}$. Note that $\ell_{1}=e_4e_5^{-1}e_3^{-1}e_1^{-1}e_1e_2$ and $\ell_{2}$ is the same loop but followed in reverse order. On the map $m$, we highlight the orientation of the edges on the two components of the boundary which are mapped to $\ell_1$ and $\ell_2$. Note that following the prior clockwise orientation of the edges in the boundary of each of these two yellow external faces (we highlight this prior orientation only for one external face), we recover exactly the two loops $\ell_1$ and $\ell_2$ (with the correct orientation).}
\end{center}
\vspace{-3ex}
\end{figure}

\subsection{Finite-\texorpdfstring{$N$}{N} 
Wilson loop expectations as surface sums} \label{subsec: sum surfaces lattice Yang--Mills}

Now that we properly defined the set of maps $\cM(s,K)$, we are almost ready to explain the main result of~\cite{cao2023random}, i.e.\ the interpretation of Wilson loop expectations as surface sums. We remark that this is not directly needed for any of the arguments in the present paper. Rather, it serves as motivation and a starting point for defining the surface sums which appear in the large-$N$ limit. We emphasize that the arguments of the present paper are self-contained; in particular, one does not need to read \cite{cao2023random} to understand our paper.

We first introduce a few remaining necessary definitions.

\medskip

The \textbf{area}, denoted by  $\area(M)$, of an embedded map $M$ is defined to be equal to the number of internal yellow faces. If $M\in \cM(s,K)$, then $\area(M) = \sum_{p\in \cP_{\Lambda}}K(p)$.

For a pair $(s,K)$, we denote by $E_{\Lambda}^+(s,K)$ the set of edges of $E_{\Lambda}^+$ that are contained in $(s,K)$. Fix $e\in E_{\Lambda}^+(s,K)$. If $M=(m,\psi)$ is an embedded map that contains $k$ blue faces that are sent to the edge $e$ by $\psi$, then we define the integer partition 
\begin{equation*}
\mu_e(M) = (\lambda_1^e,\dots,\lambda_k^e),
\end{equation*} 
where each $\lambda_i^e$ is equal to half the degree of the $i$-th largest blue face sent to the edge $e$.  
Finally, define the normalized Weingarten weight by
\begin{equation}\label{eq:wein-fct}
\nWg_N(\sigma) := N^{n+\|\sigma\|}\Wg_N(\sigma), \quad \text{for all $\sigma\in \symgrp_n$},
\end{equation}
where $\symgrp_n$ is the symmetric group on $n$ elements, $\|\sigma\| = n - \#\text{cycles}(\sigma)$, and $\Wg_N(\cdot)$ is the Weingarten function. For our purposes, the exact definition of the Weingarten function is not important, but we highlight that $\Wg_N(\cdot)$ only depends on $N$ and the cycle structure of $\sigma$. Even more importantly, we emphasize that the Weingarten function is signed—that is, it can take both positive and negative values. See \cite[Eq.\ (1.7)]{cao2023random} for an explicit formula for the Weingarten function.

We are now able to reinterpret Wilson loop expectations as surface sums.  

\begin{thm}[{\cite[Theorem 3.10]{cao2023random};} {\textsc{$\unitary(N)$ Wilson loop expectations as surface sums}}]\label{thm: finite N surface sum}
Recall that $\Lambda$ is a finite subgraph of $\mathbb{Z}^d$ for some $d \geq 2$, $N\geq 1$ and $\upbeta\in \mathbb{R}$.
Let $s=\{\ell_1,\dots,\ell_n\}$ be a string. Then
\begin{equation}\label{eq:finite-N-sum-over-surfaces}
	\phi_{\Lambda,N,\upbeta}(s) = Z_{\Lambda,N,\upbeta}^{-1}\sum_{K:\cP_{\Lambda}\to \NN}\sum_{M\in \cM(s,K)}\upbeta^{\area(M)}
	\cdot
	w_N(M)
	\cdot
	N^{\gec(M)},
\end{equation}
where $\cM(s,K)$ is introduced in \cref{defn:embedded-maps-bound-plaq-ass},
\begin{equation*}
	\gec(M):=2(c(M)-g(M)-b(M))=\chi(M) -b(M),
\end{equation*}
is the generalized Euler characteristic (recall \eqref{eq:chi-gen-relation} and the notation introduced above) and
\begin{equation}\label{eq:w-wei}
	w_N(M):=\prod_{e\in E_{\Lambda}^+(s,K)} \nWg_N(\mu_e(M)),
\end{equation}
with $\nWg_N(\mu_e(M))$ denoting the value of $\nWg_N(\sigma)$ for any permutation $\sigma$ with the same cycle structure as $\mu_e(M)$.
\end{thm}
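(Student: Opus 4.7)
The plan is to derive \eqref{eq:finite-N-sum-over-surfaces} by expanding both the Boltzmann weight and the Wilson loop observable into polynomials in the matrix entries of the $Q_e$'s, and then integrating out each edge variable against Haar measure via the Weingarten formula. Concretely, first I would write
\[
\prod_{p\in\cP_\Lambda}\exp\!\bigl(\upbeta N\,\Tr(Q_p)\bigr)
=\sum_{K:\cP_\Lambda\to\NN}\prod_{p\in\cP_\Lambda}\frac{(\upbeta N)^{K(p)}}{K(p)!}\,\Tr(Q_p)^{K(p)},
\]
so that the expectation becomes a sum over plaquette assignments $K$. For each fixed $K$, the integrand is a product of traces: one normalized trace $\tr(Q_{\ell_i})$ per boundary loop, and $K(p)$ un-normalized traces $\Tr(Q_p)$ per plaquette. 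Writing each trace as a cyclic sum over indices produces a polynomial in the entries of $(Q_e)_{e\in E^+_\Lambda}$ in which each $Q_e$ or $Q_{e^{-1}}=Q_e^*$ appears exactly $n_e(s,K)$ times; since Haar integration kills any monomial with unbalanced numbers of $U$'s and $U^*$'s, the contribution vanishes unless $(s,K)$ is balanced, matching the convention $\cM(s,K)=\emptyset$ in the non-balanced case.

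Next, I would integrate out each edge $e\in E^+_\Lambda$ independently using the $\unitary(N)$ Weingarten formula
\[
\int U_{i_1 j_1}\cdots U_{i_n j_n}\overline{U_{i'_1 j'_1}}\cdots \overline{U_{i'_n j'_n}}\,dU
=\sum_{\sigma,\tau\in\symgrp_n}\Wg_N(\sigma\tau^{-1})\prod_{k=1}^n\delta_{i_k i'_{\sigma(k)}}\delta_{j_k j'_{\tau(k)}},
\]
where $n=n_e(s,K)=n_{e^{-1}}(s,K)$. Each pair $(\sigma,\tau)$ at edge $e$ encodes a way of pairing the matrix indices; by standard Weingarten/ribbon-graph combinatorics, one can absorb $\tau$ into a cyclic relabeling of the surrounding yellow faces, so that the non-trivial data is a single permutation whose cycle structure determines an integer partition, which I would identify with $\mu_e(M)$. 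Geometrically, the cycles of this permutation glue the yellow (trace) corners at $e$ into blue faces of even degree whose alternating boundary edges are sent to $e$ and $e^{-1}$; the resulting glued 2-complex is precisely an embedded map $M=(m,\psi)\in\cM(s,K)$ in the sense of Definitions~\ref{defn:embedded-map} and \ref{defn:embedded-maps-bound-plaq-ass}, with the plaquette traces giving internal yellow faces, the loop traces giving external yellow faces, and the Weingarten pairings giving blue faces. The Kronecker deltas on the unpaired indices then sum to $N^{V(m)}$, since free indices are constant along connected components of the vertex set of $m$.

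To finish, I would collect powers of $N$ and combinatorial factors. The expansion contributes $\prod_p (\upbeta N)^{K(p)}/K(p)!=\upbeta^{\area(M)}N^{\area(M)}$; the normalized traces $\tr(Q_{\ell_i})$ give $N^{-n}$; the Weingarten sum at $e$ produces $\prod_e \Wg_N(\mu_e(M))=\prod_e \nWg_N(\mu_e(M))\,N^{-n_e-\|\mu_e(M)\|}$ by \eqref{eq:wein-fct}; and the Kronecker contractions yield $N^{V(m)}$. The multinomial factors $1/K(p)!$ and $1/n_e!$ cancel exactly against the number of ways to label the corners of each plaquette face and to order the $Q_e$-factors at each edge, producing one term per isomorphism class of embedded map (this labeling/isomorphism count is standard in ribbon-graph/Weingarten expansions but must be verified carefully). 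Combining exponents and using $E(m)=\sum_e n_e+\sum_e\|\mu_e(M)\|$ together with $F(m)=\area(M)+b(M)+\#\{\text{blue faces}\}$ and the Euler relations \eqref{eq:euler-ch}--\eqref{eq:chi-gen-relation} applied componentwise, the total $N$-exponent simplifies to $\chi(M)-b(M)=\gec(M)$, and one recognizes $w_N(M)=\prod_e\nWg_N(\mu_e(M))$. Dividing by $Z_{\Lambda,N,\upbeta}$ (whose analogous expansion is just the $s=\emptyset$ case) gives \eqref{eq:finite-N-sum-over-surfaces}.

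The main obstacle is the bookkeeping in step two: one must show that the pair of Weingarten permutations at each edge, together with the cyclic data carried by each yellow trace, glues canonically and bijectively into an embedded map whose orientation conventions from Section~\ref{sect:embedded-maps} are compatible with the signs produced by the Weingarten function. Equivalently, one must check that the $(\sigma,\tau)$-indexed sum reorganizes as a sum indexed by isomorphism classes of $M\in\cM(s,K)$ with no over- or under-counting, and that the $\tau$-dependence collapses to dependence on the cycle type $\mu_e(M)$ alone. Once this combinatorial identification is established, the $N$-power computation is purely a matter of Euler's formula.
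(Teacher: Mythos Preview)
The paper does not give a proof of this statement: Theorem~\ref{thm: finite N surface sum} is quoted from \cite[Theorem 3.10]{cao2023random} and used only as background and motivation (the paper explicitly says the present arguments are self-contained and do not rely on it). So there is nothing in the paper to compare your proof against.

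That said, your proposal is exactly the standard route by which such formulas are established, and it is essentially the argument of \cite{cao2023random}: expand the exponential, apply the Weingarten formula edge by edge, reinterpret the resulting $(\sigma,\tau)$-data as gluing rules that build an embedded map, and reduce the $N$-exponent to an Euler-characteristic count. One small slip in your bookkeeping: the identity $E(m)=\sum_e n_e+\sum_e\|\mu_e(M)\|$ is not correct. Every edge of $m$ lies on exactly one blue face, so $E(m)=\sum_{f\in\BF(M)}\deg(f)=2\sum_e n_e$, while the number of blue faces is $\sum_e(n_e-\|\mu_e\|)$. With these corrected counts and with $F(m)$ taken to exclude the external faces (so that $\chi(m)=2-2g-b$ holds), the $N$-exponent
\[
\area(M)-n-\sum_e(n_e+\|\mu_e\|)+V(m)
\]
does indeed collapse to $\chi(m)-b(M)=\gec(M)$, so your conclusion is right even though the intermediate edge formula is off. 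The genuinely delicate point, as you note, is the bijection between the Weingarten pairing data and isomorphism classes of embedded maps in $\cM(s,K)$, together with the cancellation of the $1/K(p)!$ factors against the plaquette relabelings; the paper's Remark following Theorem~\ref{thm: finite N surface sum} flags precisely this $\prod_p K(p)!$ discrepancy between its conventions and those of \cite{cao2023random}.
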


Thus, to understand Wilson loop expectations, we need to understand the embedded maps in $\cM(s,K)$ and their weights appearing in Theorem~\ref{thm: finite N surface sum}. 

\begin{rmk}
The infinite sum in \eqref{eq:finite-N-sum-over-surfaces} converges in the following sense:
\[\sum_{K:\cP_{\Lambda}\to \NN}\left|\sum_{M\in \cM(s,K)}\upbeta^{\area(M)}
	\cdot
	w_N(M)
	\cdot
	N^{\gec(M)}\right|<\infty,\]
where we note that for any fixed plaquette assignment $K$, the sum over $M\in \cM(s,K)$ is a finite sum.
\end{rmk}

\begin{rmk}
Our definition of embedded maps with boundary $s$ and plaquette assignment $K$ (Definition~\ref{defn:embedded-maps-bound-plaq-ass}) does not distinguish plaquette faces that are sent to the same plaquette. That is, if $K(p)=n$ and $M=(m,\psi)\in \cM(s,K)$, there are $n$ internal yellow faces of $m$ that are sent by $\psi$ to \emph{the same} plaquette $p$.

In \cite{cao2023random}, the authors preferred to distinguish multiple copies of the same plaquette $p$, that is, if $K(p)=n$, they consider $n$ distinguishable copies $p_1,\dots,p_n$ of $p$ and send $n$ internal yellow face of $m$ to $p_1,\dots,p_n$. Note that there are $n!$ different possible ways to do this.

This explains why the formula in \eqref{eq:finite-N-sum-over-surfaces} differs from the one in \cite[Theorem 3.10]{cao2023random} by a $\prod_{p\in \cP_{\Lambda}}K(p)!$ factor.
\end{rmk}

\noindent\underline{\emph{Important note.}} \emph{In the following sections, we will often adopt a slightly different, yet very natural and equivalent, perspective on embedded maps. Specifically, we will often treat the embedding $\psi$ as a form of labeling for the edges and faces of the map $m$. Therefore, using the notation established earlier, when we refer to ``the edge $\mathbf{e}$ on the boundary of $m$,'' we are referring to the edge of the map $m$ that is mapped by $\psi$ to the lattice edge $e$, corresponding to the copy $\mathbf{e}$ of $\ell$. Similar interpretations will apply to blue and yellow faces.}

\emph{We also point out that we will use bold notation for edges and yellow faces on an embedded map or specific edges of a loop and normal notation to denote plaquettes and edges in the lattice $\Lambda$.}

\subsection{Finite-\texorpdfstring{$N$}{N} master loop equation}\label{subsec: Finite N master loop equation}

One powerful approach to computing Wilson loop expectations is utilizing the fact that they solve a recursive relation, variously called the Makeenko-Migdal/master loop/Schwinger-Dyson equation. We will mainly use the terminology ``master loop equation'' in this paper. In this section, we present the necessary background to state the master loop equation established in \cite{cao2023random}. This recursive relation will be paramount for our later analysis. 

\subsubsection{Loop operations}\label{sect:loop-oper}

We define three operations on loops. Let $s=\{\ell_1,\dots,\ell_n\}$ be a string and fix $\mathbf{e}$ to be one specific copy of the (oriented) edge $e$ appearing in $s$. Assume that $\mathbf{e}$ is contained in the loop $\ell_i$.

\medskip

\noindent\underline{\textbf{Splittings}}. First we define splittings at $\mathbf{e}$, an operation that splits a loop into two loops. Let $\mathbf{e}'$ denote another copy of $e$ in $\ell_i$. Suppose that $\ell_i$ has the form $\pi_1\, \mathbf{e} \,\pi_2 \,\mathbf{e}' \,\pi_3$, where $\pi_i$ is a path of edges, then we say that $\mathsf{S}_{\mathbf{e},\mathbf{e}'}(\ell_i)=\{\pi_1 \, \mathbf{e} \, \pi_3 \, , \, \pi_2 \, \mathbf{e}'\}$ is a \textbf{positive splitting} of $\ell_i$ at $\mathbf{e}$. We denote these two new loops by
\begin{align*}
\mathsf{S}_{\mathbf{e},\mathbf{e}'}^1(\ell_i) = \pi_1 \, \mathbf{e} \, \pi_3\qquad\text{and}\qquad\mathsf{S}_{\mathbf{e},\mathbf{e}'}^2(\ell_i) = \pi_2 \, \mathbf{e}'.
\end{align*}

We let $\SS_+(\mathbf{e},s)$ denote the multiset of strings that can be obtained from $s$ by a positive splitting of $s$ at $\mathbf{e}$. See the top of Figure~\ref{fig-operations} for an example.

Similarly, let $\mathbf{e}^{-1}$ be any specific copy of the edge $e^{-1}$ in $s$. Suppose $\ell_i$ has the form $\pi_1 \, \mathbf{e} \, \pi_2 \, \mathbf{e}^{-1} \, \pi_3$, then we say that $\mathsf{S}_{\mathbf{e},\mathbf{e}^{-1}}(\ell_i)=\{\pi_1 \, \pi_3  \, ,  \, \pi_2\}$ is a \textbf{negative splitting} of $\ell_i$ at $\mathbf{e}$. Similarly, we denote these two new loops by
\begin{align*}
\mathsf{S}_{\mathbf{e},\mathbf{e}^{-1}}^1(\ell_i) = \pi_1 \, \pi_3\quad\text{and}\quad\mathsf{S}_{\mathbf{e},\mathbf{e}^{-1}}^2(\ell_i) = \pi_2.
\end{align*}

We let $\SS_-(\mathbf{e},s)$ denote the multiset of strings that can be obtained from $s$ by a negative splitting of $s$ at $\mathbf{e}$. See the top of Figure~\ref{fig-operations} for an example.

\medskip

\noindent\underline{\textbf{Mergers}}. Next we define mergers at $\mathbf{e}$, where two loops are combined to create one loop. Let $\mathbf{e}'$ be any copy of $e$ in $s$ but on a different loop than $\ell_i$. Assume that $\mathbf{e}'$ is on the loop $\ell_h$, where $h\neq i$. Suppose that $\ell_i = \pi_1 \, \mathbf{e} \, \pi_2$ and $\ell_h = \pi_3 \, \mathbf{e}' \, \pi_4$. Then we define the \textbf{positive merger} of $\ell_i$ with $\ell_h$ at $\mathbf{e}$ and $\mathbf{e}'$ to be \begin{equation*}
\ell_i\oplus_{\mathbf{e},\mathbf{e}'}\ell_h = \pi_1 \, \mathbf{e} \, \pi_4 \, \pi_3 \,\mathbf{e}' \, \pi_2.
\end{equation*}
We let $\MM_+(\mathbf{e},s)$ denote the set of strings that can be obtained from $s$ by positively merging the loop $\ell_i$ with another loop in $s$ at $\mathbf{e}$. See the bottom-left of Figure~\ref{fig-operations} for an example.

Similarly, let $\mathbf{e}^{-1}$ be any copy of $e^{-1}$ on a different loop than $\ell_i$. Assume that $\mathbf{e}^{-1}$ is on $\ell_h$, where $h\neq i$. Suppose that $\ell_i = \pi_1 \, \mathbf{e} \, \pi_2$ and $\ell_h = \pi_3 \, \mathbf{e}^{-1} \pi_4$ . Then we define the \textbf{negative merger} of $\ell_i$ with $\ell_h$ at $\mathbf{e}$ and $\mathbf{e}^{-1}$ to be \[\ell_i\ominus_{\mathbf{e},\mathbf{e}^{-1}}\ell_h = \pi_1 \, \pi_4 \, \pi_3 \, \pi_2.\] 
We let $\MM_-(\mathbf{e},s)$ denote the set of strings that can be obtained from $s$ by negatively merging the loop $\ell_i$ with another loop in $s$ at $\mathbf{e}$. See the bottom-left of Figure~\ref{fig-operations} for an example.

\medskip

\noindent\underline{\textbf{Deformations}}. Lastly, we define a \textbf{positive deformation} at $\mathbf{e}$ to be a positive merger of $\ell_i$ at $\mathbf{e}$ with some plaquette $p$ that contains the edge $e$. Similarly, we define a \textbf{negative deformation} at $\mathbf{e}$ to be a positive merger of $\ell_i$ at $\mathbf{e}$ with some plaquette $q$ that contains the edge $e^{-1}$. Notice since a plaquette can only contain the edge $e$ (or the edge $e^{-1}$) at most once, as long as the edge $\mathbf{e}$ is specified there is no ambiguity as to where the merger needs to be performed. Thus, we can simplify the notation of mergers with plaquettes as follows:
\begin{equation*}
\ell_i\oplus_{\mathbf{e},\mathbf{e}'}p = \ell_i\oplus_{\mathbf{e}}p\quad\text{and}\quad \ell_i\ominus_{\mathbf{e},\mathbf{e}^{-1}}q = \ell_i\ominus_{\mathbf{e}}q.
\end{equation*}
We define the sets of positive and negative deformations at $\mathbf{e}$ by 
\begin{align*}
&\DD_+(\mathbf{e},s) = \{\{\ell_1,\dots,\ell_{i-1},\ell_i\oplus_{\mathbf{e}}p,\ell_{i+1},\dots,\ell_n\}:p\in \cP_{\Lambda}(e)\},\\
&\DD_-(\mathbf{e},s) = \{\{\ell_1,\dots,\ell_{i-1},\ell_i\oplus_{\mathbf{e}}q, \ell_{i+1},\dots,\ell_n\}:q\in \cP_{\Lambda}(e^{-1})\},
\end{align*}
where we recall that $\cP_{\Lambda}(e)$ is the collection of plaquettes in $\cP_{\Lambda}$ containing $e$ as one of the four boundary edges (with the correct orientation). See the bottom-right of Figure~\ref{fig-operations} for an example.

\begin{figure}[ht!]
\begin{center}
	\includegraphics[width=.49\textwidth]{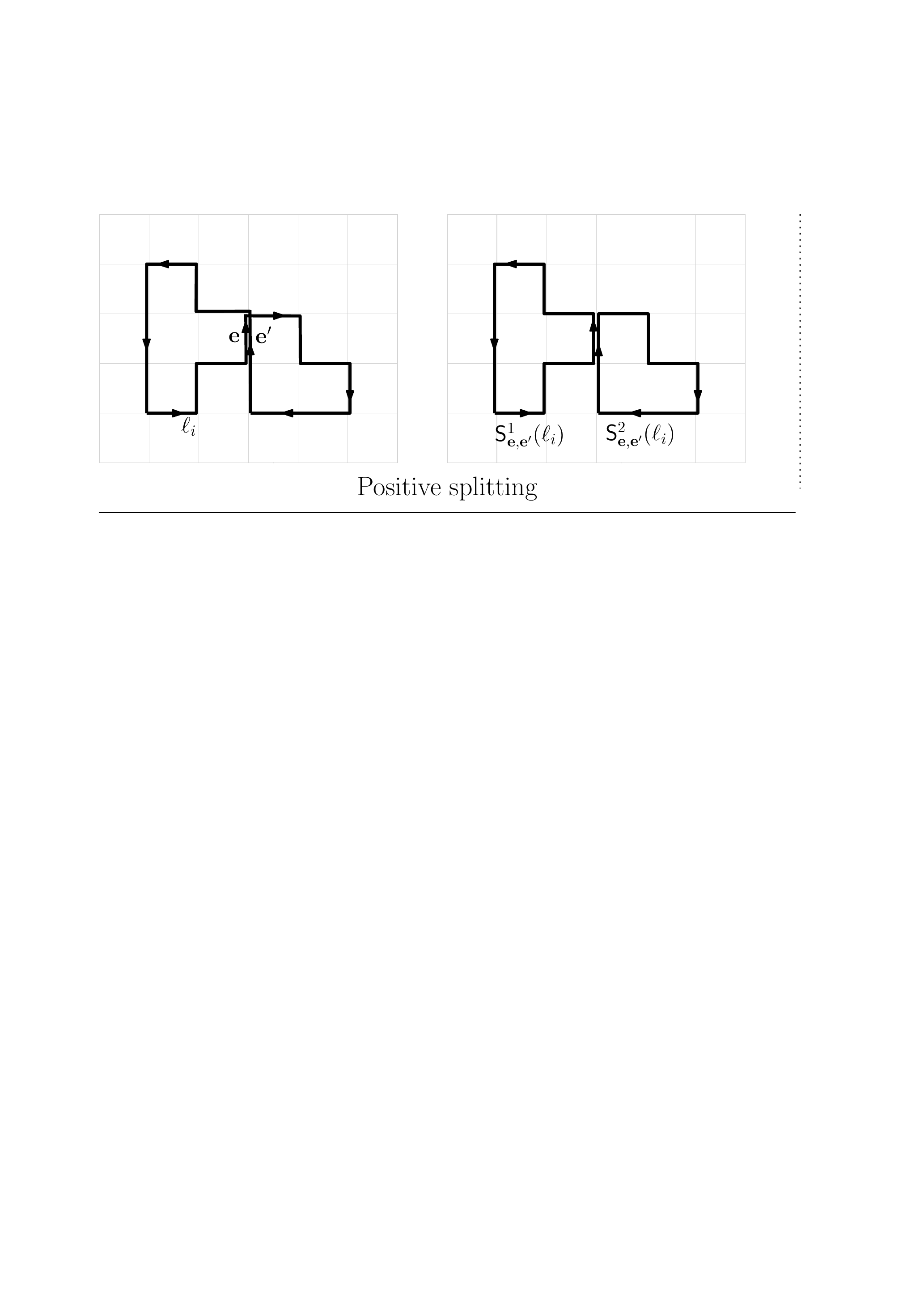}  
	\includegraphics[width=.49\textwidth]{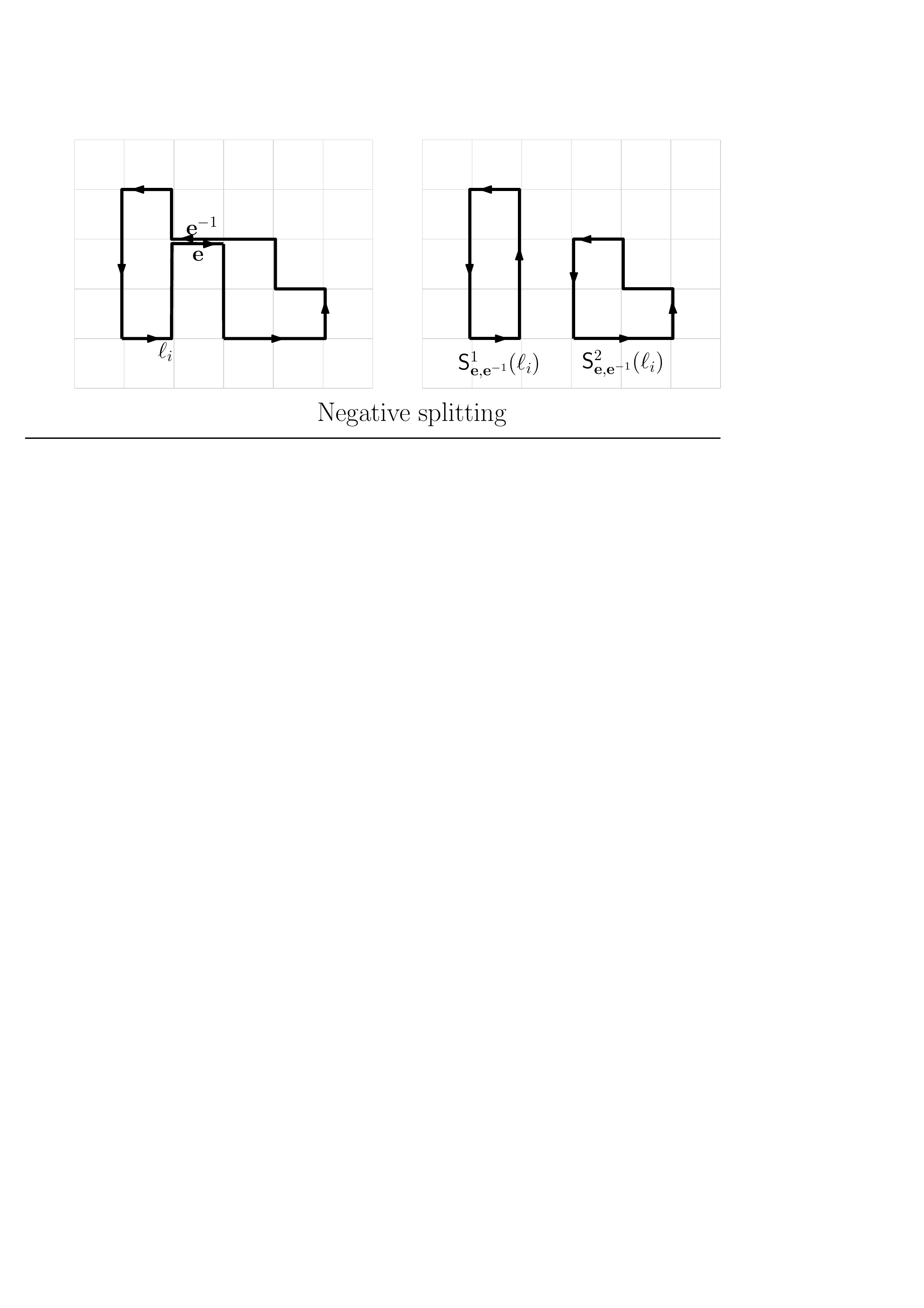}
	\includegraphics[width=.495\textwidth]{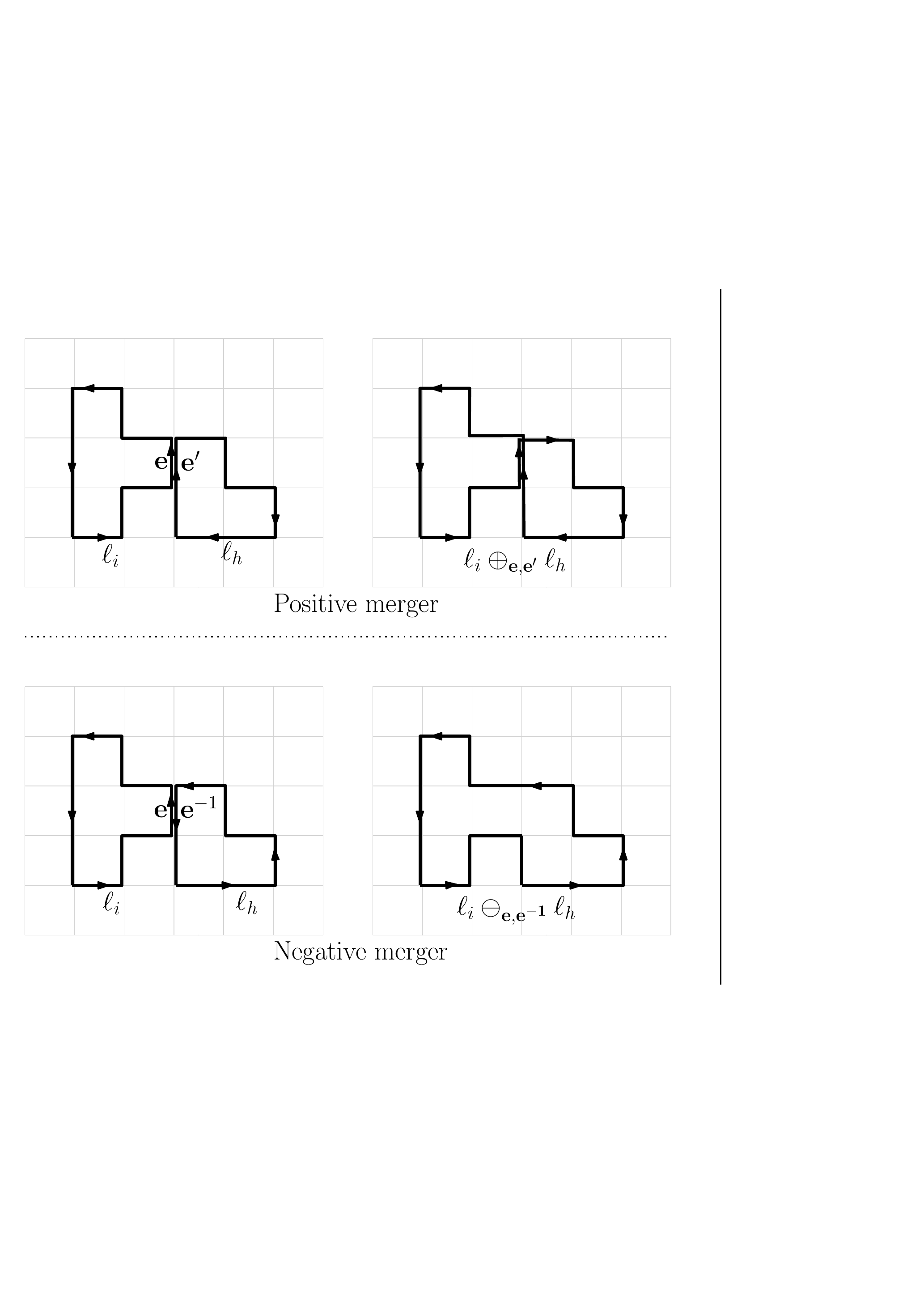} 
	\includegraphics[width=.49\textwidth]{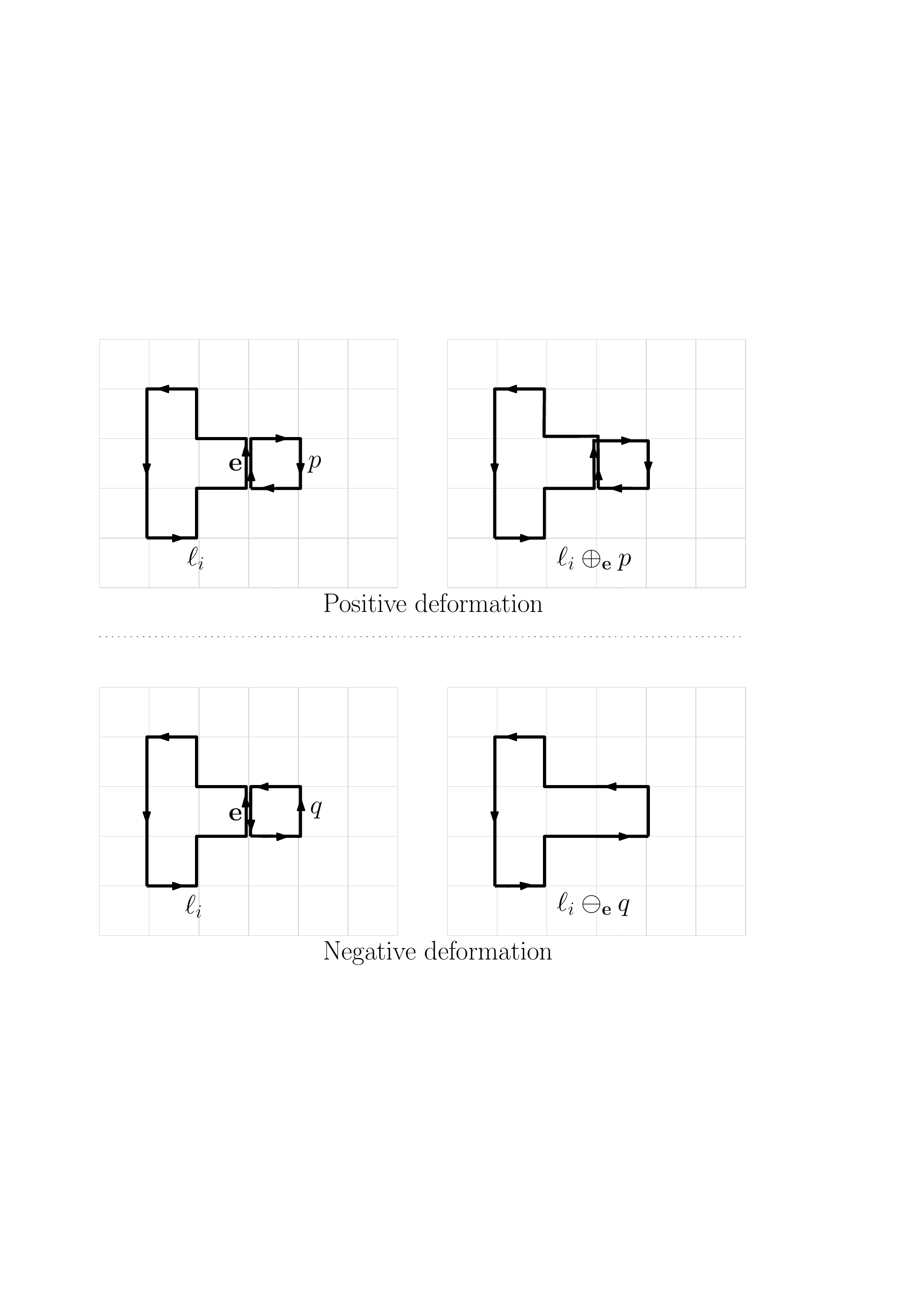} 
	\caption{\label{fig-operations} \textbf{Top:} An example of a positive and negative splitting.
		\textbf{Bottom-left:} An example of a positive and negative merger.
		\textbf{Bottom-right:} An example of a positive and negative deformation.}
\end{center}
\vspace{-3ex}
\end{figure}

\subsubsection{Statement of the master loop equation}

Now we are able to state the master loop equation for $\unitary(N)$-lattice Yang--Mills theory.

\begin{thm}[{\cite[Theorem 5.7]{cao2023random};} \textsc{$\unitary(N)$ master loop equation for finite $N$}]\label{thm: finite N master loop equation}
Let $s = \{\ell_1,\dots,\ell_n\}$ be a string. Fix a specific edge $\mathbf{e}$ on $s$. Then \begin{align*}
	\phi_{\Lambda,N, \upbeta}(s) =&\sum_{s'\in \SS_-(\mathbf{e},s)}\phi_{\Lambda,N, \upbeta}(s') - \sum_{s'\in \SS_+(\mathbf{e},s)}\phi_{\Lambda,N, \upbeta}(s') \\
	&+ \upbeta\sum_{s'\in \DD_-(\mathbf{e},s)}\phi_{\Lambda,N, \upbeta}(s') - \upbeta \sum_{s'\in \DD_+(\mathbf{e},s)}\phi_{\Lambda, N, \upbeta}(s')\\
	&+ \frac{1}{N^2}\sum_{s'\in \MM_-(\mathbf{e},s)}\phi_{\Lambda,N, \upbeta}(s') - \frac{1}{N^2}\sum_{s'\in \MM_+(\mathbf{e},s)}\phi_{\Lambda,N, \upbeta}(s').
\end{align*}
\end{thm}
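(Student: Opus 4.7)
The plan is to derive the identity by an integration-by-parts (Ward identity) argument on the Haar measure for the edge variable $Q_e$, where $e \in E_\Lambda$ is the lattice edge carrying the distinguished edge $\mathbf{e}$. Let $(T_\alpha)_{\alpha=1}^{N^2}$ be an orthonormal basis of the Lie algebra $\mathfrak{u}(N)$ of skew-Hermitian matrices (with respect to $\langle X, Y\rangle = -\Tr(XY)$), and let $\partial_\alpha$ denote the left-invariant vector field at $Q_e$ satisfying $\partial_\alpha Q_e = T_\alpha Q_e$ and $\partial_\alpha Q_e^{-1} = -Q_e^{-1} T_\alpha$. By the left-invariance of Haar measure, $\int \partial_\alpha F(\cQ)\,\prod_{e'} dQ_{e'} = 0$ for every smooth $F$. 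The strategy is to apply this identity with
\begin{equation*}
F(\cQ) := \tr\bigl(T_\alpha\, Q_{\ell_1}\bigr)\, W_{s\setminus \ell_1}(\cQ)\, \exp\Bigl(\sum_{p\in\cP_\Lambda} \upbeta N\, \Tr(Q_p)\Bigr),
\end{equation*}
where $\ell_1$ is the loop of $s$ containing $\mathbf{e}$, cyclically rotated so that $Q_{\ell_1} = Q_e\, Q_{\pi_1}$ starts with $Q_e$, and then sum over $\alpha$.

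Expanding $\partial_\alpha F$ via the product rule and summing over $\alpha$, each term combines the explicit $T_\alpha$ in $F$ with a second $T_\alpha$ produced by differentiating some occurrence of $Q_e^{\pm 1}$ elsewhere in $F$. These pairs are collapsed using the $\unitary(N)$ Casimir identities
\begin{equation*}
\sum_\alpha T_\alpha^{\,2} = -N\,I, \qquad \sum_\alpha T_\alpha\, X\, T_\alpha = -\Tr(X)\, I, \qquad \sum_\alpha \Tr(T_\alpha X)\,\Tr(T_\alpha Y) = -\Tr(XY),
\end{equation*}
yielding four classes of contributions. Differentiating the first $Q_e$ in $Q_{\ell_1}$ gives $\sum_\alpha T_\alpha^2 = -NI$, which after tracing against $Q_{\ell_1}$ produces $-N\,\tr(Q_{\ell_1})$ and so integrates to $-N\,\phi_{\Lambda,N,\upbeta}(s)$. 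Differentiating another copy of $Q_e^{\pm 1}$ on the same loop uses the middle Casimir identity to split $\tr(Q_{\ell_1})$ into a product of two traces; this yields $\SS_+$ contributions (with coefficient $-N$) when paired with a $Q_e$, and $\SS_-$ contributions (with coefficient $+N$, the extra sign coming from $\partial_\alpha Q_e^{-1}$). Differentiating $Q_e^{\pm 1}$ on a different loop uses the third Casimir identity to merge two normalized traces into one, yielding $\MM_\pm$ contributions with coefficients $\mp 1/N$. Finally, differentiating the Boltzmann exponent at a plaquette containing $e$ or $e^{-1}$ yields $\DD_\pm$ contributions with coefficients $\mp \upbeta N$ (the $N$ inside the exponent cancels the $1/N$ coming from one of the normalized traces in the Casimir output).

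Setting the sum of all contributions equal to zero and dividing by $-N$ produces the claimed master loop equation: the coefficient $1/N^2$ in front of the merger sums arises from dividing the merger coefficient $1/N$ by $N$, while the coefficient $\upbeta$ in front of the deformation sums arises from dividing $\upbeta N$ by $N$. The $+/-$ pattern on $\SS_\pm, \MM_\pm, \DD_\pm$ is dictated entirely by whether the paired occurrence is a $Q_e$ or a $Q_e^{-1}$ factor. The main delicate point of the argument is the bookkeeping: one has to track carefully the $\tr$ versus $\Tr$ normalization conventions (Wilson loops vs.\ Wilson action), the intrinsic minus sign in the Casimir identities, and the additional minus from $\partial_\alpha Q_e^{-1} = -Q_e^{-1} T_\alpha$, and to verify that each individual pairing matches exactly one of the combinatorial operations $\SS_\pm, \MM_\pm, \DD_\pm$ defined in Section~\ref{sect:loop-oper}. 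This matching is carried out in detail in \cite[Section 5]{cao2023random}, from which the present theorem is a direct restatement.
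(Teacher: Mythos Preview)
The paper does not give its own proof of this statement: Theorem~\ref{thm: finite N master loop equation} is simply quoted from \cite[Theorem 5.7]{cao2023random} as background, with no argument supplied. Your sketch is the standard Schwinger--Dyson/Ward-identity derivation that underlies that reference, and the outline is essentially correct: one differentiates along left-invariant vector fields on the Haar-distributed edge variable, applies the $\unitary(N)$ Casimir relations to collapse pairs of Lie-algebra generators, and sorts the resulting terms according to whether the paired occurrence of $Q_e^{\pm 1}$ lies on the same loop (splittings), a different loop (mergers), or in the action (deformations). Since the paper offers nothing to compare against beyond the citation, and since your final sentence already acknowledges that the detailed matching is done in \cite[Section 5]{cao2023random}, there is nothing to add.
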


\begin{rmk}
As remarked in \cite{cao2023random} we should briefly note that the above master loop equation and string operations are slightly different from those that appear in \cite{chatterjee_rigorous_2019} and \cite{jafarov2016wilson} as the operations are defined for a specific location instead of all locations of an edge. This leads to a more general master loop equation.
\end{rmk}

\section{Main results and open problems}\label{sec: Main Results}

We now formally state our main results. In Section~\ref{subsec: large N limit surface sum formula}, we provide a surface sum formula for the large-$N$ limit of Wilson loop expectations (Theorem~\ref{thm: surface sum representation in 't hooft limit}). Then, in Section~\ref{sec:mlq-large-N}, we detail the master loop equation it satisfies (Theorem~\ref{thm: fixed K 't Hooft master loop equation for surface sum}). We conclude by discussing a list of open problems in Section~\ref{sect:open-problems}.

\subsection{Wilson loop expectations in the large-\texorpdfstring{$N$}{N} limit as sums over planar surfaces}\label{subsec: large N limit surface sum formula}

It turns out the collection of embedded maps that are considered in the surface sum for Wilson loop expectations (Theorem~\ref{thm: finite N surface sum}) greatly simplifies in the large-$N$ limit. That is, the sum will only consider a much simpler subset of $\cM(s,K)$ (introduced in Definition~\ref{defn:embedded-maps-bound-plaq-ass}) that we now define.

\begin{defn}\label{defn:planar-embedded}
Fix a loop $\ell$ and a plaquette assignment $K$ such that the pair $(\ell,K)$ is balanced. We say that an  embedded map $M=(m,\psi)$ with boundary $\ell$ and plaquette assignment $K$ (Definition~\ref{defn:embedded-maps-bound-plaq-ass}) is \textbf{planar} if $M$ satisfies the two following additional conditions:
\begin{enumerate}
	\item[6.] $m$ has a unique connected component;
	\item[7.] $m$ is planar (and so, as a consequence, it is a disk with boundary sent by $\psi$ to the loop $\ell$);
\end{enumerate}
We denote by $\cP\cM(\ell,K)$ the set of planar embedded maps with boundary $\ell$ and plaquette assignment $K$.
For a non-balanced pair $(\ell,K)$, we define $\cP\cM(\ell,K)=\emptyset$.
\end{defn}

We need one final restriction on the embedded maps that we have to consider in the large-$N$ limit. Given a planar embedded map $M=(m,\psi)$, the \textbf{dual graph} of $m$ is the standard dual graph of $m$ where we also include one vertex (with its corresponding edges) for each external yellow face of $m$ (see the two dual graphs in the middle of Figure~\ref{fig-embedded-maps-exemp5} for an example). A family of blue faces of $m$ is said to \textbf{disconnect} the dual graph of $m$ if removing all the vertices corresponding to this collection (along with the edges incident to them) from the dual graph causes it to become disconnected.

\begin{defn}\label{defn:non-separableplanar-embedded}
Fix a loop $\ell$ and a plaquette assignment $K$ such that the pair $(\ell,K)$ is balanced.
We say that a planar embedded map $M=(m,\psi)$ with boundary $\ell$ and plaquette assignment $K$ is \textbf{non-separable} if for every lattice edge $e\in E^+_{\Lambda}$,
\begin{itemize}
	\item[8.] removing all blue faces sent to the edge $e$ by $\psi$ does not disconnect the dual graph of $m$.
\end{itemize} 

We denote by $\npe(\ell,K)$ the set of non-separable planar embedded maps with boundary $\ell$ and plaquette assignment $K$.
For a non-balanced pair $(\ell,K)$, we define $\npe(\ell,K)=\emptyset$.
\end{defn}

Note that we have the trivial inclusions
\begin{equation*}
\npe(\ell,K)\subset \cP\cM(\ell,K) \subset \cM(\ell,K).
\end{equation*}
If a planar embedded map $M=(m,\psi)\in \cP\cM(\ell,K)$ violates Condition 8.\ above, we say that the map is \textbf{separable}. 
We refer to a \emph{minimal} family of blue faces, all sent to the same edge $e$ by $\psi$, which disconnects the dual graph of $m$, as an \textbf{enclosure loop}.

\begin{obs}\label{obs:enclosure loops}
We note that every enclosure loop can be reduced to a simple loop (by shrinking the interior of the blue faces forming the enclosure loop) surrounding at least one vertex of the dual graph. See the red simple loops in Figure~\ref{fig-embedded-maps-exemp5}.
\end{obs}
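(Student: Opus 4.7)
My plan is to give a topological proof that leverages the planarity of $m$ (Conditions 6 and 7) together with the minimality of the enclosure loop $\mathcal{B}$. The strategy is to first set up a geometric picture, then formalize a shrinking operation, and finally invoke minimality and Jordan's theorem to conclude.

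First, since $m$ has a single connected component and is planar, the surface embedding realizes $m$ as a closed disk $\Delta$ with $\partial\Delta$ sent by $\psi$ to $\ell$. The dual graph $D$ (with external-face vertices included) naturally embeds in the one-point compactification $S^2 = \Delta \sqcup \{\infty\}$, where the points $\infty$ correspond to the external yellow faces. Each blue face $B\in\mathcal{B}$ is a topological 2-disk inside $\Delta$, and I'd set $U := \bigcup_{B\in\mathcal{B}}\overline{B}$. The assumption that $\mathcal{B}$ disconnects $D$ translates topologically into the fact that $\Delta \setminus U^\circ$ has at least two connected components, each containing the yellow/blue faces of a distinct component $C_j$ of $D-\mathcal{B}$.

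Second, I would make precise the ``shrinking'' operation. Since every blue face in $\mathcal{B}$ is sent to the same unoriented lattice edge $e$, its boundary alternates between edges sent to $e$ and $e^{-1}$, giving two natural ``sides''. I'd collapse each $B$ transversally to an arc $\gamma_B$ joining midpoints of these two sides. Since distinct blue faces of $m$ can only share vertices (not edges, by bipartiteness of the face 2-coloring), two arcs $\gamma_B, \gamma_{B'}$ meet in $\Delta$ precisely when $B$ and $B'$ share a vertex of $m$. The union $\gamma := \bigcup_{B \in \mathcal{B}} \gamma_B$ is thus a 1-complex embedded in $\Delta$.

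Third, I would use minimality to upgrade $\gamma$ to a simple closed loop. Minimality of $\mathcal{B}$ means that for each $B\in\mathcal{B}$, removing $B$ from $\mathcal{B}$ reconnects at least two components of $D - \mathcal{B}$; topologically, this forces $\gamma_B$ to have exactly two endpoints where it meets the rest of $\gamma$, one on each side of the local separation. Consequently, $\gamma$ is a 2-regular 1-complex, hence a disjoint union of simple closed curves embedded in $\Delta$. If $\gamma$ had more than one component, at least one would already be a strictly smaller separator of $D$, contradicting the minimality of $\mathcal{B}$; therefore $\gamma$ is a single simple closed curve. Finally, by the Jordan curve theorem applied in $S^2$, the loop $\gamma$ bounds two topological disks, and since $\gamma$ was a genuine separator, at least one of them contains a full component $C_j$ and hence at least one vertex of $D$.

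The main obstacle will be the rigorous justification of the degree-two claim for each $\gamma_B$ and the exclusion of multiple components in $\gamma$. In principle, a blue face could touch many components of $D - \mathcal{B}$, and one needs the planar disk structure together with the bipartite face-coloring to rule out branchings or crossings of the arcs. I anticipate that a short case analysis, combined with the observation that any ``extra'' cycle inside $\gamma$ would itself be a valid smaller enclosure, closes this gap cleanly.
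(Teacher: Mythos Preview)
The paper does not prove this statement: it is recorded as an Observation and simply refers the reader to Figure~\ref{fig-embedded-maps-exemp5}. Your sketch therefore already goes beyond what the paper provides.

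That said, Step~2 has a genuine gap. You claim that the alternation of $e$ and $e^{-1}$ along $\partial B$ gives ``two natural sides'' and hence a well-defined transverse arc $\gamma_B$ joining their midpoints. But a blue face of degree $2k$ has its $k$ edges sent to $e$ and its $k$ edges sent to $e^{-1}$ interleaved cyclically around the boundary; for $k\ge 2$ there is no canonical decomposition of $\partial B$ into two arcs, so $\gamma_B$ is not well-defined as written. Since Steps~3--4 are built on the $\gamma_B$, the argument stalls before the minimality issue you flag as the main obstacle. One way to repair this is to forgo the arcs and argue directly with the complement of $U=\bigcup_{B\in\mathcal B}\overline B$: a component $V$ of $\Delta^\circ\setminus U$ not meeting $\partial\Delta$ has boundary contained in $U$ and encloses at least one dual vertex, and minimality of $\mathcal B$ rules out holes in $V$ and forces $\partial V$ to meet every face of $\mathcal B$.
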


\begin{figure}[ht!]
\begin{center}
	\includegraphics[width=.99\textwidth]{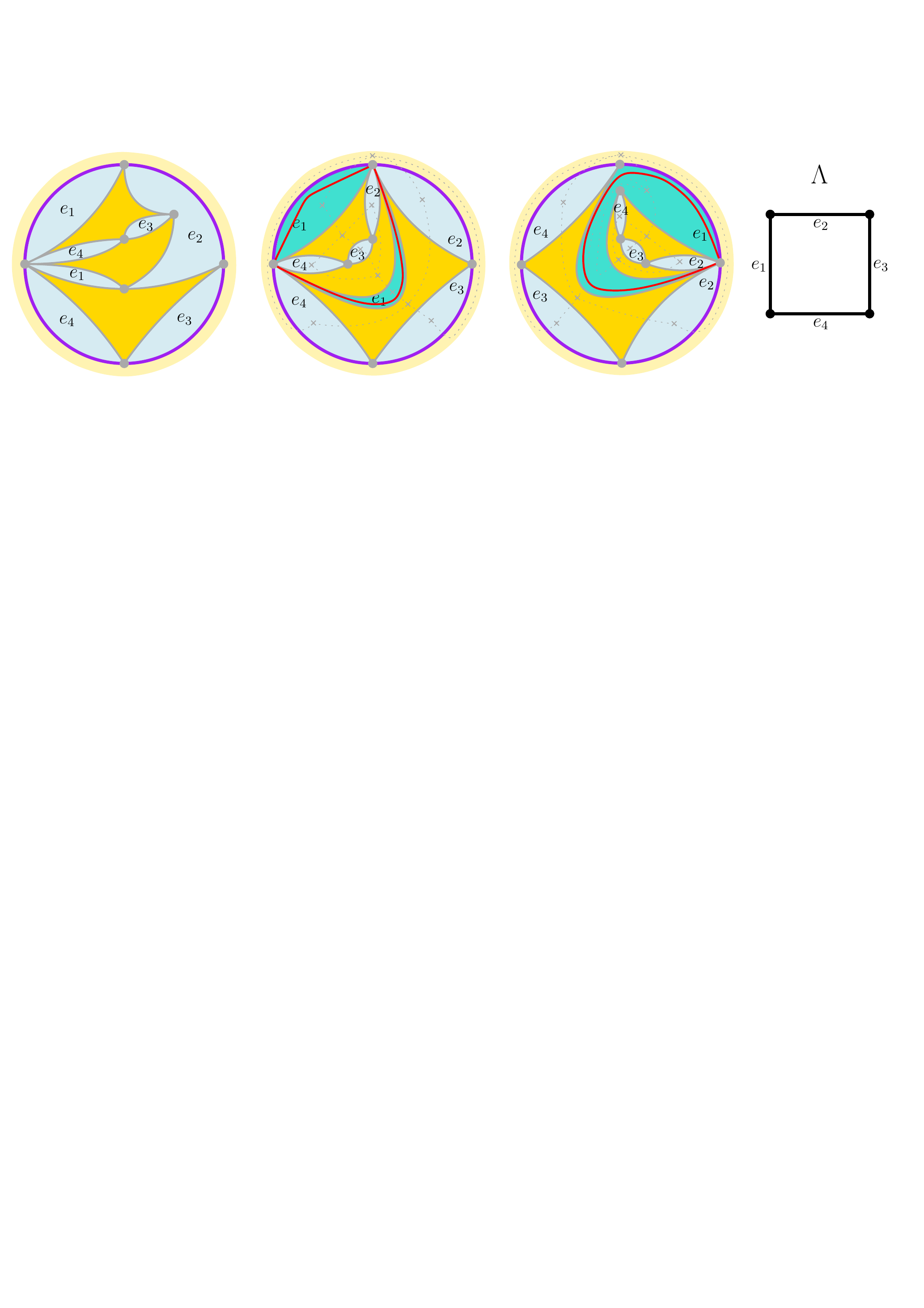}  
	\caption{\label{fig-embedded-maps-exemp5}  Three planar embedded maps with the topology of the disk (the boundary of each map is drawn in purple and the corresponding yellow external face is highlighted in light yellow) sent by the embedding to the lattice $\Lambda$ on the right: We only displayed where the blue faces are sent by the embedding. The first map is non-separable. The second and the third ones are separable: We highlight in turquoise two enclosure loops (one on each map) corresponding to blue faces sent to $e_1$ (in red we highlighted the simple loops mentioned in Observation~\ref{obs:enclosure loops}). Note that the first enclosure loop is formed by two blue faces, while the second enclosure loop is formed by a single blue face. We have also drawn the dual graph of the two separable maps: Note that removing the vertices of the dual graph (and the edges incident to them) corresponding to the faces highlighted in turquoise would disconnect the dual graph.}
\end{center}
\vspace{-3ex}
\end{figure}

In the large-$N$ limit, two simplifications occur: (1) only planar non-separable maps appear (2) the weights of each map are essentially products of signed Catalan numbers, and thus are very explicit (despite still being signed).

Given an embedded map $M\in \cM(\ell,K)$, let $\BF(M)$ denote the set of blue faces of $M$. For a blue face $f\in \BF(M)$, let $\deg(f)$ denote the degree of the blue face, i.e.\ the number of edges in the boundary of the face $f$. Recall that each blue face of $M$ has an even degree. With this, we define the new weights
\begin{equation}\label{eq:w-wei-inf}
w_{\infty}(M):= \prod_{f\in \BF(M)}w_{\deg(f)/2},\quad\text{with}\quad \text{$w_i = (-1)^{i-1}\cat(i-1)$},
\end{equation}
where $\cat(k):= \frac{(2k)!}{k!(k+1)!}$ is the $k$-th Catalan number. 

\medskip

Now we can state our surface sum formula for the large-$N$ limit of $\unitary(N)$ Wilson loop expectations.

\begin{thm}[\textsc{$\unitary(\infty)$ Wilson loop expectations as surface sums}]\label{thm: surface sum representation in 't hooft limit}
There exists a number $\upbeta_0(d)>0$, depending only on the dimension $d$, such that the following is true. Let $\Lambda_1\subseteq\Lambda_2\subseteq\hdots$ be any sequence of finite subsets of the lattice $\ZZ^d$ such that $\ZZ^d = \cup_{N=1}^{\infty}\Lambda_N$. If $|\upbeta|\leq \upbeta_0(d)$, then for any string $s=\{\ell_1,\dots,\ell_n\}$,
\begin{align*}
	\lim_{N\to\infty}\phi_{\Lambda_N,N,\upbeta}(s) =
	\prod_{i=1}^n\phi(\ell_i),
\end{align*}
where
\begin{equation*}
	\phi(\ell)=\sum_{K:\cP_{\ZZ^d}\to\NN} \phi^K(\ell), \quad\text{with}\quad \phi^K(\ell)=\sum_{M\in\npe(\ell,K)}\upbeta^{\area(M)}w_{\infty}(M),
\end{equation*}
and the infinite sum $\phi(\ell)$ is only over finite plaquette assignments $K:\cP_{\ZZ^d}\to\NN$, i.e.\ plaquette assignments such that $\sum_{p\in\cP_{\ZZ^d}}K(p)<\infty$. Moreover, the infinite sum $\phi(\ell)$ is absolutely convergent.
\end{thm}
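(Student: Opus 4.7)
The plan is to take the surface sum formula for $\phi(\ell)$ as an ansatz, show it converges absolutely, verify that it satisfies the limiting version of the master loop equation, and then identify it with $\lim_{N\to\infty}\phi_{\Lambda_N,N,\upbeta}(\ell)$ via a uniqueness argument. First I would handle the factorization \eqref{eq:factor} by exploiting the $\frac{1}{N^2}$ prefactor on the merger terms in Theorem~\ref{thm: finite N master loop equation}: a standard bootstrap in the number of loops shows that any subsequential large-$N$ limit of $\phi_{\Lambda_N,N,\upbeta}(s)$ factors as $\prod_i \phi(\ell_i)$, where the single-loop limits satisfy a closed fixed-point equation in which each splitting $\SS_\pm(\mathbf{e},\ell)$ into two loops $\ell',\ell''$ contributes the product $\phi(\ell')\phi(\ell'')$ and the merger terms vanish.

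Second, I would prove absolute convergence of $\sum_K |\phi^K(\ell)|$. The elementary bound $\cat(k)\leq 4^k$ applied in \eqref{eq:w-wei-inf} gives $|w_\infty(M)|\leq 4^{E(m)}$, where $E(m)$ is the number of edges of $m$. By the Euler relation for a planar disk with $\area(M)=A$ quadrangular internal faces, and using non-separability to control the total degree of the blue faces, one gets $E(m)\leq C(|\ell|+A)$. The cardinality of $\npe(\ell,K)$ with $\sum_p K(p)=A$ can then be bounded by $C(d)^{|\ell|+A}$ by first counting abstract rooted planar maps via a Catalan-type estimate and then bounding the number of lattice embeddings using the coordination number of $\ZZ^d$. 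Summing $|\upbeta|^A$ against these bounds gives a geometric series, convergent for $|\upbeta|\leq\upbeta_0(d)$ small enough.

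Third, and this is where the main work lies, I would verify that the ansatz satisfies the same limiting master loop equation. Fix a specified edge $\mathbf{e}$ on $\ell$, and for each $M\in\npe(\ell,K)$ perform a peeling step at the unique blue face $\mathbf{b}$ of $M$ containing $\mathbf{e}$: either pair $\mathbf{e}$ with an $e$-edge elsewhere on $\mathbf{b}$ (producing a positive splitting of $\ell$), pair it with an $e^{-1}$-edge on $\mathbf{b}$ (a negative splitting), or absorb $\mathbf{b}$ into an adjacent plaquette face of $M$ (yielding a deformation, with the sign recorded by the orientation of the plaquette). Re-summing across $M$ and $K$, the Catalan recursion $\cat(k)=\sum_{j=0}^{k-1}\cat(j)\cat(k-1-j)$ on non-crossing pairings along $\bdy\mathbf{b}$ is exactly what reproduces the splitting terms as products $\phi(\ell')\phi(\ell'')$ with the correct signed weights, while the plaquette-absorption case accounts for the deformations. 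The main obstacle, and the technical heart of the proof, is that the peeling output is only a priori supported on $\cP\cM$ rather than $\npe$: separable configurations and spurious outputs such as single-vertex pinchings and newly-created backtracks must cancel in pairs with opposite $w_\infty$-signs. These are precisely the surface cancellations previewed in the introduction, which are organized through the single-vertex pinching lemma, the backtrack cancellation lemma, and the Master surface cancellation lemma.

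Finally, I would close the argument via uniqueness. As in Remark~\ref{remark:comparison-with-chatterjee-vst}, the operator $G_\upbeta$ defined by the right-hand side of the limiting master loop equation is a strict contraction on a suitable weighted Banach space of loop functions when $|\upbeta|\leq\upbeta_0(d)$, so the fixed-point equation $\eta = G_\upbeta\eta + F$ has a unique solution in that space. Both the ansatz $\phi$ (by the peeling argument above) and any subsequential large-$N$ limit of $\phi_{\Lambda_N,N,\upbeta}$ (via the factorization step) lie in this Banach space and satisfy the same equation, so they coincide; hence the limit exists and equals $\phi$, completing the proof.
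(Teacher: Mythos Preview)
Your proposal is correct and follows the same overall architecture as the paper: verify the ansatz solves the limiting master loop equation via a peeling exploration (their PPS process of Section~\ref{subsubsec: Splittings and Deformations on embedded maps}), with the separable outputs killed by exactly the cancellation lemmas you name; then identify with the large-$N$ limit by uniqueness of the fixed point for small $\upbeta$.

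There are two organizational differences worth noting. First, the paper does \emph{not} prove factorization of subsequential limits as a preliminary step. Instead it works throughout at the level of strings: subsequential limits satisfy the string-level limiting equation~\eqref{eq:limiting_master_loop_eq} (Proposition~\ref{prop: single-location master loop equation in the 't Hooft limit}), the factorized ansatz $\phi(s):=\prod_i\phi(\ell_i)$ is shown to satisfy the same string-level equation (Corollary~\ref{cor:mle-goal}, deduced from the single-loop fixed-$K$ equation), and factorization then falls out of uniqueness. Your route via a ``standard bootstrap'' to factorize first is also valid (it is essentially what Chatterjee does), but the paper avoids it. Second, and more substantively, your absolute-convergence argument is a direct combinatorial count (bound $|w_\infty(M)|$ by $4^{E(m)}$, use $E(m)=|\ell|+4A$ from bipartiteness and the Euler relation, then count rooted planar maps), whereas the paper instead bootstraps the pointwise bound $|\phi^K(s)|\leq C^{|s|}(C\upbeta)^{\area(K)}$ from the fixed-$K$ master loop equation via a contraction estimate on a carefully designed mixed $\ell^1/\ell^\infty$ norm (Lemma~\ref{lemma:M-contraction-map}, Proposition~\ref{prop:-technical-bound-on-phi-K}), and only afterwards sums over $\ell$-connected plaquette assignments (Lemma~\ref{lemma:ell-connected-plaquette-bound}). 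Your approach is more elementary and does not rely on having already established Theorem~\ref{thm: fixed K 't Hooft master loop equation for surface sum}; theirs exploits cancellations inside $\phi^K$ rather than bounding $\sum_M|w_\infty(M)|$ termwise, and reuses the same contraction machinery that drives the uniqueness step.
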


\begin{rmk}
One might naturally wonder whether the non-separability condition in the definition of $\npe(\ell,K)$ is essential. We will show in  Appendix~\ref{sect:counter-exemp-sep} that in dimension two, even for a plaquette $p$, summing over   $\mathcal{P}\mathcal{M}(\ell,K)$ instead of $\npe(\ell,K)$ would yield an incorrect value for $\phi(\ell)$ when $\ell$ is a single plaquette $p$. 
\end{rmk}

We need to clarify how we define $\phi^K(\ell)$ when $\ell=\emptyset$. Note that $\phi(\emptyset)$ must be equal to $1$. Indeed, we defined the Wilson loop observable $W_{\emptyset}(\cQ)$ to be equal to $1$ for all matrix configurations $\cQ$ (see the discussion below \eqref{eq: Wilson loop observables}), and so $\phi_{\Lambda_N,N,\upbeta}(\emptyset)=1$.
We define 
\begin{equation}\label{eq:when-is-one}
\phi^K(\emptyset):=0, \quad\text{for all } K\neq 0, \qquad\text{and}\qquad \phi^K(\emptyset):=1,  \quad\text{when } K=0.
\end{equation}
Here $K\neq 0$ means that $K:\cP_{\ZZ^d}\to \NN$ is such that there exists $p\in \cP_{\ZZ^d}$ with $K(p)\geq 1$, and $K=0$ means that $K(p) = 0$ for all $p\in \cP_{\ZZ^d}$. 
While this definition may seem somewhat ad-hoc, as there are many ways to define $\phi^K(\emptyset)$ such that $\phi(\emptyset)=1$, it is not too hard to prove that our definition is the only one that also ensures that $\phi(s)$ satisfies the master loop equation in Theorem \ref{thm: fixed K 't Hooft master loop equation for surface sum}.

Lastly, note that, for a non-trivial loop $\ell$, we have $\phi^K(\ell):=0$ whenever $K=0$, and for any loop $\ell$, we have $\phi^K(\ell):=0$ whenever $(\ell,K)$ is not balanced. Indeed, $\npe(\ell,K)=\emptyset$ if $K=0$ or $(\ell,K)$ is not balanced. 

\subsection{The master loop equation in the large-\texorpdfstring{$N$}{N} limit}\label{sec:mlq-large-N}

We can now state the master loop equation for the large-$N$ limit of $\unitary(N)$ Wilson loop expectations.

\begin{thm}[\textsc{$\unitary(\infty)$ master loop equation for fixed plaquette assignment and location}]\label{thm: fixed K 't Hooft master loop equation for surface sum} 
Fix a non-null loop $\ell$ and a plaquette assignment  $K:\cP_{\ZZ^d}\to\NN$. Let $\mathbf{e}$ be a specific edge of the loop $\ell$. Suppose that the  edge $\mathbf{e}$ is a copy of the lattice edge $e\in E_{\ZZ^d}$. Then
\begin{align*}
	\phi^K(\ell) =&\sum_{\{\ell_1,\ell_2\}\in \SS_-(\mathbf{e},\ell)}
	\sum_{K_1+K_2=K}
	\phi^{K_1}(\ell_1)\phi^{K_2}(\ell_2) - \sum_{\{\ell_1,\ell_2\}\in \SS_+(\mathbf{e},\ell)}
	\sum_{K_1+K_2=K}\phi^{K_1}(\ell_1)\phi^{K_2}(\ell_2) \\
	&+ \upbeta
	\sum_{p\in \cP_{\ZZ^d}(e^{-1},K)}
	\phi^{K\sm p}(\ell \ominus_{\mathbf{e}}p)
	-\upbeta\sum_{q\in \cP_{\ZZ^d}(e,K)}
	\phi^{K\sm q}(\ell \oplus_{\mathbf{e}}q),
\end{align*}
where $\cP_{\ZZ^d}(e,K)$ denotes the collection of plaquettes in $\cP_{\ZZ^d}$ containing $e$ as one of the four boundary edges (with the correct orientation) and such that $K(p)\geq 1$.

We also used the (more compact) notation $\sum_{K_1+K_2=K}$ to indicate the sum $\sum_{\substack{K_1,K_2:\\K_1+K_2=K}}$, where $K_1+K_2=K$ means that $K_1(p) + K_2(p)=K(p)$ for all $p\in \cP_{\ZZ^d}$.
\end{thm}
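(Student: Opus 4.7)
The statement is a purely combinatorial identity between weighted sums over finite sets of embedded maps, and my plan is to prove it by a peeling exploration at the marked edge $\mathbf{e}$. For any $M=(m,\psi) \in \npe(\ell,K)$, the edge $\mathbf{e}$ lies on the boundary of a unique blue face $\mathbf{f}$, and I would reorganise the sum defining $\phi^K(\ell)$ by classifying $M$ according to what sits across $\mathbf{f}$ from a chosen partner edge of $\mathbf{e}$. This localises the analysis to the immediate neighbourhood of $\mathbf{e}$ in $m$ and matches each local configuration with one of the four sums on the right-hand side.

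Concretely, let $\mathbf{f}$ be the blue face at $\mathbf{e}$, of some even degree $2k$. Its boundary carries $k$ edges of type $e$ alternating with $k$ edges of type $e^{-1}$. I would pair $\mathbf{e}$ with a partner $\mathbf{e}'$ on $\partial\mathbf{f}$ and cut $\mathbf{f}$ along a chord joining their midpoints. The face of $m$ on the side of $\mathbf{e}'$ opposite $\mathbf{f}$ determines what operation this cut induces. If it is an internal yellow face sent to a plaquette $p$, the cut collapses $\mathbf{f}$ together with that yellow face and yields a map whose boundary is the deformation $\ell \ominus_{\mathbf{e}} p$ (or $\ell \oplus_{\mathbf{e}} q$) and whose plaquette assignment is $K \setminus p$; depending on whether $p$ contains $e^{-1}$ or $e$, this accounts for the third or fourth sum, with sign $+\upbeta$ or $-\upbeta$. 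If instead $\mathbf{e}'$ lies on the external yellow face, the cut disconnects the disk $m$ into two disks whose boundaries form a splitting $\{\ell_1,\ell_2\}$ of $\ell$, and the plaquette assignment partitions as $K_1 + K_2 = K$; the relative orientation of $\mathbf{e}$ and $\mathbf{e}'$ in $\ell$ determines whether this is a negative splitting ($+1$) or a positive splitting ($-1$), producing the first two sums.

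The algebraic glue ensuring all weights match is the Catalan recursion $\cat(k-1) = \sum_{j+j'=k-2}\cat(j)\cat(j')$. After a cut along a non-crossing chord of $\mathbf{f}$, the remainder of $\mathbf{f}$ splits into two blue faces of degrees $2(j+1)$ and $2(j'+1)$ with $j+j' = k-2$, and summing the product of their Catalan weights $w_{j+1}w_{j'+1}$ over all such decompositions — together with the sign convention $w_i = (-1)^{i-1}\cat(i-1)$ — precisely reproduces the original weight $w_k$ with the appropriate overall sign. Importantly, no merger terms appear on the right-hand side because the peeling of a planar connected single-boundary map cannot produce a connected planar map with two boundary components; such a configuration would require a handle, which is suppressed in the large-$N$ limit (compare the $1/N^2$ merger terms in Theorem \ref{thm: finite N master loop equation}).

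The main obstacle I foresee is controlling the non-separability condition after the peeling. One must verify both that the resulting maps really lie in the appropriate $\npe$ classes — and not merely in the larger $\cP\cM$ classes, which would overcount — and that every map appearing on the right-hand side is produced exactly once. I expect this to require the Master surface cancellation lemma (Lemma \ref{lemma:master-cancellation}) together with the auxiliary cancellation lemmas (Lemmas \ref{lemma: single vertex pinching cancellations} and \ref{lemma: backtrack cancellations}) advertised in the introduction: maps whose cut would produce a separable outcome should come in sign-reversing pairs, so that their contributions cancel in the weighted sum. Finally, the boundary conventions — $\phi^K(\emptyset) = \mathbbm{1}_{K=0}$ and the vanishing of $\phi^K(\ell)$ on unbalanced $(\ell,K)$ — are verified directly and slot into the equation naturally, since peeling can produce the null loop only when $K=0$ and since both splittings and deformations preserve balancedness.
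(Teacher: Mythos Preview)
Your proposal is correct and takes essentially the same approach as the paper: the proof in Section~\ref{subsec: surface sum Satisfies Master Loop Equation} uses the pinching--peeling--separating process (Section~\ref{subsubsec: Splittings and Deformations on embedded maps}) to explore the blue face at $\mathbf{e}$, invokes the Catalan recursion (Lemma~\ref{lemma: single vertex pinching cancellations}) to decompose its weight via pinching, classifies each partner edge by whether it borders an internal plaquette face or the external boundary, and then closes the non-bijectivity gap for the deformation operations via the Master cancellation lemma~\ref{lemma:master-cancellation}. Your anticipation of the non-separability obstacle and of the need for Lemma~\ref{lemma:master-cancellation} is exactly right; the paper also separately treats the case $\npe(\ell,K)=\emptyset$ (Section~\ref{sec: empty surface sum satisfies the master loop equation}), where it again reduces to Lemma~\ref{lemma:master-cancellation}.
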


\begin{rmk}
Note as $\phi^K(\ell)=0$ if $(\ell,K)$ is not balanced, the sum $\sum_{K_1+K_2=K}\phi^{K_1}(\ell_1)\phi^{K_2}(\ell_2)$ only considers decompositions of $K$ such that $(\ell_1,K_1)$ and $(\ell_2,K_2)$ are both balanced.
\end{rmk}

The above master loop equation will be our fundamental tool to prove Theorem~\ref{thm: surface sum representation in 't hooft limit} and explicitly compute Wilson loop expectations in dimension two in~\cite{bcsk2024area2d}.

We point out that the master loop equation introduced in Theorem \ref{thm: fixed K 't Hooft master loop equation for surface sum} is more general than the ones previously presented in the literature, as it is both for a fixed location in the loop and for a fixed plaquette assignment $K$. This seemingly subtle difference is crucial for the proofs of the results in \cite{bcsk2024area2d}. Moreover, one can easily recover the more classical (but weaker) form of the master loop equation presented in the literature, as stated in the next corollary.

\begin{cor}[\textsc{$\unitary(\infty)$ master loop equation for fixed location}]\label{cor: 't Hooft master loop equation surface sum}
Fix a non-null loop $\ell$ and a specific edge  $\mathbf{e}$ of $\ell$. Let $\upbeta_0(d)$ be as in the statement of Theorem~\ref{thm: surface sum representation in 't hooft limit}. Then for all $|\upbeta|\leq\upbeta_0(d)$,
\begin{align*}
	\phi(\ell) =&\sum_{\{\ell_1,\ell_2\}\in \SS_+(\mathbf{e},\ell)}
	\phi(\ell_1)\phi(\ell_2) - \sum_{\{\ell_1,\ell_2\}\in \SS_-(\mathbf{e},\ell)}
	\phi(\ell_1)\phi(\ell_2)\\
	&+ \upbeta\sum_{\ell'\in \DD_-(\mathbf{e},\ell)}\phi(\ell') - \upbeta\sum_{\ell'\in \DD_+(\mathbf{e},\ell)}\phi(\ell').
\end{align*}
\end{cor}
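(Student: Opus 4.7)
The plan is to deduce Corollary~\ref{cor: 't Hooft master loop equation surface sum} from Theorem~\ref{thm: fixed K 't Hooft master loop equation for surface sum} by summing the fixed-$K$ identity over all finite plaquette assignments $K : \cP_{\ZZ^d} \to \NN$. The only non-trivial ingredient needed is the absolute convergence asserted in Theorem~\ref{thm: surface sum representation in 't hooft limit}, which legitimizes the various rearrangements of sums below, and which holds precisely under the hypothesis $|\upbeta| \leq \upbeta_0(d)$.

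First, by definition, summing the left-hand side of Theorem~\ref{thm: fixed K 't Hooft master loop equation for surface sum} over $K$ yields $\sum_K \phi^K(\ell) = \phi(\ell)$, which is finite and absolutely convergent by Theorem~\ref{thm: surface sum representation in 't hooft limit}. For the splitting terms on the right-hand side, one writes, for each fixed pair $\{\ell_1,\ell_2\} \in \SS_{\pm}(\mathbf{e},\ell)$,
\[
\sum_{K} \sum_{K_1+K_2=K} \phi^{K_1}(\ell_1)\, \phi^{K_2}(\ell_2)
= \sum_{K_1,K_2} \phi^{K_1}(\ell_1)\, \phi^{K_2}(\ell_2)
= \phi(\ell_1)\, \phi(\ell_2),
\]
where in the first equality we have used the bijection between ordered pairs $(K_1,K_2)$ and triples $(K,K_1,K_2)$ with $K=K_1+K_2$, and in the second equality we have applied Fubini, which is justified because $\sum_{K_i}|\phi^{K_i}(\ell_i)|<\infty$ for each $i$ by Theorem~\ref{thm: surface sum representation in 't hooft limit}.

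For the deformation terms, I would perform a change of variables on the plaquette assignment. Fixing $p \in \cP_{\ZZ^d}(e^{-1})$, the map $K \mapsto K' := K \setminus p$ is a bijection between the finite plaquette assignments $K$ with $K(p) \geq 1$ and all finite plaquette assignments. Hence
\[
\sum_{K} \sum_{p \in \cP_{\ZZ^d}(e^{-1},K)} \phi^{K \setminus p}(\ell \ominus_{\mathbf{e}} p)
= \sum_{p \in \cP_{\ZZ^d}(e^{-1})} \sum_{K'} \phi^{K'}(\ell \ominus_{\mathbf{e}} p)
= \sum_{\ell' \in \DD_{-}(\mathbf{e},\ell)} \phi(\ell'),
\]
and the identical manipulation handles the other deformation term, producing the sum over $\DD_{+}(\mathbf{e},\ell)$. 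Assembling these pieces gives the claimed identity.

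The main (and essentially only) obstacle is ensuring that all the interchanges of summation are legitimate; but this is a direct consequence of the absolute convergence built into Theorem~\ref{thm: surface sum representation in 't hooft limit}, so once that theorem is in hand, the corollary is a short algebraic manipulation with no further geometric input required.
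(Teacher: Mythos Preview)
Your proposal is correct and follows essentially the same approach as the paper: sum the fixed-$K$ identity of Theorem~\ref{thm: fixed K 't Hooft master loop equation for surface sum} over all finite plaquette assignments, and justify the interchanges of summation via the absolute convergence established for $|\upbeta|\leq\upbeta_0(d)$ (the paper appeals to Lemma~\ref{lemma:phi-bound} directly, which is the content behind the absolute convergence in Theorem~\ref{thm: surface sum representation in 't hooft limit}). Your treatment of the deformation terms via the change of variables $K\mapsto K\setminus p$ and of the splitting terms via the $(K_1,K_2)\leftrightarrow K$ bijection is exactly what the paper does, only spelled out in slightly more detail.
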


\begin{rmk}
We stress that the master loop equation in Corollary~\ref{cor: 't Hooft master loop equation surface sum} requires the additional assumption that $|\upbeta| \leq\upbeta_0(d)$, while the master loop equation in Theorem~\ref{thm: fixed K 't Hooft master loop equation for surface sum} holds for all $\upbeta\in \RR$, as for a fixed $K$, $\phi^K(\ell)$ is a finite sum and thus is defined for all $\upbeta$.
\end{rmk}

\subsection{Open problems}\label{sect:open-problems}

We give here a list of open problems that we think might be interesting to explore:

\begin{enumerate}
\item Theorem~\ref{thm: surface sum representation in 't hooft limit} is shown for $\upbeta$ small.  It would be interesting to understand the value $\upbeta^*$ at which the conclusion of Theorem \ref{thm: surface sum representation in 't hooft limit} first fails to hold. In our companion paper~\cite[Remark 1.19]{bcsk2024area2d}, we explain why we suspect that $\upbeta^* = \frac{1}{2}$. It would be nice to confirm whether this is true. 

Moreover, it would be interesting to discover the right limiting expression for $\phi_{\Lambda_N,N,\upbeta}(s)$ when $\upbeta \geq \upbeta^*$.

\item Theorem~\ref{thm: surface sum representation in 't hooft limit} determines the large-$N$ behavior of the Wilson loop expectations $\phi_{\Lambda_N,N,\upbeta}(s)$. It would be interesting to establish a $1/N$-expansion of these quantities, that is, to show that (at least  for $\upbeta$ small enough) for any string $s=\{\ell_1,\dots,\ell_n\}$, 
\begin{equation*}
	\lim_{N\to \infty} N^{2k}\left(\phi_{\Lambda_N,N,\upbeta}(s)-\phi_0(s)-\frac{1}{N^2}\phi_2(s)-\dots-\frac{1}{N^{2k}}\phi_{2k}(s)\right)=0,
\end{equation*}
where for all $h\in\NN$,
\begin{equation*}
	\phi_{2h}(s):=\sum_{K:\cP_{\ZZ^d}\to\NN} \phi^K_{2h}(s), \quad\text{with}\quad \phi^K_{2h}(s)=\sum_{M\in\cM_{2h}(s,K)}\upbeta^{\area(M)}w_{\infty}(M).
\end{equation*}
As for the proof of  Theorem~\ref{thm: surface sum representation in 't hooft limit}, the first step would be to guess the set  $\cM_{2h}(s,K)$ of maps and the type of weight $w_{\infty}(\cdot)$ one should consider in the sums
$$\phi^K_{2h}(s)=\sum_{M\in\cM_{2h}(s,K)}\upbeta^{\area(M)}w_{\infty}(M).$$ 
One natural guess (see also Appendix~\ref{subsec: large N surface sum guess} for more explanations) would be to consider (non-separable) maps with generalized Euler characteristic equal to $2h$ and the same signed Catalan weights as in the planar case. Unfortunately, this guess seems to be incorrect and various similar adaptations all turn out to be incorrect. We plan to revisit this problem in the future.

\item Theorem~\ref{thm: surface sum representation in 't hooft limit} give us a simple sum over weighted planar surfaces. This can be naturally interpreted as a finite (recall that the sum is absolutely convergent) signed measure on the set of non-separable planar embedded maps. Is it possible to define a notion of scaling limit for this maps? Whould the limiting measure be a signed measure on the space of Liouville quantum gravity surfaces? The latter are the scaling limit of many natural models or random planar maps~\cite{sheffield2022random}.

\item Our ``peeling exploration'' from Section~\ref{subsubsec: Splittings and Deformations on embedded maps} can be naturally interpreted as a signed-Markov chain on the space of non-separable planar embedded maps. Would this Markov chain help in answering the question in the previous item? Can one compute the transition signed measures of this chain?

\item In this paper we considered lattice Yang--Mills with the \emph{Wilson} action, but it seems very natural to also consider the large-$N$ limit of lattice Yang--Mills with the \emph{Villain/heat-kernel} action, in the same spirit as~\cite{park2023wilson}. This should yield a different surface sum formula that might work in a larger regime of the $\upbeta$ parameter. We plan to explore this question in future work.
\end{enumerate}

The rest of the paper is organized as follows: Section~\ref{sect:fundamental-tools} introduces some useful preliminary tools that will be used in the consecutive sections. In Section~\ref{sec: Large N Limit}, we prove our main results, i.e.\ Theorems~\ref{thm: surface sum representation in 't hooft limit}~and~\ref{thm: fixed K 't Hooft master loop equation for surface sum}, under the assumption that the Master surface cancellation lemma~\ref{lemma:master-cancellation} holds. In Section~\ref{sect:pinch-canc}, we explore certain surface cancellation results (Theorem~\ref{thm:master-sum-blue-faces}) arising from a procedure that we call ``pinching''. These surface cancellations will be fundamental to prove later in Section~\ref{sec: cancellation lemma} the Master surface cancellation lemma~\ref{lemma:master-cancellation}.

The proof of Theorem~\ref{thm: surface sum representation in 't hooft limit} in Section~\ref{sec: Large N Limit} does not offer intuition for arriving at the expression for the surface sum $\phi(s)$ presented in the statement of the theorem. Thus, in Appendix~\ref{subsec: large N surface sum guess}, we provide insights and intuition that support why this expression serves as the correct ansatz.

\section{Two fundamental tools: pinchings and backtrack cancellations}\label{sect:fundamental-tools}

In this section we introduce two preliminary tools that will be used later to establish Theorems~\ref{thm: surface sum representation in 't hooft limit}~and~\ref{thm: fixed K 't Hooft master loop equation for surface sum}.

\subsection{Pinchings and surface cancellations}\label{sect:pinchings}

In this short section we introduce an important operation on the blue faces of embedded maps, called the  pinching operation. Throughout the paper, it will be repeatedly used to get certain surface cancellations (recall Definition~\ref{def:surface-cancellation}).  

We start with a few definitions. We say that a face of an embedded map has \textbf{disjoint vertices} if none of its vertices are identified; see the left-hand side of Figure~\ref{fig-pinching} for an example.

For a balanced pair $(s,K)$, consider an embedded map $M\in \cM(s,K)$, as introduced in Definition~\ref{defn:embedded-maps-bound-plaq-ass}. Fix a blue face $B$ of $M$ with disjoint vertices and let $v$ be a vertex of $B$.
Recall from Observation~\ref{obs:bipartite} that the vertices of $M$ are bipartite. Given any vertex $u\neq v$ on $B$ in the same partite class as $v$, let $\pp_{u,v}$ denote the \textbf{pinching operation} that pinches the vertices $u$ and $v$ together inside $B$. More precisely, one draws a line from $u$ to $v$ through the interior of $B$ and contracts this line to a point splitting $B$ into two blue faces $\pp_{u,v}(B)$ that share a vertex (the vertex corresponding to $u$ and $v$). See the right-hand side of Figure~\ref{fig-pinching} for an example. Note we define pinchings for vertices in the same partite class because this ensures that the resulting two new blue faces have both even degree.

We denote by $\AF(B,v)$ the set of all pairs of blue faces obtained from pinching the vertex $v$ with another vertex of the blue face $B$ in the same partite class. Similarly, we let $\AM(M, B, v)$ denote the set of all maps that can be obtained from $M$ by the same type of pinchings.

\begin{figure}[ht!]
\begin{center}
	\includegraphics[width=.89\textwidth]{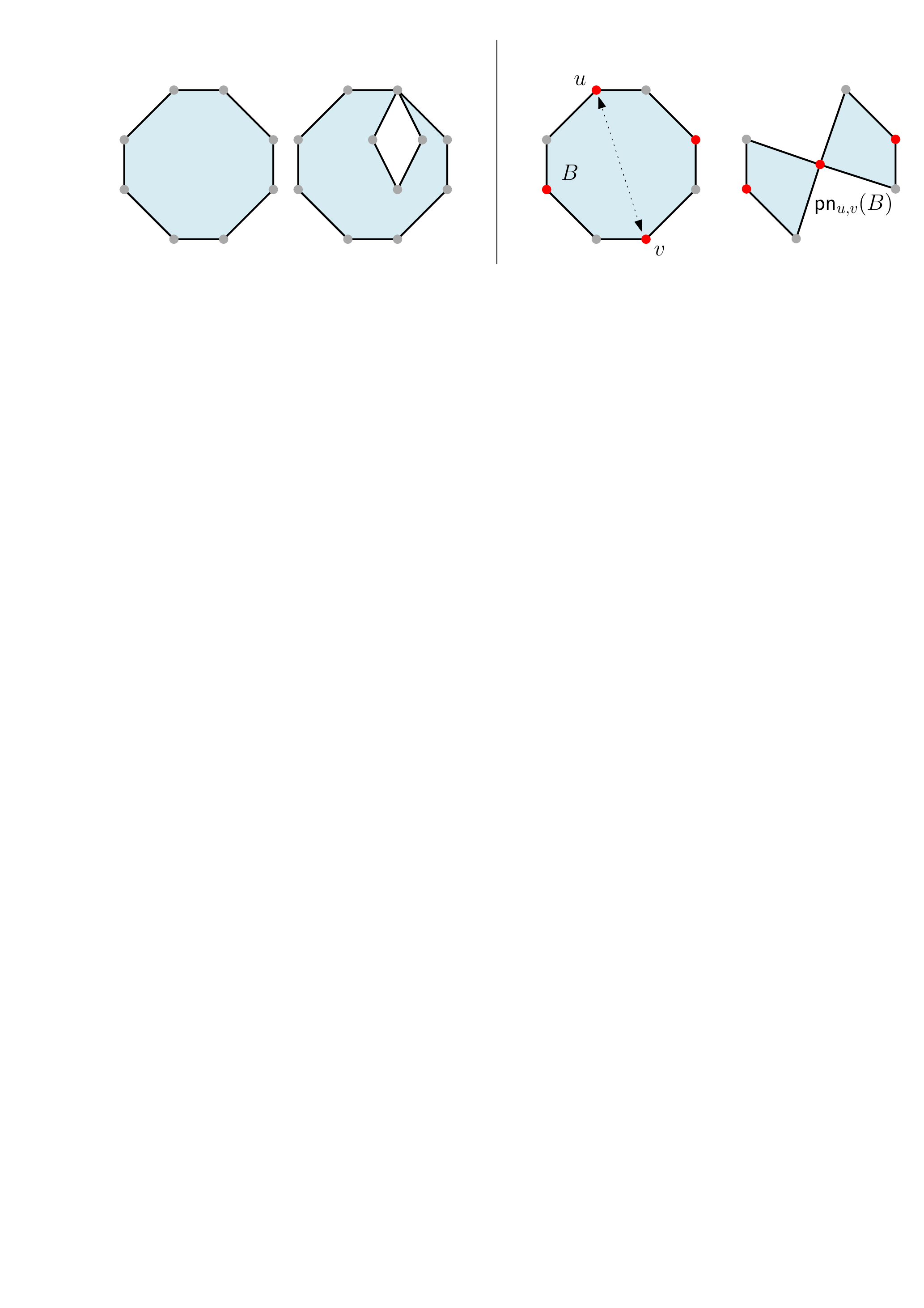}  
	\caption{\label{fig-pinching}
		\textbf{Left:} An example of a blue face with disjoint vertices and of another blue face with non disjoint vertices.
		\textbf{Right:} An example of the pinching operation $\pp_{u,v}(B)$ for a blue face $B$ having 8 vertices.}
\end{center}
\vspace{-3ex}
\end{figure}

We have the following surface cancellation result. 

\begin{lem}[\textsc{single vertex pinching cancellations}]\label{lemma: single vertex pinching cancellations}
Fix a blue face $B$ with disjoint vertices and a vertex $v$ of $B$. Then 
\begin{equation}\label{eq:ifeibewoufbw}
	w_{\infty}(B) + \sum_{B'\in \AF(B,v)}w_{\infty}(B') = 0,
\end{equation}
where if $B'$ is the pair of faces $\{B'_1,B'_2\}$, then $w_{\infty}(B'):=w_{\infty}(B'_1)\cdot w_{\infty}(B'_2)$.

As a consequence, for an embedded map $M\in \cM(s,K)$ and a vertex $v$ on a blue face $B$ of $M$ with disjoint vertices, we have that
\begin{align}\label{eq: single pinching cancellation equation form}
	w_{\infty}(M) + \sum_{M'\in \AM(M,B,v)}w_{\infty}(M') = 0.
\end{align} 
\end{lem}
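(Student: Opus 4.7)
The plan is to reduce the map-level identity \eqref{eq: single pinching cancellation equation form} to the single-face identity \eqref{eq:ifeibewoufbw}, and then verify the latter via the classical Catalan recurrence. First, for the reduction, observe that under a pinching $\pp_{u,v}$ applied inside a single blue face $B$, the rest of the embedded map $M$ is unchanged: the set of internal yellow faces, the other blue faces, the embedding $\psi$, and the boundary of $m$ all stay the same (only $B$ is split into two blue faces sharing the identified vertex). Hence for any $M' \in \AM(M,B,v)$ coming from a pinching $\pp_{u,v}$ with $B \mapsto \{B'_1, B'_2\}$,
\begin{equation*}
w_\infty(M') = w_\infty(B'_1)\,w_\infty(B'_2) \prod_{B'' \in \BF(M) \sm \{B\}} w_\infty(B''),
\end{equation*}
and similarly $w_\infty(M) = w_\infty(B) \prod_{B'' \in \BF(M) \sm \{B\}} w_\infty(B'')$. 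Factoring out the common product $\prod_{B'' \neq B} w_\infty(B'')$ then shows that \eqref{eq: single pinching cancellation equation form} follows immediately from \eqref{eq:ifeibewoufbw}.

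For \eqref{eq:ifeibewoufbw}, suppose $B$ has degree $2k$, so $w_\infty(B) = w_k = (-1)^{k-1}\cat(k-1)$. Since $B$ has disjoint vertices, label them cyclically $v_0 = v, v_1, \ldots, v_{2k-1}$ around the boundary of $B$. By Observation~\ref{obs:bipartite}, $v$ lies in the same partite class as exactly the vertices $v_2, v_4, \ldots, v_{2k-2}$. Pinching $v_0$ with $v_{2i}$ (for $i = 1, \ldots, k-1$) splits $B$ into two blue faces of degrees $2i$ and $2(k-i)$, whose weights are $w_i$ and $w_{k-i}$. The distinct values of $i$ yield genuinely distinct elements of $\AF(B,v)$, since the two pinchings cut $B$ along arcs that use different edges (even when $i$ and $k-i$ produce pairs with the same multiset of degrees). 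Thus
\begin{equation*}
w_\infty(B) + \sum_{B' \in \AF(B,v)} w_\infty(B') = w_k + \sum_{i=1}^{k-1} w_i w_{k-i}.
\end{equation*}

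Substituting the explicit formula $w_j = (-1)^{j-1}\cat(j-1)$, a common sign $(-1)^{k-1}$ factors out of both terms (since $(-1)^{(i-1) + (k-i-1)} = (-1)^{k-2} = -(-1)^{k-1}$), so the identity reduces to
\begin{equation*}
\cat(k-1) \;=\; \sum_{i=1}^{k-1} \cat(i-1)\,\cat(k-i-1) \;=\; \sum_{j=0}^{k-2} \cat(j)\,\cat(k-2-j),
\end{equation*}
which is exactly the standard Catalan recurrence. I do not anticipate any substantial obstacle: the main thing to be careful about is the bookkeeping of degrees after pinching (so that the recursion lines up correctly) and the check that the bipartite class condition on $u$ guarantees both pinched faces have even degree, which is automatic since the two arcs between $v_0$ and $v_{2i}$ each contain an even number of edges.
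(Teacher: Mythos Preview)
Your proof is correct and follows essentially the same approach as the paper: both first reduce the map-level identity to the face-level one by factoring out the common product over the unchanged blue faces, and then verify the face-level identity by expanding $w_k + \sum_{i=1}^{k-1} w_i w_{k-i}$ and reducing it to the standard Catalan recurrence. The bookkeeping (degrees, signs, bipartite class) is handled correctly in both.
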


\begin{proof}
Let $C = \prod_{f\in \BF(M)\sm B}w_{\deg(f)/2}$ where $\BF(M)$ denotes the set of blue faces of $M$. Note that since the embedded maps in $\AM(M,B,v)$ are obtained from $M$ by only modifying $B$, we have from \eqref{eq:w-wei-inf} that
\begin{equation*}
	w_{\infty}(M) = C\cdot w_{\infty}(B)
\end{equation*}
and 
\begin{equation*}
	\sum_{M'\in \AM(M,B,v)}w_{\infty}(M') = C \cdot \sum_{B'\in \AF(B,v)}w_{\infty}(B').
\end{equation*}
Hence we only need to prove \eqref{eq:ifeibewoufbw} to complete the proof of the lemma.
Assume that $B$ has degree $2k$. Then we can write \begin{align*}
	w_{\infty}(B) =  w_{\deg(B)/2} =   (-1)^{k-1}\cat(k - 1).
\end{align*}
Now recall the following recursive property of Catalan numbers
\begin{equation}\label{eq:catalan number recurstion}
	\cat(k)  = \sum_{i=1}^k\cat(i-1)\cat(k-i).
\end{equation}
This lets us write
\begin{align*}\
	w_{\infty}(B) &=  (-1)^{k-1}\cat(k - 1)\nonumber\\&=   (-1)^{k-1}\sum_{i=1}^{k-1}\cat(i-1)\cat(k-1-i)\nonumber\\&= - (-1)^{k-2}\sum_{j=0}^{k-2}\cat(j)\cat(k-j) \nonumber\\&= -\sum_{h=1}^{k-1}(-1)^{h-1}\cat(h-1)(-1)^{k-h-1}\cat(k-h-1)\\
	&=-\sum_{B'\in \AF(B,v)}w_{\infty}(B'_1)w_{\infty}(B'_2).
\end{align*}
This is enough to complete the proof.
\end{proof}

\subsection{Backtrack cancellations}

We turn to our second tool. One fundamental property in lattice Yang--Mills theory is the following one: if a matrix configuration assigns to an oriented edge the matrix $Q$, then it assigns the matrix $Q^{-1}$ to the opposite orientation of the same edge. This has the immediate consequence that if $\ell$ has a backtrack, i.e.\
\begin{equation*}
\ell=\pi_1 \, \mathbf{e}  \, \mathbf{e}^{-1}  \, \pi_2,
\end{equation*}
where $\pi_1$ and $\pi_2$ are two paths of edges, $\mathbf{e}$ corresponds to the oriented edge $e\in E_{\Lambda}$, and $\mathbf{e}^{-1}$ corresponds to $e^{-1}$,
then 
\begin{equation}\label{eq:requested-coeff2}
\phi_{\Lambda,N,\upbeta}(\pi_1 \, \mathbf{e} \, \mathbf{e}^{-1} \, \pi_2) = \phi_{\Lambda,N,\upbeta}(\pi_1 \, \pi_2).
\end{equation}
Obviously, the same property must be satisfied by the limit $\phi(\ell)=\sum_{K:\cP_{\ZZ^d}\to\NN} \phi^K(\ell)$ in Theorem~\ref{thm: surface sum representation in 't hooft limit}. 

In order to prove that $\phi_{\Lambda,N,\upbeta}$ converges to $\phi$ in the large-$N$ limit, we will need to show that $\phi$ has the property in \eqref{eq:requested-coeff2} before establishing Theorem~\ref{thm: surface sum representation in 't hooft limit}; see the proof of  Lemma~\ref{lemma:phi-bound} in Section~\ref{Asec: proof of uniqness to limiting master loop equation} for further details. Hence we establish this result here. In fact, we will actually show a stronger result, that this property holds for $\phi^K$. 

\begin{lem}[\textsc{backtrack cancellations}]\label{lemma: backtrack cancellations}
Suppose that $\ell=\pi_1 \, \mathbf{e} \, \mathbf{e}^{-1} \, \pi_2$ is a loop with backtrack $\mathbf{e} \, \mathbf{e}^{-1}$ and $(\ell,K)$ is a balanced pair. Then 
\begin{equation*}
	\phi^K(\pi_1 \, \mathbf{e} \, \mathbf{e}^{-1} \, \pi_2) = \phi^K(\pi_1 \, \pi_2),
\end{equation*}
where $\phi^K$ has been introduced in the statement of Theorem~\ref{thm: surface sum representation in 't hooft limit}.
\end{lem}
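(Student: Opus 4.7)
The plan is to prove this identity by a bijection-plus-cancellation argument. The main idea is to partition $\npe(\pi_1\mathbf{e}\mathbf{e}^{-1}\pi_2, K)$ into maps with a natural weight-preserving correspondence to $\npe(\pi_1\pi_2, K)$, together with maps whose weights cancel out via the single vertex pinching cancellation (Lemma~\ref{lemma: single vertex pinching cancellations}).

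First, I define a weight-preserving injection $\iota\colon \npe(\pi_1\pi_2, K)\hookrightarrow\npe(\pi_1\mathbf{e}\mathbf{e}^{-1}\pi_2, K)$ via \emph{stub attachment}: given $M'\in\npe(\pi_1\pi_2, K)$ with boundary vertex $v$ between $\pi_1$ and $\pi_2$, let $\iota(M')$ be the map obtained by introducing a new map vertex $u$ (sent by the embedding to the lattice vertex at the other endpoint of $e$), two parallel map edges $\mathbf{e},\mathbf{e}^{-1}$ from $v$ to $u$ (both sent to the lattice edge $e$), and a new degree-$2$ blue face $B_0$ bounded by them. Since $B_0$ has weight $w_1 = 1$ and no internal yellow faces are added, $\iota$ preserves area and satisfies $w_\infty(\iota(M'))=w_\infty(M')$. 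To verify non-separability: $B_0$ is a ``pendant'' in the dual graph of $\iota(M')$ (its only neighbor is the external yellow face), so removing $B_0$ alone cannot disconnect the dual graph; removing $B_0$ together with any other family of blue faces sent to $e$ disconnects the dual graph of $\iota(M')$ iff the remaining family already disconnects the dual graph of $M'$, contradicting the non-separability of $M'$. Hence the image of $\iota$ contributes exactly $\phi^K(\pi_1\pi_2)$ to $\phi^K(\pi_1\mathbf{e}\mathbf{e}^{-1}\pi_2)$.

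Second, consider the ``excess'' maps $\mathcal{R} := \npe(\pi_1\mathbf{e}\mathbf{e}^{-1}\pi_2, K) \setminus \iota(\npe(\pi_1\pi_2, K))$, i.e.\ those in which $\mathbf{e}$ and $\mathbf{e}^{-1}$ are not bounded by a single degree-$2$ blue face. I aim to show $\sum_{M\in\mathcal{R}}\upbeta^{\area(M)}w_\infty(M) = 0$ by grouping the excess maps into classes sharing a common ``unpinched ancestor'' at the tip vertex $u$. Concretely, for each $M\in\mathcal{R}$, I construct $\tilde M$ by merging the blue face(s) of $M$ incident to $u$ into a single enlarged blue face $\tilde B$ on which $u$ appears as a single disjoint corner, with $\mathbf{e}$ and $\mathbf{e}^{-1}$ as the two consecutive edges at that corner. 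Each $M\in\mathcal{R}$ is then recovered either as the ancestor $\tilde M$ itself or as a pinching of $\tilde M$ at $u$, i.e.\ an element of $\AM(\tilde M, \tilde B, u)$. Applying Lemma~\ref{lemma: single vertex pinching cancellations} yields
\[ w_\infty(\tilde M) + \sum_{M^\#\in\AM(\tilde M,\tilde B,u)} w_\infty(M^\#) = 0 \]
for each ancestor class, and since pinching preserves area, the $\upbeta^{\area}$ factor is common within each class; summing over classes gives the desired vanishing.

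The main obstacle is making the unpinched-ancestor construction rigorous. The pinching lemma requires the blue face $\tilde B$ to have \emph{disjoint vertices}, but in the case where both $\mathbf{e},\mathbf{e}^{-1}$ lie on a single blue face of degree $>2$ the opposite boundary vertex $v$ typically appears multiple times on $\tilde B$. Resolving this requires a careful local analysis at $u$---possibly a recursive application of the pinching lemma restricted to the corner at $u$, or the development of an auxiliary cancellation combining the pinching argument with the non-separability condition---to confirm that every excess map arises from a unique ancestor and that the cancellation is complete. A secondary subtlety is checking that membership in $\npe$ (in particular non-separability) is preserved under pinching/unpinching at $u$, since enclosure loops at the edge $e$ could \emph{a priori} appear or disappear; I expect this to follow from a direct local check analogous to the one used to verify the stub injection.
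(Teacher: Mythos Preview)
Your plan is exactly the paper's: split $\npe(\ell,K)$ according to whether $\mathbf{e},\mathbf{e}^{-1}$ bound a common $2$-gon, lie on a common blue face of larger degree, or lie on two distinct blue faces; identify the first class with $\npe(\pi_1\pi_2,K)$ via the stub map; and cancel the remaining two classes against each other by applying Lemma~\ref{lemma: single vertex pinching cancellations} at the tip vertex~$u$.

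The ``main obstacle'' you flag is not a real obstacle, and seems to come from conflating map vertices with lattice vertices. The hypothesis of Lemma~\ref{lemma: single vertex pinching cancellations} concerns \emph{map} vertices, and the key fact (which the paper invokes in a single parenthetical) is that \emph{every blue face of a non-separable planar embedded map has disjoint vertices}: a blue face with a repeated corner would by itself disconnect the dual graph, violating Condition~8 of Definition~\ref{defn:non-separableplanar-embedded}. Thus when $M\in\npe(\ell,K)$ has $\mathbf{e},\mathbf{e}^{-1}$ on a common blue face $B$ of degree $>2$, the face $B$ automatically has pairwise distinct corners; in particular the two ``base'' corners of $B$ (the non-tip endpoints of $\mathbf{e}$ and of $\mathbf{e}^{-1}$) are \emph{distinct map vertices}, even though both are sent to the same lattice vertex. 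Lemma~\ref{lemma: single vertex pinching cancellations} then applies to $(M,B,u)$ directly, with no recursion or auxiliary cancellation needed.

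Your secondary subtlety --- that pinching or unpinching at $u$ might leave $\npe$ --- is a genuine bookkeeping point the paper leaves implicit, but both directions are short. After a single pinch at the degree-$2$ tip, the two new faces $B_1,B_2$ each contain one of $\mathbf{e},\mathbf{e}^{-1}$ and hence are both adjacent to the external face in the dual; from this, non-separability is easily seen to be preserved. Conversely, in any non-separable $M'$ with $\mathbf{e}\in B_1$ and $\mathbf{e}^{-1}\in B_2$, the faces $B_1,B_2$ cannot share a vertex other than $u$ (two shared vertices would let $\{B_1,B_2\}$ form an enclosure loop), so the merged face has disjoint vertices and the unpinched map stays in $\npe$.
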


\begin{proof}
Let $\cM_{\mathbf{e} \, \mathbf{e}^{-1}}$ denote the set of embedded maps in $\npe(\ell,K)$ such that $\mathbf{e}$ and $\mathbf{e}^{-1}$ are together in a blue 2-gon  and $\overline{\cM}_{\mathbf{e} \, \mathbf{e}^{-1}}$ denote the complement of $\cM_{\mathbf{e} \, \mathbf{e}^{-1}}$ in $\npe(\ell,K)$. Further, let $\overline{\cM}^{\text{same}}_{\mathbf{e} \, \mathbf{e}^{-1}}$ denote the maps in $\overline{\cM}_{\mathbf{e} \, \mathbf{e}^{-1}}$ where $\mathbf{e}$ and $\mathbf{e}^{-1}$ are in the same blue face and $\overline{\cM}^{\text{split}}_{\mathbf{e} \, \mathbf{e}^{-1}}$ denote the maps where $\mathbf{e}$ and $\mathbf{e}^{-1}$ are in two separate blue faces. With this decomposition of $\npe(\ell,K)$ we have that \begin{align*}
	\phi^K(\ell) &= \upbeta^{\sum_{p\in \cP}K(p)}\left(\sum_{M\in \cM_{\mathbf{e} \, \mathbf{e}^{-1}}}w_{\infty}(M)+ \sum_{M\in \overline{\cM}^{\text{same}}_{\mathbf{e} \, \mathbf{e}^{-1}}}w_{\infty}(M)+ \sum_{M\in \overline{\cM}^{\text{split}}_{\mathbf{e} \, \mathbf{e}^{-1}}}w_{\infty}(M)\right) \\&= \phi^K(\pi_1 \, \pi_2) + \upbeta^{\sum_{p\in \cP}K(p)}\left(\sum_{M\in \overline{\cM}^{\text{same}}_{\mathbf{e} \, \mathbf{e}^{-1}}}w_{\infty}(M)+ \sum_{M\in \overline{\cM}^{\text{split}}_{\mathbf{e} \, \mathbf{e}^{-1}}}w_{\infty}(M)\right),
\end{align*}
where to get the last equality we used that each map $M\in \cM_{\mathbf{e} \, \mathbf{e}^{-1}}$ corresponds bijectively to the map in $\npe(\pi_1 \, \pi_2,K)$ obtained by  removing the  blue 2-gon containing $\mathbf{e}$ and $\mathbf{e}^{-1}$, and moreover that these two maps have the same weight, since blue 2-gons have weight 1. Thus to finish the proof we need to show that \begin{equation}\label{eq: backtrack cancellation proof eq to show}
	\sum_{M\in \overline{\cM}^{\text{same}}_{\mathbf{e} \, \mathbf{e}^{-1}}}w_{\infty}(M) + \sum_{M\in \overline{\cM}^{\text{split}}_{\mathbf{e} \, \mathbf{e}^{-1}}}w_{\infty}(M')=0.
\end{equation}
Fix $M\in \overline{\cM}^{\text{same}}_{\mathbf{e} \, \mathbf{e}^{-1}}$. Let $v$ be the vertex shared by $\mathbf{e}$ and $\mathbf{e}^{-1}$, and $B$ denote the blue face incident to $\mathbf{e}$ and $\mathbf{e}^{-1}$.  Now we rewrite the sum in the left-hand side of \eqref{eq: backtrack cancellation proof eq to show} as
\begin{align*}
	\sum_{M\in \overline{\cM}^{\text{same}}_{\mathbf{e} \, \mathbf{e}^{-1}}}\left(w_{\infty}(M) + \sum_{M'\in \AM(M,B,v)}w_{\infty}(M')\right).
\end{align*}
Notice applying Lemma~\ref{lemma: single vertex pinching cancellations} (recall each blue face in a non-separable map has disjoint vertices) we get that $w_{\infty}(M) + \sum_{M'\in \AM(M,B,v)}w_{\infty}(M')=0$,
which gives us \eqref{eq: backtrack cancellation proof eq to show} as desired.
\end{proof}

\section{Large-\texorpdfstring{$N$}{N} limit for Wilson loop expectations as sums over disks and the master loop equation}\label{sec: Large N Limit}

In this section we prove our two main results, i.e.\ Theorems~\ref{thm: surface sum representation in 't hooft limit}~and~\ref{thm: fixed K 't Hooft master loop equation for surface sum}. 
The proof of Theorem~\ref{thm: surface sum representation in 't hooft limit} will not provide any intuition for guessing the expression for $\phi(s)$ given in the statement of Theorem~\ref{thm: surface sum representation in 't hooft limit}. Instead, the proof will simply demonstrate that $\phi(s)$ is the correct limit. Therefore, in Appendix~\ref{subsec: large N surface sum guess}, we offer insight and intuition as to why this expression should be the correct ansatz. 

This section is organized as follows: in Section~\ref{subsec: large N Master loop equation in large-$N$ limit}, we prove Theorem~\ref{thm: surface sum representation in 't hooft limit} assuming Theorem~\ref{thm: fixed K 't Hooft master loop equation for surface sum}, then we give the proof of the latter result in Section~\ref{subsec: surface sum Satisfies Master Loop Equation}, after introducing the fundamental ``peeling exploration'' in Section~\ref{subsubsec: Splittings and Deformations on embedded maps}.

\medskip

For later convenience, we rewrite the master loop equation from Theorem~\ref{thm: fixed K 't Hooft master loop equation for surface sum} in a more compact form and establish a simple consequence (Corollary~\ref{cor:mle-goal}). Fix a non-null loop $\ell$ and a plaquette assignment  $K:\cP_{\ZZ^d}\to\NN$. Let $\mathbf{e}$ be a specific edge of the loop $\ell$. Suppose that the  edge $\mathbf{e}$ is a copy of the lattice edge $e\in E_{\ZZ^d}$. Then Theorem~\ref{thm: fixed K 't Hooft master loop equation for surface sum} states that
\begin{align*}
\phi^K(\ell) =&\sum_{\{\ell_1,\ell_2\}\in \SS_{-}(\mathbf{e},\ell)}
\sum_{K_1+K_2=K}
\phi^{K_1}(\ell_1)\phi^{K_2}(\ell_2) - \sum_{\{\ell_1,\ell_2\}\in \SS_{+}(\mathbf{e},\ell)}
\sum_{K_1+K_2=K}\phi^{K_1}(\ell_1)\phi^{K_2}(\ell_2) \\
&+ \upbeta
\sum_{p\in \cP_{\ZZ^d}(e^{-1},K)}
\phi^{K\sm p}(\ell \ominus_{\mathbf{e}}p)
-\upbeta\sum_{q\in \cP_{\ZZ^d}(e,K)}
\phi^{K\sm q}(\ell \oplus_{\mathbf{e}}q).
\end{align*}
For a string $s=\{\ell_1,\dots,\ell_n\}$, we set 
\begin{equation}\label{eq:gen-phi-string}
\phi^{K}(s):=\sum_{M\in\npe(s,K)}\upbeta^{\area(M)}w_{\infty}(M), 
\end{equation}
where
\begin{equation}\label{eq:string-set-maps}
\npe(\{\ell_1,\dots,\ell_n\},K):=\bigsqcup_{K_1+\dots+K_n=K}\npe(\ell_1,K_1)\times\dots\times\npe(\ell_n,K_n),
\end{equation}
that is, an element of $\npe(\{\ell_1,\dots,\ell_n\},K)$ is a collection of $n$ planar non-separable disks, each of them having boundary $\ell_i$.
With these new definitions,\footnote{Here we only need the $n=2$ case of \eqref{eq:gen-phi-string}, but later in Corollary~\ref{cor:mle-goal}, we will need the version for general $n$.} we get
\begin{equation}\label{eq:ewfjbewbfopwejnf}
\sum_{\{\ell_1,\ell_2\}\in \SS_\pm(\mathbf{e},\ell)}
\sum_{K_1+K_2=K}
\phi^{K_1}(\ell_1)\phi^{K_2}(\ell_2)=\sum_{s\in \SS_{\pm}(\mathbf{e},\ell)} \phi^{K}(s),
\end{equation}
and so, we can rewrite the master loop equation in Theorem~\ref{thm: fixed K 't Hooft master loop equation for surface sum} in the following equivalent compact form:\footnote{We did not write the master loop equation in Theorem~\ref{thm: fixed K 't Hooft master loop equation for surface sum} immediately in this compact form because we wanted to stress that the $\unitary(\infty)$ Yang--Mills theory is a theory where one can only look at single loops instead of strings. A similar comment applies to Corollary~\ref{cor:mle-goal}.}
\begin{align}\label{eq:new-form}
\phi^K(\ell) =&
\sum_{s\in \SS_{{-}}(\mathbf{e},\ell)} \phi^{K}(s)
- \sum_{s\in \SS_{{+}}(\mathbf{e},\ell)} \phi^{K}(s)\notag\\
&+ \upbeta
\sum_{p\in \cP_{\ZZ^d}(e^{-1},K)}
\phi^{K\sm p}(\ell \ominus_{\mathbf{e}}p)
-\upbeta\sum_{q\in \cP_{\ZZ^d}(e,K)}
\phi^{K\sm q}(\ell \oplus_{\mathbf{e}}q).
\end{align}

We also need the following extension of Theorem~\ref{thm: fixed K 't Hooft master loop equation for surface sum} for general strings $s=\{\ell_1,\dots,\ell_n\}$.

\begin{cor}[\textsc{$\unitary(\infty)$ master loop equation for fixed plaquette assignment and location and for general strings}]\label{cor:mle-goal}
For all strings $s=\{\ell_1,\dots,\ell_n\}$ and all plaquette assignments  $K:\cP_{\ZZ^d}\to\NN$ the following master loop equation at every fixed edge $\mathbf{e}$ in $s$ holds:
\begin{equation}
	\begin{aligned}\label{eq:limiting_master_loop_eq-2222}
		\phi^K(s) &= \sum_{s'\in \SS_-(\mathbf{e},s)}\phi^K(s') - \sum_{s'\in \SS_+(\mathbf{e},s)}\phi^K(s') 
		\\&+ \upbeta\sum_{s'\in \DD_-(\mathbf{e},s)}\phi^K(s') - \upbeta\sum_{s'\in \DD_+(\mathbf{e},s)}\phi^K(s'),
	\end{aligned}
\end{equation}
where we recall that $\phi^K(s)$ has been introduced in \eqref{eq:gen-phi-string}.
\end{cor}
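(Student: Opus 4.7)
The plan is to deduce Corollary~\ref{cor:mle-goal} from Theorem~\ref{thm: fixed K 't Hooft master loop equation for surface sum} by a straightforward substitution argument that exploits the product/sum structure of $\phi^K(s)$ across the loops of~$s$. First, since the distinguished edge $\mathbf{e}$ lies on exactly one loop, I would label that loop $\ell_i$. By~\eqref{eq:gen-phi-string} together with the disjoint-union decomposition~\eqref{eq:string-set-maps}, one has
\[ \phi^K(s) \;=\; \sum_{K_1+\cdots+K_n=K}\ \prod_{j=1}^n \phi^{K_j}(\ell_j), \]
and only the factor $\phi^{K_i}(\ell_i)$ contains~$\mathbf{e}$. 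I would then apply Theorem~\ref{thm: fixed K 't Hooft master loop equation for surface sum} in its compact form~\eqref{eq:new-form} to expand this single factor into its four terms (two splitting, two deformation), substitute back into the product, and swap the order of summation. This produces four contributions to $\phi^K(s)$, which I would identify term-by-term with the four terms on the right-hand side of~\eqref{eq:limiting_master_loop_eq-2222}.

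For the splitting contributions the identification is immediate: for each fixed pair $\{\ell_i',\ell_i''\}\in\SS_\pm(\mathbf{e},\ell_i)$, the inner sum $\sum_{K_i'+K_i''=K_i}\phi^{K_i'}(\ell_i')\phi^{K_i''}(\ell_i'')$ combines with the remaining factors $\prod_{j\neq i}\phi^{K_j}(\ell_j)$ and with the outer sum over decompositions of $K$ to reassemble exactly $\phi^K\bigl((s\setminus\ell_i)\cup\{\ell_i',\ell_i''\}\bigr)$, by the string-level analogue of~\eqref{eq:ewfjbewbfopwejnf}. Summing over $\SS_\pm(\mathbf{e},\ell_i)$ then yields the $\SS_\pm(\mathbf{e},s)$ sums on the right-hand side of~\eqref{eq:limiting_master_loop_eq-2222}.

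For the deformation contributions, a small reindexing of the plaquette assignment is needed; this is the only step I expect to require care. Consider the negative-deformation term: after substitution, its contribution to $\phi^K(s)$ reads
\[ \upbeta\sum_{K_1+\cdots+K_n=K}\ \prod_{j\neq i}\phi^{K_j}(\ell_j)\ \sum_{p\in\cP_{\ZZ^d}(e^{-1},K_i)}\phi^{K_i\setminus p}(\ell_i\ominus_{\mathbf{e}}p). \]
Swapping the sums over $p$ and over decompositions of $K$, I would substitute $\widetilde K_i := K_i\setminus p$, which transforms the joint constraint $K_1+\cdots+K_n=K$ with $K_i(p)\ge 1$ into $K_1+\cdots+\widetilde K_i+\cdots+K_n = K\setminus p$ with $K(p)\ge 1$. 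The expression then collapses, by the same product/sum identity used for splittings, into $\upbeta\sum_p \phi^{K\setminus p}\bigl((s\setminus\ell_i)\cup\{\ell_i\ominus_{\mathbf{e}}p\}\bigr)$, which matches the $\DD_-$ term of~\eqref{eq:limiting_master_loop_eq-2222} once the sum over $s'\in\DD_-(\mathbf{e},s)$ is identified with the sum over the plaquettes $p$ used in the deformation. The $\DD_+$ case is handled symmetrically. The main (but purely formal) obstacle is exactly this reindexing, i.e.\ keeping track of the fact that the plaquette $p$ participating in the deformation is simultaneously being removed from the internal plaquette assignment and being absorbed into the new boundary loop $\ell_i\ominus_{\mathbf{e}}p$.
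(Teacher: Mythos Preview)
Your approach is essentially identical to the paper's: both write $\phi^K(s)$ via the sum-of-products identity \eqref{eq:eiwuvfuowevo}, apply the single-loop equation (Theorem~\ref{thm: fixed K 't Hooft master loop equation for surface sum}) to the factor carrying $\mathbf{e}$, substitute, and reassemble using~\eqref{eq:gen-phi-string}--\eqref{eq:string-set-maps}. Your reindexing $\widetilde K_i=K_i\setminus p$ for the deformation terms is in fact more explicit than the paper's corresponding step.

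One point you glossed over, though: your deformation computation correctly produces
\[
\upbeta\sum_{p\in\cP_{\ZZ^d}(e^{-1},K)}\phi^{K\setminus p}\bigl((s\setminus\ell_i)\cup\{\ell_i\ominus_{\mathbf{e}}p\}\bigr),
\]
whereas the displayed \eqref{eq:limiting_master_loop_eq-2222} literally reads $\upbeta\sum_{s'\in\DD_-(\mathbf{e},s)}\phi^K(s')$, i.e.\ a sum over \emph{all} plaquettes in $\cP_{\ZZ^d}(e^{-1})$ with the \emph{same} $K$. These two expressions do not agree in general, so your claim that the result ``matches the $\DD_-$ term of~\eqref{eq:limiting_master_loop_eq-2222}'' is not literally true. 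This is a typo in the paper's statement rather than an error in your argument: the paper's own proof stops at the $\phi^{K_1\setminus p}$ form before asserting~\eqref{eq:limiting_master_loop_eq-2222}, and the only place the corollary is used downstream---the fixed-point map~\eqref{eq:fixed-point-map-M-def} and the claim $Mf_\phi=f_\phi$ in the proof of Proposition~\ref{prop:-technical-bound-on-phi-K}---employs exactly the $K\setminus p$ version you derived. So your argument establishes the intended (corrected) identity; just be aware that it does not establish~\eqref{eq:limiting_master_loop_eq-2222} as written.
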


\begin{proof}[Proof of Corollary~\ref{cor:mle-goal} assuming Theorem~\ref{thm: fixed K 't Hooft master loop equation for surface sum}]
Fix a string $s=\{\ell_1,\dots,\ell_n\}$ and a plaquette assignment $K:\cP_{\ZZ^d}\to\NN$. Further, fix an edge $\mathbf{e}$ in $s$ that we assume to be an edge of $\ell_1$. From \eqref{eq:gen-phi-string} and \eqref{eq:string-set-maps}, and the definition of our weights in \eqref{eq:w-wei-inf}, we have that 
\begin{equation}\label{eq:eiwuvfuowevo}
	\phi^{K}(s)= \sum_{K_1+\dots+K_n=K}\prod_{i=1}^n\phi^{K_i}(\ell_i).
\end{equation}
From Theorem~\ref{thm: fixed K 't Hooft master loop equation for surface sum}, we have that (recall that we assumed that $\mathbf{e}\in\ell_1$)
\begin{align*}
	\phi^{K_1}(\ell_{1}) =&\sum_{\{\ell_{1,2},\ell_{1,2}\}\in \SS_{-}(\mathbf{e},\ell_1)}
	\sum_{K_{1,1}+K_{1,2}=K_1}
	\phi^{K_{1,1}}(\ell_{1,1})\phi^{K_{1,2}}(\ell_{1,2})\\
	&- \sum_{\{\ell_{1,1},\ell_{1,2}\}\in \SS_{+}(\mathbf{e},\ell_1)}
	\sum_{K_{1,1}+K_{1,2}=K_1}\phi^{K_{1,1}}(\ell_{1,1})\phi^{K_{1,2}}(\ell_{1,2}) \\
	&+ \upbeta
	\sum_{p\in \cP_{\ZZ^d}(e^{-1},K)}
	\phi^{K_1\sm p}(\ell_1 \ominus_{\mathbf{e}}p)
	-\upbeta\sum_{q\in \cP_{\ZZ^d}(e,K)}
	\phi^{K_1\sm q}(\ell_1 \oplus_{\mathbf{e}}q).
\end{align*}
Substituting this equation in \eqref{eq:eiwuvfuowevo}, and rearranging the order of the sums and products, we get that 
\begin{align*}
	\phi^{K}(s)=&
	\sum_{K_{1,1}+K_{1,2}+K_2+\dots+K_n=K}\sum_{\{\ell_{1,2},\ell_{1_2}\}\in \SS_{{-}}(\mathbf{e},\ell_1)}
	\left(\phi^{K_{1,1}}(\ell_{1,1})\phi^{K_{1,2}}(\ell_{1,2})\prod_{i=2}^n\phi^{K_i}(\ell_i)\right)\\
	&- 
	\sum_{K_{1,1}+K_{1,2}+K_2+\dots+K_n=K}\sum_{\{\ell_{1,1},\ell_{1,2}\}\in \SS_{{+}}(\mathbf{e},\ell_1)}\left(\phi^{K_{1,1}}(\ell_{1,1})\phi^{K_{1,2}}(\ell_{1,2})\prod_{i=2}^n\phi^{K_i}(\ell_i)\right) \\
	&+ \upbeta \sum_{K_1+K_2+\dots+K_n=K}
	\sum_{p\in \cP_{\ZZ^d}(e^{-1},K)}
	\left(\phi^{K_1\sm p}(\ell_1 \ominus_{\mathbf{e}}p)\prod_{i=2}^n\phi^{K_i}(\ell_i)\right)\\
	&-\upbeta \sum_{K_1+K_2+\dots+K_n=K}\sum_{q\in \cP_{\ZZ^d}(e,K)}
	\left(\phi^{K_1\sm q}(\ell_1 \oplus_{\mathbf{e}}q)\prod_{i=2}^n\phi^{K_i}(\ell_i)\right).
\end{align*}
Using once again \eqref{eq:gen-phi-string}, \eqref{eq:string-set-maps} and the definition of our weights in \eqref{eq:w-wei-inf}, we get \eqref{eq:limiting_master_loop_eq-2222}.
\end{proof}

\subsection{Wilson loop expectations in the large-\texorpdfstring{$N$}{N} limit}\label{subsec: large N Master loop equation in large-$N$ limit}

In this section, we assume that Theorem~\ref{thm: fixed K 't Hooft master loop equation for surface sum} (and thus Corollary~\ref{cor:mle-goal}) holds.

To prove Theorem~\ref{thm: surface sum representation in 't hooft limit}, we show that for a fixed increasing sequence $\Lambda_N$ converging to $\ZZ^d$, there exists $\upbeta_0(d)>0$ such that whenever $|\upbeta|\leq \upbeta_0(d)$,
\begin{equation*}
\phi_{\Lambda_N,N,\upbeta}(s)\xrightarrow[N\to\infty]{}\phi(s):=\prod_{i=1}^n\phi(\ell_i), \quad \text{for all $s=\{\ell_1,\dots,\ell_n\}$},
\end{equation*}
where we recall that $\phi(s)$ is precisely defined in the statement of Theorem~\ref{thm: surface sum representation in 't hooft limit}.
To do this, we show that:
\begin{enumerate}	
\item any sub-sequential limit of $\phi_{\Lambda_N,N,\upbeta}(s)$ satisfies the master loop equation \eqref{eq:limiting_master_loop_eq} (Proposition~\ref{prop: single-location master loop equation in the 't Hooft limit});
\item for sufficiently small $\upbeta$ the master loop equation \eqref{eq:limiting_master_loop_eq} has a unique solution (Proposition~\ref{prop: uniqueness of solution for single-location master loop equation in the 't Hooft limit});
\item $\phi(s)$ satisfies the master loop equation \eqref{eq:limiting_master_loop_eq}. (This will follow from the assumption that Theorem~\ref{thm: fixed K 't Hooft master loop equation for surface sum} holds.)
\end{enumerate}

\subsubsection{Proof of the large-\texorpdfstring{$N$}{N} limit for Wilson loop expectations}

First we need to understand the form of the master loop equation in the large-$N$ limit. That is, what recursive relation do subsequential limits of $\phi_{\Lambda_N,N,\upbeta}(s)$ satisfy. The next theorem gives us this relation. 

We recall, from Section~\ref{subsec: lattice Yang--Mills}, that a string of cardinality $n$ has been defined as a \emph{multiset} of loops $\{\ell_1,\dots,\ell_n\}$.

\begin{defn}
Let $\cS$ denote the set of all possible strings of any finite cardinality.
\end{defn}

\begin{prop}[\textsc{single-location master loop equation in the large-n limit}]\label{prop: single-location master loop equation in the 't Hooft limit}
Let $\Lambda_N$ be an increasing sequence converging to $\ZZ^d$. Suppose that there exists a subsequence of $N$ such that $\phi_{\Lambda_N,N,\upbeta}(s)$ converges to $\phi_{\infty}(s)$ for all strings $s\in\cS$.  Then, for every fixed non-empty string $s$ and every fixed edge $\mathbf{e}$ in $s$,
\begin{align}\label{eq:limiting_master_loop_eq}
	\phi_{\infty}(s) &= \sum_{s'\in \SS_-(\mathbf{e},s)}\phi_{\infty}(s') - \sum_{s'\in \SS_+(\mathbf{e},s)}\phi_{\infty}(s') \notag
	\\&+ \upbeta\sum_{s'\in \DD_-(\mathbf{e},s)}\phi_{\infty}(s') - \upbeta\sum_{s'\in \DD_+(\mathbf{e},s)}\phi_{\infty}(s').
\end{align}
\end{prop}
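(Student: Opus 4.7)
The natural starting point is the finite-$N$ master loop equation from Theorem~\ref{thm: finite N master loop equation}, applied with $\Lambda = \Lambda_N$: for every $N$ large enough that $\Lambda_N$ contains the edge $\mathbf{e}$ together with all plaquettes in $\cP_{\ZZ^d}$ adjacent to it (there are only finitely many, and the sequence $\Lambda_N$ exhausts $\ZZ^d$), one has
\begin{align*}
\phi_{\Lambda_N,N, \upbeta}(s) =&\sum_{s'\in \SS_-(\mathbf{e},s)}\phi_{\Lambda_N,N, \upbeta}(s') - \sum_{s'\in \SS_+(\mathbf{e},s)}\phi_{\Lambda_N,N, \upbeta}(s') \\
&+ \upbeta\sum_{s'\in \DD_-(\mathbf{e},s)}\phi_{\Lambda_N,N, \upbeta}(s') - \upbeta \sum_{s'\in \DD_+(\mathbf{e},s)}\phi_{\Lambda_N, N, \upbeta}(s')\\
&+ \frac{1}{N^2}\sum_{s'\in \MM_-(\mathbf{e},s)}\phi_{\Lambda_N,N, \upbeta}(s') - \frac{1}{N^2}\sum_{s'\in \MM_+(\mathbf{e},s)}\phi_{\Lambda_N,N, \upbeta}(s').
\end{align*}
The plan is simply to pass to the limit $N\to\infty$ along the subsequence on which $\phi_{\Lambda_N,N,\upbeta}$ converges to $\phi_\infty$ pointwise on $\cS$.

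First I would observe that the four sums indexed by $\SS_\pm(\mathbf{e},s)$ and $\DD_\pm(\mathbf{e},s)$ are each finite sums that, for $N$ large, no longer depend on $\Lambda_N$: the splitting sets are determined entirely by the combinatorics of $s$ (positions of copies of $e$ and $e^{-1}$ inside the loops of $s$), while the deformation sets, once $\Lambda_N \supset \cP_{\ZZ^d}(e)\cup\cP_{\ZZ^d}(e^{-1})$, coincide with the corresponding sets computed on all of $\ZZ^d$. Hence each individual summand $\phi_{\Lambda_N,N,\upbeta}(s')$ converges to $\phi_\infty(s')$ by hypothesis, and the four sums converge to their counterparts in \eqref{eq:limiting_master_loop_eq}.

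For the two merger terms, I would use the crude bound $|\phi_{\Lambda,N,\upbeta}(s')|\leq 1$, which follows from $|\tr(Q)|\leq 1$ for any $Q\in\unitary(N)$ applied to \eqref{eq: Wilson loop observables}. The sets $\MM_\pm(\mathbf{e},s)$ are finite (their size depends only on the number of copies of $e$ and $e^{-1}$ appearing in the other loops of $s$, hence only on $s$). Therefore the prefactor $1/N^2$ forces
\[ \frac{1}{N^2}\sum_{s'\in \MM_\pm(\mathbf{e},s)}\phi_{\Lambda_N,N, \upbeta}(s') \xrightarrow[N\to\infty]{} 0. \]
Combining this with the convergence of the splitting and deformation sums yields \eqref{eq:limiting_master_loop_eq}.

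There is no real obstacle: the argument is a direct term-by-term limit in a finite sum, using only the uniform bound $|\phi_{\Lambda_N,N,\upbeta}|\leq 1$ to kill the $1/N^2$-weighted merger contributions. The only minor point worth spelling out is the stabilization of $\DD_\pm(\mathbf{e},s)$ once $\Lambda_N$ is large enough to contain all plaquettes incident to $e$ or $e^{-1}$, which is immediate from $\bigcup_N \Lambda_N = \ZZ^d$ and the finiteness of $\cP_{\ZZ^d}(e^{\pm 1})$.
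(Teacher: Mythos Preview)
Your proposal is correct and follows essentially the same approach as the paper: apply the finite-$N$ master loop equation (Theorem~\ref{thm: finite N master loop equation}), pass to the limit along the subsequence using that the splitting and deformation sums are finite, and kill the merger terms via the uniform bound $|\phi_{\Lambda_N,N,\upbeta}(s')|\leq 1$ together with the $1/N^2$ prefactor. Your additional remark about the stabilization of $\DD_\pm(\mathbf{e},s)$ once $\Lambda_N$ contains all plaquettes incident to $e^{\pm1}$ is a slight elaboration beyond what the paper states, but the core argument is the same.
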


\begin{proof}
If we take the limit of the equation in Theorem~\ref{thm: finite N master loop equation} (satisfied by $\phi_{\Lambda_N,N, \upbeta}(s)$) along the subsequence in our theorem statement, we get the desired result as long as we can show that the merger terms vanish (note we can exchange the order of the limit and sum as there are only a finite number of splitting and deformations). To see that the merger terms vanish, notice that $|\phi_{\Lambda_N,N,\upbeta}(s)|\leq 1$ for any string $s$ contained in $\Lambda_N$ as the eigenvalues of unitary matrices have modulus one and our Wilson loop observable are defined in terms of the normalized trace (recall \eqref{eq: Wilson loop observables}). Thus, since $\Lambda_N$ increases to $\ZZ^d$, it will eventually contain any fixed string $s$. Since for a fixed string $s$ there are only a finite number of possible mergers, this bound on $|\phi_{\Lambda_N,N,\upbeta}(s)|$ gives us that the factors $\frac{1}{N^2}$ will make these terms vanish.
\end{proof}

The next proposition (which immediately follows from \cite[Theorem 9.2]{chatterjee_rigorous_2019}) shows that the master loop equation in the large-$N$ limit has a unique solution for sufficiently small $\upbeta$.

\begin{prop}[\textsc{uniqueness of solution for single-location master loop equation in the large-n limit}]\label{prop: uniqueness of solution for single-location master loop equation in the 't Hooft limit}
Given any $L\geq 1$, there exists $\upbeta_*(L,d)>0$ such that if $|\upbeta|\leq \upbeta_*(L,d)$, then there is a unique function $\phi_{\infty}:\cS\to \RR$ such that 
\begin{enumerate}
	\item $\phi_{\infty}(\emptyset) = 1$;
	\item $\phi_{\infty}$ is invariant under backtrack erasures, i.e.\ for any loop $\ell = \pi_1ee^{-1} \pi_2$, we have that $\phi_{\infty}(\ell) = \phi_{\infty}(\pi_1\pi_2)$;
	\item $|\phi_{\infty}(s)|\leq L^{|s|}$, for all $s\in \cS$;
	\item $\phi_{\infty}(s)$ satisfies the master loop equation \eqref{eq:limiting_master_loop_eq},  for all non-empty $s\in \cS$.
\end{enumerate}
\end{prop}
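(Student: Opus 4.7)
The plan is to prove uniqueness by a Banach fixed-point argument, following \cite[Section 9]{chatterjee_rigorous_2019}. Suppose $\phi_\infty^{(1)}$ and $\phi_\infty^{(2)}$ both satisfy conditions (1)--(4). Then $\eta := \phi_\infty^{(1)} - \phi_\infty^{(2)}$ satisfies $\eta(\emptyset)=0$, is backtrack-invariant, obeys $|\eta(s)| \leq 2L^{|s|}$, and satisfies the homogeneous version of \eqref{eq:limiting_master_loop_eq} (the equation is linear in $\phi_\infty$ and has no inhomogeneous term). It thus suffices to show $\eta \equiv 0$, which I would do by recasting \eqref{eq:limiting_master_loop_eq} as a fixed-point equation $\eta = T_\upbeta \eta$ on a suitable Banach space, where $T_\upbeta$ is a strict contraction once $|\upbeta|$ is small enough in terms of $L$ and $d$.

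Concretely, I would work on the Banach space $\cB$ of backtrack-invariant functions on $\cS$ vanishing at $\emptyset$, equipped with a weighted supremum norm $\|\eta\|_* := \sup_{s \neq \emptyset} |\eta(s)|/w(s)$ for a judiciously chosen weight $w: \cS \to (0,\infty)$ comparable to $L^{|s|}$. The RHS of \eqref{eq:limiting_master_loop_eq} at a fixed edge $\mathbf{e}$ decomposes into four sums to be estimated: at most $|s|$ positive splittings and $|s|$ negative splittings (each producing a string $s'$ with $|s'|\leq |s|$), and at most $4(d-1)$ deformations (each producing a string $s'$ with $|s'|\in \{|s|+2,|s|+4\}$). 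The goal is to calibrate $w$ so that the splitting sums are bounded by $\tfrac{1}{2} w(s) \|\eta\|_*$ and the deformation sums are bounded by $|\upbeta|\, C(L,d)\, w(s)\|\eta\|_*$; taking $\upbeta_*(L,d)$ small enough to make the second contribution at most $\tfrac{1}{2}w(s)\|\eta\|_*$ yields $\|T_\upbeta\|_* \leq 1$ with strict inequality, and contractivity of $T_\upbeta$ on $\cB$ follows.

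The main obstacle is the splitting bound: positive splittings preserve $|s|$ while the number of such terms scales linearly in $|s|$, so a naive weight $w(s) = L^{|s|}$ yields an estimate of size $|s|\,\|\eta\|_*$ that is not a contraction. The resolution, already in \cite{chatterjee_rigorous_2019}, is to refine $w$ by an auxiliary complexity factor (for instance penalising the number of loops in $s$, exploiting that positive splittings strictly increase this count while preserving total length) so that $\sum_{s'} w(s')$ strictly decreases under splittings and absorbs the linear-in-$|s|$ branching; an equivalent Picard-iteration viewpoint is that of summing over ``vanishing string trajectories'' as described in Remark~\ref{remark:comparison-with-chatterjee-vst}, with the $\upbeta^{\#\text{deformations}}$ weights controlling combinatorial growth. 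Once contractivity is established, the unique fixed point of $T_\upbeta$ in $\cB$ is $0$, so $\eta \equiv 0$ and uniqueness follows. Verifying condition~(2) is preserved by $T_\upbeta$ is routine, and was in fact the subject of Lemma~\ref{lemma: backtrack cancellations} in a closely related setting.
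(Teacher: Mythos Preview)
Your contraction-mapping strategy is correct and is essentially the content of \cite[Theorem 9.2]{chatterjee_rigorous_2019}, which the paper cites rather than reproves. The paper's own proof is a brief reduction: given two solutions, fix an unoriented lattice edge $e$ appearing in some loop $\ell\in s$, apply the single-location equation \eqref{eq:limiting_master_loop_eq} at each of the $m$ copies of $e$ in $\ell$, and sum; this yields Chatterjee's all-locations master loop equation. Condition (2) is invoked precisely at this step, to match Chatterjee's convention of erasing backtracks after every loop operation. Then \cite[Theorem 9.2]{chatterjee_rigorous_2019} applies directly. In short, you are proposing to reprove the cited black box; the paper simply invokes it.

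One technical point in your sketch deserves care: a pure weighted sup norm $\sup_s|\eta(s)|/w(s)$ with $w$ depending only on $(|s|,\#s)$ does not readily contract the splitting terms, since the number of positive splittings at a fixed $\mathbf{e}$ can be as large as $|\ell|-1$ and $\sum_{s'} w(s')/w(s)$ then scales with that count. What Chatterjee uses, and what the paper itself spells out in Section~\ref{Asec: proof of uniqness to limiting master loop equation} for the closely related $(s,K)$-indexed map, is a mixed norm $\sum_{\delta\in\Delta^+}\lambda^{\iota(\delta)}\sup_{\delta(s)\le\delta}|\eta(s)|$: the $\ell^1$-in-$\delta$ structure absorbs the branching because the profiles $(\delta_1-h,h,\delta_2,\dots)$ arising from splittings re-index injectively into the $\delta$-sum while gaining a factor $\lambda$ from $\iota(\delta')=\iota(\delta)-1$ (see Lemmas~\ref{lemma:delta-map}--\ref{lemma:M-contraction-map}). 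Your identification of $\iota$ as the key complexity is right; only the form of the norm needs adjusting. Finally, your closing reference to Lemma~\ref{lemma: backtrack cancellations} is slightly misplaced: that lemma establishes backtrack invariance for the specific surface sum $\phi^K$, not preservation of backtrack invariance under a generic $T_\upbeta$-iterate.
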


\begin{proof}
Suppose that there are two functions $\phi_{\infty}(s)$ and $\gamma_{\infty}(s)$ that satisfy the conditions of the theorem. Fix a string $s\in\mathcal{S}$ and  a loop $\ell$ in $s$. Let $e\in \ZZ^d$ be an unoriented lattice edge such that $\ell$ contains at least one copy of this edge (in one of the two possible orientations). Suppose that there are $m$ different edges in $\ell$ that correspond to the edge $e$ (in one of the two possible orientations).  Applying \eqref{eq:limiting_master_loop_eq} at every edge $\mathbf{e}$ in $\ell$ that corresponds to $e$, we get that $\phi_{\infty}(s)$ and $\gamma_{\infty}(s)$ both satisfy the conditions\footnote{Recall Remark~\ref{rmk: beta factor of two} when transfering result from our paper to the ones in \cite{chatterjee_rigorous_2019}.} of \cite[Theorem 9.2]{chatterjee_rigorous_2019} giving us that $\phi_{\infty}(s) = \gamma_{\infty}(s)$. Here, we remark that our assumption that $\phi_{\infty}$ is invariant under backtrack erasures is needed because the Master loop equation in \cite{chatterjee_rigorous_2019} is stated for loop operations with all backtracks removed, whereas in our master loop equation \eqref{eq:limiting_master_loop_eq}, we do not necessarily remove backtracks.
\end{proof}

We have the following desired consequence.

\begin{cor}\label{cor:almostdone}
Let $\Lambda_N$ be an increasing sequence converging to $\ZZ^d$.
There exists a number $\upbeta_1(d)>0$, depending only on the dimension $d$, such that the following is true. If $|\upbeta|\leq \upbeta_1(d)$, then, for any string $s\in\cS$,
\begin{equation*}
	\text{$\phi_{\Lambda_N,N,\upbeta}(s)$ converges to a limit $\phi_{\infty}(s)$ as $N\to\infty$.}
\end{equation*} 
Moreover, $\phi_{\infty}(\emptyset)=1$, $\phi_{\infty}$ is invariant under backtrack erasures, $|\phi_{\infty}(s)|\leq 1$ for all $s\in\cS$, and $\phi_{\infty}(s)$ is the unique solution to the master loop equation \eqref{eq:limiting_master_loop_eq} for all non-empty $s\in \cS$.
\end{cor}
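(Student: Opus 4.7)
The plan is a standard compactness-uniqueness argument. Fix $\upbeta_1(d) := \upbeta_*(1,d)$, where $\upbeta_*(L,d)$ is the constant from Proposition \ref{prop: uniqueness of solution for single-location master loop equation in the 't Hooft limit}, and assume $|\upbeta|\leq \upbeta_1(d)$ throughout. Since the set $\cS$ of strings is countable, and for every fixed $s\in\cS$ one has $s\subset \Lambda_N$ for all $N$ sufficiently large (as $\Lambda_N\uparrow\ZZ^d$), with $|\phi_{\Lambda_N,N,\upbeta}(s)|\leq 1$ once $s\subset\Lambda_N$ (because unitary matrices have eigenvalues of modulus one and our Wilson observable uses the normalized trace), a Cantor diagonal extraction yields a subsequence $(N_k)$ along which $\phi_{\Lambda_{N_k},N_k,\upbeta}(s)$ converges for every $s\in\cS$ simultaneously. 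Denote the limit by $\phi_\infty(s)$.

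Next, I verify that any such subsequential limit $\phi_\infty$ satisfies the four hypotheses of Proposition \ref{prop: uniqueness of solution for single-location master loop equation in the 't Hooft limit} with $L=1$. Condition (1) is immediate from $\phi_{\Lambda_N,N,\upbeta}(\emptyset)=1$. Condition (2) follows by passing to the limit the finite-$N$ identity $\phi_{\Lambda_N,N,\upbeta}(\pi_1 \mathbf{e}\mathbf{e}^{-1}\pi_2)=\phi_{\Lambda_N,N,\upbeta}(\pi_1\pi_2)$, which in turn follows from the definition of Wilson loop observables together with the edge-reversal symmetry $Q_{e^{-1}}=Q_e^{-1}$ noted in Section \ref{subsec: lattice Yang--Mills}. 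Condition (3), $|\phi_\infty(s)|\leq 1^{|s|}=1$, passes to the limit from the same uniform bound used for the diagonal extraction. Condition (4), the master loop equation \eqref{eq:limiting_master_loop_eq}, is exactly the conclusion of Proposition \ref{prop: single-location master loop equation in the 't Hooft limit} applied to the chosen subsequence.

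By Proposition \ref{prop: uniqueness of solution for single-location master loop equation in the 't Hooft limit} (with $L=1$ and $\upbeta_1(d)=\upbeta_*(1,d)$), there is at most one function $\cS\to\RR$ satisfying (1)--(4), so $\phi_\infty$ is uniquely determined and does not depend on the extracted subsequence. A standard subsequence argument then upgrades the subsequential convergence to convergence of the full sequence: if $\phi_{\Lambda_N,N,\upbeta}(s)$ did not converge to $\phi_\infty(s)$ for some $s$, we could extract a further subsequence staying bounded away from $\phi_\infty(s)$, apply the diagonal extraction again within this subsequence to obtain another limit satisfying (1)--(4), and reach a contradiction with uniqueness. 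This yields the convergence claim together with all the stated properties of $\phi_\infty$.

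The argument is essentially routine; the only slightly delicate points are the uniform bound $|\phi_{\Lambda_N,N,\upbeta}(s)|\leq 1$ (which holds once $\Lambda_N\supseteq s$, and this is where we use that $\Lambda_N\uparrow\ZZ^d$) and the fact that backtrack invariance is assumed in Proposition \ref{prop: uniqueness of solution for single-location master loop equation in the 't Hooft limit} because our master loop equation \eqref{eq:limiting_master_loop_eq} does not a priori erase backtracks, as already emphasized in the proof of that proposition. Both are handled as indicated above, so no genuine obstacle remains.
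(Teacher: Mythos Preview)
Your proposal is correct and follows essentially the same approach as the paper: set $\upbeta_1(d)=\upbeta_*(1,d)$, use the uniform bound $|\phi_{\Lambda_N,N,\upbeta}(s)|\le 1$ together with a diagonal extraction, verify that any subsequential limit satisfies the four hypotheses of Proposition~\ref{prop: uniqueness of solution for single-location master loop equation in the 't Hooft limit} with $L=1$ (using Proposition~\ref{prop: single-location master loop equation in the 't Hooft limit} for the master loop equation and finite-$N$ backtrack invariance), and conclude full-sequence convergence from uniqueness. The only cosmetic difference is that the paper phrases the final step as ``every subsequence has a further subsequence converging to the same limit,'' which is logically equivalent to your contradiction argument.
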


\begin{proof}
We use the compact notation $\phi_{N}(s):=\phi_{\Lambda_N,N,\upbeta}(s)$. We will show that the statement of the corollary holds by setting $\upbeta_1(d):=\upbeta_*(1,d)$ from  Proposition~\ref{prop: uniqueness of solution for single-location master loop equation in the 't Hooft limit}.
To do this, we prove that every subsequence $N_k$ of $N$ has a further subsequence $N_{k_j}$ such that if $|\upbeta|\leq \upbeta_1(d)$ then for all $s\in\cS$,  
\begin{equation*}
	\text{$\phi_{N_{k_j}}(s)$ converges to $\phi_{\infty}(s)$},
\end{equation*}
where $\phi_{\infty}(s)$ is as in the lemma statement. This would be enough to conclude by standard arguments.

Fix a subsequence $N_k$ of $N$. Recall that $|\phi_{N_k}(s)|\leq 1$ for any string $s$ contained in $\Lambda_{N_k}$. Since $\Lambda_{N_k}$ increases to $\ZZ^d$, it will eventually contain any $s$. Therefore, a standard diagonal argument, gives the existence of a subsequence  $N_{k_j}$ along
which the limit of $\phi_{N_{k_j}}(s)$ exists for all $s\in\cS$. Call this limit $\hat{\phi}_{\infty}(s)$. It remains to prove that $\hat{\phi}_{\infty}(s)=\phi_{\infty}(s)$. Note that
\begin{enumerate}
	\item by the discussion below \eqref{eq: Wilson loop observables}, we have that by definition $|\phi_{N}(\emptyset)|= 1$ for all  $N\geq1$, and so we must also have that $\hat{\phi}_{\infty}(\emptyset)=1$;
	\item for any loop $\ell = \pi_1ee^{-1} \pi_2$, by defintion of lattice Yang--Mills theory, we have that $\phi_{N}(\ell) = \phi_{N}(\pi_1\pi_2)$, and so we must also have that $\hat{\phi}_{\infty}$ is invariant under backtrack erasures;
	\item by construction, $|\hat{\phi}_{\infty}(s)|\leq 1$ for all $s\in\cS$;
	\item by Proposition~\ref{prop: single-location master loop equation in the 't Hooft limit}, $\hat{\phi}_{\infty}(s)$ solves the master loop equation \eqref{eq:limiting_master_loop_eq} for all non-empty $s\in\cS$.
\end{enumerate}
Hence, by the uniqueness in Proposition~\ref{prop: uniqueness of solution for single-location master loop equation in the 't Hooft limit} with $L=1$, we get that  $\hat{\phi}_{\infty}(s)=\phi_{\infty}(s)$ for all $|\upbeta|\leq \upbeta_1(d)$ and all strings $s\in\cS$.
\end{proof}

Note that in the above proof we only used Proposition~\ref{prop: uniqueness of solution for single-location master loop equation in the 't Hooft limit} with $L=1$. The general $L$ case is used to conclude that $\phi_{\infty}$ from Corollary~\ref{cor:almostdone} is actually equal to the explicit $\phi$ we introduced in Theorem~\ref{thm: surface sum representation in 't hooft limit}.

We now show that $\phi_{\infty}=\phi$ by showing that $\phi$ satisfies the conditions of Proposition~\ref{prop: uniqueness of solution for single-location master loop equation in the 't Hooft limit}.
Recall by the comments immediately after Theorem~\ref{thm: surface sum representation in 't hooft limit} that $\phi(s)$ is defined such that
\begin{equation}\label{eq:condition-1}
\phi(\emptyset)=1,
\end{equation}
so the first condition holds. The second condition concerning backtrack erasure invariance follows by Lemma \ref{lemma: backtrack cancellations} and the next estimate, which also shows the third condition. The proof is postponed to Section~\ref{Asec: proof of uniqness to limiting master loop equation}.
\begin{lem}\label{lemma:phi-bound}
There exists  $L\geq 1$ large enough and $\upbeta_{2}(d)>0$ such that if $|\upbeta| \leq \upbeta_2(d)$ then $\phi(s)$ is absolutely convergent and
\begin{equation}\label{eq:condition-2}
	|\phi(s)| \leq L^{|s|},\quad\text{ for all $s\in \cS$}.
\end{equation}
\end{lem}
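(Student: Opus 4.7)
The plan is to establish the stronger bound
\[
\sum_{K : \cP_{\ZZ^d} \to \NN} \sum_{M \in \npe(\ell,K)} |\upbeta|^{\area(M)} |w_\infty(M)| \leq L^{|\ell|}
\]
for every single loop $\ell$. Once this is in hand, absolute convergence of $\phi(\ell)$ and the bound $|\phi(\ell)| \leq L^{|\ell|}$ follow from the triangle inequality, and the factorization $\phi(s) = \prod_{i=1}^n \phi(\ell_i)$ together with $|s| = \sum_i |\ell_i|$ gives $|\phi(s)| \leq L^{|s|}$ for general strings $s\in \cS$ (the case $s=\emptyset$ being trivial since $\phi(\emptyset)=1=L^0$). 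The core of the argument is thus to dominate the absolute surface sum by a geometric series in the area.

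To carry this out, I would first bound the weights. Since $|w_i| = \cat(i-1) \leq 4^{i-1}$, one has
\[
|w_\infty(M)| \leq \prod_{f \in \BF(M)} 4^{\deg(f)/2} \leq 4^{E(M)},
\]
using that $\sum_{f \in \BF(M)} \deg(f) \leq 2 E(M)$. Combined with the obvious linear bound $E(M) \leq C_1 (\area(M) + |\ell|)$ coming from Euler's formula applied to the disk together with the fact that each internal yellow face is a quadrangle, this yields $|w_\infty(M)| \leq C_2^{\area(M) + |\ell|}$ for some constant $C_2$ depending only on $d$. Second, I would bound the number of maps of fixed area: namely, the number of non-separable planar embedded maps $M \in \npe(\ell,K)$ summed over all plaquette assignments $K$ with $\sum_p K(p) = A$ is at most $C_3(d)^{A + |\ell|}$. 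This splits into two sub-counts: (a) the enumeration of abstract planar maps with $A$ internal quadrangular (yellow) faces, $|\ell|$ boundary edges, and the bipartite face-coloring constraint, which grows exponentially in $A+|\ell|$ by standard planar map enumeration techniques; and (b) the counting of lattice embeddings, where once the boundary is fixed, each internal yellow face can be sent to at most $2(d-1)$ lattice plaquettes containing a given oriented edge, contributing at most $(2d)^A$ on top of the abstract count. Combining these ingredients,
\[
\sum_{K} \sum_{M \in \npe(\ell,K)} |\upbeta|^{\area(M)} |w_\infty(M)| \leq C_4(d)^{|\ell|} \sum_{A \geq 0} (|\upbeta| \, C_4(d))^A,
\]
which converges for $|\upbeta| \leq \upbeta_2(d) := 1/(2 C_4(d))$, and the total is bounded by $L^{|\ell|}$ for a suitable constant $L = L(d) \geq 1$.

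The main obstacle is the combinatorial enumeration of non-separable planar embedded maps. Obtaining an explicit exponential-in-$A$ bound requires either (i) a peeling-type construction, building maps plaquette-by-plaquette while maintaining the non-separability and planarity conditions, with only a bounded number of choices (depending on $d$) at each step; or (ii) leveraging the trajectory counting bounds from \cite{chatterjee_rigorous_2019}, observing that our surface sum refines his string trajectory sum and that the resulting weights are comparable. Either approach requires some care to ensure that the non-separability condition from Definition~\ref{defn:non-separableplanar-embedded} does not cause enumeration difficulties, and to verify that the resulting constant $\upbeta_2(d)$ is compatible with the constant $\upbeta_0(d)$ appearing in Theorem~\ref{thm: surface sum representation in 't hooft limit}.
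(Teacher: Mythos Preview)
Your approach is plausible and takes a genuinely different route from the paper. The paper does \emph{not} enumerate maps directly; instead it first proves the pointwise estimate $|\phi^K(s)| \leq C^{|s|}(C\upbeta)^{\area(K)}$ (Proposition~\ref{prop:-technical-bound-on-phi-K}) by viewing the fixed-$K$ master loop equation (Theorem~\ref{thm: fixed K 't Hooft master loop equation for surface sum}) as a fixed-point equation $Mf_\phi = f_\phi$ and establishing a contraction estimate for $M$ in a carefully designed mixed $\ell^1/\ell^\infty$ norm (the weight $\lambda^{\iota(\delta)}$ with $\iota(\delta)=|\delta|-\#\delta$ rather than $\lambda^{|\delta|}$ is essential, since positive splittings preserve total length but always decrease $\iota$). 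The sum over $K$ is then controlled by a separate count of $\ell$-connected plaquette assignments (Lemma~\ref{lemma:ell-connected-plaquette-bound}). So the paper leverages the recursive structure it has already built, whereas your argument is self-contained and more elementary in spirit: bound the Catalan weights crudely, then count embedded planar maps of area $A$ directly. Both yield the same intermediate exponential-in-$A$ bound. The trade-off is that your enumeration step is only sketched; your part (a) is genuinely standard (rooted planar maps with $E=4A+|\ell|$ edges are $O(C^E)$), and your part (b) works once you note that an exploration alternating blue faces (no choice, lattice edge forced) and yellow faces ($\leq 2(d-1)$ plaquette choices each) reaches every face by connectedness of the disk---but this needs to be written out. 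The paper's route avoids this bookkeeping entirely at the cost of depending on Theorem~\ref{thm: fixed K 't Hooft master loop equation for surface sum}.
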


Finally, since we know from the above lemma that if $|\upbeta| \leq \upbeta_2(d)$ then $\phi(s)$ is absolutely convergent, we can immediately deduce from Corollary~\ref{cor:mle-goal} (which holds under our assumption) that, whenever $|\upbeta| \leq \upbeta_2(d)$, $\phi(s)$ satisfies the master loop equation \eqref{eq:limiting_master_loop_eq} for all non-empty $s\in \cS$.
Combining this with~\eqref{eq:condition-1}~and~\eqref{eq:condition-2}, we can conclude from the general $L$ case of Proposition~\ref{prop: uniqueness of solution for single-location master loop equation in the 't Hooft limit} that $\phi_{\infty}=\phi$ for all $|\upbeta|\leq \upbeta_0(d)$, where
\begin{equation}\label{eq:beta_0-def}
\upbeta_0(d):=\min\{\upbeta_1(d),\upbeta_2(d)\}.
\end{equation}

This completes the proof of Theorem~\ref{thm: surface sum representation in 't hooft limit}, assuming that Theorem~\ref{thm: fixed K 't Hooft master loop equation for surface sum} holds.
In the next subsection we give the missing proof of Lemma~\ref{lemma:phi-bound}. Theorem~\ref{thm: fixed K 't Hooft master loop equation for surface sum} will be proved in Section~\ref{subsec: surface sum Satisfies Master Loop Equation}, after  introducing our  ``peeling exploration'' in Section~\ref{subsubsec: Splittings and Deformations on embedded maps}.

\subsubsection{Absolute summability}\label{Asec: proof of uniqness to limiting master loop equation}

In this section, we prove Lemma \ref{lemma:phi-bound} concerning the absolute summability of 
\[\phi(\cdot)=\sum_{K:\cP_{\ZZ^d}\to\NN} \phi^K(\cdot),\] 
where we recall that the infinite sum $\phi(\cdot)$ is only over finite plaquette assignments $K$, i.e.\ plaquette assignments such that $\sum_{p\in\cP_{\ZZ^d}}K(p)<\infty$.
We remark at the beginning that while absolute summability is a crucial result, the arguments are mostly of a technical nature, and are quite distinct from the main conceptual arguments of the paper. Thus, we recommend the reader to skip Section \ref{Asec: proof of uniqness to limiting master loop equation} on a first reading.

The primary step is to obtain a bound like 
$$|\phi^K(s)|  \leq C^{|s|} (C \upbeta d)^{\area(K)}$$ 
for some constant $C$, where
$\area(K):=\sum_{p\in\cP_{\ZZ^d}}K(p).$ This goal will be achieved in Proposition~\ref{prop:-technical-bound-on-phi-K}.
The proof of this estimate will be an adaptation of the fixed point argument in \cite{chatterjee_rigorous_2019}. The main idea is to view the master loop equation (from Corollary \ref{cor:mle-goal}) as a fixed point equation, and then to derive appropriate estimates for the associated fixed point map. First, we introduce some preliminary notation and a notion of norm that will be used in the following; in Remark~\ref{rmk:expl-norm} we will explain the motivations behind our choice of norm.

It will be convenient in this section to view strings as ordered collections of rooted loops. With this in mind, we make the following definition.

\begin{defn}
Let $\cS_o$ be the set of finite ordered tuples of rooted loops $s = (\ell_1, \ldots, \ell_n)$. Rooted loops are loops with a distinguished starting point.
\end{defn}	

In this subsection, when we refer to ``loops'', by default we refer to rooted loops, and when we refer to ``strings'', we refer to ordered tuples of rooted loops. We prefer to introduce this slight abuse of notation, rather than always saying ``ordered string of rooted loops''. We remark that it is often convenient in combinatorics to order unlabeled collections of objects so as to avoid having to consider combinatorial factors. The ensuing arguments are an example of this.

To begin the discussion, let $\Delta$ be the set of all finite sequences of strictly positive integers, plus the null sequence. Given $\delta = (\delta_1,\dots,\delta_n)\in \Delta$, define\begin{align*}
|\delta| ~:= \sum_{i=1}^n\delta_i, \hspace{0.5cm} \#\delta := n, \hspace{0.5cm} \iota(\delta) = |\delta|-\#\delta.
\end{align*}
All these quantities are defined to be $0$ for the null sequence. Given two non-null elements $\delta=(\delta_1,\dots,\delta_n)\in \delta$ and $\delta' = (\delta'_1,\dots,\delta'_m)\in \delta$ we say that $\delta\leq \delta'$ if $m=n$ and $\delta_i\leq \delta'_i$ for each $i$. Given a string $s = (s_1, \ldots, s_n) \in \cS_o$, define $\delta(s) := (|s_1|, \ldots, |s_n|)$ (if $s$ is the null string, then $\delta(s)$ is defined to be the null sequence).

Let $\Delta^+$ be the subset of $\Delta$ consisting of non-null elements whose components are all at least $2$. Since we are working with the usual integer lattice $\ZZ^d$, if $s$ is a non-null string then $\delta(s)\in \Delta^+$. For $\lambda < 1/2$, we have that
\begin{equation}\label{eq:sum-delta}
\sum_{\delta \in \Delta^+} \lambda^{\iota(\delta)} = \sum_{n=1}^\infty \sum_{\delta_1, \ldots, \delta_n \geq 2} \lambda^{(\delta_1 + \cdots + \delta_n) - n} = \sum_{n=1}^\infty \lambda^{-n} \bigg(\frac{\lambda^2}{1-\lambda}\bigg)^n = \frac{\lambda}{1 - 2\lambda}.
\end{equation}
We set and recall some convenient notation to be used in the following:
\begin{itemize}
\item Given a plaquette assignment $K : \cP_{\ZZ^d} \rightarrow \NN$, we defined $\area(K) := \sum_{p \in \cP_{\ZZ^d}} K(p)$.
\item For two plaquette assignments $K, K' : \cP_{\ZZ^d} \rightarrow \NN$, we say that $K \leq K'$ if $K(p) \leq K'(p)$ for all $p \in \cP_{\ZZ^d}$.
\end{itemize}

\begin{defn}
Given a plaquette assignment $K_* : \cP_{\ZZ^d} \rightarrow \NN$ and an integer $M_* \geq 1$, define the index set
\[
I_{K_*, M_*} ~:= \{(s, K) \,:\, s \in \cS_o,\, K: \cP_{\ZZ^d} \rightarrow \NN,\, K \leq K_*,\, |s| + 4\area(K) \leq M_*\},
\]
and define the space
\begin{equation*}
	\Theta_{K_*, M_*} := \left\{f = (f(s, K))_{(s, K) \in I_{K_*, M_*}} \,:\, f(\emptyset, K) = \mathds{1}_{\{K = 0\}}\right\}.
\end{equation*} 
\end{defn}

Note that $\Theta_{K_*, M_*}$ is simply a subset (even more, an affine subspace) of $\RR^{I_{K_*, M_*}}$ and the function $f_\phi(s, K) := \phi^K(s)$ is an element of $\Theta_{K_*, M_*}$, where $\phi^K$ is as in \eqref{eq:gen-phi-string}.
Next, we define a carefully chosen norm on $\RR^{I_{K_*, M_*}}$. We remark that we will only ever apply the norm to elements of $\Theta_{K_*, M_*} \subseteq \RR^{I_{K_*, M_*}}$. However, our terminology of ``norm'' is only accurate when viewed as a function defined on $\RR^{I_{K_*, M_*}}$, since this is a vector space while $\Theta_{K_*, M_*}$ is only an affine space.

For $\gamma \in (0, \infty)$, let
\begin{equation}\label{eq:defn-Dgf}
(D_\gamma f)(\delta) ~:= \sup_{\substack{s \in \cS_o \\ \delta(s) \leq \delta}} \sum_{K:(s,K)\in I_{K_*, M_*}} \gamma^{\area(K)} |f(s, K)|, \qquad\forall f \in \RR^{I_{K_*, M_*}}, \delta \in \Delta_+.
\end{equation}
For $\lambda \in (0, 1/2)$, $\gamma \in (0, \infty)$, define the norm 
\begin{equation}\label{eq:lambda-gamma-norm}
\|f\|_{\lambda, \gamma} ~:= \sum_{\delta \in \Delta_+} \lambda^{\iota(\delta)} (D_\gamma f)(\delta), \qquad\forall f \in \RR^{I_{K_*, M_*}}.
\end{equation}
Here, even if $\|\cdot\|_{\lambda, \gamma}$ does not satisfy all the mathematical properties of a norm (it may be infinite), we will still refer to it as one, because this is really how we think about it. 

Note that the norm $\|\cdot\|_{\lambda, \gamma}$ also depends on $K_*, M_*$, but we keep this dependence implicit. In the end, the motivation for this norm is that it satisfies the estimate in Lemma \ref{lemma:M-contraction-map}, which we will get to later.

\begin{rmk}[\textsc{comments on the norm}]\label{rmk:expl-norm}
One can think of the norm \eqref{eq:lambda-gamma-norm} as a mixed $\ell^1$ and $\ell^\infty$ norm. Intuitively, one should think of $\lambda$ as a small parameter and $\gamma$ as a large parameter greater than 1 (the latter will only be true when $\upbeta$ is small enough -- see the proof of Proposition \ref{prop:-technical-bound-on-phi-K}), so that $$\|f\|_{\lambda, \gamma} \leq C\quad\text{ implies that}\quad|f(s, K)| \leq C \lambda^{-\iota(s)} \gamma^{-\area(K)},$$ 
i.e.\ $f$ decays exponentially in $\area(K)$ and grows at most exponentially in $\iota(s)$ (a quantity which should be thought of as a proxy for the length of $s$).

Perhaps a more natural definition of the norm would have been to use $\lambda^{|\delta|}$ instead of $\lambda^{\iota(\delta)}$ in \eqref{eq:lambda-gamma-norm}, i.e.\ to just weight by the total length. However, this norm turns out to be too weak to close the ensuing contraction mapping argument. The problem is that if $s'$ is a positive splitting of $s$, then it could be the case that $|s'| = |s|$, while it is always the case that $\iota(\delta(s')) \leq \iota(\delta(s)) - 1$ as we will show in Lemma~\ref{lemma:delta-map}. The latter estimate allows us to gain a crucial factor of $\lambda$ (which recall we are thinking of as small).
\end{rmk}

The reason we take finite parameters $K_*, M_*$ is so that we have the following soft estimate.

\begin{lem}\label{lemma:phi-finite-norm}
Let $K_* : \cP_{\ZZ^d} \rightarrow \NN$ be a plaquette assignment and $M_* \geq 1$ an integer. Let $f_\phi(s, K) := \phi^K(s)$, where $\phi^K(s)$ is as in \eqref{eq:gen-phi-string}. For any $\lambda \in (0, 1/2)$ and $\gamma \in (0, \infty)$, we have that
\[ \|f_\phi\|_{\lambda, \gamma} < \infty,\]
where we recall that the norm $\|\cdot\|_{\lambda, \gamma}$ is defined using the implicit parameters $K_*$ and $M_*$, which we fixed.
\end{lem}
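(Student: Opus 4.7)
The plan is to show that there exists a finite constant $C$ (depending on $M_*$, $K_*$, $d$, $|\upbeta|$, $\gamma$) such that $(D_\gamma f_\phi)(\delta) \leq C$ for every $\delta \in \Delta_+$, after which \eqref{eq:sum-delta} immediately yields $\|f_\phi\|_{\lambda, \gamma} \leq C \cdot \frac{\lambda}{1-2\lambda} < \infty$. The argument is entirely soft: every ingredient reduces to the observation that $I_{K_*, M_*}$ is, up to lattice translations, a finite set, after which the inner sum over $K$ and the supremum over $s$ in the definition of $\|\cdot\|_{\lambda, \gamma}$ are both easily controlled.

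First I would obtain a uniform pointwise bound $|\phi^K(s)| \leq C_1$ for every $(s, K) \in I_{K_*, M_*}$. Any $M = (m, \psi) \in \npe(s, K)$ has exactly $|s| + 4\area(K) \leq M_*$ edges (each edge bounds one yellow face and one blue face, and the sum of yellow-face perimeters equals $|s| + 4\area(K)$). From $\cat(k) \leq 4^k$ one gets $|w_\infty(M)| \leq 4^{M_*}$, while standard enumeration bounds on rooted planar maps with a bounded number of edges give $|\npe(s, K)| \leq C_0^{M_*}$ uniformly in $s$ with $|s| \leq M_*$. Since also $|\upbeta|^{\area(K)} \leq \max(1, |\upbeta|)^{M_*/4}$, combining these yields the desired uniform bound $|\phi^K(s)| \leq C_1$.

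The key geometric step is to show that for every fixed $s$ with $|s| \leq M_*$, the number of $K$ with $(s, K) \in I_{K_*, M_*}$ and $\phi^K(s) \neq 0$ is bounded by a constant $C_2$ depending only on $M_*$ and $d$. This follows from connectivity of the map: any $(m, \psi) \in \npe(s, K)$ gives a connected subgraph $\psi(m) \subset \ZZ^d$ containing the image of $s$ and having at most $M_*$ edges, so every plaquette in the support of $K$ lies within graph-distance $M_*$ of some vertex of $s$, and such a neighborhood contains at most $C_d M_*^d$ plaquettes. Since moreover $K(p) \leq \min(K_*(p), M_*/4) \leq M_*/4$, the number of such $K$ is bounded by $C_2 := (M_*/4 + 1)^{C_d M_*^d}$. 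Together with the preceding paragraph, this gives
\[
\sum_{K : (s, K) \in I_{K_*, M_*}} \gamma^{\area(K)} |\phi^K(s)| \leq C_1 C_2 \max(1, \gamma)^{M_*/4} =: C
\]
for every $s$ with $|s| \leq M_*$, hence $(D_\gamma f_\phi)(\delta) \leq C$ uniformly in $\delta$, and \eqref{eq:sum-delta} concludes the proof.

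There is no substantive obstacle in this argument: the lemma is a soft qualitative statement whose sole purpose (as the authors themselves note) is to place $f_\phi$ in the normed space $\Theta_{K_*, M_*}$ equipped with $\|\cdot\|_{\lambda, \gamma}$, so that the genuine fixed-point/contraction analysis --- carried out via Lemma \ref{lemma:M-contraction-map} and the ensuing Proposition \ref{prop:-technical-bound-on-phi-K} --- can supply the quantitative estimate on $\|f_\phi\|_{\lambda, \gamma}$ that is actually needed in the remainder of the argument.
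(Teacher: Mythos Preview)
Your argument is correct and reaches the conclusion by the same final step as the paper (a uniform bound on $(D_\gamma f_\phi)(\delta)$ followed by \eqref{eq:sum-delta}), but the way you obtain that uniform bound is genuinely different. The paper exploits that $K_*$ has finite support: if $\phi^K(s)\neq 0$ then some edge of $s$ must lie in a plaquette of $\mathrm{supp}(K_*)$, and since $\mathrm{supp}(K_*)$ is finite and $|s|\leq M_*$, there are only finitely many pairs $(s,K)\in I_{K_*,M_*}$ with $\phi^K(s)\neq 0$ in total. This yields the constant $C$ with no quantitative input whatsoever. You instead fix $s$ and use connectivity of the maps in $\npe(\ell_i,K_i)$ to localize $\supp(K)$ near $s$, combined with explicit bounds on $|w_\infty(M)|$ and on the number of planar maps, to get a bound on the inner sum that is uniform in $s$.

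What each approach buys: the paper's route is shorter and entirely soft, but it does use (implicitly in the lemma statement, explicitly in the proof) that $K_*$ has finite support. Your route is more quantitative and does not need that hypothesis; it would work even for $K_*\equiv\infty$. Your closing paragraph correctly identifies the lemma's purpose, and indeed the connectivity idea you use here is exactly what the paper later invokes (Remark~\ref{remark:loop-K-connected}, Lemma~\ref{lemma:ell-connected-plaquette-bound}) when proving the genuine estimate in Lemma~\ref{lemma:phi-bound}.
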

\begin{proof}
We claim that there are only finitely many pairs $(s,K)\in I_{K_*, M_*}$ such that $\phi^K(s)$ is nonzero.

To see this, first we observe that if $s$ is a string such that no edge of $s$ is contained in a plaquette $p$ for which $K_*(p) > 0$, then $\phi^K(s) = 0$ for all $K \leq K_*$. This follows because in this case, $\phi^K$ is an empty sum, because there are no maps in $\npe(s,K)$. Thus, if $\phi^K(s)$ is nonzero, then one of its edges must be contained in a plaquette $p$ for which $K_*(p) > 0$. Since $K_*$ is finite, there are only finitely many strings $s \in \cS_0$ for which this holds and such that 
$|s| + 4\area(K) \leq M_*.$ This concludes the proof of our claim.

Thus, it follows that there is some constant $C$ (depending on $\lambda, \gamma, K_*, M_*, d)$ such that
\[ \sum_{K:(s,K)\in I_{K_*, M_*}} \gamma^{area(K)} |f_{\phi}(s, K)|  \leq C, \qquad \forall s \in \cS_o. \]
Using this, we may bound the norm
\[ \|f_\phi\|_{\lambda, \gamma} \leq C \sum_{\delta \in \Delta_+} \lambda^{\iota(\delta)} < \infty, \]
where we used that $\lambda < 1/2$ and \eqref{eq:sum-delta}.
\end{proof}

Next, we define a relevant mapping on the space $\Theta_{K_*, M_*}$. To relate back to our discussion in Section \ref{sec: intro}, one should think of this as the mapping $\eta \mapsto G_\beta \eta + F$ discussed in Remark \ref{remark:comparison-with-chatterjee-vst}. For each non-null string $s = (\ell_1, \ldots, \ell_n) \in \cS_o$, let $\mathbf{e}_s$ be the first edge of $\ell_1$. 
Let $e_s$ denote the lattice edge which $\mathbf{e}_s$ is mapped to. Define the mapping 
\[M : \Theta_{K_*, M_*} \rightarrow \Theta_{K_*, M_*},\]
which sets, for all $(s,K)\in I_{K_*, M_*}$ with $s$ a non-null string, $(Mf)(s, K)$  to be the right-hand side of the fixed-$K$ master loop equation (Theorem~\ref{thm: fixed K 't Hooft master loop equation for surface sum}; recall also the equivalent version in \eqref{eq:new-form}) at the edge $\mathbf{e}_s$, i.e.\ 

\begin{equation}\label{eq:fixed-point-map-M-def}
\begin{aligned}
	(Mf)(s, K) := \mp \sum_{s' \in \SS_{\pm}(\mathbf{e}_s, s)} f(s', K) &+ \upbeta \sum_{p \in \cP_{\ZZ^d}(e_s^{-1}, K)} f(s \ominus_{\mathbf{e}_s} p, K \sm p)\\
	&-\upbeta \sum_{q \in \cP_{\ZZ^d}(e_s, K)} f(s \oplus_{\mathbf{e}_s} q, K \sm q).
\end{aligned} 
\end{equation}
When $s = \emptyset$, we set $(Mf)(s, K) := \mathds{1}_{\{K = 0\}}$, as required in the definition of $\Theta_{K_*, M_*}$. Observe that this map is well-defined, because if $(s, K)$ is such that $|s| + 4\area(K) \leq M_*$, then the same is true for $(s', K)$ for any (positive or negative) splitting $s'$ of $s$, as well as for $(s', K')$ for any (positive or negative) deformation $s'$ using a plaquette $q$ or $p$. In our notation, we omit the dependence of $M$ on $\upbeta$.

Here, we specify that by default, we do not erase backtracks, i.e.\ the strings $s'$, obtained by performing a loop operation to $s$, may still have backtracks. We also specify that $s' \in \SS_{\pm}(\mathbf{e}_s, s)$ is an ordered string of the form $(\ell'_{1, 1}, \ell'_{1, 2}, \ell_2, \ldots, \ell_n)$ or $(\ell_{1, 1}', \ell_2, \ldots, \ell_n)$ -- the latter case may happen in a negative splitting. That is, we always take the ordering of $s'$ so that the first string of $s$ is split into the first one or two strings of $s'$, while preserving the order of all the remaining strings.

The following lemma gives the key estimate for the mapping $M$.

\begin{lem}\label{lemma:M-contraction-map}
Let $K_* : \cP_{\ZZ^d} \rightarrow \NN$ be a plaquette assignment and $M_* \geq 1$ an integer.
For any $\lambda \in (0, 1/2)$ and $\gamma \in (0, \infty)$, we have that
\begin{align}
	\|M f\|_{\lambda, \gamma} &\leq \frac{\lambda}{1-2\lambda} + (4\lambda + 4d \upbeta \gamma \lambda^{-4}) \|f\|_{\lambda, \gamma},\qquad\forall f \in \Theta_{K_*, M_*}, \label{eq:M-self-map}
\end{align}
where we recall that the norm $\|\cdot\|_{\lambda, \gamma}$ is defined using the parameters $K_*, M_*$, which we fixed.
\end{lem}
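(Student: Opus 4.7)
The plan is to decompose $(Mf)(s,K)$ using the defining identity \eqref{eq:fixed-point-map-M-def} for non-null $s$, apply the triangle inequality, and bound each of the four resulting families of terms (positive/negative splittings and positive/negative deformations) in terms of $\|f\|_{\lambda,\gamma}$, with the constant $\lambda/(1-2\lambda)$ coming from the null-string contribution $f(\emptyset,K)=\mathds{1}_{\{K=0\}}$. Concretely, for each non-null $s$ with $\delta(s)\leq \delta$, the triangle inequality applied to $|(Mf)(s,K)|$ followed by $\sum_K \gamma^{\area(K)}$ reduces matters to controlling
$F(s'):=\sum_{K'}\gamma^{\area(K')}|f(s',K')|\leq (D_\gamma f)(\delta(s'))$
for the various strings $s'$ that arise from splitting or deforming $s$ at $\mathbf{e}_s$. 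For the deformation sums, the change of variables $K=K'+q$ (respectively $K'+p$) extracts a single factor of $\gamma$.

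The next step is to track how each operation changes $\iota(\delta(\cdot))$. Positive splittings preserve $|s|$ and increase $\#s$ by one, so $\iota(\delta(s'))=\iota(\delta(s))-1\leq \iota(\delta)-1$. Negative splittings decrease $|s|$ by two and either add a loop or not (with $s'=\emptyset$ only in the degenerate case $s=(\mathbf{e}_s\mathbf{e}_s^{-1})$), so $\iota(\delta(s'))\leq \iota(\delta)-1$ whenever $s'$ is non-null. Positive deformations preserve $\#s$ and increase $|s|$ by $4$, so $\iota(\delta(s'))=\iota(\delta(s))+4$; negative deformations increase $|s|$ by $2$. Since each lattice edge is contained in $2(d-1)$ oriented plaquettes in $\cP_{\ZZ^d}(e)$ and similarly $2(d-1)$ in $\cP_{\ZZ^d}(e^{-1})$, the total number of deformation terms per $s$ is at most $4(d-1)\leq 4d$.

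Assembling these ingredients by multiplying by $\lambda^{\iota(\delta)}$ and summing over $\delta\in \Delta_+$: for splittings, $\lambda^{\iota(\delta)}\leq \lambda\cdot \lambda^{\iota(\delta(s'))}$ gives a net factor of $\lambda$, and the factor of $4$ accounts for the two types (positive vs.\ negative) and the two orderings of the split pair admitted by the ordering convention stipulated above \eqref{eq:fixed-point-map-M-def}, yielding the $4\lambda\|f\|_{\lambda,\gamma}$ term. For deformations, the worst-case relation $\lambda^{\iota(\delta)}\leq \lambda^{-4}\cdot \lambda^{\iota(\delta(s'))}$ (positive deformations), combined with the prefactor $|\upbeta|$, the reindexing factor $\gamma$, and the $4d$ plaquette count, yields $4d|\upbeta|\gamma\lambda^{-4}\|f\|_{\lambda,\gamma}$. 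The constant term is obtained by bounding the null-string contribution uniformly: $f(\emptyset,K)=\mathds{1}_{\{K=0\}}$ contributes at most $\sum_K \gamma^{\area(K)}\mathds{1}_{\{K=0\}}=1$ to each $(D_\gamma Mf)(\delta)$, and summing gives $\sum_{\delta\in \Delta_+}\lambda^{\iota(\delta)}\cdot 1=\lambda/(1-2\lambda)$ by \eqref{eq:sum-delta}.

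The main technical obstacle is that the raw number of splittings at $\mathbf{e}_s$ can be as large as $|\ell_1|-1\leq \delta_1-1$, so a naive bound $\sum_{s'\in \SS_\pm}F(s')\leq (\delta_1-1)\max_{s'}F(s')$ would leave an unbounded factor of $\delta_1$ when summed against $\lambda^{\iota(\delta)}$. The resolution is to parametrize each positive splitting by the pair $(|\ell_{1,1}'|,|\ell_{1,2}'|)$ with $|\ell_{1,1}'|+|\ell_{1,2}'|=|\ell_1|$ (and analogously for negative splittings), then upper bound $F(s')\leq (D_\gamma f)(\delta^{(a,b)})$ for an appropriately chosen $\delta^{(a,b)}\geq \delta(s')$ having $\iota(\delta^{(a,b)})=\iota(\delta)-1$; after swapping the order of summation over $\delta$ and the splitting parameters, the combinatorial factor becomes a geometric series in the components of $\delta$ that integrates against $\sum_\delta \lambda^{\iota(\delta)}$ to a finite constant, absorbing the apparent $\delta_1$ dependence into the norm structure without cost.
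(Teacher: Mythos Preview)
Your proposal is essentially correct and follows the same approach as the paper: decompose $(Mf)(s,K)$ via \eqref{eq:fixed-point-map-M-def}, extract the constant $\lambda/(1-2\lambda)$ from the boundary contribution, bound the deformation terms via the plaquette count and the $\gamma$-reindexing, and handle the splitting terms by parametrizing each splitting by the length profile of the resulting loops and then reindexing the $\delta$-sum.

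One imprecision worth flagging: your accounting for the factor $4\lambda$ in the splitting bound is not quite right. The ordering convention above \eqref{eq:fixed-point-map-M-def} fixes a \emph{unique} ordering of the split pair, so there is no factor of $2$ from ``two orderings.'' In the paper's analysis (which your last paragraph correctly anticipates), the breakdown is $\lambda$ from positive splittings and $3\lambda$ from negative splittings. For positive splittings, the injective map $s'\mapsto h_{s'}:=|\ell'_{1,2}|$ of Lemma~\ref{lemma:delta-map} turns the sum over $s'$ into a sum over $h\in[1,\delta_1-1]$, and the change of variables $(\delta,h)\mapsto(\delta_1-h,h,\delta_2,\ldots,\delta_n)$ absorbs this into a single copy of $\|f\|_{\lambda,\gamma}$. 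For negative splittings, there are three sub-cases (the two-loop case handled as above, the one-loop case $\delta(s')\leq(\delta_1-2,\delta_2,\ldots,\delta_n)$, and the degenerate case $\delta_1=2$ with $\delta(s')\leq(\delta_2,\ldots,\delta_n)$), each contributing one $\lambda\|f\|_{\lambda,\gamma}$. Your final paragraph has the right mechanism; just replace the ``two-by-two'' heuristic with this case analysis and the argument goes through.
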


\begin{rmk}
At first glance, it may be a bit surprising that the estimate \eqref{eq:M-self-map} is uniform in $K_*, M_*$. Conceptually, one may think of the following close analogy. We have a map $M$ defined on a Banach space $V$, and we expect to be able to prove that $M$ is a contraction on this space (a form of this is shown in \cite{chatterjee_rigorous_2019}). Morally, the role of the parameters $K_*, M_*$ is to give an increasing sequence of finite-dimensional subspaces $V_1 \subseteq V_2 \subseteq \cdots$ which increase to $V$, such that for each $n \geq 1$, $M$ maps $V_n$ to $V_n$. In this analogy, Lemma \ref{lemma:M-contraction-map} amounts to proving a contraction estimate for each $V_n$. Since $M$ is supposed to be a contraction on the entire space $V$, we certainly expect to have estimates that are uniform in $n$.
\end{rmk}

Before we prove Lemma \ref{lemma:M-contraction-map}, we first show one preliminary lemma.

\begin{lem}\label{lemma:delta-map}
Let $s = (\ell_1, \ldots, \ell_n) \in \cS_o$ be a non-null string, and let $\delta = \delta(s)$. For each positive splitting $s' \in \SS_+(\mathbf{e}_s, s)$, we have that there is some $1 \leq h_{s'} \leq \delta_1 - 1$ such that 
$$\delta(s') \leq (\delta_1 - h_{s'}, h_{s'}, \delta_2, \ldots, \delta_n).$$ 
Moreover, the map $s' \mapsto h_{s'}$ on the domain $\SS_+(\mathbf{e}_s, s)$ can be chosen to be injective. 

Similarly, for each negative splitting $s' \in \SS_-(\mathbf{e}_s, s)$, either 
$$\delta(s') \leq (\delta_1 - 2, \delta_2, \ldots, \delta_n),\qquad \Big(\delta(s') \leq (\delta_2, \ldots, \delta_n)\quad  \text{if}\quad \delta_1 = 2\Big),$$ 
or there is some $1 \leq g_{s'} \leq \delta_1 - 1$ such that  
$$\delta(s') \leq (\delta_1 - g_{s'}, g_{s'}, \delta_2, \ldots, \delta_n).$$ 
Moreover, the map $s' \mapsto g_{s'}$ (whose domain is a suitable subset of $\SS_-(\mathbf{e}_s, s)$) can be taken as injective.
\end{lem}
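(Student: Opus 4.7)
The plan is to parametrize the splittings by the position of the ``partner'' edge of $\mathbf{e}_s$ within $\ell_1$ and then read off the lengths of the resulting loops directly. Let $e \in E_{\ZZ^d}$ denote the lattice edge to which $\mathbf{e}_s$ is mapped. By the definitions in Section~\ref{sect:loop-oper}, each element $s' \in \SS_+(\mathbf{e}_s,s)$ (resp.\ $\SS_-(\mathbf{e}_s,s)$) corresponds bijectively to a choice of a second copy $\mathbf{e}''$ of $e$ (resp.\ a copy of $e^{-1}$) in $\ell_1$ distinct from $\mathbf{e}_s$. Since $\mathbf{e}_s$ is the first edge of $\ell_1$, we may write $\ell_1 = \mathbf{e}_s \, \pi_2 \, \mathbf{e}'' \, \pi_3$ for some sub-paths $\pi_2, \pi_3$, and distinct choices of $\mathbf{e}''$ occupy distinct positions in $\ell_1$, hence give distinct values of $|\pi_2|$.

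For positive splittings, the resulting pair of loops is $\{\mathbf{e}_s \, \pi_3, \, \pi_2 \, \mathbf{e}''\}$, of lengths $1 + |\pi_3|$ and $|\pi_2| + 1$, summing to $\delta_1$. Ordering $s'$ so that $\mathbf{e}_s \, \pi_3$ comes first, we take $h_{s'} := |\pi_2| + 1$. Then $h_{s'} \in [1, \delta_1 - 1]$ and
\[ \delta(s') = (\delta_1 - h_{s'}, \, h_{s'}, \, \delta_2, \ldots, \delta_n). \]
The map $s' \mapsto h_{s'}$ is injective because $h_{s'}$ records the position of $\mathbf{e}''$ in $\ell_1$, which uniquely determines $s'$.

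For negative splittings, the resulting pair of sub-loops is $\{\pi_3, \pi_2\}$, with total length $\delta_1 - 2$. If $\delta_1 = 2$, both $\pi_2$ and $\pi_3$ are empty and the null sub-loops are discarded, giving $\delta(s') = (\delta_2, \ldots, \delta_n)$. If exactly one of $\pi_2, \pi_3$ is non-null, its length is at most $\delta_1 - 2$, so $\delta(s') \leq (\delta_1 - 2, \delta_2, \ldots, \delta_n)$. These two subcases yield the first alternative in the statement. In the remaining subcase, both $\pi_2$ and $\pi_3$ are non-null; placing $\pi_3$ first in $s'$, we set $g_{s'} := |\pi_2|$. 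Since $\ZZ^d$ has no self-loops, every non-null closed walk has length at least $2$, so $g_{s'} \geq 2 \geq 1$ and $|\pi_3| \geq 2$, whence $g_{s'} = \delta_1 - 2 - |\pi_3| \leq \delta_1 - 4 \leq \delta_1 - 1$. Therefore
\[ \delta(s') = (\delta_1 - 2 - g_{s'}, \, g_{s'}, \, \delta_2, \ldots, \delta_n) \leq (\delta_1 - g_{s'}, \, g_{s'}, \, \delta_2, \ldots, \delta_n). \]
Injectivity of $s' \mapsto g_{s'}$ on this sub-domain follows by the same positional argument. No genuine obstacle arises; the statement reduces to a direct case analysis resting on the identity $|\pi_2| + |\pi_3| + 2 = \delta_1$. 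The only mild care is choosing the order in which to place the two new sub-loops so that the inequality takes the desired form, and partitioning the negative splittings into the ``at least one null'' and ``both non-null'' sub-domains.
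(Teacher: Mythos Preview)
Your proof is correct and follows essentially the same approach as the paper: parametrize each splitting by the position of the partner edge in $\ell_1$ and set $h_{s'}$ (resp.\ $g_{s'}$) to be the length of the second sub-loop. Your case analysis for negative splittings is in fact slightly more careful than the paper's, since you explicitly treat the sub-case where $\pi_3$ is null but $\pi_2$ is not (placing it under the first alternative), whereas the paper's dichotomy into ``backtrack erasure'' versus ``two loops'' glosses over this.
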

\begin{proof}
Any positive splitting $s' \in \SS_+(\mathbf{e}_s, s)$ is obtained by splitting the first loop $\ell_1$ into two loops $\ell_{1, 1}', \ell_{1,2}'$ whose lengths $|\ell_{1, 1}'| + |\ell_{1, 2}'| ~= |\ell_1| = \delta_1$. Thus we may take $h_{s'} = |\ell_{1, 2}'|$. The injectivity of $s \mapsto h_{s'}$ follows since different splittings $s'$ result in different lengths $|\ell'_{1, 2}|$.

Similarly, any negative splitting may be obtained by either taking the loop $\ell_1$ and erasing a backtrack $ee^{-1}$ to obtain $\ell_{1, 1}'$ (which may be the null loop), or by splitting the first loop $s_1$ into two loops $\ell_{1, 1}', \ell_{1, 2}'$ whose lengths $|\ell_{1, 1}'| + |\ell_{1, 2}'|
= |\ell_1| - 2$. In the former case, we have that $\delta(s') \leq (\delta_1 - 2, \delta_2, \ldots, \delta_n)$ (or $\delta(s') \leq (\delta_2, \ldots, \delta_n)$), while in the latter case, we may take $g_{s'} = |\ell_{1, 2}'|$.
\end{proof}

\begin{proof}[Proof of Lemma \ref{lemma:M-contraction-map}]
We may assume that $\|f\|_{\lambda, \gamma} < \infty$, as otherwise the inequality is trivial. It is convenient to introduce (recall from \eqref{eq:fixed-point-map-M-def} the definition of the mapping $M$) for all $(s,K)\in I_{K_*, M_*}$ with $s$ a non-null string,
\begin{align*}
	&g_1(s,K):= \sum_{s' \in \SS_{\pm}(\mathbf{e}_s, s)} f(s', K), \quad g_2(s,K):=- \sum_{s' \in \SS_{\pm}(\mathbf{e}_s, s)} f(s', K)\\ 
	& 
	g_3(s,K):= \upbeta \sum_{p \in \cP_{\ZZ^d}(e_s^{-1}, K)} f(s \ominus_{\mathbf{e}_s} p, K \sm p), \quad
	g_4(s,K):=-\upbeta \sum_{q \in \cP_{\ZZ^d}(e_s, K)} f(s \oplus_{\mathbf{e}_s} q, K \sm q).
\end{align*}
With this notation, by definition of our norm, 
\begin{equation}\label{eq:firstbound}
	\begin{aligned}
		\|Mf\|_{\lambda, \gamma} &= \sum_{\delta \in \Delta^+} \lambda^{\iota(\delta)} (D_\gamma M f)(\delta) \\
		&\leq \sum_{\delta \in \Delta^+} \lambda^{\iota(\delta)} + \sum_{\delta \in \Delta^+} \lambda^{\iota(\delta)} \sup_{\substack{s \in \cS \\ s \neq \emptyset \\ \delta(s) \leq \delta}} \sum_{K : (s, K) \in I_{K_*, M_*}} \gamma^{\area(K)} |(Mf)(s, K)| \\
		&\leq \frac{\lambda}{1-2\lambda} + \sum_{i=1}^4\Bigg(\sum_{\delta \in \Delta^+}  \sup_{\substack{s \in \cS \\ s \neq \emptyset \\ \delta(s) \leq \delta}} \lambda^{\iota(\delta)} \sum_{K : (s, K) \in I_{K_*, M_*}} \gamma^{\area(K)} |g_i(s, K)|\Bigg).
	\end{aligned}
\end{equation}
where we used \eqref{eq:sum-delta} in the final inequality (which is where the assumption $\lambda < 1/2$ comes in). We are now going to bound each of the four terms in the final sum separately.

\medskip

\noindent\underline{\emph{Estimate for the positive splitting term $g_1$}}: Fix $\delta \in \Delta_+$ and $s \in \cS_o$ such that $\delta(s) \leq \delta$. Assume further that $s \neq \emptyset$. By the triangle inequality and exchanging the order of summation, we get that 
\begin{align}\label{eq:positive-splitting-intermediate-M-fixed-point}
	&\lambda^{\iota(\delta)} \sum_{\substack{K : (s, K) \in I_{K_*, M_*}}} \gamma^{\area(K)} \bigg|\sum_{s' \in \SS_+(\mathbf{e}_s, s)}  f(s', K)\bigg|\\
	&\leq \lambda^{\iota(\delta)} \sum_{s' \in \SS_+(\mathbf{e}_s, s)} \sum_{\substack{K : (s, K) \in I_{K_*, M_*}}} \gamma^{\area(K)} |f(s', K)|\notag\\
	&\leq \lambda^{\iota(\delta)} \sum_{s' \in \SS_+(\mathbf{e}_s, s)} \sum_{\substack{K : (s', K) \in I_{K_*, M_*}}} \gamma^{\area(K)} |f(s', K)|.\notag
\end{align}
where in the last inequality we used that $(s, K) \in I_{K_*, M_*}$ implies that $(s', K) \in I_{K_*, M_*}$ for all $s' \in \SS_+$ because $|s'| \leq |s|$.
Since $\delta(s') \leq (\delta_1 - h_{s'}, h_{s'}, \delta_2, \ldots, \delta_n)$ by Lemma~\ref{lemma:delta-map}, we have that 
\begin{equation*}
	\sum_{\substack{K : (s', K) \in I_{K_*, M_*}}} \gamma^{\area(K)} |f(s', K)|\leq
	\sup_{\substack{s' \in \cS, s' \neq \emptyset \\ \delta(s') \leq (\delta_1 - h_{s'}, h_{s'}, \delta_2, \ldots, \delta_n)}}\sum_{\substack{K : (s', K) \in I_{K_*, M_*}}} \gamma^{\area(K)} |f(s', K)|,
\end{equation*}
and, recalling \eqref{eq:defn-Dgf}, the right-hand side of the above equation is $(D_\gamma f)(\delta_1 - h, h, \delta_2, \ldots, \delta_n)$.
Moreover, $\iota(\delta)=1+\iota(\delta_1 - h_{s'}, h_{s'}, \delta_2, \ldots, \delta_n)$ by definition of $\iota$, and so it follows that
\begin{align*}
	\eqref{eq:positive-splitting-intermediate-M-fixed-point}&\leq \lambda \sum_{s' \in \SS_+(\mathbf{e}_s, s)} \lambda^{\iota(\delta_1 - h_{s'}, h_{s'}, \delta_2, \ldots, \delta_n)} (D_\gamma f)(\delta_1 - h_{s'}, h_{s'}, \delta_2, \ldots, \delta_n) \\
	&\leq  \lambda \sum_{h=1}^{\delta_1 - 1} \lambda^{\iota(\delta_1 - h, h, \delta_2, \ldots, \delta_n)} (D_\gamma f)(\delta_1 - h, h, \delta_2, \ldots, \delta_n), \notag
\end{align*}
where for the last inequality, we used that the map $s' \mapsto h_{s'}$ is injective, again by Lemma \ref{lemma:delta-map}.
Upon taking sup over $s \neq \emptyset$ with $\delta(s) \leq \delta$ and then summing in $\delta \in \Delta^+$, we may thus obtain
\begin{equation}\label{eq:M-fixed-point-splitting-term-intermediate-bound}
	\sum_{\delta \in \delta^+}  \sup_{\substack{s \in \cS_o \\ s \neq \emptyset \\ \delta(s) \leq  \delta}}\eqref{eq:positive-splitting-intermediate-M-fixed-point}  \leq  \lambda \|f\|_{\lambda, \gamma},
\end{equation}

\medskip

\noindent\underline{\emph{Estimate for the negative splitting term $g_2$}}: It is handled in a similar way, but we prefer to spell out the details. Recall the second part of Lemma \ref{lemma:delta-map}. We bound
\begin{align}
	\lambda^{\iota(\delta)} \sum_{\substack{K : (s, K) \in I_{K_*, M_*}}} &\gamma^{\area(K)} \bigg|\sum_{s' \in \SS_-(\mathbf{e}_s, s)} f(s', K)\bigg| \label{eq:negative-splitting-intermediate-M-fixed-point}\\
	&\leq \lambda^{\iota(\delta)} \sum_{s' \in \SS_-(\mathbf{e}_s, s)} \sum_{\substack{K : (s', K) \in I_{K_*, M_*}}} \gamma^{\area(K)} |f(s', K)| \\
	&\leq I_1 + I_2 + I_3, \notag
\end{align}
where
\begin{align}
	I_1 &\leq \lambda \sum_{h=1}^{\delta_1 - 1} \lambda^{\iota(\delta_1 - h, h, \delta_2, \ldots, \delta_n)} (D_\gamma f)(\delta_1 - h, h, \delta_2, \ldots, \delta_n), \notag\\
	I_2 &\leq  \mathbbm{1}(\delta_1 > 2) \lambda\times \lambda^{\iota(\delta_1- 2, \delta_2, \ldots, \delta_n)} (D_\gamma f)(\delta_1 - 2, \delta_2, \ldots, \delta_n), \notag \\
	I_3 & \leq \mathbbm{1}(\delta_1 = 2) \lambda \times \lambda^{\iota(\delta_2, \ldots, \delta_n)} (D_\gamma f)(\delta_2, \ldots, \delta_n). \notag
\end{align}
Taking sup over $s \neq \emptyset$ with $\delta(s) \leq \delta$ and then summing in $\delta \in \Delta^+$, we thus obtain
\begin{equation}\label{eq:M-fixed-point-negative-splitting-term-intermediate-boun}
	\sum_{\delta \in\Delta^+} \sup_{\substack{s \in \cS_o \\ s \neq \emptyset \\ \delta(s) \leq \delta}} \eqref{eq:negative-splitting-intermediate-M-fixed-point} \leq 3 \lambda \|f\|_{\lambda, \gamma}.
\end{equation}

\medskip

\noindent\underline{\emph{Estimate for the negative deformation term $g_3$}}:  For any $s' \in \DD_-(\mathbf{e}_s, s)$, we have that $\delta(s') \leq (\delta_1 + 2, \delta_2, \ldots, \delta_n) \leq (\delta_1 + 4, \delta_2, \ldots, \delta_n)$. Noting that $|s \ominus_{\bf{e}_s} p| + 4\area(K\sm p) \leq |s| + 4\area(K)$, we thus have that 
\begin{align}\label{eq:negative-deformation-intermediate-M-fixed-point}
	\lambda^{\iota(\delta)} &\sum_{\substack{K : (s, K) \in I_{K_*, M_*}}} \gamma^{\area(K)} \bigg|\sum_{p \in \cP_{\ZZ^d}(e_s^{-1}, K)} f(s \ominus_{\mathbf{e}_s} p, K \sm p)\bigg|  \notag\\
	&\leq \gamma \lambda^{-4} \sum_{p \in \cP_{\ZZ^d}(e_s^{-1})} \lambda^{\iota(\delta_1 + 4, \delta_2, \ldots, \delta_n)} \sum_{\substack{K:K \leq K_*, K(p) \geq 1 \\ (s \ominus_{\bf{e}_s} p, 4\area(K)) \in I_{K_*, M_*}}} \gamma^{\area(K \sm p)} |f(s \ominus_{\mathbf{e}_s} p, K \sm p)|\notag\\
	&\leq \gamma \lambda^{-4} \sum_{p \in \cP_{\ZZ^d}(e_s^{-1})} \lambda^{\iota(\delta_1 + 4, \delta_2, \ldots, \delta_n)} (D_\gamma f)(\delta_1 + 4, \delta_2, \ldots, \delta_n)  \notag\\
	&\leq 2d \gamma \lambda^{-4} \lambda^{\iota(\delta_1 + 4, \delta_2, \ldots, \delta_n)} (D_\gamma f)(\delta_1 + 4, \delta_2, \ldots, \delta_n).
\end{align}
Here, the $2d$ factor arises because the number of oriented plaquettes containing any given oriented edge $e$ is upper bounded by $2d$. Taking sup over $s \neq \emptyset$ with $\delta(s) \leq \delta$ with and then summing in $\delta \in \Delta^+$, we thus obtain
\begin{equation}\label{eq:M-fixed-point-merger-term-intermediate-bound}
	\sum_{\delta \in \Delta^+} \sup_{\substack{s \in \cS_o \\ s \neq \emptyset \\ \delta(s) \leq \delta}} \eqref{eq:negative-deformation-intermediate-M-fixed-point} \leq 2d \gamma \lambda^{-4} \|f\|_{\lambda, \gamma}.
\end{equation}
\medskip

\noindent\underline{\emph{Estimate for the positive deformation term $g_4$}}: This term may be handled similarly, resulting in the same bound as above.

\medskip

Combining the four estimates above and recalling \eqref{eq:firstbound}, we may thus obtain
\begin{equation*}
	\|Mf\|_{\lambda, \gamma} \leq \frac{\lambda}{1-2\lambda} + (4\lambda + 4d \upbeta \gamma \lambda^{-4}) \|f\|_{\lambda, \gamma},
\end{equation*}
as desired.
\end{proof}

The main outcome of Lemma \ref{lemma:M-contraction-map} is the following proposition, which is the key step towards the proof of Lemma \ref{lemma:phi-bound}.

\begin{prop}\label{prop:-technical-bound-on-phi-K}
Let $\phi^K(\cdot)$ be as defined in \eqref{eq:gen-phi-string}. There is a constant $C = C_d$ depending only on $d$ such that
\[
|\phi^K(s)|  \leq C^{|s|} (C\upbeta)^{\area(K)},
\]
for all strings $s$ and finite plaquette assignments  $K:\cP_{\ZZ^d}\to\NN$.
\end{prop}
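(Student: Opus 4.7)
The plan is to recognize $f_\phi(s, K) := \phi^K(s)$ as a fixed point of the mapping $M$ introduced in Section~4 and then exploit Lemma~\ref{lemma:M-contraction-map} as a contraction-type bound. First, for any truncation parameters $K_*, M_*$, I would check that $f_\phi \in \Theta_{K_*, M_*}$: the boundary condition $f_\phi(\emptyset, K) = \mathds{1}_{\{K = 0\}}$ is exactly~\eqref{eq:when-is-one}. Moreover, the fixed-$K$ master loop equation (Theorem~\ref{thm: fixed K 't Hooft master loop equation for surface sum}, extended from single loops to strings via the argument in the proof of Corollary~\ref{cor:mle-goal}) matches the definition of $M$ in~\eqref{eq:fixed-point-map-M-def} at the edge $\mathbf{e}_s$, so $Mf_\phi = f_\phi$ on the non-null part of $I_{K_*, M_*}$. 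Thus $f_\phi$ is a fixed point of $M$.

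Second, I would select $(\lambda, \gamma)$ so that Lemma~\ref{lemma:M-contraction-map} becomes a genuine a priori bound on $\|f_\phi\|_{\lambda, \gamma}$. Fix $\lambda = 1/16$ and set $\gamma := c/|\upbeta|$ with $c = c(d) > 0$ small enough that $4\lambda + 4dc\lambda^{-4} \leq 1/2$. Applying Lemma~\ref{lemma:M-contraction-map} to $f_\phi$ and using $Mf_\phi = f_\phi$ yields
\[
\|f_\phi\|_{\lambda, \gamma} \leq \tfrac{\lambda}{1-2\lambda} + \tfrac{1}{2}\|f_\phi\|_{\lambda, \gamma}.
\]
Here Lemma~\ref{lemma:phi-finite-norm} is crucial: it guarantees $\|f_\phi\|_{\lambda, \gamma} < \infty$ a priori (this is precisely why the truncation parameters $K_*, M_*$ were introduced in the first place), so we may rearrange to obtain $\|f_\phi\|_{\lambda, \gamma} \leq A$ for some constant $A = A(d)$ that is crucially independent of $K_*, M_*$.

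Third, I would extract the pointwise bound. Directly from definitions~\eqref{eq:defn-Dgf} and~\eqref{eq:lambda-gamma-norm}, for any $(s, K) \in I_{K_*, M_*}$,
\[
\gamma^{\area(K)} |\phi^K(s)| \leq \lambda^{-\iota(\delta(s))}\|f_\phi\|_{\lambda, \gamma} \leq A \lambda^{-|s|},
\]
using $\iota(\delta(s)) \leq |\delta(s)| = |s|$. Substituting $\gamma = c/|\upbeta|$ gives $|\phi^K(s)| \leq A\lambda^{-|s|}(|\upbeta|/c)^{\area(K)}$. Since this bound is uniform in $K_*, M_*$, and any $(s, K)$ with finite $K$ eventually lies in some $I_{K_*, M_*}$, the estimate holds for all $(s, K)$. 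To put it in the desired form $C^{|s|}(C\upbeta)^{\area(K)}$, I would absorb the constant $A$ using $|s| \geq 2$ for any non-null string in $\ZZ^d$ (the null-string case being trivial since $\phi^K(\emptyset) \in \{0, 1\}$) and choose $C = C(d)$ accordingly. The main delicate point is precisely the a priori finiteness of the norm on the unrestricted space, which is sidestepped cleanly by the finite-truncation argument together with Lemma~\ref{lemma:phi-finite-norm}; once this is in hand, everything else is a matter of unwinding definitions.
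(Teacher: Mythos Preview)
Your proposal is correct and follows essentially the same route as the paper: recognize $f_\phi$ as a fixed point of $M$ via the fixed-$K$ master loop equation, invoke Lemma~\ref{lemma:phi-finite-norm} for a priori finiteness, apply Lemma~\ref{lemma:M-contraction-map} with a suitable choice of $(\lambda,\gamma)$ (the paper takes $\lambda_0=10^{-2}$, $\gamma_0=10^{-10}/(\upbeta d)$) to get a uniform bound on $\|f_\phi\|_{\lambda,\gamma}$, and then extract the pointwise estimate by unwinding the definition of the norm. Your remark that the string case relies on the extension argument of Corollary~\ref{cor:mle-goal} is a fair clarification of what the paper implicitly uses when it writes $Mf_\phi=f_\phi$.
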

\begin{proof}
Fix $K_*, M_*$. For ordered strings $s \in \cS_o$, define $f_\phi(s, K) := \phi^K(s)$, where we abuse notation and take the $s$ appearing in $\phi^K(s)$ to also denote the multiset of (unrooted) loops corresponding to $s$.

By Theorem \ref{thm: fixed K 't Hooft master loop equation for surface sum}, we have that $M f_\phi = f_\phi$. Applying Lemma \ref{lemma:M-contraction-map} and taking $\lambda_0 = 10^{-2}$, $\gamma_0 = 10^{-10} / (\upbeta d)$, we have that 
\[
\|f_\phi\|_{\lambda_0, \gamma_0} \leq 2 + \frac{1}{2} \|f_\phi\|_{\lambda_0, \gamma_0}, \text{ and thus } \|f_\phi\|_{\lambda_0, \gamma_0} \leq 4.
\]
In the above, we used that $\|f_\phi\|_{\lambda_0, \gamma_0} < \infty$ (by Lemma \ref{lemma:phi-finite-norm}). Now for any $(s, K)$, we may take $K_*, M_*$ so that $(s, K) \in I_{K_*, M_*}$, and then applying the above estimate, we obtain
\[ \lambda_0^{\iota(s)} \gamma_0^{\area(K)} |\phi^K(s)|  \leq 4, \text{ and thus } |\phi^K(s)|  \leq 4\lambda_0^{-\iota(s)} \gamma_0^{-\area(K)}, \]
which is the desired estimate (with the constant $C = 10^{10}$, say). 
\end{proof}

Before we get to the proof of Lemma \ref{lemma:phi-bound} via an application of Proposition \ref{prop:-technical-bound-on-phi-K}, we need some preliminary results regarding the enumeration of plaquette assignments.

\begin{defn}\label{defn:connected}
Let $K : \cP_\Lambda \rightarrow \NN$ be a plaquette assignment. We say that $K$ is {\bf connected} if its support $\mathrm{supp}(K) := \{p \in \cP_\Lambda^+ : K(p) \text{ or } K(p^{-1}) \neq 0\}$ is connected, in the sense that any two plaquettes $p, p'$ in $\mathrm{supp}(K)$ are connected by a sequence $p = p_0, \ldots, p_n = p'$ such that $p_{j-1}$ and $p_j$ share an edge for all $j \in [n]$.
\end{defn}

\begin{lem}\label{lemma:connected-plaquette-count-bound}
There is a constant $C = C_d$ depending only on $d$ such that for any $A \geq 1$, and any plaquette $p$, the number of connected plaquette assignments $K$ for which $K(p) \neq 0$ with area $A$ is at most $C^A$. 
\end{lem}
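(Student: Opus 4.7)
The plan is to decompose the count in two steps: first bound the number of connected sets of unoriented plaquettes of a given size that contain $p$, and then bound the number of ways to assign non-negative integer multiplicities to such a set so that the total area equals $A$.

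For the first step, writing $S = \mathrm{supp}(K)$ as in Definition~\ref{defn:connected}, the connectedness of $K$ means that $S$ is a connected lattice animal in the adjacency graph on unoriented plaquettes of $\ZZ^d$ where two plaquettes are adjacent iff they share an edge. Each plaquette has a bounded number of such neighbors, depending only on $d$ (each of its four edges is contained in $O(d)$ other plaquettes). A classical spanning-tree/depth-first-search encoding argument (e.g.\ the standard bound on the number of lattice animals, see e.g.\ Grimmett's \emph{Percolation}) then gives
\begin{equation*}
\#\bigl\{\text{connected unoriented plaquette animals } S \text{ of size } n \text{ containing } p\bigr\} \leq c_1^n,
\end{equation*}
for some constant $c_1 = c_1(d)$.

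For the second step, fix a connected animal $S$ with $|S| = n$. A plaquette assignment $K$ with $\mathrm{supp}(K) \subseteq S$ corresponds to choosing a non-negative integer $K(p')$ for each of the at most $2n$ oriented plaquettes whose underlying unoriented plaquette lies in $S$. The constraint $\area(K) = A$ is then the constraint that these $2n$ non-negative integers sum to $A$, and the number of such choices is $\binom{A + 2n - 1}{2n - 1}$. Moreover, since every unoriented plaquette $p' \in S$ has $K(p') + K((p')^{-1}) \geq 1$, we have the automatic constraint $n \leq A$. Using the crude bound $\binom{A + 2n - 1}{2n - 1} \leq 2^{A + 2n - 1}$, we obtain
\begin{equation*}
\#\{K : K \text{ connected},\, K(p) \neq 0,\, \area(K) = A\} \;\leq\; \sum_{n=1}^{A} c_1^n \cdot 2^{A + 2n - 1} \;\leq\; C^A,
\end{equation*}
for an appropriate $C = C(d)$, as desired.

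The main (though still standard) ingredient is the animal enumeration bound in the first step; the second step is a routine stars-and-bars estimate. No serious obstacle is expected, and the lemma is proved precisely to feed into the proof of Lemma~\ref{lemma:phi-bound}, where one needs to sum $|\phi^K(s)|$ over all finite plaquette assignments $K$ using the pointwise bound from Proposition~\ref{prop:-technical-bound-on-phi-K} together with the observation (to be used there) that $\phi^K(s)$ vanishes unless every connected component of $K$ touches $s$.
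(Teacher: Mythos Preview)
Your proposal is correct and follows essentially the same two-step strategy as the paper: bound the number of connected plaquette animals of a given size containing $p$ via the standard lattice-animal estimate (the paper also cites Grimmett), then use a stars-and-bars count for the multiplicities, combined with $n \leq A$. The only cosmetic difference is that you work with unoriented animals and distribute $A$ among the $2n$ oriented slots with non-negative entries (bounding by $\binom{A+2n-1}{2n-1}$), whereas the paper works directly with the set of oriented plaquettes on which $K$ is nonzero and distributes $A$ among $j$ slots with positive entries (bounding by $\binom{A-1}{j-1}$); both variants give the desired $C^A$ bound.
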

\begin{proof}
Consider the graph $\mathcal{G}$ whose vertices are the oriented plaquettes of $\ZZ^d$, and two vertices $p, p'$ are connected if $p, p'$ share an edge. By standard results (see e.g. \cite[(4.24)]{GrimmettPercolation}), for any $j \geq 1$, the number of connected subgraphs of $\mcl{G}$ of size $j$ containing a given vertex $p$ is at most $C^j$. Now, a connected plaquette assignment $K$ of area $A$ is specified by a connected subgraph $G = (V, E)$ of $\mathcal{G}$ with $|V|  \leq A$, along with a vertex labeling $(k_v, v \in V)$ such that $k_v \geq 1$ for all $v$, and $\sum_{v \in V} k_v = A$. By a standard stars-and-bars argument, for fixed $j \geq 1$, for $|V| = j$, the number of such vertex labelings is at most $\dbinom{A-1}{j-1}$. Using the elementary inequalities
\begin{equation}\label{eq:binomial-bound}
	\dbinom{A-1}{j-1} \leq \bigg(\frac{(A-1) e}{j-1}\bigg)^{j - 1}, ~~\text{and}~~ \sup_{x > 0} (m/x)^x \leq e^{m/e},
\end{equation}
we obtain that the number of connected plaquette assignments $K$ containing $p$ is at most 
\[ 
\sum_{j=1}^A C^j \dbinom{A-1}{j-1} \leq \sum_{j=1}^A C^j e^{A/e} \leq C^A,
\]
where the constant $C$ may change within the line.
\end{proof}

\begin{defn}
Given a loop $\ell$, we say that a plaquette assignment $K : \cP_{\ZZ^d} \ra \NN$ is {\bf $\ell$-connected} if every connected component of its support is connected to at least one edge of $\ell$.
\end{defn}

\begin{rmk}\label{remark:loop-K-connected}
The motivation for the definition of $\ell$-connectedness is the following.
For loops $\ell$, the surface sum $\phi^K(\ell) = 0$ if $K$ is not $\ell$-connected. This is because $\npe(\ell,K)$ only contains maps with a unique connected component.
\end{rmk}

\begin{lem}\label{lemma:ell-connected-plaquette-bound}
Let $\ell$ be a loop. There is a constant $C = C_d$ depending only on $d$ such that for any $A \geq 1$, the number of $\ell$-connected plaquette assignments with area $A$ is at most $C^{|\ell|} C^A$.
\end{lem}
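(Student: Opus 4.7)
The plan is to decompose $K$ into its connected components and apply Lemma~\ref{lemma:connected-plaquette-count-bound} to each. Writing $K = K_1 + \cdots + K_m$ where the $K_i$ are the restrictions of $K$ to the connected components of $\supp(K)$, with areas $A_i$ summing to $A$, the $\ell$-connectedness assumption says that each $K_i$ contains at least one plaquette sharing an edge with $\ell$.

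To avoid overcounting, I would fix a canonical total ordering on $\cP_{\ZZ^d}$ and associate to each component $K_i$ its smallest plaquette (in this ordering) that shares an edge with $\ell$; call this the \emph{anchor} $p_i$ of $K_i$. Since the components have disjoint supports, the $p_i$ are distinct and all lie in the set $\cP(\ell)$ of plaquettes sharing an edge with $\ell$. A direct local count gives $|\cP(\ell)| \leq c_d |\ell|$ for some dimensional constant $c_d$, since each edge of $\ell$ is adjacent to only $O(d)$ oriented plaquettes.

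With this setup, $K$ is determined by the unordered set of anchors $\{p_1, \ldots, p_m\}$, the assignment of an area $A_i \geq 1$ to each anchor (with $\sum_i A_i = A$), and the extension of each anchor to a connected plaquette assignment of the prescribed area containing that anchor. For fixed $m$, the number of choices is at most $\binom{c_d|\ell|}{m} \binom{A-1}{m-1} \prod_i C^{A_i} = \binom{c_d|\ell|}{m} \binom{A-1}{m-1} C^A$, where the final product uses Lemma~\ref{lemma:connected-plaquette-count-bound}. Summing over $m \in \{1, \ldots, A\}$ and applying the crude bounds $\binom{c_d|\ell|}{m} \leq 2^{c_d|\ell|}$ and $\binom{A-1}{m-1} \leq 2^{A-1}$ yields the desired estimate $C_1^{|\ell|} C_1^A$ for a suitable constant $C_1 = C_1(d)$, absorbing the extra factor $A$ from the sum over $m$ into $C_1^A$.

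The main (and mild) obstacle is avoiding an overcounting factor that would spoil the $|\ell|$-dependence. A naive counting with ordered anchor tuples would give a factor $(c_d|\ell|)^m$, which when $m$ is close to $A$ produces a term of order $|\ell|^A$ that is far too large when both $|\ell|$ and $A$ are comparable. Using a canonical anchor per component (rather than an ordered tuple) replaces $(c_d|\ell|)^m$ by $\binom{c_d|\ell|}{m} \leq 2^{c_d|\ell|}$, which is uniformly bounded in $m$ and gives the correct $|\ell|$-dependence.
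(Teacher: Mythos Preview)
Your proposal is correct and follows essentially the same strategy as the paper: decompose $K$ into its connected components, anchor each component at a plaquette touching $\ell$, apply Lemma~\ref{lemma:connected-plaquette-count-bound} to each component, and sum the resulting combinatorics. The only cosmetic difference is in the bookkeeping of the anchor choices: the paper keeps ordered components with a $\tfrac{1}{m!}$ symmetry factor and sums $\sum_m \tfrac{(2d|\ell|)^m}{m!} \leq e^{2d|\ell|}$, whereas you use distinct canonical anchors and bound $\binom{c_d|\ell|}{m} \leq 2^{c_d|\ell|}$; both give the required $C^{|\ell|}$ dependence.
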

\begin{proof}
Every $\ell$-connected plaquette assignment $K$ decomposes into $K = K_1 + \cdots + K_m$, where the $K_j$ are connected plaquette assignments with disjoint supports which each contain some edge of $\ell$. 
Let $N(\ell, A)$ be the number of connected plaquette assignments with area $A$ which contain some edge of $\ell$. Then the number of $\ell$-connected plaquette assignments with area $A$ is at most
\[ \sum_{m=1}^A \frac{1}{m!}\sum_{\substack{A_1 + \cdots + A_m = A \\ A_j \geq 1 \\ 1 \leq j \leq m}} \prod_{j=1}^m N(\ell, A_j) .\]
Here, the $\frac{1}{m!}$ factor arises because we may permute the labels $K_1, \ldots, K_m$ of the decomposition. By Lemma \ref{lemma:connected-plaquette-count-bound}, we have that $N(\ell, A_j) \leq (2d|\ell|) C^{A_j}$, where the $2d|\ell|$ factor arises because any connected plaquette assignment which contains some edge of $\ell$ must contain one of the plaquettes touching $\ell$, and there are only at most $2d|\ell|$ such plaquettes. Inserting this estimate, we obtain the further upper bound
\[ \sum_{m=1}^A \frac{1}{m!} (2d|\ell|)^m C^A \dbinom{A-1}{m-1},\]
where $\dbinom{A-1}{m-1}$ counts the number of partitions of $A$ into parts $A_1, \ldots, A_m \geq 1$. To finish, apply the estimate \eqref{eq:binomial-bound} to obtain that the above is bounded by
\[ C^A e^{A/e} \sum_{m=1}^\infty \frac{(2d|\ell|)^m}{m!} \leq e^{2d|\ell|} (C e^{1/e})^A, \]
as desired.
\end{proof}

We may now finally prove Lemma \ref{lemma:phi-bound}.

\begin{proof}[Proof of Lemma \ref{lemma:phi-bound}]
Fix a string $s = \{\ell_1, \ldots, \ell_n\}$ with $n$ loops. We may write
\[ \sum_{K : \cP_{\ZZ^d} \rightarrow \NN} |\phi^K(s)| ~= \sum_{A=n}^\infty \sum_{\substack{K : \cP_{\ZZ^d} \rightarrow \NN \\ \area(K) = A}} |\phi^K(s)|.\]
Note that the sum starts at $A = n$, because each of the $n$ loops must be in its own component, and the smallest area a component can have is one. Next, recalling the definition of $\phi^K(s)$ from \eqref{eq:gen-phi-string} (see also \eqref{eq:eiwuvfuowevo}), we may further write the right-hand side as
\[ \sum_{A=n}^\infty \sum_{\substack{K : \cP_{\ZZ^d} \rightarrow \NN \\ \area(K) = A}} |\phi^K(s)| ~= \sum_{A=n}^\infty \sum_{\substack{A_1 + \cdots + A_n = A \\ A_1, \ldots, A_n \geq 1}} \sum_{\substack{K_j : \cP_{\ZZ^d} \rightarrow \NN \\ \area(K_j) = A_j\\ j = 1, \ldots, n }} |\phi^{K_1}(\ell_1)| \cdots |\phi^{K_n}(\ell_n)|.  \]
Now by Remark \ref{remark:loop-K-connected}, Lemma \ref{lemma:ell-connected-plaquette-bound} and Proposition \ref{prop:-technical-bound-on-phi-K}, the right-hand side above may be bounded by (using \eqref{eq:binomial-bound} in going from the first line to the second line, and also allowing the constant $C$ to change from line to line)
\begin{align*}
	\sum_{A=n}^\infty \sum_{\substack{A_1 + \cdots + A_n = A \\ A_1, \ldots, A_n \geq 1}}  \prod_{j=1}^n C^{|\ell_j|} (C_d \beta)^{A_j}  &\leq C^{|s|} \sum_{A=n}^\infty \dbinom{A-1}{n-1}  (C_d\beta)^A \\
	&\leq C^{|s|} \sum_{A=n}^\infty (C_d\beta)^A e^{A/e} < \infty,  
\end{align*}
as long as $\beta$ is small enough.
\end{proof}

\subsection{A pinching-peeling-separating exploration process for planar embedded maps}\label{subsubsec: Splittings and Deformations on embedded maps}

In order to derive the master loop equation later in Section~\ref{subsec: surface sum Satisfies Master Loop Equation}, we introduce here a ``peeling exploration'' -- called the pinching-peeling-separating process -- designed for non-separable planar embedded maps. This process systematically explores these maps face by face and further performs certain specific operations on the embedded maps, similar to splittings and deformations of loops. That is, when we  pinch-peel-separate an embedded map with boundary $\ell$, we obtain an embedded map with a new boundary $\ell'$, where $\ell'$ is a deformation or splitting of $\ell$. Our analysis will also account for how the weights of the original embedded map and the new embedded map are modified.

\medskip

Throughout this section, we assume that:
\begin{itemize}
\item $\ell$ is a fixed loop of the form $\ell = \mathbf{e} \, \pi$, where $\mathbf{e}$ is a copy of the oriented edge $e\in E_{\ZZ^d}$,
\item $K:\cP_{\ZZ^d}\to \NN$ is a fixed non-zero plaquette assignment such that the pair $(\ell,K)$ is balanced.
\item $\npe(\ell,K)\neq \emptyset$ and $M=(m,\psi) \in \npe(\ell,K)$ is a non-separable planar embedded map with boundary $\ell$ and plaquette assignment $K$ (recall Definition~\ref{defn:non-separableplanar-embedded}).
\end{itemize} 

\begin{rmk}\label{rmk:set-can-be-empty}
We explain why we assumed that $\npe(\ell,K)\neq \emptyset$. One subtle but important fact is that $\npe(\ell,K)$ can be empty even if $(\ell,K)$ is balanced, $\ell\neq \emptyset$ and $K\neq 0$. For instance, it can be checked that for two adjacent positively oriented plaquettes $p$ and $q$, when $\ell=p$, and $K$ is such that $K(p^{-1})=1$, $K(q)=1$, $K(q^{-1})=1$, and $K$ is zero for all other plaquettes, then $\npe(\ell,K)=\emptyset$ because all planar maps in $\cP\cM(\ell,K)$ turn out to be separable. This fact will play a role later in Section~\ref{subsec: surface sum Satisfies Master Loop Equation} (see in particular Section~\ref{sec: empty surface sum satisfies the master loop equation}).
\end{rmk}

We are now ready to introduce our \textbf{pinching-peeling-separating (PPS) process} for non-separable planar embedded maps.

\medskip

\noindent\underline{\textbf{Step 1:}} \emph{\textbf{Exploring the blue face incident to 
	$\mathbf{e}$}}. The first step of the PPS process prescribes how to explore the blue face $B_{\mathbf{e}}(M)$ incident to $\mathbf{e}$. Let $\delta_{\mathbf{e}}(M)$ denote half of the degree of $B_{\mathbf{e}}(M)$. There are two different possible cases: 
	
	\begin{enumerate}
\item either $\delta_{\mathbf{e}}(M)=1$, i.e.\ we are exploring a blue 2-gon. See the left-hand side of Figure~\ref{fig-PPS-process-1} for an example;
\item or $\delta_{\mathbf{e}}(M)\in [2,n_e(\ell,K)]$, i.e.\ we are exploring a large blue face (we recall that the definition of $n_e(\ell,K)$ was given in \eqref{defn:ne}). See the left-hand side of Figure~\ref{fig-PPS-process-2} for an example.
\end{enumerate}
We first describe the process in Case 1. Note that if  $\delta_{\mathbf{e}}(M)=1$, then $\mathbf{e}$ is in a $2$-gon with another edge $\mathbf{e}^{-1}$ of $M$ which is a copy of the lattice edge $e^{-1}\in E_{\ZZ^d}$. Now there are two possible sub-cases: 
\begin{enumerate}
\item[1a.] $\mathbf{e}^{-1}$ is incident to a plaquette face $\mathbf{p}$ with edges $\mathbf{e}^{-1}\,\mathbf{e}_a\,\mathbf{e}_b\,\mathbf{e}_c$ in clockwise order, where $\mathbf{e}_x$ is a copy of the oriented lattice edge $e_x\in E_{\ZZ^d}$ for $x=a,b,c$ and $p=e^{-1}e_ae_be_c$ is the corresponding lattice plaquette.

\item[1b.] $\mathbf{e}^{-1}$ is part of the boundary of $M$, i.e.\ $\ell = \mathbf{e} \,\ell_1\, \mathbf{e}^{-1}\,\ell_2$ where $\ell_1\,\mathbf{e}^{-1}\,\ell_2 = \pi$. Note that $\ell_1$ and $\ell_2$ are non-empty loops as we assumed that $\ell$ has no backtracks and moreover $\{\ell_1,\ell_2\}\in\SS_-(\mathbf{e},\ell)$.
\end{enumerate}

Next we describe the PPS process for the above two sub-cases:

\medskip

\noindent\textbf{\underline{Step 2 (Case 1a):}} \textbf{\emph{Peeling \& combining (negative deformation)}}. The PPS process peels from $M$ the blue 2-gon $B_{\mathbf{e}}(M)$ and combines the plaquette face $\mathbf{p}$ with the yellow external face of $M$, obtaining a new embedded map $M'$ with boundary $\mathbf{e}_a\,\mathbf{e}_b\,\mathbf{e}_c\, \pi$ corresponding to the new lattice loop $\ell'=e_a\,e_b\,e_c\, \pi = \ell \ominus_{\mathbf{e}} p$. Notice since $M$ is a non-separable planar embedded map so is $M'$.  Hence
\begin{equation*}
M'\in\npe(\ell \ominus_{\mathbf{e}} p , K\sm \{p\}) \quad\text{ and }\quad p\in\cP_{\ZZ^d}(e^{-1},K).
\end{equation*}
We call the operation that associates $M$ with the map $M'$ the \textbf{negative deformation operation on embedded maps}. We finally note that 
\begin{equation}\label{eq: neg def total weight relation}
\upbeta^{\area(M)}w_{\infty}(M) = \upbeta\cdot\upbeta^{\area(M')}w_{\infty}(M').
\end{equation}
The top of Figure~\ref{fig-PPS-process-1} shows an example of this step of the PPS process. We conclude with an observation useful for future reference.

\begin{figure}[ht!]
\begin{center}
	\includegraphics[width=.99\textwidth]{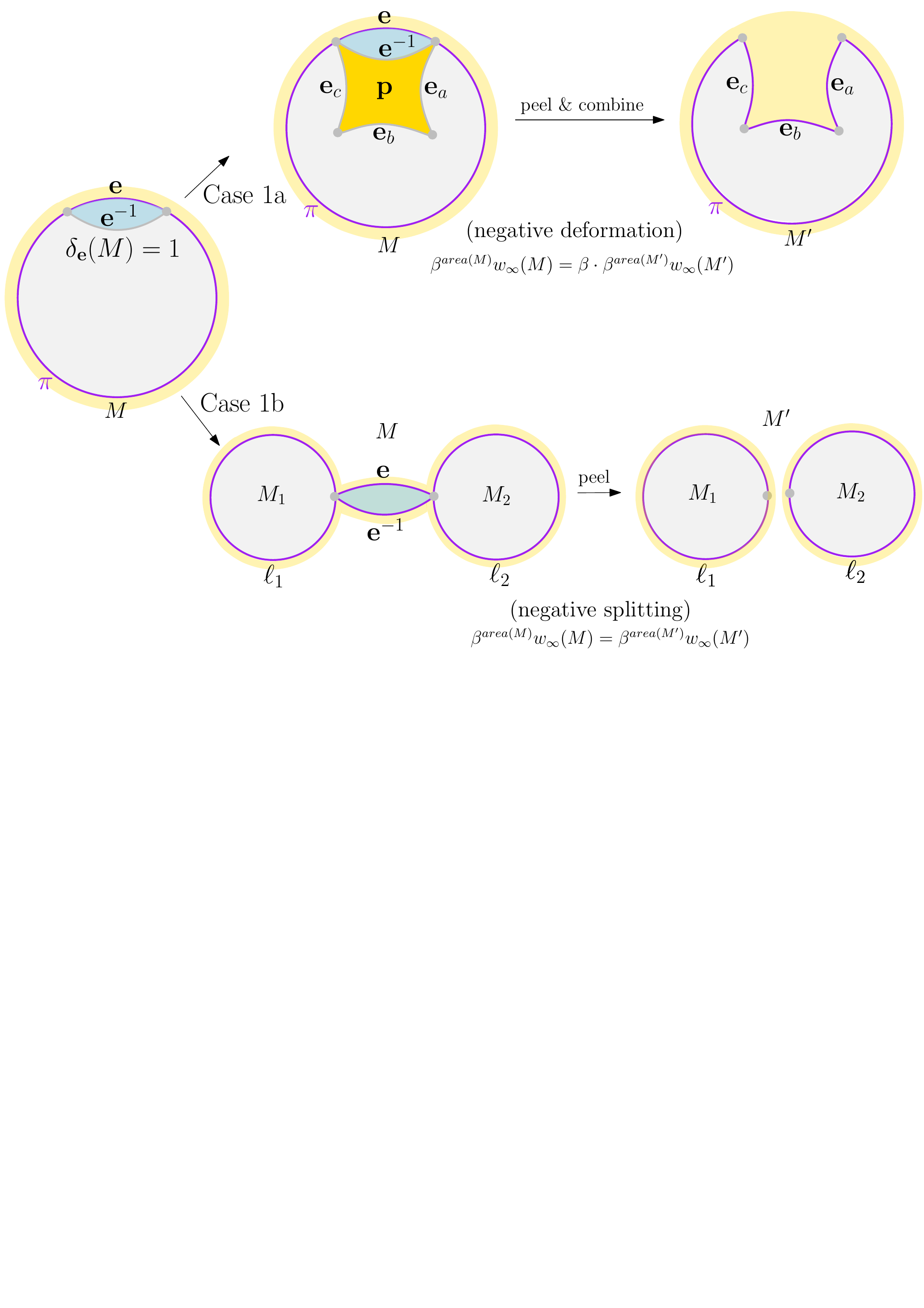}  
	\caption{\label{fig-PPS-process-1} A schema for the Step 2 (Cases 1a and 1b) of the PPS process.}
\end{center}
\vspace{-3ex}
\end{figure}

\begin{obs}\label{obs:almost-bij}
The negative deformation operation on embedded maps is an injective map from the set 
\begin{equation}\label{eq:crazyset2}
	\Big\{M\in\npe(\ell,K)\,:\,\delta_{\mathbf{e}}(M)=1,\,\mathbf{e}^{-1}\notin \ell \Big\} 
\end{equation}
to the set $\bigsqcup_{p\in\cP_{\ZZ^d}(e^{-1},K)}\npe(\ell \ominus_{\mathbf{e}} p , K\sm \{p\})$ but is not surjective, as we are going to explain. 

Recall that $\ell$ has the form $\mathbf{e} \, \pi$ and $\mathbf{e}$ is a copy of the lattice edge $e$.  
Fix $p\in\cP_{\ZZ^d}(e^{-1},K)$. All the embedded maps in $\npe(\ell \ominus_{\mathbf{e}} p , K\sm \{p\})$ that cannot be obtained from a negative deformation operation of a map in the set \eqref{eq:crazyset2} are the ones having a connected\footnote{Two faces are connected if they share at least one vertex.} sequence of  blue faces sent to the lattice edge $e$ and connecting the starting and final vertex of $\pi$. We denote this set by $\npe^{\text{bad}}( \ell\ominus_{\mathbf{e}} p, K \sm\{p\})$. See Figure~\ref{fig-PPS-process-3} for an example.
\end{obs}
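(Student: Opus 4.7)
The plan is to establish injectivity via an explicit left inverse and then characterize the image by analyzing when this inverse yields a non-separable map. Given $M' \in \npe(\ell \ominus_{\mathbf{e}} p, K \sm \{p\})$ with $p \in \cP_{\ZZ^d}(e^{-1}, K)$, I would construct a candidate preimage $\tilde M$ as follows: the first three edges of the boundary of $M'$ are $\mathbf{e}_a\mathbf{e}_b\mathbf{e}_c$; glue a new plaquette face $\mathbf{p}$ (sent to $p$) along these three edges, creating a new internal edge $\mathbf{e}^{-1}$ as the fourth side of $\mathbf{p}$, and then attach a blue $2$-gon with edges $\mathbf{e},\mathbf{e}^{-1}$ so that $\mathbf{e}$ becomes the new first boundary edge. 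The resulting $\tilde M$ has boundary $\mathbf{e}\,\pi = \ell$, plaquette assignment $K$, and satisfies $\delta_{\mathbf{e}}(\tilde M)=1$, $\mathbf{e}^{-1}\notin \ell$. Injectivity then follows because any $M$ in the domain with image $(M',p)$ must contain the $2$-gon $B_{\mathbf{e}}(M)$ adjacent to $\mathbf{p}$; peeling these off recovers $M'$, so $M = \tilde M$ is uniquely determined by $M'$ (and the plaquette $p$ is itself recoverable from $M'$ by reading the first three boundary edges).

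Next I would verify that $\tilde M$ is always a planar embedded map in $\cP\cM(\ell,K)$, since the gluing takes place only along the boundary, thus preserving planarity, the disk topology, the boundary word, and the plaquette counts. Hence the image of the negative deformation operation is exactly the subset of $\bigsqcup_p \npe(\ell \ominus_{\mathbf{e}} p, K \sm \{p\})$ for which $\tilde M$ is additionally non-separable. Because the blue faces of $\tilde M$ coincide with those of $M'$ together with one new $2$-gon sent to $e$, for any $e' \neq e$ the blue faces of $\tilde M$ sent to $e'$ cannot disconnect the dual graph of $\tilde M$ (this would contradict non-separability of $M'$), so the only possible obstruction is the family of blue faces sent to $e$.

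The key claim is then: $\tilde M$ is separable if and only if $M'$ contains a connected chain of blue faces sent to $e$ linking the starting and final vertex of $\pi$, i.e.\ iff $M' \in \npe^{\text{bad}}(\ell \ominus_{\mathbf{e}} p, K \sm \{p\})$. The two boundary vertices of the newly attached $2$-gon are precisely the endpoints of $\pi$, so any such chain in $M'$ closes up together with the $2$-gon into a dual-graph cycle in $\tilde M$ consisting entirely of blue faces sent to $e$. By planarity of the disk $\tilde M$, such a cycle separates it into two non-empty regions---one of them containing the new plaquette $\mathbf{p}$---which implies that removing these blue faces disconnects the dual graph. Conversely, if no such chain exists in $M'$, no disconnecting set of blue faces sent to $e$ can be formed after attaching the $2$-gon. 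The main obstacle I anticipate is rigorously justifying this Jordan-curve-type argument for the dual graph of the planar disk $\tilde M$, in particular correctly accounting for the external face and ruling out degenerate cases in which a candidate separating cycle bounds an empty region.
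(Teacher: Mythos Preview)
Your proposal is correct and in fact supplies more detail than the paper, which states this as an observation without formal proof and illustrates the failure of surjectivity only through Figure~\ref{fig-PPS-process-3}. Your construction of the explicit inverse gluing (re-attaching the plaquette face $\mathbf{p}$ along $\mathbf{e}_a\mathbf{e}_b\mathbf{e}_c$ and inserting the blue $2$-gon) is exactly the intended reasoning, as is your identification of the obstruction: the inverse $\tilde M$ is separable precisely when the blue faces sent to $e$ in $M'$ already contain a chain linking the two endpoints of $\pi$, which then closes up with the new $2$-gon into an enclosure loop. The Jordan-curve-type step you flag is indeed the only subtle point; it is standard planar topology for disks, and the paper takes it for granted.
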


\begin{figure}[ht]
\begin{center}
	\includegraphics[width=.89\textwidth]{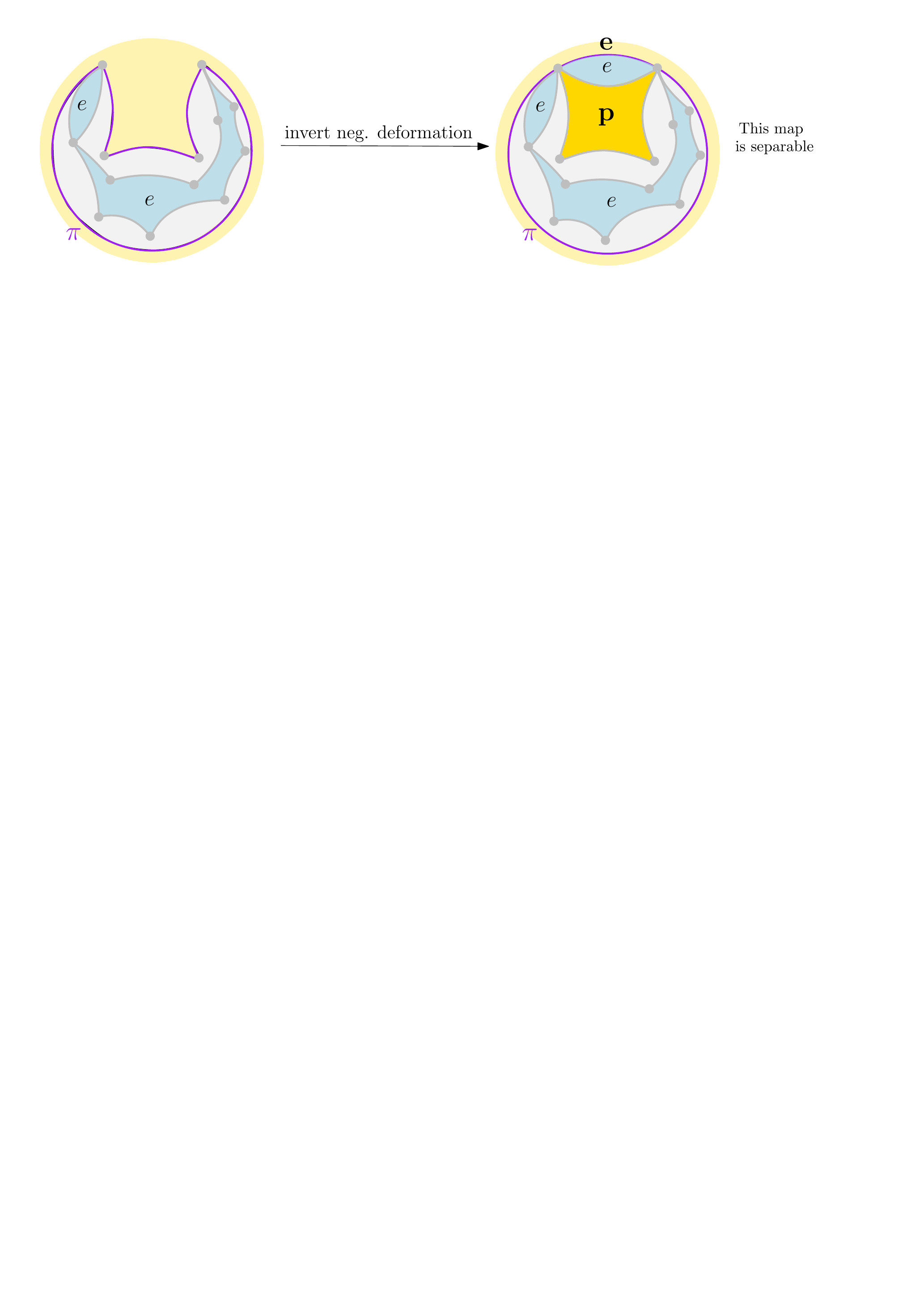}  
	\caption{\label{fig-PPS-process-3} On the left we have an example of an embedded map in $\npe^{\text{bad}}( \ell\ominus_{\mathbf{e}} p, K \sm\{p\})$. Indeed, there is  a connected sequence of blue faces sent to the lattice edge $e$ that connect the starting and final vertex of $\pi$. Note that if we invert the negative deformation operation, we get on the right an embedded map which contains an enclosure loop and so it is separable.}
\end{center}
\vspace{-3ex}
\end{figure}

\medskip

\noindent\textbf{\underline{Step 2 (Case 1b):}} \textbf{\emph{Peeling (negative splitting)}}. The PPS process peels $B_{\mathbf{e}}(M)$ from $M$ (i.e.\ removes the blue 2-gon $B_{\mathbf{e}}(M)$). Doing this, we obtain two new embedded maps $M_1$ (with boundary $\ell_1$) and $M_2$ (with boundary $\ell_2$) with plaquette assignments $K_1$ and $K_2$ such that 
\begin{itemize}
\item $\{\ell_1,\ell_2\}\in\SS_-(\mathbf{e},\ell)$;
\item $K_1(p)+K_2(p) = K(p)$, for all $ p\in \cP_{\ZZ^d}$;
\item $M_1$ and $M_2$ are both non-separable planar embedded maps (since $M$ is a non-separable planar embedded map);
\item the pairs $(\ell_1,K_1)$ and $(\ell_2,K_2)$ are balanced.
\end{itemize}
The last item is a simple consequence of the fact that each blue face sent to a specific lattice edge contains on its boundary the same number of copies of the two possible orientations of that edge, together with the fact that every edge of the map $M$ is contained in exactly one blue face. 

As a consequence of the four items above, $M_1\in \npe(\ell_1,K_1)$ and $M_2\in \npe(\ell_2,K_2)$. Thus, setting $M'=(M_1,M_2)$ and $s=\{\ell_1,\ell_2\}$, we conclude that (recall the form of $\npe(s,K)$ from \eqref{eq:string-set-maps})
\begin{equation*}
M'\in\npe(s,K)\quad\text{ and }\quad s\in \SS_-(\mathbf{e},\ell).
\end{equation*}
We call the operation that associates $M$ with the map $M'=(M_1,M_2)$ the \textbf{negative splitting operation on embedded maps}.
We finally note that \begin{equation}\label{eq: neg split total weight relation}
\upbeta^{\area(M)}w_{\infty}(M)
=
\upbeta^{\area(M')}w_{\infty}(M').
\end{equation}
The bottom part of Figure~\ref{fig-PPS-process-1} shows an example of this step of the PPS process. We conclude with an observation useful for future reference.

\begin{obs}\label{obs:it-is-a-bij}
The negative splitting operation on embedded maps is a bijection from the set 
$$\Big\{M\in\npe(\ell,K)\,:\,\delta_{\mathbf{e}}(M)=1,\,\mathbf{e}^{-1}\in \ell \Big\}$$ 
to the set $\bigsqcup_{s\in \SS_-(\mathbf{e},\ell)}\npe(s,K)$.
\end{obs}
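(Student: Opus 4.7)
The plan is to define an explicit inverse to the negative splitting operation --- a gluing operation --- and to check that the two compositions are identity maps. Given an element $(M_1,M_2)\in \npe(\ell_1',K_1)\times \npe(\ell_2',K_2)$ with $K_1+K_2=K$ and $\{\ell_1',\ell_2'\}\in \SS_-(\mathbf{e},\ell)$, the definition of a negative splitting provides the decomposition $\ell=\mathbf{e}\,\ell_1'\,\mathbf{e}^{-1}\,\ell_2'$ (after reordering the pair if necessary). The gluing operation then produces $M$ by taking the disjoint union $M_1\sqcup M_2$, identifying the appropriate boundary vertices so that the terminal vertex of $\ell_1'$ meets the initial vertex of $\ell_2'$ (and correspondingly on the other side), and inserting a new blue $2$-gon with edges $\mathbf{e},\mathbf{e}^{-1}$ so that the resulting boundary reads $\ell$. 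That $M$ is planar, connected, has plaquette assignment $K$, boundary $\ell$, and satisfies $\delta_\mathbf{e}(M)=1$ is immediate from the construction.

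The main obstacle is checking non-separability of $M$. Fix a lattice edge $e^*$ and remove all blue faces of $M$ sent to $e^*$ from the dual graph of $M$. The decisive structural feature is that the newly inserted $2$-gon is adjacent only to the external yellow face of $M$ (since both of its edges lie on the boundary $\ell$), and the external yellow faces of $M_1$ and $M_2$ become identified into the single external yellow face of $M$. If $e^*\neq e$, the removed blue faces all lie in $M_1\cup M_2$; by non-separability of $M_1$ and $M_2$, each of their dual graphs remains connected after the removal (the external face vertex in particular remaining in the main component), and the identification of external face vertices, together with the still-present $2$-gon, keeps the combined dual graph of $M$ connected. If $e^*=e$, the $2$-gon is removed too, but the same identification of external face vertices alone suffices to keep the two halves connected to each other.

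The reverse implication --- that the negative splitting of a non-separable $M$ yields two non-separable components $M_1,M_2$ --- follows by contraposition: if removing blue faces of $M_1$ sent to some edge disconnects $M_1$'s dual graph, the piece not containing the external face of $M_1$ cannot reach $M_2$ or the $2$-gon through $M$'s dual graph (neither of which is adjacent to internal blue faces of $M_1$), so $M$ itself would be separable. This shows both directions of the correspondence land in their stated codomains. Finally, the two compositions are the identity by inspection: peeling removes exactly the $2$-gon that gluing creates, with all other structural data on both sides preserved on the nose, completing the bijection.
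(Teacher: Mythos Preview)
Your proposal is correct and in fact provides considerably more detail than the paper, which states this as an observation without proof (it is treated as self-evident from the description of the PPS process in Step~2, Case~1b). Your explicit gluing inverse and the dual-graph connectedness argument for non-separability are exactly the right ideas; the key structural point you isolate --- that the inserted $2$-gon is adjacent only to the external face, and that the external faces of $M_1$ and $M_2$ merge into a single external face of $M$ --- is precisely what makes the non-separability check go through cleanly in both the $e^*=e$ and $e^*\neq e$ cases.
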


\medskip

We now move to the description of the PPS process in Case 2, i.e.\ when $\delta_{\mathbf{e}}(M)\geq 2$. Recall that this means that $B_{\mathbf{e}}(M)$ is a 2$\delta_{\mathbf{e}}(M)$-gon.

\medskip

\noindent\textbf{\underline{Step 2 (Case 2):}} \textbf{\emph{Pinching}}. The PPS process pinches the starting vertex of $\mathbf{e}$ with another vertex of $B_{\mathbf{e}}(M)$ in the same partite class. Note there are $\delta_{\mathbf{e}}(M)-1$ possible different vertices to pinch with. Let $M_i$ denote the new embedded map obtained after one of these pinching operations, where $i$ is equal to half of the degree of the new blue face containing $\mathbf{e}$. An example is given on the left-hand side of Figure~\ref{fig-PPS-process-2}.
Let $\mathbf{e}_i$ denote the edge on the boundary of $B_{\mathbf{e}}(M)$ starting at the vertex used in the pinching operation (note that $\mathbf{e}_i$ is embedded into $e\in E_{\ZZ^d}$, the same (oriented) edge of the lattice that $\mathbf{e}$ is embedded into).
Now there are two possible sub-cases: 
\begin{enumerate}
\item[2a.] $\mathbf{e}_i$ is incident to a plaquette face $\mathbf{q}$ with edges $\mathbf{e}_i\,\mathbf{e}_d\,\mathbf{e}_f\,\mathbf{e}_g$ in clockwise order, where $\mathbf{e}_x$ is a copy of the oriented lattice edge $e_x\in E_{\ZZ^d}$ for $x=d,f,g$ and $q=e \, e_d  \, e_f \, e_g$ is the corresponding lattice plaquette.

\item[2b.] $\mathbf{e}_i$ is part of the boundary of $M_i$, i.e.\ $\ell = \mathbf{e} \,\ell_{i,1}\, \mathbf{e}_i\, \ell_{i,2}$ where $\ell_{i,1}\,\mathbf{e}_i\,\ell_{i,2} = \pi$. Note that $\ell_{i,1}$ and $\ell_{i,2}$ are non-empty loops and $(e\,\ell_{i,1},e\,\ell_{i,2})\in\SS_
+(\mathbf{e},\ell)$.
\end{enumerate}

Next we describe the PPS process for the above two sub-cases:

\medskip

\noindent\textbf{\underline{Step 3 (Case 2a):}} \textbf{\emph{Separating and combining (positive deformation)}}. The PPS process separates the vertex shared by $\mathbf{e}$ and $\mathbf{e}_i$ (duplicating this vertex) by opening the boundary of the plaquette face $\mathbf{q}$ and then combines the interior of the plaquette face $\mathbf{q}$ with the interior of the yellow external face of $M_i$. Doing this, we obtain a new embedded map $M'_i$ with boundary $\mathbf{e}\,\mathbf{e}_d\,\mathbf{e}_f\,\mathbf{e}_g \,\mathbf{e}_i\, \pi$ corresponding to the new lattice loop $\ell'=e \, e_d \, e_f \, e_g \, e \, \pi = \ell \oplus_{\mathbf{e}} q$. 
Notice since $M$ is a non-separable planar embedded map, so is $M'_i$.\footnote{We stress that $M_i$ might be separable. Indeed, in the pinching operation one might create an enclosure loop (if the two vertices used in the pinching operation were connected by a sequence of blue faces sent to the same lattice edge). Nevertheless, even if this enclosure loop was created, it would be removed during the separating operation. Hence $M'_i$ is always non-separable.}  Hence
\begin{equation*}
M'_i\in\npe(\ell \oplus_{\mathbf{e}} q , K\sm \{q\}) \quad\text{ and }\quad q\in\cP_{\ZZ^d}(e,K).
\end{equation*}
We call the operation that associates $M$ with the map $M'_i$ the \textbf{positive deformation operation on embedded maps}. We finally note that 
\begin{equation}\label{eq: neg def total weight relation2}
\upbeta^{\area(M_i)}w_{\infty}(M_i) = \upbeta\cdot\upbeta^{\area(M'_i)}w_{\infty}(M'_i).
\end{equation}
The top part of Figure~\ref{fig-PPS-process-2} shows an example of this step of the PPS process. We conclude with an observation useful for future reference.

\begin{figure}[ht!]
\begin{center}
	\includegraphics[width=.99\textwidth]{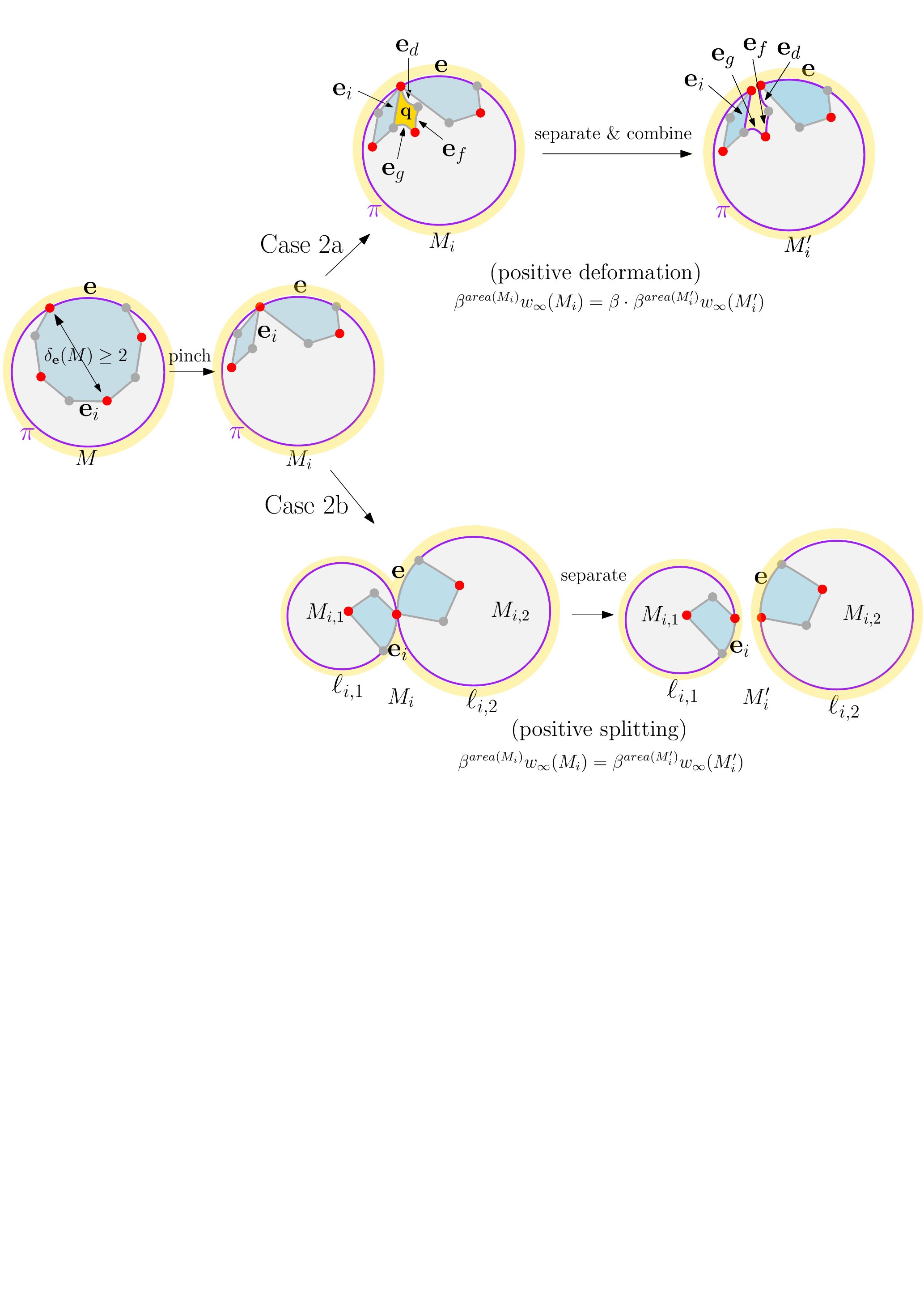}  
	\caption{\label{fig-PPS-process-2} A schema for the Steps 2 and 3 (Cases 2a and 2b) of the PPS process.}
\end{center}
\vspace{-3ex}
\end{figure}

\begin{obs}\label{obs:almot-bij2}
The composition of the pinching with the separate and combine operation, that is, the positive deformation operation on embedded maps, is an injective map from the set 
\begin{equation}\label{eq:crazyset}
	\Big\{M\in\npe(\ell,K)\,:\,\delta_{\mathbf{e}}(M)\in[2,n_e(\ell,K)],\, \mathbf{e}_i\notin\ell \text{ for some } i\in[\delta_{\mathbf{e}}(M)-1]\Big\} 
\end{equation}
to the set $\bigsqcup_{q\in\cP_{\ZZ^d}(e,K)}\npe(\ell \oplus_{\mathbf{e}} q , K\sm \{q\}),$ but is not surjective, as we are going to explain. 

Fix $q=\mathbf{e}' \, \mathbf{e}_d \, \mathbf{e}_f \, \mathbf{e}_g\in\cP_{\ZZ^d}(e,K)$ so that $\ell \oplus_{\mathbf{e}} q=\mathbf{e}' \, \pi \, \mathbf{e} \, \mathbf{e}_d \, \mathbf{e}_f \, \mathbf{e}_g$. All the embedded maps in $\npe(\ell \oplus_{\mathbf{e}} q , K\sm \{q\})$ that cannot be obtained from a positive deformation operation of a map in the set \eqref{eq:crazyset} are the ones having a connected sequence of blue faces sent to the lattice edge $e$ connecting the edges $\mathbf{e}$ and $\mathbf{e}'$. We denote this set by $\npe^{\text{bad}}(\ell\oplus_{\mathbf{e}} q, K \sm\{q\})$. See Figure~\ref{fig-PPS-process-4} for an example.
\end{obs}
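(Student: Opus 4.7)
The plan is to construct an explicit inverse $\Phi$ of the positive deformation operation on its image, which simultaneously yields injectivity and identifies the image as the complement of $\bigsqcup_{q \in \cP_{\ZZ^d}(e,K)} \npe^{\text{bad}}(\ell \oplus_{\mathbf{e}} q, K \sm \{q\})$.

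Well-definedness of the positive deformation is essentially immediate from the construction given in Step 3 (Case 2a). The output $M_i'$ is planar because pinching and separating-combining are local operations that preserve the ambient disk topology; it has the correct boundary $\ell \oplus_{\mathbf{e}} q$ and plaquette assignment $K \sm \{q\}$ by direct inspection; and it is non-separable since any enclosure loop created at the pinching step must pass through the pinched vertex and is therefore destroyed by the subsequent separating step at that vertex, as already observed in the text.

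To construct $\Phi$, fix $q \in \cP_{\ZZ^d}(e, K)$ and $N' \in \npe(\ell \oplus_{\mathbf{e}} q, K \sm \{q\})$, whose boundary reads $\mathbf{e}\, \mathbf{e}_d\, \mathbf{e}_f\, \mathbf{e}_g\, \mathbf{e}'\, \pi$. The inverse reverses Step 3 (Case 2a) as follows: first, fold the four consecutive boundary edges $\mathbf{e}_d, \mathbf{e}_f, \mathbf{e}_g, \mathbf{e}'$ into an interior plaquette face $\mathbf{q}$ sent to $q$, which identifies the endpoint of $\mathbf{e}$ with the endpoint of $\mathbf{e}'$ and places $\mathbf{e}'$ as an interior edge; then merge the blue face incident to $\mathbf{e}$ in $N'$ with the blue face incident to $\mathbf{e}'$ in $N'$ along the now-interior edge $\mathbf{e}'$, producing a single blue face $B_{\mathbf{e}}(\Phi(N'))$. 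The procedure is entirely determined by $N'$ and $q$, and direct verification shows that composing $\Phi$ with the positive deformation recovers the identity on valid input pairs; therefore, whenever $\Phi(N')$ lies in $\npe(\ell,K)$, it is the unique preimage of $N'$, giving injectivity.

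The main step, and the principal obstacle, is showing that $\Phi(N') \in \npe(\ell, K)$ if and only if $N' \notin \npe^{\text{bad}}(\ell \oplus_{\mathbf{e}} q, K \sm \{q\})$. Planarity, boundary, and plaquette assignment of $\Phi(N')$ are correct by construction; the only remaining condition to verify is non-separability. Any enclosure loop in $\Phi(N')$ sent to a lattice edge $f \neq e$ would be composed of blue faces that already exist unchanged in $N'$, contradicting the non-separability of $N'$. Hence any potential enclosure loop must be sent to $e$ and must involve the newly merged blue face $B_{\mathbf{e}}(\Phi(N'))$. Applying the reduction from \cref{obs:enclosure loops}, such an enclosure loop, after excising the merged face, corresponds exactly to a connected sequence of blue faces in $N'$ each sent to $e$, joining the boundary edges $\mathbf{e}$ and $\mathbf{e}'$; conversely, any such connected sequence produces a separating enclosure loop in $\Phi(N')$ once the merger is performed. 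Establishing this bijective correspondence is the core technical content of the proof, and it identifies the failure set of $\Phi$ with $\npe^{\text{bad}}$ exactly as defined.
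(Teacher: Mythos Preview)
The paper states this as an observation without a formal proof, relying on the explicit description of the PPS process and the accompanying figure. Your proposal supplies the natural argument---construct an explicit inverse $\Phi$ and characterize exactly when it lands in $\npe(\ell,K)$---and this is essentially correct.

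One imprecision should be corrected. In your description of $\Phi$ you write ``merge the blue face incident to $\mathbf{e}$ with the blue face incident to $\mathbf{e}'$ along the now-interior edge $\mathbf{e}'$.'' This is not the right operation: the two faces on either side of the interior edge $\mathbf{e}'$ are the blue face incident to $\mathbf{e}'$ and the new plaquette face $\mathbf{q}$, not the two blue faces you intend to merge. The correct step is to \emph{un-pinch} at the re-identified vertex (the common starting vertex of $\mathbf{e}$ and $\mathbf{e}'$ after folding), which is the inverse of Step~2 (Case~2) of the PPS process and merges the two blue faces at that shared vertex into the single face $B_{\mathbf{e}}(\Phi(N'))$.

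Two further clarifications. First, as the observation is actually used in Section~\ref{subsec: surface sum Satisfies Master Loop Equation}, the positive deformation is a map on \emph{pairs} $(M,i)$ with $i\in[\delta_{\mathbf{e}}(M)-1]$ and $\mathbf{e}_i\notin\ell$; your inverse implicitly recovers $i$ as half the degree of the blue face containing $\mathbf{e}$ in $N'$, and it is worth saying so. Second, your claim that an enclosure loop of $\Phi(N')$ sent to an edge $f\neq e$ (hence not involving $B_{\mathbf{e}}$) would already be an enclosure loop in $N'$ deserves one more sentence: the associated simple curve from Observation~\ref{obs:enclosure loops} avoids the re-identified vertex (the only place where the underlying maps differ), so it persists unchanged in $N'$ and still disconnects the dual there. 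With these fixes the argument is complete.
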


\begin{figure}[ht!]
\begin{center}
	\includegraphics[width=.99\textwidth]{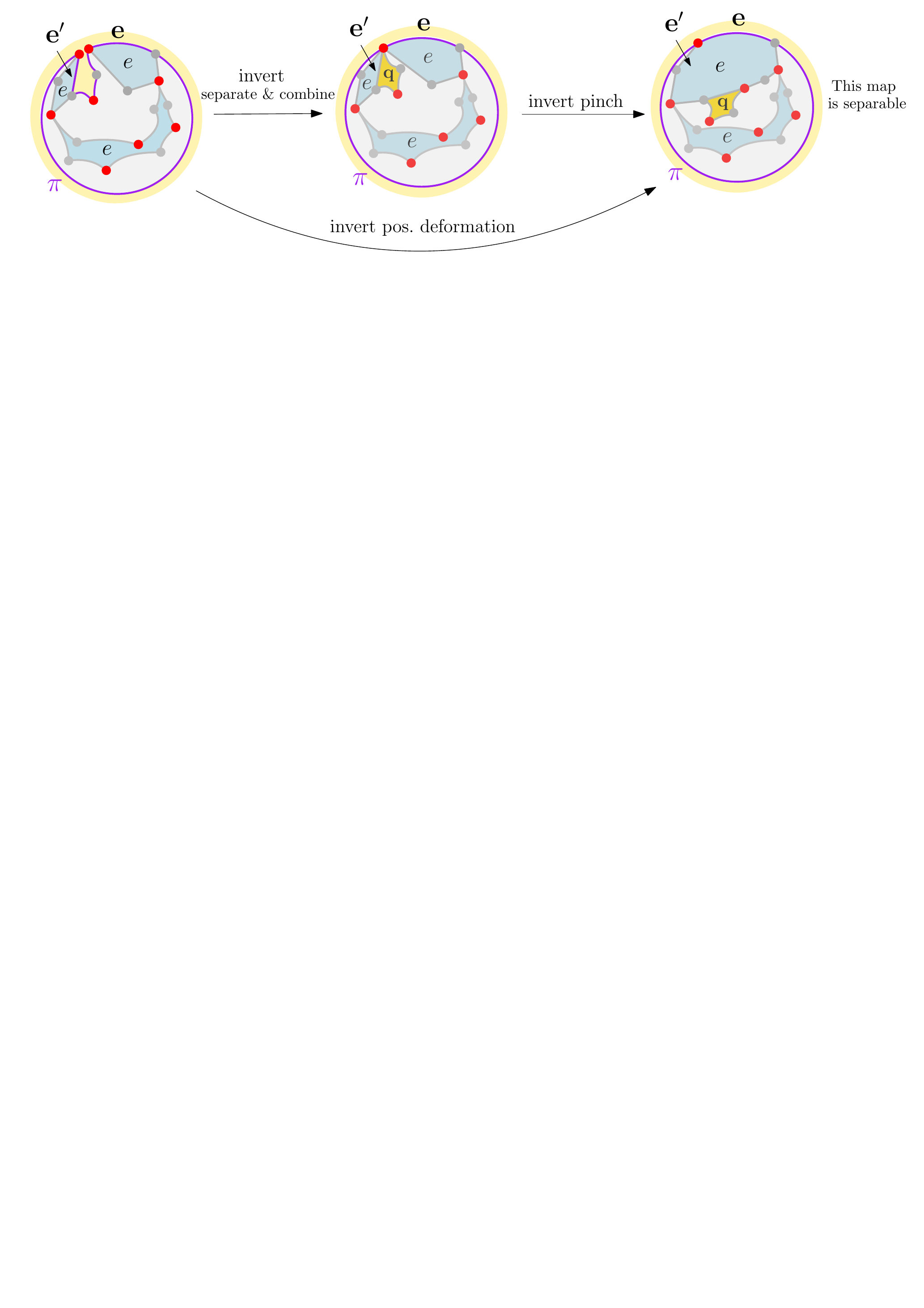}  
	\caption{\label{fig-PPS-process-4} On the left we have an example of an embedded map in $\npe^{\text{bad}}(\ell\oplus_{\mathbf{e}} q, K \sm\{q\})$ for $q=\mathbf{e}' \, \mathbf{e}_d \, \mathbf{e}_f \, \mathbf{e}_g\in\cP_{\ZZ^d}(e,K)$. Indeed, there is  a connected sequence of blue faces sent to the lattice edge $e$ connecting the edges $\mathbf{e}$ and $\mathbf{e}'$. Note that if we invert the positive deformation operation, we get on the right an embedded map which contains an enclosure loop and so it is separable.}
\end{center}
\vspace{-3ex}
\end{figure}

\medskip

\noindent\textbf{\underline{Step 3 (Case 2b):}} \textbf{\emph{Separating (positive splitting)}}. 
The PPS process simply separates the vertex shared by $\mathbf{e}$  and $\mathbf{e}_i$ (duplicating this vertex) splitting $M_i$. Doing this, we obtain two new embedded maps $M_{i,1}$ (with boundary $\mathbf{e}_i\,\ell_{i,1}$ corresponding to the lattice loop $e\,\ell_{i,1}$) and $M_{i,2}$ (with boundary $\mathbf{e}\,\ell_{i,2}$ corresponding to the lattice loop $e\,\ell_{i,2}$) with plaquette assignments $K_{i,1}$ and $K_{i,2}$ such that 
\begin{itemize}
\item $(e\,\ell_{i,1},e\,\ell_{i,2})\in\SS_+(\mathbf{e},\ell)$;
\item $K_{i,1}(p)+K_{i,2}(p) = K(p)$ for all $p\in \cP_{\ZZ^d}$;
\item $M_{i,1}$ and $M_{i,2}$ are both non-separable planar embedded maps (since $M$ is a non-separable planar embedded map);
\item the pairs $(\ell_{i,1},K_{i,1})$ and $(\ell_{i,2},K_{i,2})$ are balanced.
\end{itemize}

As a consequence of the four items above,
\begin{equation*}
M_{i,1}\in \npe(e\,\ell_{i,1},K_{i,1})\quad\text{ and }\quad M_{i,2}\in \npe(e\,\ell_{i,2},K_{i,2}),
\end{equation*} 
and setting $M'_i=(M'_{i,1},M'_{i,2})$ and $s=\{\ell_{i,1},\ell_{i,2}\}$, we conclude that (recall the definition of $\npe(s,K)$ from \eqref{eq:string-set-maps})
\begin{equation*}
M'_i\in\npe(s,K)\quad\text{ and }\quad s\in \SS_+(\mathbf{e},\ell).
\end{equation*}

We call the operation that associates $M$ with the map $M'_i=(M_{i,1},M_{i,2})$ the \textbf{positive splitting operation on embedded maps}.
We finally note that \begin{equation}\label{eq: neg split total weight relation2}
\upbeta^{\area(M_i)}w_{\infty}(M_i)
=
\upbeta^{\area(M'_i)}w_{\infty}(M'_i).
\end{equation}
The bottom part of Figure~\ref{fig-PPS-process-2} shows an example of this step of the PPS process. We conclude with an observation useful for future reference.

\begin{obs}\label{obs:it-is-a-bij2}
The composition of the pinching operation with the separate operation, that is, the positive splitting operation on embedded maps, is a bijection from the set
$$\Big\{M\in\npe(\ell,K)\,:\,\delta_{\mathbf{e}}(M)\in[2,n_e(\ell,K)],\, \mathbf{e}_i\in\ell \text{ for some } i\in[\delta_{\mathbf{e}}(M)-1]\Big\}$$ 
to the set $\bigsqcup_{s\in \SS_+(\mathbf{e},\ell)}\npe(s,K)$.
\end{obs}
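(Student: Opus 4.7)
The plan is to construct an explicit inverse map, which I will call \emph{unsplitting}, and verify that it is mutually inverse to the positive splitting operation on embedded maps (with the source interpreted naturally as pairs $(M, i)$ consisting of a non-separable planar embedded map $M$ and a valid pinching index $i$, so that each $(M,i)$ yields a unique output).

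Given $s = \{e\,\ell_{i,1},\, e\,\ell_{i,2}\} \in \SS_+(\mathbf{e},\ell)$ and a pair $(M_1, M_2) \in \npe(e\,\ell_{i,1}, K_1) \times \npe(e\,\ell_{i,2}, K_2)$ with $K_1 + K_2 = K$, the unsplitting is defined as follows. Let $\mathbf{e}_i$ be the first edge of the boundary of $M_1$ and $\mathbf{e}$ the first edge of the boundary of $M_2$; both are copies of the lattice edge $e$. Let $B_1$ and $B_2$ denote the blue faces of $M_1$ and $M_2$ incident to $\mathbf{e}_i$ and $\mathbf{e}$ respectively, both sent by the corresponding embeddings to the unoriented edge $e$. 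The unsplitting identifies the starting vertices of $\mathbf{e}_i$ and $\mathbf{e}$, producing a wedge of two planar disks at a single vertex, and merges $B_1, B_2$ across this vertex into a single blue face $B$ of degree $\deg(B_1) + \deg(B_2) \geq 4$. By construction the resulting embedded map $M$ is planar, has plaquette assignment $K$, and has boundary (read cyclically) $\mathbf{e}\,\ell_{i,2}\,\mathbf{e}_i\,\ell_{i,1} = \ell$; the merged blue face $B$ plays the role of $B_{\mathbf{e}}(M)$, with the identified vertex as the pinched vertex of the forward operation.

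Next I would verify preservation of non-separability in both directions. For the unsplitting, suppose for contradiction that removing all blue faces of $M$ sent to some lattice edge $e'$ disconnects the dual graph of $M$. Observe that this dual graph is obtained from those of $M_1$ and $M_2$ by identifying $B_1, B_2$ to $B$ and identifying the two external face vertices to a single one. If $e' \neq e$, then $B$ is not removed; the non-separability of each $M_j$ keeps its dual graph connected after edge removal, and the identification at $B$ and at the external face preserves connectedness in $M$. If $e' = e$, then $B$ itself is removed, but connectedness is still preserved through the common external face vertex. Conversely, suppose $M \in \npe(\ell, K)$ is non-separable, $(M_{i,1}, M_{i,2})$ is produced by positive splitting at a valid index $i$, but (say) $M_{i,1}$ admits an enclosure loop of blue faces sent to some $e'$, yielding a decomposition $G_1 \sqcup G_2$ of the dual graph of $M_{i,1}$ with the external face vertex of $M_{i,1}$ in $G_1$. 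Then in the dual graph of $M$, after removing all blue faces sent to $e'$, the interior component $G_2$ cannot be connected to anything in $M_{i,2}$: the only possible links pass through the merged blue face $B_{\mathbf{e}}(M)$, which restricts to $B_1 \in G_1$ when $e' \neq e$ and is itself removed when $e' = e$, or through the combined external face vertex, which lies in $G_1$. Hence $G_2$ disconnects the dual graph of $M$, contradicting non-separability of $M$.

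The fact that the forward and inverse operations compose to the identity is immediate by construction: unsplitting locally reverses the pinch-and-separate step of positive splitting, and vice versa. The main obstacle is the non-separability preservation in both directions; the delicate subcase is $e' = e$, where the merged blue face $B$ is itself among the removed blue faces and connectedness must be maintained through the external face vertex alone. This is also the crucial reason why the analogous statement in Observation \ref{obs:almot-bij2} fails to be surjective (since the set $\npe^{\text{bad}}(\ell \oplus_{\mathbf{e}} q, K \sm \{q\})$ arises precisely when one cannot merge without creating such a forbidden enclosure configuration).
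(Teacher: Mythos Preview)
Your argument is correct and is essentially the natural one; the paper states Observation~5.4 without proof, so there is nothing to compare against beyond the construction itself. Your explicit unsplitting map is exactly the inverse of the pinch-and-separate procedure described in Step~3 (Case~2b), and your interpretation of the domain as pairs $(M,i)$ is the right reading (and is precisely how the observation is used in the derivation of \eqref{eq: pos split goal}).

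The one place worth a small remark is your non-separability argument in the forward direction. When $e'\neq e$, you assert that $B_1\in G_1$ because ``the merged blue face $B_{\mathbf{e}}(M)$ restricts to $B_1$''. The reason $B_1$ must lie in the component $G_1$ containing the external face vertex is that $\mathbf{e}_i$ is a boundary edge of $M_{i,1}$, so $B_1$ is adjacent to the external face in the dual of $M_{i,1}$; hence once neither is removed they lie in the same component. You use this implicitly, and it would be worth making explicit. With that addition your proof is complete and matches the paper's (unwritten) intended justification.
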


\subsection{The surface sums satisfy the master loop equation}\label{subsec: surface sum Satisfies Master Loop Equation}

We now show that $\phi(\ell)$ satisfies the master loop equation from Theorem~\ref{thm: fixed K 't Hooft master loop equation for surface sum} and then we will easily deduce Corollary~\ref{cor: 't Hooft master loop equation surface sum}.
The results in this section assume the validity of the Master surface cancellation lemma~\ref{lemma:master-cancellation} whose proof is postponed to Section~\ref{sec: cancellation lemma}.

\medskip

Our main tool for showing the master loop equation will be the PPS process introduced above. Throughout this section, we fix a loop $\ell$ of the form $\ell = \mathbf{e} \, \pi$, where $\mathbf{e}$ is a copy of to the oriented edge $e\in E_{\ZZ^d}$. First, in Section~\ref{sec: Non-empty surface sum satisfies the master loop equation} we will show the master loop equation holds when $\npe(\ell,K)\neq \emptyset$. Then, in Section~\ref{sec: empty surface sum satisfies the master loop equation} we will show that it still holds when $\npe(\ell,K) = \emptyset$ (recall Remark~\ref{rmk:set-can-be-empty}). We conclude with the proof of Corollary~\ref{cor: 't Hooft master loop equation surface sum} in Section~\ref{sec: proof of cor old lit master loop equation }.

\subsubsection{Non-empty surface sums satisfy the master loop equation}\label{sec: Non-empty surface sum satisfies the master loop equation}

Fix a plaquette assignment $K:\cP_{\ZZ^d}\to \NN$ such that $\npe(\ell,K)\neq \emptyset$. 

Using the PPS process, we rewrite $\phi^K(\ell)$ in a more convenient way to see certain surface cancellations. Recall the definition of $n_e(\ell,K)$ from \eqref{defn:ne} and $\delta_{\mathbf{e}}(M)$ from Section~\ref{subsubsec: Splittings and Deformations on embedded maps}.  Then, by the definition of $\phi^K(\ell)$ from the statement of Theorem~\ref{thm: surface sum representation in 't hooft limit}, and recalling the two cases in Step 1 of the PPS process, we get that
\begin{align*}
\phi^K(\ell) &= \sum_{M\in \npe(\ell,K)}\upbeta^{\area(M)}w_{\infty}(M) \\
&=\sum_{\substack{M\in \npe(\ell,K):\\\delta_{\mathbf{e}}(M) = 1}}\upbeta^{\area(M)}w_{\infty}(M) + \sum_{\delta=2}^{n_e(\ell,K)}\sum_{\substack{M\in \npe(\ell,K):\\\delta_{\mathbf{e}}(M) = \delta}}\upbeta^{\area(M)}w_{\infty}(M) \\&= \phi^K_{\mathbf{e}}(\ell,1) + \sum_{\delta=2}^{n_e(\ell,K)}\phi^K_{\mathbf{e}}(\ell,\delta),
\end{align*}
where
\begin{equation*}
\phi^K_{\mathbf{e}}(\ell,\delta):=\sum_{\substack{M\in \npe(\ell,K):\\\delta_{\mathbf{e}}(M) = \delta}}\upbeta^{\area(M)}w_{\infty}(M).
\end{equation*}

We first focus on the term $\phi^K_{\mathbf{e}}(\ell,1)$. Following Step 2 when $\delta_{\mathbf{e}}(M)=1$ (Cases 1a and 1b) of the PPS process and applying (\ref{eq: neg def total weight relation}) and (\ref{eq: neg split total weight relation}) we get that \begin{align}
\phi^K_{\mathbf{e}}(\ell,1) &=  \sum_{\substack{M\in \npe(\ell,K):\\\delta_{\mathbf{e}}(M) = 1, \mathbf{e}^{-1}\notin \ell}}\upbeta^{\area(M)}w_{\infty}(M)+ \sum_{\substack{M\in \npe(\ell,K):\\\delta_{\mathbf{e}}(M) = 1, \mathbf{e}^{-1}\in \ell}}\upbeta^{\area(M)}w_{\infty}(M)\nonumber \\&= \upbeta\sum_{\substack{M\in \npe(\ell,K):\\\delta_{\mathbf{e}}(M) = 1, \mathbf{e}^{-1}\notin \ell}}\upbeta^{\area(M')}w_{\infty}(M')\label{eq: neg def sum}\\&\hspace{0.5cm}+ \sum_{\substack{M\in \npe(\ell,K):\\\delta_{\mathbf{e}}(M) = 1, \mathbf{e}^{-1}\in \ell}}\upbeta^{\area(M')}w_{\infty}(M')\label{eq: neg split sum},
\end{align}
where we recall that $\mathbf{e}^{-1}$ is the other edge of the blue 2-agon $B_{\mathbf{e}}(M)$ containing $\mathbf{e}$.

We now focus on the term $\phi^K_{\mathbf{e}}(\ell,\delta)$ for $\delta\in[2,n_e(\ell,K)]$. We fix $\delta\in[2,n_e(\ell,K)]$ and an embedded map  $M\in \npe(\ell,K)$ such that $\delta_{\mathbf{e}}(M) = \delta$. 
Preforming the pinching procedure of the PPS process (Step 2 from Case 2) and using Lemma~\ref{lemma: single vertex pinching cancellations} we get that 
\begin{equation}\label{eq: pinching procedure weight relation}
w_{\infty}(M) = -\sum_{i=1}^{\delta-1}w_{\infty}(M_i).
\end{equation}
Now following the separating procedure of the PPS process (Step 3 for Cases 2a and 2b) and using \eqref{eq: neg def total weight relation2} and \eqref{eq: neg split total weight relation2} combined with the last displayed equation, we get that 
\begin{align*}
\upbeta^{\area(M)}w_{\infty}(M)&=-\sum_{i=1}^{\delta-1}\upbeta^{\area(M_i)}w_{\infty}(M_i)\notag\\
&= - \upbeta\sum_{\substack{i\in [ \delta-1]:\\\mathbf{e}_i\notin \ell}}\upbeta^{\area(M_i')}w_{\infty}(M'_i)
- \sum_{\substack{i\in [\delta-1]:\\\mathbf{e}_i\in \ell}}\upbeta^{\area(M_i')}w_{\infty}(M'_i),
\end{align*}
where we used the compact notation $[\delta-1]=\{1,\dots,\delta-1\}$ and we recall that $\mathbf{e}_i$ is the edge in the blue face incident to $\mathbf{e}$ involved in the pinching operation.
Summarizing, we get that for all $\delta\in[2,n_e(\ell,K)]$,
\begin{align*}
\phi^K_{\mathbf{e}}&(\ell,\delta) \\
&= \sum_{\substack{M\in \npe(\ell,K):\\\delta_{\mathbf{e}}(M) = \delta}}\upbeta^{\area(M)}w_{\infty}(M) \nonumber\\&= -\sum_{\substack{M\in \npe(\ell,K):\\\delta_{\mathbf{e}}(M) = \delta}} \Bigg(\upbeta\sum_{\substack{i\in [\delta-1]:\\\mathbf{e}_i\notin \ell}}\upbeta^{\area(M_i')}w_{\infty}(M'_i) + \sum_{\substack{i\in [\delta-1]:\\\mathbf{e}_i\in \ell}}\upbeta^{\area(M_i')}w_{\infty}(M'_i)\Bigg) \nonumber\\&= -\upbeta\sum_{\substack{M\in \npe(\ell,K):\\\delta_{\mathbf{e}}(M) = \delta}}\sum_{\substack{i\in [\delta-1]:\\\mathbf{e}_i\notin \ell}}\upbeta^{\area(M_i')}w_{\infty}(M'_i)- \sum_{\substack{M\in \npe(\ell,K):\\\delta_{\mathbf{e}}(M) = \delta}}\sum_{\substack{i\in [\delta-1]:\\\mathbf{e}_i\in \ell}}\upbeta^{\area(M_i')}w_{\infty}(M'_i).
\end{align*} 
Thus
\begin{align}
\sum_{\delta=2}^{n_e(\ell,K)}\phi^K_{\mathbf{e}}(\ell,\delta) &= -\upbeta \sum_{\delta=2}^{n_e(\ell,K)}\sum_{\substack{M\in \npe(\ell,K):\\\delta_{\mathbf{e}}(M) = \delta}}\sum_{\substack{i\in [\delta-1]:\\\mathbf{e}_i\notin \ell}}\upbeta^{\area(M_i')}w_{\infty}(M'_i) \label{eq: pos def sum}\\&\hspace{0.5cm}-  \sum_{\delta=2}^{n_e(\ell,K)}\sum_{\substack{M\in \npe(\ell,K):\\\delta_{\mathbf{e}}(M) = \delta}}\sum_{\substack{i\in [\delta-1]:\\\mathbf{e}_i\in \ell}}\upbeta^{\area(M_i')}w_{\infty}(M'_i).\label{eq: pos split sum}
\end{align}Notice we have split $\phi^K(\ell)$ into sums of maps with boundary $\ell'$ corresponding to negative deformations \eqref{eq: neg def sum}, positive deformations \eqref{eq: pos def sum}, negative splittings (\ref{eq: neg split sum}), and positive splittings (\ref{eq: pos split sum}). Thus to show the recursion in the statement of Theorem~\ref{thm: fixed K 't Hooft master loop equation for surface sum} (recall the equivalent form in \eqref{eq:new-form}), we need to show the sum of \eqref{eq: neg def sum}, \eqref{eq: neg split sum}, \eqref{eq: pos def sum}, and \eqref{eq: pos split sum} equals the sum of the deformation and splitting terms in the master loop equation. 

First, since we observed that splittings are bijective operations between the sets of embedded maps (described in Observations ~\ref{obs:it-is-a-bij}~and~\ref{obs:it-is-a-bij2}), we get that
\begin{align}
(\ref{eq: neg split sum}) &= \sum_{s\in \SS_{-}(\mathbf{e},\ell)} \sum_{N \in \npe(s , K)}\upbeta^{\area(N)}w_{\infty}(N) = \sum_{s\in \SS_{-}(\mathbf{e},\ell)} \phi^{K}(s)\label{eq: neg split goal}, \\
(\ref{eq: pos split sum}) &= -\sum_{s\in \SS_{+}(\mathbf{e},\ell)} \sum_{N \in \npe(s , K)}\upbeta^{\area(N)}w_{\infty}(N) = - \sum_{s\in \SS_{+}(\mathbf{e},\ell)} \phi^{K}(s) \label{eq: pos split goal}.
\end{align}
Unfortunately, for the deformation terms, equality does not hold\footnote{This can be explicitly checked in some specific examples.} at the level of positive and negative deformations but only at the level of all deformations\footnote{Note that this is possible thanks to some specific surface cancellations detailed in the Matster cancellation lemma~\ref{lemma:master-cancellation}. As mentioned in the introduction, understanding such \emph{cancellations} is a fundamental step in our proof.}. That is, we will show that 
\begin{align}\label{eq: def goal}
\eqref{eq: neg def sum} + \eqref{eq: pos def sum} &= \upbeta\sum_{p\in\cP_{\ZZ^d}(e^{-1},K)} \sum_{N \in \npe(\ell \ominus_{\mathbf{e}} p , K\sm \{p\})}\upbeta^{\area(N)}w_{\infty}(N) \nonumber\\&-\hspace{0.5cm} \upbeta\sum_{q\in \cP_{\ZZ^d}(e,K)} \sum_{N \in \npe(\ell \oplus_{\mathbf{e}} q , K\sm \{q\})}\upbeta^{\area(N)}w_{\infty}(N) \nonumber\\&= \upbeta\sum_{p\in\cP_{\ZZ^d}(e^{-1},K)} \phi^{K\sm\{p\}}(\ell\ominus_{\mathbf{e}}p) - \upbeta\sum_{q\in \cP_{\ZZ^d}(e,K)} \phi^{K\sm\{q\}}(\ell\oplus_{\mathbf{e}}q).
\end{align}
We start by rewriting \eqref{eq: neg def sum}. Using Observation~\ref{obs:almost-bij}, we get that
\begin{align*}
\eqref{eq: neg def sum} =& \upbeta\sum_{\substack{M\in \npe(\ell,K):\\ \delta_{\mathbf{e}}(M) = 1, \mathbf{e}^{-1}\notin \ell}}\upbeta^{\area(M')}w_{\infty}(M')\\
=&  \upbeta
\sum_{p\in\cP_{\ZZ^d}(e^{-1},K)}
\sum_{N\in\npe(\ell \ominus_{\mathbf{e}} p , K\sm \{p\})}\upbeta^{\area(N)}w_{\infty}(N)\\
&-
\upbeta
\sum_{p\in\cP_{\ZZ^d}(e^{-1},K)}
\sum_{N\in\npe^{\text{bad}}(\ell \ominus_{\mathbf{e}} p , K\sm \{p\})}\upbeta^{\area(N)}w_{\infty}(N).
\end{align*}
Similarly, using Observation~\ref{obs:almot-bij2}, we get that
\begin{align*}
\eqref{eq: pos def sum} =& -\upbeta\sum_{\delta=2}^{n_e(\ell,K)}\sum_{\substack{M\in \npe(\ell,K):\\\delta_{\mathbf{e}}(M) = \delta}}\sum_{\substack{i\in [\delta-1]:\\\mathbf{e}_i\notin \ell}}\upbeta^{\area(M_i')}w_{\infty}(M'_i) \\=& -\upbeta\sum_{q\in\cP_{\ZZ^d}(e,K)}\sum_{N\in\npe(\ell \oplus_{\mathbf{e}} q , K\sm \{q\})}\upbeta^{\area(N)}w_{\infty}(N)\\
&+\upbeta\sum_{q\in\cP_{\ZZ^d}(e,K)}\sum_{N\in\npe^{\text{bad}}(\ell \oplus_{\mathbf{e}} q , K\sm \{q\})}\upbeta^{\area(N)}w_{\infty}(N).
\end{align*}
Comparing the last two displayed equations, it is immediate to realize that the proof of \eqref{eq: def goal} is complete (thus giving Theorem~\ref{thm: fixed K 't Hooft master loop equation for surface sum}), by the following surface cancellation result whose proof is postponed to Section~\ref{sec: cancellation lemma}.

\begin{lem}[\textsc{master cancellation lemma}]\label{lemma:master-cancellation}
Consider a loop $\ell=\mathbf{e} \, \pi$, where $\mathbf{e}$ is a copy of the lattice edge $e$, and a non-zero plaquette assignment $K$ such that $(\ell,K)$ is balanced. Then
\begin{align}\label{eq:cancellation-one-1}
	\sum_{p\in\cP_{\ZZ^d}(e^{-1},K)}&\sum_{N\in \npe^{\text{bad}}(\ell\ominus_{\mathbf{e}} p , K\sm\{p\})}\upbeta^{\area(N)}w_{\infty}(N) \notag\\&= \sum_{q\in\cP_{\ZZ^d}(e,K)}\sum_{N\in \npe^{\text{bad}}(\ell\oplus_{\mathbf{e}} q , K\sm\{q\})}\upbeta^{\area(N)}w_{\infty}(N),
\end{align}
where we recall that the sets $\npe^{\text{bad}}(\ell\ominus_{\mathbf{e}} p , K\sm\{p\})$ and $\npe^{\text{bad}}(\ell\oplus_{\mathbf{e}} q , K\sm\{q\})$ are defined in  Observations~\ref{obs:almost-bij}~and~\ref{obs:almot-bij2}, respectively.
\end{lem}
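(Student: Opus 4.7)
The plan is to establish the identity \eqref{eq:cancellation-one-1} via a weight-preserving correspondence between bad maps on the two sides, crucially leveraging the pinching-based cancellation tool that is the subject of Theorem~\ref{thm:master-sum-blue-faces} in Section~\ref{sect:pinch-canc}. First, I would locate a canonical ``bad chain'' of blue faces inside each bad map. By definition, an $N\in\npe^{\text{bad}}(\ell\ominus_{\mathbf{e}} p,K\setminus\{p\})$ contains at least one connected sequence of blue faces sent to $e$ that joins the two endpoints of $\pi$ on the boundary of $N$; since $N$ is a planar disk, such sequences are partially ordered from ``outermost'' (closest to the arc $\mathbf{e}_a\mathbf{e}_b\mathbf{e}_c$ produced by $p$) to ``innermost'' (closest to $\pi$), and I would select the outermost one. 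Cutting the disk along this chain $C$ produces two half-disks: a plaquette-side piece $T$ bounded by $\mathbf{e}_a\mathbf{e}_b\mathbf{e}_c$ together with one side of $C$, and a $\pi$-side piece $U$ bounded by $\pi$ together with the other side of $C$. An analogous canonical decomposition applies on the positive deformation side, where the cut separates a piece bounded by $\mathbf{e}\mathbf{e}_d\mathbf{e}_f\mathbf{e}_g$ from a piece bounded by $\mathbf{e}'\pi$.

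The candidate bijection keeps the $\pi$-side piece $U$ and the chain $C$ fixed, and reassembles the map on the opposite side by replacing the top piece with the corresponding ``dual'' configuration; at the level of lattice data, this pairs a plaquette $p\in\cP_{\ZZ^d}(e^{-1},K)$ with a plaquette $q\in\cP_{\ZZ^d}(e,K)$ realizing the same seam across $C$. Area is preserved because both sides consume exactly one plaquette (passing from $K$ to $K\setminus\{p\}$ or $K\setminus\{q\}$), so the factor $\upbeta^{\area(N)}$ matches across the correspondence. A secondary geometric check is that the ``outermost chain'' rule is compatible with the swap: since $U$ and $C$ are preserved, any chain lying strictly outside $C$ in the swapped map would pull back to a chain lying strictly outside $C$ in the original map, contradicting the initial choice.

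The hard part, and the main obstacle, is that the weight $w_\infty$---a product of signed Catalan numbers indexed by blue-face degrees---is \emph{not} preserved by the naive bijection, because the blue faces of $C$ abutting the seam acquire or lose boundary edges when the deformation switches from the three-edge negative seam $\mathbf{e}_a\mathbf{e}_b\mathbf{e}_c$ to the four-edge positive seam $\mathbf{e}\mathbf{e}_d\mathbf{e}_f\mathbf{e}_g$ (one of whose edges is itself sent to $e$). This is exactly the gap that Theorem~\ref{thm:master-sum-blue-faces} is designed to close. Iterating the single-vertex pinching identity of Lemma~\ref{lemma: single vertex pinching cancellations}, that theorem should reorganize the weighted sum over all top-piece structures compatible with a fixed chain $C$ into a single canonical expression depending only on $C$ and on the seam, and the Catalan recursion \eqref{eq:catalan number recurstion} would then force the reduced expressions on the two sides of \eqref{eq:cancellation-one-1} to coincide chain-by-chain. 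Once that reduction is in place, what remains is a purely local identity around the seam expressing the symmetric roles played by $(p,e^{-1})$ and $(q,e)$, which I expect to verify by direct inspection.
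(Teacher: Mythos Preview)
Your intuition is in the right direction—Theorem~\ref{thm:master-sum-blue-faces} is indeed the core tool, and decomposing each bad map into a $\pi$-side piece and a plaquette-side piece joined across the connecting blue structure is exactly how the paper proceeds. However, your decomposition via an ``outermost chain $C$'' does not line up with what Theorem~\ref{thm:master-sum-blue-faces} provides, and your proposed use of that theorem to ``reorganize the weighted sum over all top-piece structures compatible with a fixed chain $C$'' is not what the theorem says. The theorem sums over all valid \emph{pinchings of a single blue face $B$ in a fixed map $M$}; it does not sum over top-piece structures for a fixed chain, and a chain of several faces is not an object to which the theorem applies.

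The paper's decomposition is different. Rather than fixing a chain, it passes to the \emph{fully unpinched} representative: every bad map in $\cM^{\text{bad}}_{\pm}$ lies in $\VM(M,B_{\pm})$ for a unique $M$ in which the connecting structure is a \emph{single} blue face $B_{\pm}$ not touching any other $e$-face. One then refines by the top/bottom edge counts $(t,b)$ of $B_{\pm}$ and by a pair of plaquettes $(p,q)$: on the negative side $p=f_r(M)$ is the removed plaquette and $q=f_{\hat{\mathbf{e}}}(M)$ is the plaquette abutting the first top-boundary edge; on the positive side their roles swap. This makes the $(p,q)$ pairing explicit, which your proposal leaves vague. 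Deleting $B_{\pm}$ together with that abutting plaquette yields an ``unknown region'' $U=(U^t,U^b)$, and Lemma~\ref{lem:cance1} shows that the sets of possible $U$'s on the two sides coincide.

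Theorem~\ref{thm:master-sum-blue-faces} then applies directly (Lemma~\ref{lem:cance2}): for fixed $U$, the sum $\sum_{M'\in\VM(\un_{\pm}^{-1}(U),B_{\pm})} w_\infty(M')$ equals either $\prod_{f\neq B_{\pm}} w_{\deg(f)/2}$ or $0$, and since the blue faces outside $B_{\pm}$ are literally the same on both sides (and the invalid-pinching condition also matches), the two sums agree. Note that $B_-$ and $B_+$ have different degrees ($t+b$ versus $t+b+2$), so there is \emph{no} map-by-map bijection preserving $w_\infty$; equality only holds after summing over all pinchings. Your ``seam'' discussion captures this discrepancy heuristically, but the resolution is not a local identity around the seam—it is the wholesale replacement of the single-face weight by the pinching sum, which requires the single-face (not chain) decomposition.
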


\subsubsection{Empty surface sums satisfy the master loop equation}\label{sec: empty surface sum satisfies the master loop equation}

Recall that the PPS process is only defined when $\npe(\ell,K)\neq \emptyset$. Thus, the approach of the previous section will not work when $\npe(\ell,K) = \emptyset$. However, we will show that the master loop equation from Theorem~\ref{thm: fixed K 't Hooft master loop equation for surface sum} still holds in this setting. The key step will be to consider some inverse operations from the PPS process.

Suppose that $\npe(\ell,K)=\emptyset$. If the four sums on the right-hand side of the master loop equation are empty, then we are done. Thus, suppose it is not the case, that is, there exists some splitting or deformation of $\ell$, call it $s'$, such that $\npe(s',K')\neq \emptyset$. 

We claim that $s'$ cannot be a splitting of $\ell$. Indeed, suppose that $s'$ is a (positive or negative) splitting of $\ell$ and $\npe(s',K)\neq \emptyset$. Consider $M'\in \npe(s',K)$. Then it is simple to see that we can construct a map in $M\in\npe(\ell,K)$ by reversing the PPS process for (positive or negative) splittings. But since we assumed that $\npe(\ell,K)=\emptyset$, we conclude that $s'$ cannot be a splitting of $\ell$.

Therefore, we must have that the right-hand side of the master loop equation is equal to
\begin{align*}
\upbeta \sum_{p\in \cP_{\ZZ^d}(e^{-1},K)}\phi^{K\sm p}(\ell \ominus_{\mathbf{e}}p)
&-\upbeta\sum_{q\in \cP_{\ZZ^d}(e,K)}\phi^{K\sm q}(\ell \oplus_{\mathbf{e}}q)\\
&=\upbeta \sum_{p\in \cP_{\ZZ^d}(e^{-1},K)}
\sum_{N\in \npe(\ell\ominus_{\mathbf{e}} p , K\sm\{p\})}\upbeta^{\area(N)}w_{\infty}(N)\\
&\quad-\upbeta\sum_{q\in \cP_{\ZZ^d}(e,K)}
\sum_{N\in \npe(\ell \oplus_{\mathbf{e}}q , K\sm \{q\})}\upbeta^{\area(N)}w_{\infty}(N).
\end{align*}
Now we can partition the sets $\npe(\ell\ominus_{\mathbf{e}} p , K\sm\{p\})$ and $\npe(\ell \oplus_{\mathbf{e}}q , K\sm \{q\})$ as follows (recall that the sets $\npe^{\text{bad}}(\ell\ominus_{\mathbf{e}} p , K\sm\{p\})$ and $\npe^{\text{bad}}(\ell\oplus_{\mathbf{e}} q , K\sm\{q\})$ are defined in  Observations~\ref{obs:almost-bij}~and~\ref{obs:almot-bij2}):
\begin{align*}
&\npe(\ell\ominus_{\mathbf{e}} p , K\sm\{p\})=\npe^{\text{bad}}(\ell\ominus_{\mathbf{e}} p , K\sm\{p\})\,\,\bigsqcup\,\, \overline{\npe}^{\text{bad}}(\ell\ominus_{\mathbf{e}} p , K\sm\{p\}),\\
&\npe(\ell \oplus_{\mathbf{e}}q , K\sm \{q\})=\npe^{\text{bad}}(\ell\oplus_{\mathbf{e}} q , K\sm\{q\})\,\,\bigsqcup\,\,\overline{\npe}^{\text{bad}}(\ell\oplus_{\mathbf{e}} q , K\sm\{q\}).
\end{align*}

We claim that the sets $\overline{\npe}^{\text{bad}}(\ell\ominus_{\mathbf{e}} p , K\sm\{p\})$ and $\overline{\npe}^{\text{bad}}(\ell\oplus_{\mathbf{e}} q , K\sm\{q\})$ must be empty for all plaquettes $p$ and $q$. Indeed, if either contained some map $M'$ then applying the inverse operation of the PPS process to $M'$ would produce some map $M\in \npe(\ell,K)$, a contradiction with the fact that $\npe(\ell,K)=\emptyset$. Thus, we get that the right-hand side of the master loop equation simplifies to
\begin{align*}
\sum_{p\in\cP_{\ZZ^d}(e^{-1},K)}&\sum_{N\in \npe^{\text{bad}}(\ell\ominus_{\mathbf{e}} p , K\sm\{p\})}\upbeta^{\area(N)}w_{\infty}(N) \notag
\\&- \sum_{q\in\cP_{\ZZ^d}(e,K)}\sum_{N\in \npe^{\text{bad}}(\ell\oplus_{\mathbf{e}} q , K\sm\{q\})}\upbeta^{\area(N)}w_{\infty}(N)
\end{align*}which we know equals $0$ by the Master surface cancellation lemma~\ref{lemma:master-cancellation}. Thus, we get that the master loop equation sill holds when $\npe(\ell,K)=\emptyset$, as desired.

\subsubsection{The master loop equation for fixed location}\label{sec: proof of cor old lit master loop equation }

Finally , we deduce Corollary~\ref{cor: 't Hooft master loop equation surface sum}.

\begin{proof}[Proof of Corollary~\ref{cor: 't Hooft master loop equation surface sum}]
From Theorem~\ref{thm: fixed K 't Hooft master loop equation for surface sum}, for a fixed non-null loop $\ell$ and plaquette assignment $K$, we have that
\begin{align*}
	\phi^K(\ell) =&\sum_{\{\ell_1,\ell_2\}\in \SS_+(\mathbf{e},\ell)}
	\sum_{K_1+K_2=K}
	\phi^{K_1}(\ell_1)\phi^{K_2}(\ell_2) - \sum_{\{\ell_1,\ell_2\}\in \SS_-(\mathbf{e},\ell)}
	\sum_{K_1+K_2=K}\phi^{K_1}(\ell_1)\phi^{K_2}(\ell_2) \\
	&+ \upbeta
	\sum_{p\in \cP_{\ZZ^d}(e^{-1},K)}
	\phi^{K\sm p}(\ell \ominus_{\mathbf{e}}p)
	-\upbeta\sum_{q\in \cP_{\ZZ^d}(e,K)}
	\phi^{K\sm q}(\ell \oplus_{\mathbf{e}}q).
\end{align*}
Summing the last equation over all possible plaquette assignments $K$ and exchanging the order of the sums (this is possible thanks to Lemma~\ref{lemma:phi-bound} since $\upbeta_0(d)\stackrel{\eqref{eq:beta_0-def}}{=}\min\{\upbeta_1(d),\upbeta_2(d)\}\leq \upbeta_2(d)$), we get that
\begin{align*}
	\phi(\ell) =&\sum_{\{\ell_1,\ell_2\}\in \SS_+(\mathbf{e},\ell)}
	\phi(\ell_1)\phi(\ell_2) - \sum_{\{\ell_1,\ell_2\}\in \SS_-(\mathbf{e},\ell)}
	\phi(\ell_1)\phi(\ell_2) \\
	&+ \upbeta
	\sum_{p\in \cP_{\ZZ^d}(e^{-1},K)}
	\phi(\ell \ominus_{\mathbf{e}}p)
	-\upbeta\sum_{q\in \cP_{\ZZ^d}(e,K)}
	\phi(\ell \oplus_{\mathbf{e}}q).
\end{align*}
Finally, recalling the definitions of the sets  $\DD_-(\mathbf{e},s)$ and $\DD_+(\mathbf{e},s)$, we get the master loop equation in the Corollary statement.
\end{proof}

\section{Cancellations for sums of embedded maps obtained from pinchings of a blue face}\label{sect:pinch-canc}

The main result of this section is Theorem~\ref{thm:master-sum-blue-faces} which establishes some fundamental surface cancellations obtained from pinchings of a blue face. This result will be one of the main tools that we will use later in Section~\ref{sec: cancellation lemma} to prove the Master surface cancellation lemma~\ref{lemma:master-cancellation}. 

The rest of this section is organized as follows. In Section~\ref{sect:seq-pinch}, we introduce the new notion of collections of pinchings and state our main result, i.e.\ Theorem~\ref{thm:master-sum-blue-faces}. Then in Section~\ref{sect:all-pinching} we prove a generalization of the Cancellation Lemma~\ref{lemma: single vertex pinching cancellations} and establish the first part of Theorem~\ref{thm:master-sum-blue-faces}. 
Finally, in Section~\ref{subsec: Weight of maps with at least one invalid pinching} we complete the proof of Theorem~\ref{thm:master-sum-blue-faces}.

\subsection{Collections of pinchings and a new surface cancellation result}\label{sect:seq-pinch}

Recall from Section~\ref{sect:pinchings} (see also Figure~\ref{fig-pinching}) that given two vertices $u$ and $v$ of a blue face $B$ of an embedded map $M\in \npe(s,K)$ of the same partite class, we defined the pinching operation $\pp_{u,v}$. 
Given a collection of pinchings $\left\{\pp_{u_1,v_1},\dots,\pp_{u_r,v_r}\right\}$ where each pair $(u_i,v_i)$ is formed by vertices of the same blue face $B$, we denote the operation that sequentially performs them by\footnote{We will shortly clarify (after certain preliminary comments) that the order in which the pinchings are performed is not relevant, this is why we defined $\cpp$ as a set $\{\pp_{u_i,v_i}\}_{i=1}^r$ and not as a sequence.} $\cpp=\{\pp_{u_i,v_i}\}_{i=1}^r$; see the top part of Figure~\ref{fig-pinching2} for an example. We stress that we do not require that all pinchings involved in a collection of pinchings are in the same partite class of vertices (see again the example at the top of Figure~\ref{fig-pinching2}).

Note that some collections of pinchings might not be realizable, as one pinching could preclude another. This occurs when there exists an $i$ such that after pinching the vertices $(u_i,v_i)$, the vertices $u_j$ and $v_j$ end up in two different blue faces.
We resolve this issue in two different ways, depending on whether $(u_i,v_i)$ and $(u_j,v_j)$ are all in the same partite class or not:
\begin{itemize}
\item If $u_i,v_i,u_j,v_j$ are all in the same partite class, then we replace the collection of pinchings $\{\pp_{u_i,v_i},\pp_{u_j,v_j}\}$ with the collection of three pinchings\footnote{Note that $\pp_{u_i,v_i}$ is an arbitrary choice which could be replaced by $\pp_{u_j,v_j}$.}  $\{\pp_{u_i, u_j},\pp_{v_i,v_j},\pp_{u_i,v_i}\}$ which is a realizable collection of pinchings that pinches all the four vertices together (see the second example in Figure~\ref{fig-pinching2});
\item If $(u_i,v_i)$ and $(u_j,v_j)$ are in opposite partite classes, then we say that the collection of pinchings $\{\pp_{u_i,v_i},\pp_{u_j,v_j}\}$ is \textbf{non-feasible} (see the third example in Figure~\ref{fig-pinching2}).
\end{itemize} 
We say that a collection of pinchings $\cpp=\{\pp_{u_i,v_i}\}_{i=1}^r$ is \textbf{feasible} if it does not contain any pair of non-feasible pinchings.
Note that if a collection of pinchings $\cpp=\{\pp_{u_i,v_i}\}_{i=1}^r$ is feasible then the individual pinchings can be performed in any order and always produce the same map. 

Given a feasible collection of pinchings $\cpp$ of the blue face $B$, we often write $\cpp(M)$ to denote the embedded map obtained by performing the collection of pinchings $\cpp$ to the blue face $B$ of the embedded map $M$, and $\cpp(B)$ to denote the family of blue faces obtained by performing the collection of pinchings $\cpp$.

Finally, let $\ACP(M,B)$ denote the set of all feasible collections of pinchings $\cpp$ of the blue face $B$ and $\AM(M,B)$ denote the set of all the embedded maps $\cpp(M)$ obtained from some collection of pinchings $\cpp\in\ACP(M,B)$ (where $\ACP$ stands for ``all pinchings'' and $\AM$ for ``all maps'').

\begin{figure}[ht!]
\begin{center}
	\includegraphics[width=.89\textwidth]{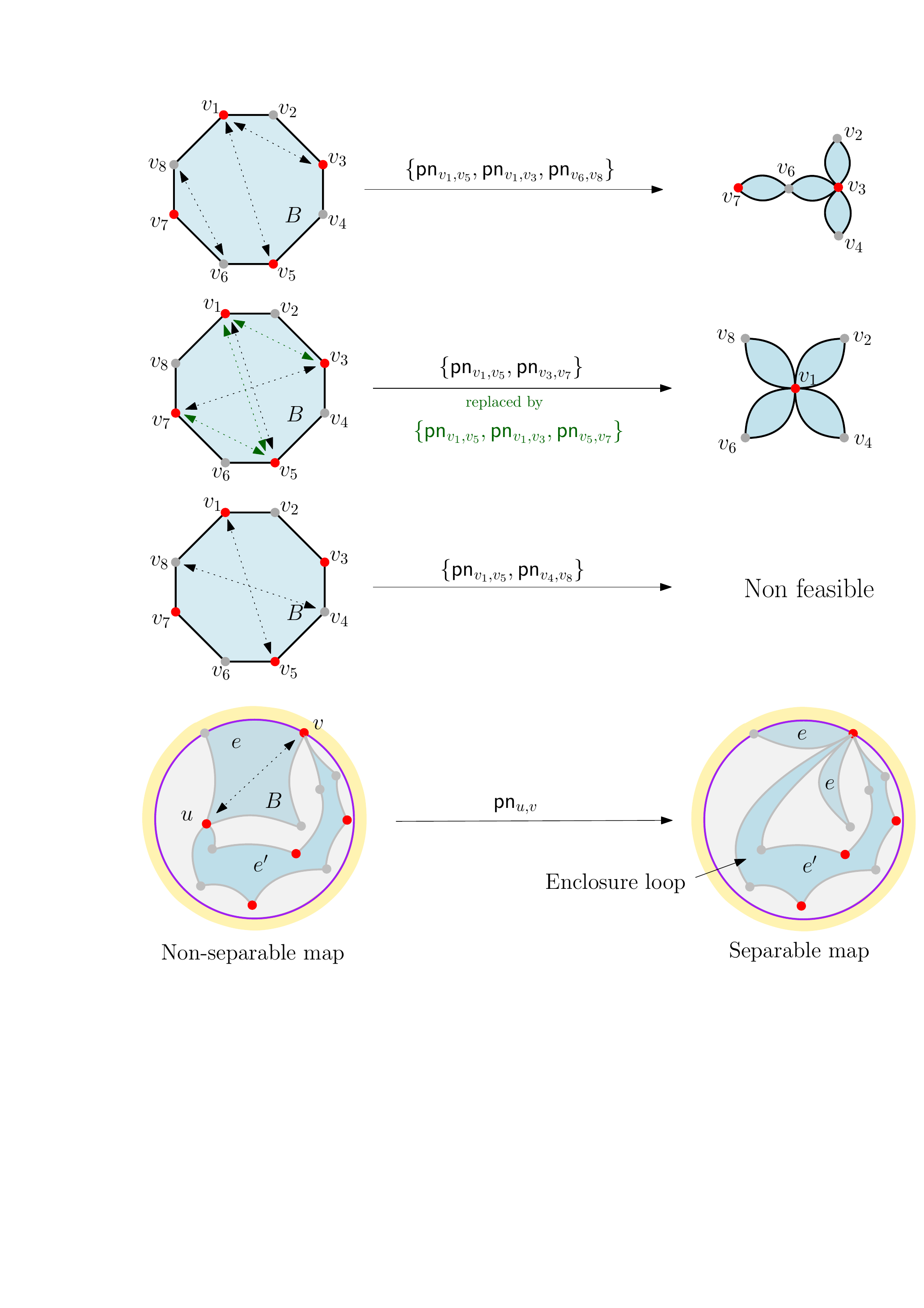}  
	\caption{\label{fig-pinching2} \textbf{Top:} An example of a  feasible collection of pinchings. \textbf{Middle-top:} An example of a feasible collection of pinchings that we explained how to resolve. \textbf{Middle-bottom:} An example of a non-feasible collection of pinchings. \textbf{Bottom:} An example of a non-separable map that becomes separable after a pinching operation.}
\end{center}
\vspace{-3ex}
\end{figure}

We make the following important observation.

\begin{obs}\label{obs:valid-pinching}
Note that a collection of feasible pinchings of a blue face of a non-separable planar embedded map $M\in\npe(s,K)$ might produce a separable planar embedded map; see the bottom part of Figure~\ref{fig-pinching2} for an example.
\end{obs}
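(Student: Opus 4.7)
The proof proceeds by exhibiting an explicit counterexample, as depicted in the bottom portion of Figure~\ref{fig-pinching2}. Since the statement is an existence claim (that such pinchings ``might'' produce a separable map), it suffices to construct a single instance in which this phenomenon occurs, together with a verification that the resulting embedded map fails the non-separability condition in Definition~\ref{defn:non-separableplanar-embedded}.

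The plan is as follows. First, I would exhibit a non-separable planar embedded map $M = (m, \psi) \in \npe(s, K)$ containing a blue face $B$ of degree at least four, sent by $\psi$ to some lattice edge $e \in E^+_{\ZZ^d}$. The map $M$ should be arranged so that $B$ sits in the ``interior'' of $m$, with plaquette faces on both of its sides, and so that there exist other blue faces sent to the same edge $e$ flanking the region surrounding $B$. Next, I would choose two vertices $u, v$ on $\bdy B$ lying in the same partite class (so the pinching is legitimate) but on opposite sides of $B$, and consider the feasible collection $\cpp = \{\pp_{u,v}\}$. Feasibility is immediate since $\cpp$ consists of a single pinching.

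To verify that $\cpp(M)$ is separable, I would exhibit an enclosure loop in the sense of Observation~\ref{obs:enclosure loops}. After pinching, the face $B$ splits into two new blue faces $B_1', B_2'$, both still sent to $e$ by $\psi$. The example is designed precisely so that $B_1'$, together with some pre-existing blue faces of $m$ sent to $e$, forms a minimal family whose removal from the dual graph of $\cpp(m)$ disconnects it; equivalently, the corresponding vertices in the dual graph, when contracted, bound a simple loop surrounding at least one vertex of the dual graph. This is exactly an enclosure loop for the edge $e$, violating Condition 8 in Definition~\ref{defn:non-separableplanar-embedded}.

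The main subtle point to check is that the starting map $M$ is genuinely non-separable before the pinching is performed. The role of $B$ as a single blue face is crucial here: although its two ``sides'' would together form an enclosure if treated as separate faces, as long as they are joined into a single face $B$, no candidate enclosure loop sent to $e$ can disconnect the dual graph (one can always pass through $B$). The pinching $\pp_{u,v}$ severs this connection, which is precisely what creates the enclosure loop. Concretely, I would take for $M$ a small non-separable configuration of four plaquettes arranged around a common edge $e$ (easily realized in $\ZZ^2 \subset \ZZ^d$), glued along a degree-$4$ blue face at $e$; pinching the two opposite vertices of this face in the same partite class yields an enclosure and thus a separable map, as drawn in Figure~\ref{fig-pinching2}.
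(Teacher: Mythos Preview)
Your proposal is correct and takes essentially the same approach as the paper: the observation is an existence claim, and both you and the paper establish it by exhibiting a single explicit example (the paper does so purely via Figure~\ref{fig-pinching2}, without further text). Your added explanation of the mechanism---that the single blue face $B$ allows the dual graph to remain connected, while splitting $B$ by a pinching severs this passage and creates an enclosure loop---is a helpful elaboration of what the figure is meant to convey.

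One minor caution: your concluding concrete example (``four plaquettes arranged around a common edge $e$'' in $\ZZ^2$) is imprecise as stated, since in $\ZZ^2$ only two oriented plaquettes contain a given oriented edge; you would need either multiple copies of the same plaquette in $K$ or to work in $\ZZ^d$ with $d\geq 3$. This does not affect the validity of your argument, which only requires \emph{some} configuration with the structure you describe, but you should make the specific construction precise if you spell it out.
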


Let $\VCP(M,B)$ denote the set of all feasible collections of pinchings of the blue face $B$ which lead to a non-separable embedded map and $\VM(M,B)$ denote the set of all non-separable embedded maps $\cpp(M)$ obtained from some $\cpp\in\VCP(M,B)$ (where $\VCP$ stands for ``valid pinchings''  and $\VM$ for ``valid maps'').

Similarly, let $\ICP(M,B)$ denote the set of all feasible collections of pinchings of the blue face $B$ which lead to a separable embedded map and $\IM(M,B)$ denote the set of all separable embedded maps $\cpp(M)$ obtained from some $\cpp\in\ICP(M,B)$ (where $\ICP$ stands for ``invalid pinchings''  and $\IM$ for ``invalid maps'').

\medskip

Our main result about collections of pinchings is the following fundamental surface cancellation theorem.

\begin{thm}[\textsc{cancellations for sums of embedded maps obtained from pinchings of a blue face}]\label{thm:master-sum-blue-faces}
Let $M\in\npe(s,K)$ be a non-separable planar embedded map and $B$ one of its blue faces. Then
\begin{equation*}
	\sum_{M'\in \VM(M,B)}w_{\infty}(M')=
	\begin{cases}
		\prod_{f\in \BF(M)\sm \{B\}}w_{\deg(f)/2}, \quad&\text{if all the pinchings of $B$ are valid},\\
		0, \quad&\text{if $B$ has at least one invalid pinching},
	\end{cases}
\end{equation*}
where we recall that $\BF(M)$ is the set of blue faces of $M$.
\end{thm}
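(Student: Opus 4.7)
I split the proof into the two cases of the statement. Both rest on the basic factorization
\[ w_\infty(\cpp(M)) = \Big(\prod_{f \in \BF(M) \sm \{B\}} w_{\deg(f)/2}\Big) \cdot \prod_{f' \in \cpp(B)} w_{\deg(f')/2}, \qquad \cpp \in \ACP(M,B), \]
which isolates the contribution from $B$ alone (the other blue faces are unchanged by the pinching).

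\emph{Case 1: all pinchings of $B$ are valid.} Then $\VM(M,B) = \AM(M,B)$, and by the above factorization the claim reduces to the purely combinatorial identity $F_k = 1$ for every $k \geq 1$, where $2k = \deg(B)$ and
\[ F_k := \sum_{\cpp \in \ACP(M,B)} \prod_{f' \in \cpp(B)} w_{\deg(f')/2}. \]
Since $M$ is non-separable, $B$ has disjoint vertices, so $F_k$ depends only on $k$ and on the alternating partite structure of its boundary, not on the rest of $M$. To establish $F_k = 1$, I would fix an edge $e$ on $\bdy B$ and decompose the sum by the face $B_e$ of $\cpp(B)$ containing $e$. If $\deg(B_e) = 2m$, the non-crossing character of feasible pinchings, combined with the parity condition forced by the bipartite vertex structure (pinched endpoints must lie in the same partite class), splits the remaining $2k - 2m$ edges of $B$ into $2m$ independent sub-polygons, each of even length, with total half-degree $k - m$. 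This produces the recursion
\[ F_k = \sum_{m=1}^{k} w_m \sum_{\substack{t_1 + \cdots + t_{2m} = k-m \\ t_i \geq 0}} \prod_{i=1}^{2m} F_{t_i}, \qquad F_0 := 1. \]
Setting $F(x) := \sum_{k \geq 0} F_k x^k$ and $W(y) := \sum_{m \geq 1} w_m y^m = (\sqrt{1+4y} - 1)/2$ (the generating function of the signed Catalan weights), the recursion becomes $F(x) - 1 = W(x F(x)^2)$. A direct computation, using $\sqrt{1 + 4x/(1-x)^2} = (1+x)/(1-x)$, shows that $F(x) = 1/(1-x)$ solves this fixed-point equation, giving $F_k = 1$ as required.

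\emph{Case 2: $B$ has at least one invalid pinching.} Decomposing $\AM(M,B) = \VM(M,B) \sqcup \IM(M,B)$ and applying the combinatorial identity of Case 1 (which holds regardless of whether individual pinchings are valid), we obtain
\[ \sum_{M' \in \VM(M,B)} w_\infty(M') = \prod_{f \in \BF(M) \sm \{B\}} w_{\deg(f)/2} - \sum_{M' \in \IM(M,B)} w_\infty(M'). \]
Thus the target $\sum_{\VM} w_\infty = 0$ is equivalent to
\[ \sum_{M' \in \IM(M,B)} w_\infty(M') = \prod_{f \in \BF(M) \sm \{B\}} w_{\deg(f)/2}, \]
and this is the main obstacle. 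An invalid pinching creates a new enclosure loop in $\cpp(M)$ assembled from pinched pieces of $B$ together with other blue faces of $M$ sent to the same lattice edge as $B$. The strategy I expect to carry out is to group invalid pinchings by a canonical choice of enclosure loop (for instance, the innermost one relative to a distinguished vertex of $\bdy B$), then analyze the induced sub-pinchings on each of the two regions separated by this loop. Each such region looks like a strictly smaller instance of the same problem (a pinching of a smaller polygon in a non-separable embedded sub-map), so its contribution collapses to $1$ by the identity of Case 1 applied inductively (on $\deg(B)$, or on $\area(M)$). Summing over canonical enclosure loops then recovers the full product $\prod_{f \in \BF(M) \sm \{B\}} w_{\deg(f)/2}$ and closes the argument.
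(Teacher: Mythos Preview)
Your Case~1 argument is correct and takes a genuinely different route from the paper. You decompose $F_k := \sum_{B'\in\AF(B)} w_\infty(B')$ by the face containing a fixed edge and solve the resulting functional equation $F(x)-1 = W\bigl(xF(x)^2\bigr)$ directly. The paper instead proves $F_k=1$ by an iterative cancellation (Lemma~\ref{lemma: weight of all possible pinchings is 1}): fix a vertex $v_1$, use Lemma~\ref{lemma: single vertex pinching cancellations} to show that the total weight over all $B'\in\AF(B)$ in which the two edges at $v_1$ do \emph{not} form a 2-gon vanishes, and hence reduce to a face with two fewer edges. Your generating-function approach is slicker; the paper's has the advantage that the very same cancellation mechanism is what drives Case~2.

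Your Case~2 has a real gap. First, the description of the enclosure loop is off: it is not built from pieces of $B$ together with other blue faces at the \emph{same} lattice edge as $B$. As the paper explains at the start of Section~\ref{subsec: Weight of maps with at least one invalid pinching}, the enclosure loop in $\cpp(M)$ is at some lattice edge $e'$ \emph{different} from the edge of $B$; after unpinching it breaks into \emph{arcs}, i.e.\ connected sequences of $e'$-blue-faces in $M$ joining pairs of same-parity vertices of $B$. More importantly, your strategy---group invalid pinchings by a canonical enclosure loop, treat the two separated regions as smaller independent instances, and sum---does not close as stated: the subregions are not in general non-separable sub-maps to which Case~1 applies (they may carry their own invalid pinchings), the canonical choice is unspecified, and nothing ensures the sum over enclosure loops produces exactly one copy of $\prod_{f\neq B} w_{\deg(f)/2}$ rather than an uncontrolled multiple.

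The paper avoids computing $\sum_{\IM}$ and proves $\sum_{\VM}=0$ directly, by locating a special vertex at which the Case~1 cancellation still operates \emph{within valid maps}. The key input is Lemma~\ref{lemma: minimal arc}: among all arcs there is always one joining vertices $u,v$ of $B$ such that every opposite-parity vertex $z_i$ between $u$ and $v$ carries no arc. Because $z_1$ has no arc, pinching or unpinching at $z_1$ never changes validity, so the partition-and-cancel argument of Lemma~\ref{lemma: weight of all possible pinchings is 1} runs unchanged on $\VF(B)$, reducing to fewer $z_i$'s. The base case is a single $z$ between $u$ and $v$: forcing a 2-gon at $z$ would pinch $u$ with $v$ and close the arc into an enclosure loop, so $\VF(B,z)=\emptyset$ and the sum vanishes.
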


\begin{rmk}
One natural temptation after seeing the result in Theorem~\ref{thm:master-sum-blue-faces} is to exclude from $\npe(s,K)$ all maps that contain at least one blue face with an invalid pinching, along with all maps obtained through further pinchings of such a face, in the hope that the total sum of the excluded maps would amount to zero. Figure~\ref{fig-tempation-fail} shows an example that clarifies why this is not possible.
\end{rmk}

\begin{figure}[ht!]
\begin{center}
	\includegraphics[width=.89\textwidth]{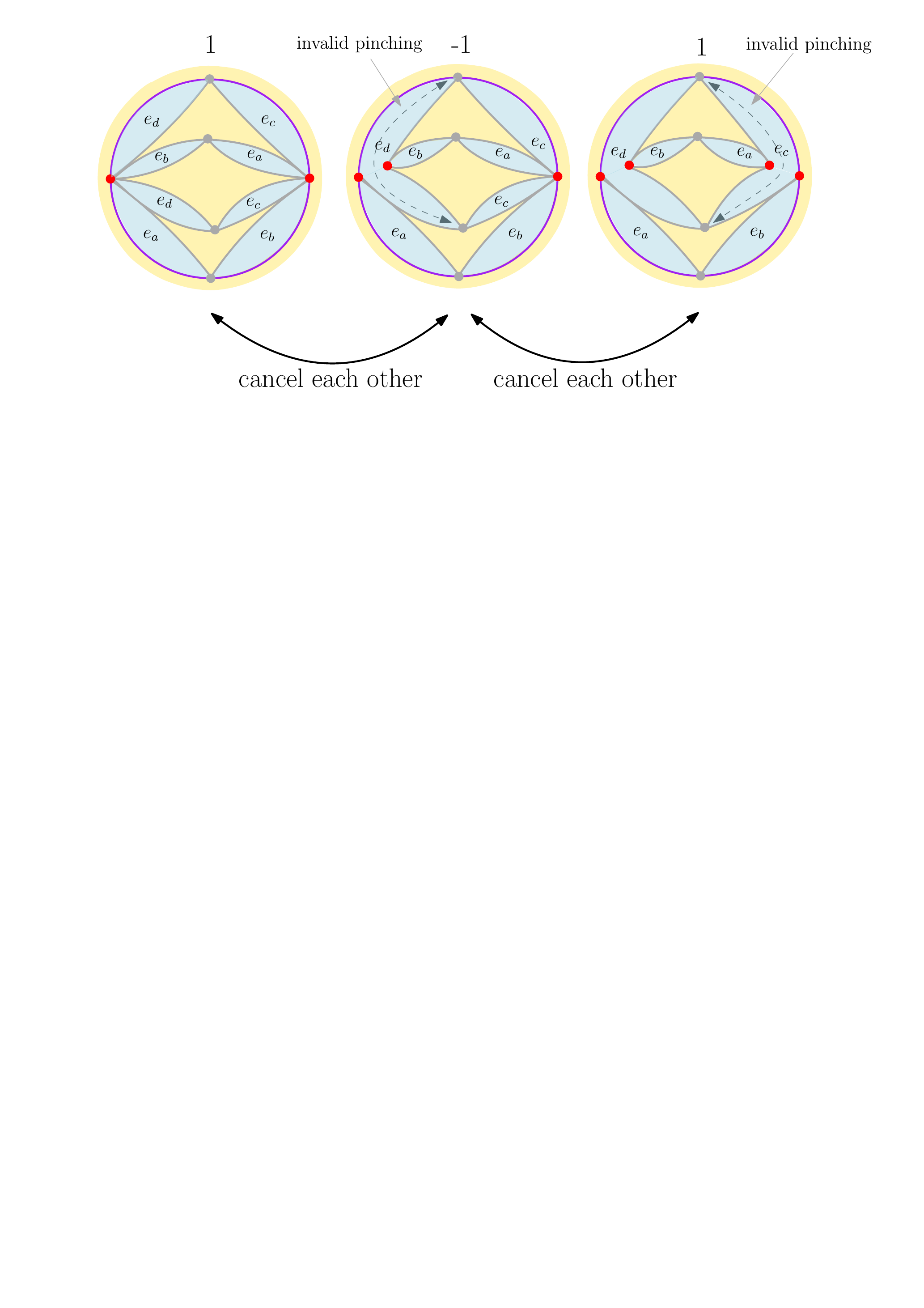}  
	\caption{\label{fig-tempation-fail} Here $s=p$ with $p=e_a\,e_b \, e_c \, e_d$ and $K$ is such that $K(p)=1$, $K(p^{-1})=2$ and $K$ is zero for all other plaquettes. We show three embedded maps in $\npe(s,K)$ with their corresponding weight written on top. \textbf{Left-middle:} The first map is obtained from the second map by pinching the two red vertices of the blue face sent to $e_d$. Note that the other pinching (between the two gray vertices of the blue face sent to $e_d$) in the second map is an invalid pinching. As established in Theorem~\ref{thm:master-sum-blue-faces}, the sum of the weights of the first and second map is indeed zero. \textbf{Middle-right:} The second map is obtained from the third map by pinching the two red vertices of the blue face sent to $e_c$. Note that the other pinching (between the two gray vertices of the blue face sent to $e_c$) in the third map is an invalid pinching. As established in Theorem~\ref{thm:master-sum-blue-faces}, the sum of the weights of the second and third map are zero. \textbf{Left-middle-right:} Note that the sum of the weights of the three maps is not zero, this is why we cannot exclude from $\npe(s,K)$ all maps that contain at least one blue face with an invalid pinching, along with all maps obtained through further pinchings of such a face.}
\end{center}
\vspace{-3ex}
\end{figure}

\subsection{The sum of all feasible pinchings of a blue face equals one}\label{sect:all-pinching}

The next result is a generalization of Lemma~\ref{lemma: single vertex pinching cancellations} and will immediately give us as a corollary the first case of   Theorem~\ref{thm:master-sum-blue-faces}.

Given a blue face $B$ with disjoint vertices, we denote by $\AF(B)$ the set of all families of blue faces that can be obtained from $B$ by applying collections of feasible pinchings to $B$.

\begin{lem}[\textsc{the sum of all feasible pinchings of a blue face equals one}]\label{lemma: weight of all possible pinchings is 1}
Fix a blue face $B$ with disjoint vertices. 
Then
\begin{equation}\label{eq: weight of all pinching of face is one}
	\sum_{B'\in \AF(B)}w_{\infty}(B')=1,
\end{equation}
where if $B'$ has multiple faces $\{B'_i\}_{i=1}^k$, we set $w_{\infty}(B'):= \prod_{i=1}^kw_{\deg(B'_i)/2}$.
As a consequence, if $M\in\npe(s,K)$ is a non-separable planar embedded map and $B$ is one of its blue faces, then 
\begin{equation}\label{eq: weight of all pinching of face is one2}
	\sum_{M'\in \AM(M,B)}w_{\infty}(M') = \prod_{f\in \BF(M)\sm \{B\}}w_{\deg(f)/2},
\end{equation}
where we recall $\BF(M)$ is the set of blue faces of $M$. 
\end{lem}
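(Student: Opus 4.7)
The second identity \eqref{eq: weight of all pinching of face is one2} follows immediately from \eqref{eq: weight of all pinching of face is one}: for any $\cpp \in \ACP(M, B)$, the pinching only modifies $B$, so $w_\infty(\cpp(M)) = w_\infty(\cpp(B)) \prod_{f \in \BF(M) \sm \{B\}} w_{\deg(f)/2}$, and summing over $\cpp$ (equivalently, over $B' \in \AF(B)$) converts \eqref{eq: weight of all pinching of face is one} into \eqref{eq: weight of all pinching of face is one2}. It remains to prove \eqref{eq: weight of all pinching of face is one}.

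My plan is to proceed by simultaneous strong induction on the half-degree $k$ of $B$, establishing both
\[
T_k := \sum_{B' \in \AF(B)} w_\infty(B') = 1 \quad\text{and}\quad T^*_k := \sum_{\substack{B' \in \AF(B)\\ v \text{ alone in its } x\text{-block}}} w_\infty(B') = \delta_{k,1},
\]
where $v$ is a fixed $x$-vertex of $B$. The base case $k = 1$ is trivial, since $B$ is a 2-gon with no feasible pinching. For the inductive step $k \geq 2$, I split $T^*_k$ into a \emph{non-terminal} part (families $F$ whose face $f(F)$ containing $v$ has at least one other $x$-vertex) and a \emph{terminal} part (families where $f(F)$ is a 2-gon, equivalently, the two $y$-vertices adjacent to $v$ lie in a common $y$-block $Y$). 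Lemma \ref{lemma: single vertex pinching cancellations}, applied to $f(F)$ at $v$ for each non-terminal $F$, yields $w_\infty(F) + \sum_{u} w_\infty(F^{(u)}) = 0$ summed over the other $x$-vertices $u$ of $f(F)$, where $F^{(u)}$ is obtained by further pinching $v$ with $u$. A bijection $(F, u) \leftrightarrow F^{(u)}$ identifies the non-terminal contribution with $-U_k$, where $U_k := T_k - T^*_k$ is the total weight of families whose $v$-block has size at least two. Decomposing $U_k$ by the cyclically next member $x_{i_1}$ of $v$'s block, and invoking the inductive hypothesis, gives
\[
U_k = \sum_{i_1 = 2}^{k} T^*_{i_1 - 1} \, T_{k - i_1 + 1} = T^*_1 \, T_{k - 1} = 1.
\]
For the terminal part, extracting the 2-gon $f(F)$ leaves $|Y| - 1$ sub-disks of half-degrees $a_i$ summing to $k - 1$, each carrying a \emph{restricted} sum $T^*_{a_i}$ (since the identified $y$-vertex must remain alone in its $y$-block within the sub-disk, to preserve $Y$); by the inductive hypothesis on $T^*$, this product is nonzero only when all $a_i = 1$, which forces $Y = \{y_1, \ldots, y_k\}$ and makes the terminal contribution exactly $1$. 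Combining, $T^*_k = 1 + (-1) = 0$ and $T_k = T^*_k + U_k = 1$, closing the induction.

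The main anticipated obstacle is defining and verifying the bijection $(F, u) \leftrightarrow F^{(u)}$ in the non-terminal case: when $u$ is itself an identified vertex representing a multi-element $x$-block of $F$, the pinching $\pp_{v, u}$ enlarges a pre-existing block rather than producing one of size exactly $2$, so each family $G$ with $v$'s $x$-block of size $\geq 2$ must be matched to a unique pair $(F, u)$ through a canonical ``un-pinching'' that isolates $v$ while leaving the rest of its block intact. The hypotheses of Lemma \ref{lemma: single vertex pinching cancellations} for $f(F)$ must also be verified, but $v$ has disjoint incidence on $f(F)$ automatically (as $v$ is alone in its $x$-block), and ``non-terminal'' is precisely the condition that $f(F)$ has another $x$-vertex available. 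Finally, the appearance of $T^*$- rather than $T$-sums in the terminal sub-disk decomposition is exactly what makes the simultaneous induction on $T_k$ and $T^*_k$ unavoidable.
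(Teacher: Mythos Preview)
Your argument is correct, and its core cancellation coincides with the paper's: for families in which $v$ is unpinched, you apply Lemma~\ref{lemma: single vertex pinching cancellations} to $f(F)$ at $v$ and use the pinch/unpinch bijection to show that the non-terminal weight equals $-U_k$; in the paper's notation this is exactly the identity $\sum_{B' \in \AF^c(B, v_1)} w_\infty(B') = 0$. Where you diverge is the treatment of the terminal families (those with $v$ sitting in a 2-gon). The paper simply observes that deleting this 2-gon is a bijection $\AF(B, v_1) \to \AF(B^*)$ for a face $B^*$ of half-degree $k-1$, yielding the one-step recursion $T_k = T_{k-1}$ and hence $T_k = T_1 = 1$. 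You instead decompose the terminal part further by the full $y$-block $Y$ of the neighbors of $v$ and the sub-disks it creates; this forces you to carry the auxiliary quantity $T^*_k$, run a simultaneous strong induction, and separately establish $U_k = 1$ via a second sub-disk decomposition. All of this is correct but strictly more work than the paper's direct reduction; the only dividend is the side identity $T^*_k = \delta_{k,1}$, which the paper does not record.
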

\begin{proof}
Fix a blue face $B$ with disjoint vertices. To prove \eqref{eq: weight of all pinching of face is one} we repeatedly use Lemma~\ref{lemma: single vertex pinching cancellations}. We inform the reader that the argument in this proof will also be used later in Section~\ref{subsec: Weight of maps with at least one invalid pinching}.

Fix a vertex $v_1$ on $B$ and let $\AF(B, v_1)$ be the subset of $\AF(B)$ consisting of the $B'\in \AF(B)$ such that the two edges incident to $v_1$ form a 2-gon. Set $\AF^c(B,v_1):=\AF(B)\sm \AF(B,v_1)$, so that
\[\AF(B)=\AF(B, v_1) \sqcup  \AF^c(B,v_1).\]
See Figure~\ref{fig-unpinch-bij} for some examples. We claim that
\begin{equation}\label{eq: total weight 1 intermediate}
	\sum_{B'\in \AF^c(B,v_1)}w_{\infty}(B')=0.
\end{equation} 

We can partition $\AF^c(B,v_1)$ as follows:
\begin{equation}\label{eq:webfuowbef}
	\AF^c(B,v_1)=\AF^c_{\mathsf{p}}(B,v_1) \sqcup  \AF^c_{\mathsf{np}}(B,v_1),
\end{equation}
where the set  $\AF^c_{\mathsf{p}}(B,v_1)$ contains all the maps in $\AF^c(B,v_1)$ where $v_1$ has been pinched and $\AF^c_{\mathsf{np}}(B,v_1)$ contains all the other  maps in $\AF^c(B,v_1)$.

Now, for $B' \in \AF^c_{\mathsf{p}}(B,v_1)$, let $\upp(B')$ denote the family of faces obtained from $B'$ by unpinching $v_1$. See again Figure~\ref{fig-unpinch-bij}. Notice $\upp$ is a map from $\AF^c_{\mathsf{p}}(B,v_1)$ to $\AF^c_{\mathsf{np}}(B,v_1)$.

\begin{figure}[ht!]
	\begin{center}
		\includegraphics[width=.89\textwidth]{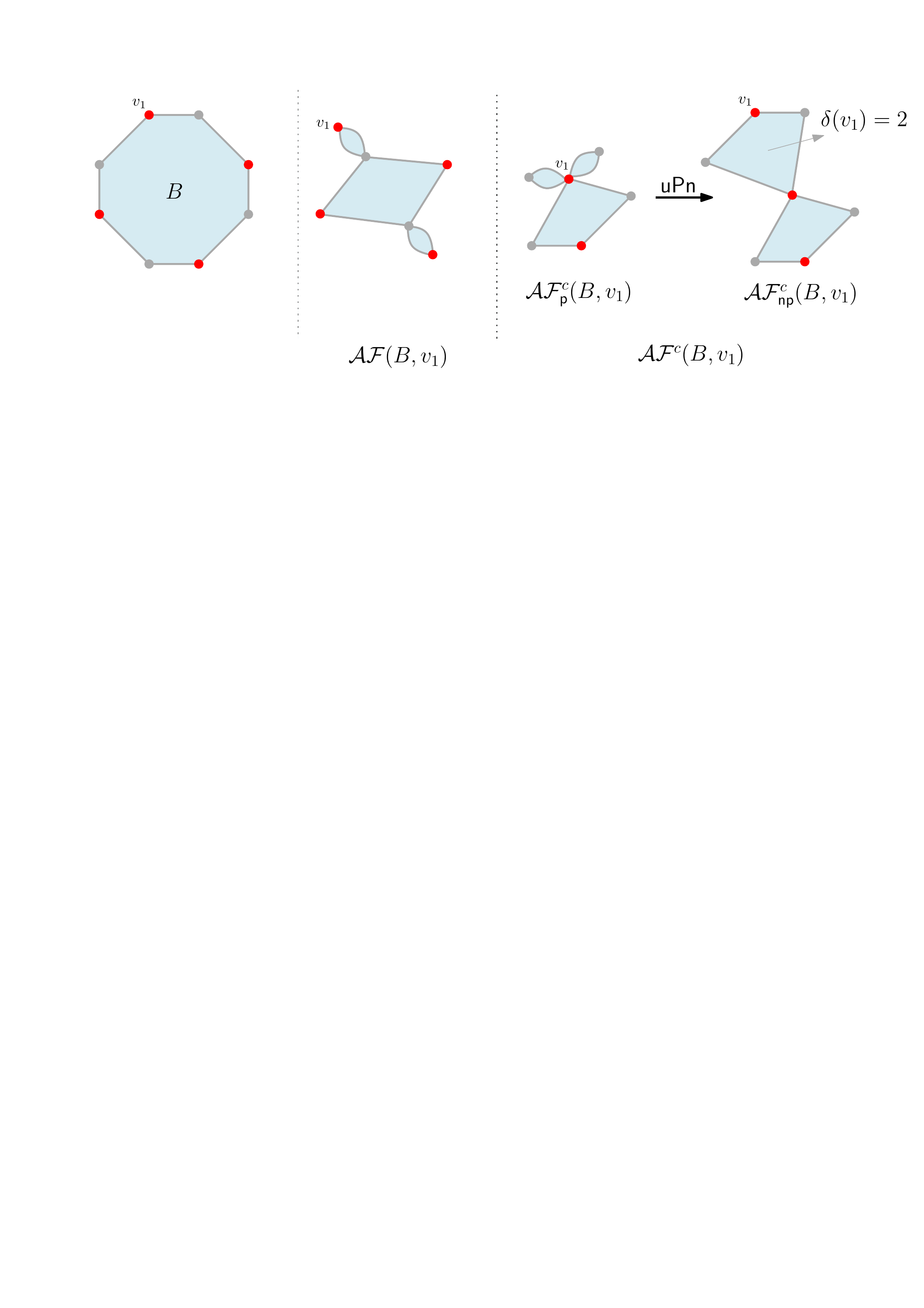}  
		\caption{\label{fig-unpinch-bij}\textbf{Left:} A blue face $B$ with disjoint vertices.
			\textbf{Middle:} A blue face in $\AF(B,v_1)$ such that the two edges incident to $v_1$ form a 2-gon.
			\textbf{Right:} Two blue face in $\AF^c(B,v_1)$. The first one is in $\AF^c_{\mathsf{p}}(B,v_1)$, while the second one is in $\AF^c_{\mathsf{np}}(B,v_1)$. Unpinching the first face at $v_1$,  we get the second face.}
	\end{center}
	\vspace{-3ex}
\end{figure}

Next, given $B' \in \AF^c_{\mathsf{np}}(B,v_1)$, let $\delta_{v_1}(B')$ denote half the degree of the blue face containing $v_1$. Note that since $B'$ is such that $v_1$ has not been pinched, this is well defined as $v_1$ is only contained in one face. There are exactly $\delta_{v_1}(B')-1$ collections of faces $B'' \in \AF^c_{\mathsf{p}}(B,v_1)$ such that $\upp(B'')=B'$. 
Thus, Lemma~\ref{lemma: single vertex pinching cancellations} gives us that \begin{align}\label{eq:eiwfvevfoueq}
	w_{\infty}(B') + \sum_{\substack{B''\in \AF^c_{\mathsf{p}}(B,v_1):\\ \upp(B'')=B'}}w_{\infty}(B'')=0.
\end{align}
Moreover, since $\AF^c_{\mathsf{p}}(B,v_1)=\bigsqcup_{B'\in \AF^c_{\mathsf{np}}(B,v_1)}\left\{B''\in\AF^c_{\mathsf{p}}(B,v_1):\upp(B'')=B'\right\}$, we can write
\begin{equation}\label{eq:eewfewefwefq}
	\sum_{B''\in \AF^c_{\mathsf{p}}(B,v_1)}w_{\infty}(B'')=\sum_{B'\in \AF^c_{\mathsf{np}}(B,v_1)}\sum_{\substack{B''\in \AF^c_{\mathsf{p}}(B,v_1):\\ \upp(B'')=B'}}w_{\infty}(B'').
\end{equation}
Combining the last two equations, we get that
\begin{align*}
	\sum_{B'\in \AF^c(B,v_1)}w_{\infty}(B') &\stackrel{\eqref{eq:webfuowbef}}{=} 
	\sum_{B'\in \AF^c_{\mathsf{np}}(B,v_1)}w_{\infty}(B') 
	+
	\sum_{B''\in \AF^c_{\mathsf{p}}(B,v_1)}w_{\infty}(B'')\\ 
	&\stackrel{\eqref{eq:eewfewefwefq}}{=}\sum_{B'\in \AF^c_{\mathsf{np}}(B,v_1)}\left[w_{\infty}(B') + \sum_{\substack{B''\in \AF^c_{\mathsf{p}}(B,v_1):\\ \upp(B'')=B'}}w_{\infty}(B'')\right]\stackrel{\eqref{eq:eiwfvevfoueq}}{=}0,
\end{align*} 
giving \eqref{eq: total weight 1 intermediate}. Hence
\begin{equation}\label{eq:ewjbweboebfewwe}
	\sum_{B'\in\AF(B)} w_{\infty}(B')=\sum_{B'\in\AF(B, v_1)}w_{\infty}(B').
\end{equation}
Now, we note that by removing the 2-gon containing $v_1$ from a family of faces in $\AF(B, v_1)$, we obtain a family of faces in $\AF(B^*)$, where $B^*$ is a blue face with disjoint vertices and has two fewer edges than $B$. The latter operation is a bijection from $\AF(B, v_1)$ to $\AF(B^*)$. Since the weight of a 2-gon is one, we get that 
\begin{equation*}
	\sum_{B'\in\AF(B)} w_{\infty}(B')=\sum_{B'\in\AF(B^*)} w_{\infty}(B').
\end{equation*}
Iterating the last formula, we can always arrive at a sum involving only a single 2-gon, i.e.\ to a sum that is equal to one. This proves \eqref{eq: weight of all pinching of face is one}.

Finally, fix a map $M\in \npe(s,K)$ and a blue face $B$ on $M$ (note $B$ is a blue face with disjoint vertices as $M$ is non-separable). Now \eqref{eq: weight of all pinching of face is one2} is a simple consequence of \eqref{eq: weight of all pinching of face is one}. Indeed,
\begin{align*}
	\sum_{M'\in \AM(M,B)}w_{\infty}(M') = \prod_{f\in \BF(M)\sm \{B\}}w_{\deg(f)/2}\cdot\sum_{B'\in \AF(B)}w_{\infty}(B')=\prod_{f\in \BF(M)\sm \{B\}}w_{\deg(f)/2}.
\end{align*}
\end{proof}

\begin{proof}[Proof of Theorem~\ref{thm:master-sum-blue-faces} (Case 1)]
Let $M\in\npe(s,K)$ be a non-separable planar embedded map and $B$ one of its blue faces such that  all its pinchings are valid. Then
\begin{align*}
	\sum_{M'\in \VM(M,B)}w_{\infty}(M')&=\sum_{M'\in \AM(M,B)}w_{\infty}(M')-\sum_{M'\in \IM(M,B)}w_{\infty}(M')\\
	&=\prod_{f\in \BF(M)\sm \{B\}}w_{\deg(f)/2},
\end{align*}
where the last equality follows from the results in \eqref{eq: weight of all pinching of face is one2}  of Lemma~\ref{lemma: weight of all possible pinchings is 1} and noting that the sum over $\IM(M,B)$ is zero since $\IM(M,B)=\emptyset$.
\end{proof}

\subsection{The sum of all valid pinchings of a blue face with one invalid pinching equals zero}\label{subsec: Weight of maps with at least one invalid pinching}

To prove the second case of Theorem~\ref{thm:master-sum-blue-faces}, we first need to better understand maps with at least one invalid pinching. 

Fix a non-separable planar embedded map $M\in\npe(\ell,K)$ and  one of its blue faces $B$. Note that since a (single) pinching of the vertices of the blue face $B$ creating a family of blue faces $B'$ cannot cause $B'$ to have non-distinct vertices, all the invalid collections of pinchings $\cpp\in\ICP(M,B)$ must create at least one enclosure loop in $\cpp(M)$ (recall the definition of enclosure loop from the paragraph below Definition~\ref{defn:non-separableplanar-embedded}). We chose one of these enclosure loops and we assume that it is made of blue faces all sent by the embedding to the lattice edge $e$.

Focusing on this enclosure loop, which we can assume to be a simple loop as remarked in Observation~\ref{obs:enclosure loops}, and unpinching one by one all the vertices of $B$ in $\cpp(B)$ that were pinched by $\cpp$ (in an arbitrarily fixed order), we see that this simple loop will be split into a collection of arcs connecting pairs of vertices of $B$, all of them being in the same partite class. See Figure~\ref{fig-minimal-pinching} for an example.

\begin{figure}[ht!]
\begin{center}
	\includegraphics[width=.89\textwidth]{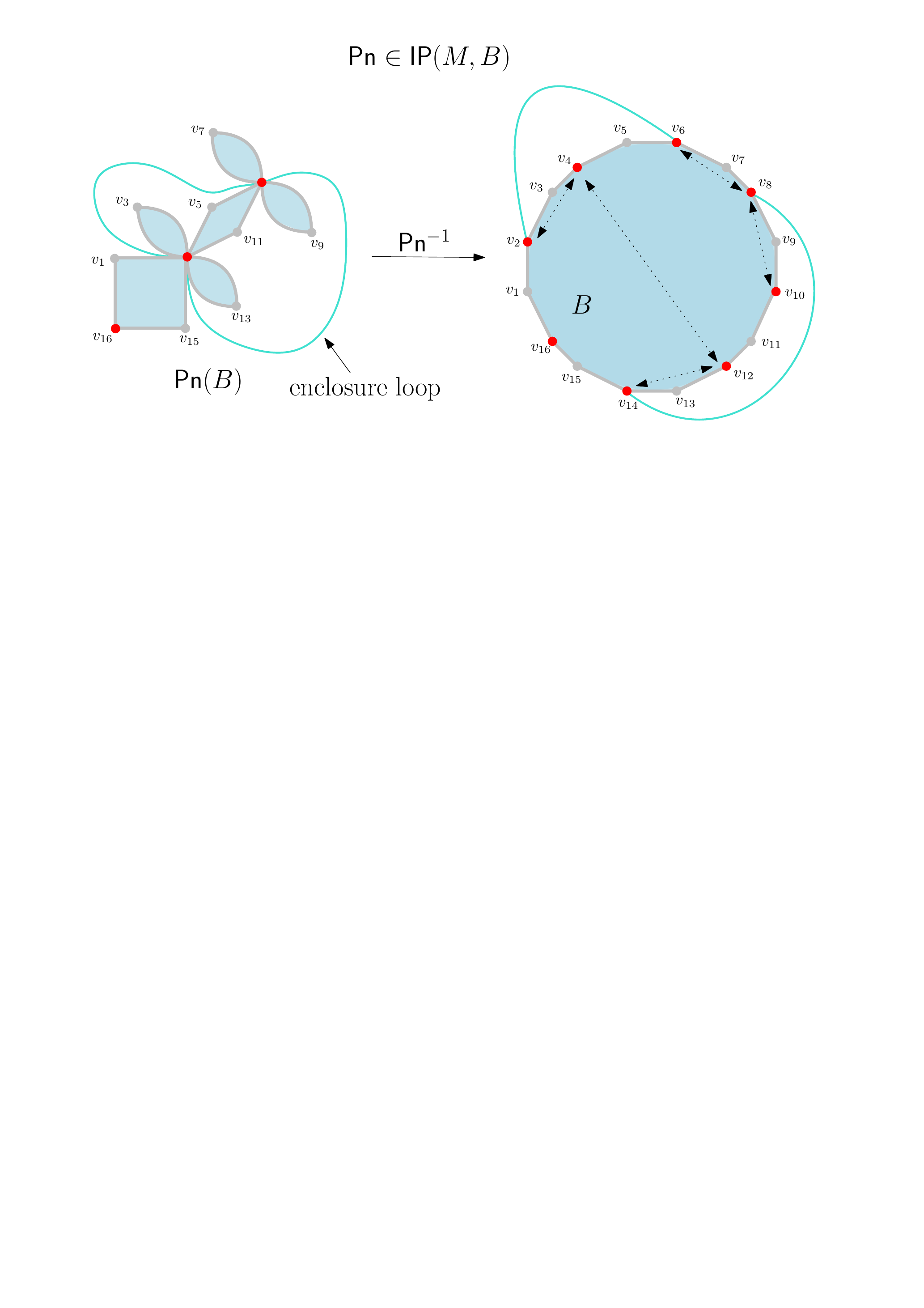}  
	\caption{\label{fig-minimal-pinching} An invalid pinching  $\cpp\in\ICP(M,B)$. \textbf{Left:} The pinched blue face $\cpp(B)$ has an enclosure loop.  \textbf{Right:} Unpinching all the vertices of $\cpp(B)$ that were pinched by $\cpp$, we see that this simple loop on the left becomes a collections of arcs connecting pairs of vertices of $B$ in the same partite class.} 
\end{center}
\vspace{-3ex}
\end{figure}

Each arc corresponds to a connected sequence of blue faces all sent by the embedding to the same lattice edge $e$ (note that $e$ must be different from the edge where the blue face $B$ is sent to since $M$ is non-separable).

From now on, we say that two vertices $u$ and $v$ of the same partite class on the boundary of $B$ are connected by an \textbf{arc sent to} $e$, if there exists a connected sequence of blue faces of $M$ all sent by the embedding to the lattice edge $e$ and connecting $u$ and $v$.

As a consequence of the discussion above, the set of all invalid collections of pinchings $\ICP(M,B)$ is uniquely determined by the set of all arcs of $B$ sent to any edge of the lattice.

\medskip

Next we introduce a useful definition. Given two vertices $u$ and $v$ on a blue face $B$ we define the \textbf{vertices between} $u$ \textbf{and} $v$ to be the vertices on $B$ that lie between $u$ and $v$ when going around the boundary of $B$ counter-clockwise starting from $u$ and ending at $v$. 

With this, we are able to state the following property about maps with invalid pinchings which will be crucial to prove Theorem~\ref{thm:master-sum-blue-faces}.

\begin{lem}\label{lemma: minimal arc}
Consider $M\in\npe(\ell,K)$ and one of its blue faces $B$. If $M$ is such that there is at least one invalid pinching of $B$ then there are two vertices $u$ and $v$ on $B$ of the same partite class that are connected by an arc and such that each vertex between $u$ and $v$ of the opposite partite class is not connected by an arc to any other vertex of $B$.
\end{lem}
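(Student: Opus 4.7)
The plan is to prove Lemma~\ref{lemma: minimal arc} by a minimality argument that exploits the non-crossing structure of arcs on the boundary of $B$, which in turn is forced by the planarity of $M$.

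First I would record two preliminary observations. Since there is an invalid collection of pinchings $\cpp\in\ICP(M,B)$, the discussion preceding the lemma produces an enclosure loop in $\cpp(M)$ which, after unpinching the vertices that were pinched by $\cpp$, becomes a non-empty collection of arcs between same-partite-class vertices of $B$; in particular, at least one arc exists. Next, I would argue that the family of all arcs on $\partial B$ is \emph{non-crossing} in the sense that for any two distinct arcs $\{u_1,v_1\}$ and $\{u_2,v_2\}$, their endpoints do not interleave cyclically along $\partial B$. Indeed, each arc is a connected sequence of blue faces living in the complement of $B$ inside the planar disk carrying $M$, and distinct arcs correspond to disjoint such sequences; a planar embedding cannot realise two arcs whose endpoints interleave cyclically on $\partial B$ without a crossing of the corresponding sequences of blue faces.

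Second, I would run the minimality argument. List the vertices of $B$ in counter-clockwise order as $v_1,v_2,\dots,v_{2k}$; since $M$ is non-separable these vertices are pairwise distinct and the partite classes alternate along the boundary. For an arc $\{v_a,v_b\}$ with $a<b$, define its \emph{span} to be $\min(b-a-1,\,2k-(b-a)-1)$, i.e.\ the smaller of the two counter-clockwise gaps between the endpoints. Among all arcs of $B$, choose one $\{u^\ast,v^\ast\}=\{v_{a^\ast},v_{b^\ast}\}$ with $a^\ast<b^\ast$ whose span is minimal and, after possibly swapping the roles of $u^\ast$ and $v^\ast$, arrange that the minimal gap is the one counter-clockwise from $u^\ast$ to $v^\ast$, of size $b^\ast-a^\ast-1$.

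Third, I would show that no vertex $w=v_c$ with $a^\ast<c<b^\ast$ belongs to any arc. Assume for contradiction that $\{v_c,v_d\}$ is an arc. If $a^\ast<d<b^\ast$, then $\{v_c,v_d\}$ has span at most $|d-c|-1<b^\ast-a^\ast-1$, contradicting minimality. If $d\in\{a^\ast,b^\ast\}$, the same counting again yields an arc of strictly smaller span. Otherwise, $d$ lies strictly outside the counter-clockwise interval $(a^\ast,b^\ast)$, and then $\{v_{a^\ast},v_{b^\ast}\}$ and $\{v_c,v_d\}$ interleave cyclically on $\partial B$, contradicting the non-crossing property. Hence every vertex strictly between $u^\ast$ and $v^\ast$ (in particular those of the opposite partite class) fails to be in any arc, so the pair $(u,v):=(u^\ast,v^\ast)$ is the desired one.

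The main obstacle I anticipate is making the non-crossing property precise for arcs that may share endpoints on $\partial B$, since several connected sequences of blue faces can meet at a common vertex of $B$. One must verify that ``endpoint interleaving'' is still the correct combinatorial obstruction and that it genuinely forces a crossing in the planar embedding; this is essentially a topological point about curves in a disk with their endpoints on the boundary. Once this is settled, the combinatorial minimality argument above is routine.
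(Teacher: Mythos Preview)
Your minimality approach is natural and close in spirit to the paper's descending-interval argument, but your non-crossing step has a real gap. You claim that ``distinct arcs correspond to disjoint such sequences'' of blue faces and then invoke planarity. That disjointness fails in general: two sequences of blue faces sent to different lattice edges $e',e''$ can share a vertex of $M$ (any vertex sent to a common lattice endpoint of $e'$ and $e''$), and at such a shared vertex the four arms can alternate cyclically, so the endpoints on $\partial B$ \emph{can} interleave. Planarity only forbids interleaving for vertex-disjoint connected regions. The obstacle you anticipated (arcs sharing endpoints on $\partial B$) is a symptom of the same issue, but the deeper problem is sharing interior vertices.

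The paper closes this gap using the lattice embedding, and this is precisely why the lemma only concerns vertices of the \emph{opposite} partite class. If your minimal arc $\{u^*,v^*\}$ is realized through blue faces sent to an edge $e'$ incident to one endpoint of $e$ (the lattice edge of $B$), then any arc through an opposite-partite vertex $v_c$ is realized through blue faces sent to some $e''$ incident to the \emph{other} endpoint of $e$; since $\ZZ^d$ has no triangles, $e'$ and $e''$ (both $\neq e$) share no lattice vertex, hence the two sequences share no map vertex and really are disjoint --- now planarity applies. Your stronger assertion, that \emph{every} vertex between $u^*$ and $v^*$ is arc-free, is neither what the lemma states nor what your argument establishes: for a same-partite $v_c$, both arcs may live over edges incident to the same endpoint of $e$ and the disjointness breaks down. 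Restricting step~3 to opposite-partite vertices and inserting the lattice argument fixes your proof and makes it essentially equivalent to the paper's.
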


\begin{proof}
By the discussion at the beginning of this section, we know that $B$ must have at least two vertices connected by an arc as it has an invalid pinching. Let $u_1$ and $v_1$ be two such vertices. Now if all the vertices between $u_1$ and $v_1$ of the opposite partite class are not connected by an arc to another vertex of $B$ then we are done. So assume that there is a vertex $w$ of the opposite partite class and between $u_1$ and $v_1$ such that it is connected by an arc to another vertex $z$ on $B$. 

First notice $z$ cannot be between $v_1$ and $u_1$. This is because, if it was the case, then the arc connecting $w$ and $z$ must intersect the arc connecting $u_1$ and $v_1$ and this is not possible. Indeed, without loss of generality, we can assume that $u_1$ and $v_1$ are sent to the starting vertex of $e$, and $w$ and $z$ are sent to the ending vertex of $e$.  We can also assume that the arc connecting $u_1$ and $v_1$ is sent to some lattice edge $e'\neq e$ and the one connecting $w$ and $z$ is sent to  $e''\neq e$. But then we must have that $e'$ is one of the lattice edges incident to the starting vertex of $e$ and $e''$ is one of the lattice edges incident to the ending vertex of $e$. Therefore, $e'$ and $e''$ do not share any vertex of $\ZZ^d$ and so the two arcs cannot cross each other. Indeed, this is only possible if the corresponding sequences of blue faces both have a vertex that is sent to the same lattice vertex. 

Thus we can assume that $z$ is in between $u_1$ and $v_1$. Without loss of generality, we assume that when going around the boundary of $B$ counter-clockwise, the vertex $w$ is visited before $z$. Then let $u_2=w$ and $v_2=v$. Now if all the vertices between $u_2$ and $v_2$ of the opposite partite class are not connected by an arc to any other vertex on $B$ we are done. If not we can repeat the above arguments giving vertices $u_3$ and $v_3$. 

Notice since $B$ has a finite number of vertices and the number of vertices between $u_{i+1}$ and $v_{i+1}$ is strictly less than the number of vertices between $u_i$ and $v_i$ this procedure must terminate in a finite number of steps. That is, at some point, we must find vertices $u_n$ and $v_n$ that are connected by an arc such that all the vertices between $u_n$ and $v_n$ of the opposite partite class are not connected by an arc to any other vertex of $B$.
\end{proof}

With this, we are ready to finish the proof of Theorem~\ref{thm:master-sum-blue-faces}.

\begin{proof}[Proof of Theorem~\ref{thm:master-sum-blue-faces} (Case 2)]
Suppose $M\in \npe(\ell,K)$ and one of its blue faces $B$ is such that $B$ has at least one invalid pinching. Then, by Lemma~\ref{lemma: minimal arc}, there exist two vertices $u$ and $v$ of $B$ in the same partite class that are connected by an arc and such that each vertex between $u$ and $v$ of the opposite partite class are not connected by an arc to any other vertex on $B$. Consider the set of vertices $\{w_i\}_{i=0}^n$ where $w_0=u$, $w_n=v$, and $w_i$ are the vertices between $u$ and $v$ of the same partite class in counter-clockwise order. Similarly let $\{z_i\}_{i=1}^{n}$ be the vertices between $u$ and $v$ of the opposite partite class in counter-clockwise order.

\medskip

We denote by $\VF(B)$ the set of collection of blue faces $\cpp(B)$ obtained from some $\cpp\in\VCP(M,B)$. Note that $\VF(B)$ depends on $M$.
Let $\VF(B, z_1)$ be the subset of $\VF(B)$ consisting of the $B'\in \VF(B)$ such that the two edges incident to $z_1$ form a 2-gon. Set $\VF^c(B,z_1):=\VF(B)\sm \VF(B,z_1)$, so that
\[\VF(B)=\VF(B, z_1) \sqcup  \VF^c(B,z_1).\]
We claim that
\begin{equation}\label{eq: total weight 1 intermediate2}
	\sum_{B'\in \VF^c(B,z_1)}w_{\infty}(B')=0.
\end{equation} 
If this is the case, then
\begin{equation*}
	\sum_{B'\in\VF(B)} w_{\infty}(B')=\sum_{B'\in\VF(B, z_1)}w_{\infty}(B').
\end{equation*}
Note that if $n=1$, then $\VF(B, z_1)=\emptyset$, since if the two edges incident to $z_1$ form a 2-gon, then $w_0=u$, $w_1=v$ are pinched together forming an invalid map (recall that $u$ and $v$ are connected by an arc). So, if $n=1$, we immediately get that $	\sum_{B'\in\VF(B)} w_{\infty}(B')=0$. This last idea combined with an iteration argument similar to the one used below \eqref{eq:ewjbweboebfewwe}, gives the general $n\geq2$ case.

\medskip

It remains to prove \eqref{eq: total weight 1 intermediate2}. We run an argument similar to the one used for the proof of Lemma~\ref{lemma: weight of all possible pinchings is 1}.\footnote{We need to repeat the argument because we are now working with only valid maps instead of all maps as in Lemma~\ref{lemma: weight of all possible pinchings is 1} and a priori this difference might create potential issues in the cancellations.} We partition $\VF^c(B,z_1)$ as follows:
\begin{equation}\label{eq:webfuowbef-2}
	\VF^c(B,z_1)=\VF^c_{\mathsf{p}}(B,z_1) \sqcup  \VF^c_{\mathsf{np}}(B,z_1),
\end{equation}
where the set  $\VF^c_{\mathsf{p}}(B,z_1)$ contains all the maps in $\VF^c(B,z_1)$ where $z_1$ has been pinched and $\VF^c_{\mathsf{np}}(B,z_1)$ contains all the other  maps in $\VF^c(B,z_1)$.

Now, for $B' \in \VF^c_{\mathsf{p}}(B,z_1)$, let $\upp(B')$, denote the family of faces obtained from $B'$ by unpinching $z_1$. Notice $\upp$ is a map from $\VF^c_{\mathsf{p}}(B,z_1)$ to $\VF^c_{\mathsf{np}}(B,z_1)$ since the unpinching operation cannot create invalid faces.

Next, given a collection of blue faces $B' \in \VF^c_{\mathsf{np}}(B,z_1)$, let $\delta_{z_1}(B')$ denote half the degree of the blue face $B'_{z_1}$ containing $z_1$. Since we know that $z_1$ is not connected by an arc to any other vertex on $B$, and so every (single) pinching of $z_1$ with another vertex of $B'_{z_1}$ in the same partite class leads to a new valid collection of faces, we get that there are exactly $\delta_{z_1}(B')-1$ collections of faces $B'' \in \VF^c_{\mathsf{p}}(B,z_1)$ such that $\upp(B'')=B'$. 
Thus, Lemma~\ref{lemma: single vertex pinching cancellations} gives us that \begin{align}\label{eq:eiwfvevfoueq-2}
	w_{\infty}(B') + \sum_{\substack{B''\in \VF^c_{\mathsf{p}}(B,z_1):\\ \upp(B'')=B'}}w_{\infty}(B'')=0.
\end{align}
Moreover, we can write
\begin{equation}\label{eq:eewfewefwefq-2}
	\sum_{B''\in \VF^c_{\mathsf{p}}(B,z_1)}w_{\infty}(B'')=\sum_{B'\in \VF^c_{\mathsf{np}}(B,z_1)}\sum_{\substack{B''\in \VF^c_{\mathsf{p}}(B,z_1):\\ \upp(B'')=B'}}w_{\infty}(B'').
\end{equation}
Combining the last two equations, we get that
\begin{align*}
	\sum_{B'\in \VF^c(B,z_1)}w_{\infty}(B') &\stackrel{\eqref{eq:webfuowbef-2}}{=} 
	\sum_{B'\in \VF^c_{\mathsf{np}}(B,z_1)}w_{\infty}(B') 
	+
	\sum_{B''\in \VF^c_{\mathsf{p}}(B,z_1)}w_{\infty}(B'')\\ 
	&\stackrel{\eqref{eq:eewfewefwefq-2}}{=}\sum_{B'\in \VF^c_{\mathsf{np}}(B,z_1)}\left[w_{\infty}(B') + \sum_{\substack{B''\in \VF^c_{\mathsf{p}}(B,z_1):\\ \upp(B'')=B'}}w_{\infty}(B'')\right]\stackrel{\eqref{eq:eiwfvevfoueq-2}}{=}0,
\end{align*}
giving \eqref{eq: total weight 1 intermediate}. 
\end{proof}

\section{Proof of the Master surface cancellation lemma}\label{sec: cancellation lemma}

The main goal of this section is to prove the Master surface cancellation lemma~\ref{lemma:master-cancellation}. First, in Section~\ref{sect:partitions} we introduce convenient partitions of the sets of bad embedded maps appearing in the statement of the Master surface cancellation lemma~\ref{lemma:master-cancellation}. Then, in Section~\ref{sect:proofoflemma}, we prove the Master surface cancellation lemma~\ref{lemma:master-cancellation} assuming two preliminary results (Lemmas~\ref{lem:cance1}~and~\ref{lem:cance2}) whose proofs are given in Section~\ref{sect:lemmas-one}. The surface cancellations established in Theorem~\ref{thm:master-sum-blue-faces} will be the fundamental tool needed to prove Lemma~\ref{lem:cance2}.

\subsection{Partitioning the sets of bad embedded maps}\label{sect:partitions}

The main goal of this section is to present a convenient way to partition the sets of maps appearing in the statement of the Master surface cancellation lemma~\ref{lemma:master-cancellation}.

Fix a loop $\ell$ such that $\ell=\mathbf{e} \, \pi$, where $\mathbf{e}$ is a copy of the lattice edge $e$ and a  plaquette assignment $K$ such that $(\ell,K)$ is balanced. We consider the embedded maps in the sets

\begin{align}\label{eq:new_notation_maps}
\begin{split}
	\cM^{\text{bad}}_{-}&:=\bigsqcup_{p\in\cP_{\ZZ^d}(e^{-1},K)} \npe^{\text{bad}}(\ell\ominus_{\mathbf{e}}p , K\sm\{p\}),\\
	\cM^{\text{bad}}_{+}&:=\bigsqcup_{q\in\cP_{\ZZ^d}(e,K)} \npe^{\text{bad}}(\ell\oplus_{\mathbf{e}}q , K\sm\{q\}).
\end{split}
\end{align}

See Figure~\ref{fig-bad-maps} for some examples.
Note that each embedded map  $M\in\cM^{\text{bad}}_{-}$ has boundary $\pi \, \nu_1$ where $\nu_1$ is such that $\mathbf{p}=\textbf{e}^{-1} \, \nu_1\in \cP_{\ZZ^d}(e^{-1},K)$. Here $\mathbf{p}\in \cP_{\ZZ^d}(e^{-1},K)$ is a slight abuse of notation to indicate that $\mathbf{p}$ is an internal yellow face mapped to the lattice plaquette $p\in \cP_{\ZZ^d}(e^{-1},K)$. Throughout this section, we will use such a convention.

Let $f_r(M)$ denote the internal yellow face  $\mathbf{p}$ above (where $f$ stands for ``face'' and $r$ stands for ``removed'', since $f_r(M)$ corresponds to the face that has been removed by the PPS process).

Similarly, each embedded map  $M\in\cM^{\text{bad}}_+$ has boundary $\textbf{e}\, \nu_2 \, \textbf{e}' \pi$ where $\nu_2$ is such that $\mathbf{q}=\textbf{e}'\, \nu_2\in \cP_{\ZZ^d}(e,K)$. In this case, we let $f_r(M)$ denote the internal yellow face $\mathbf{q}$.

\begin{figure}[ht!]
\begin{center}
	\includegraphics[width=.99\textwidth]{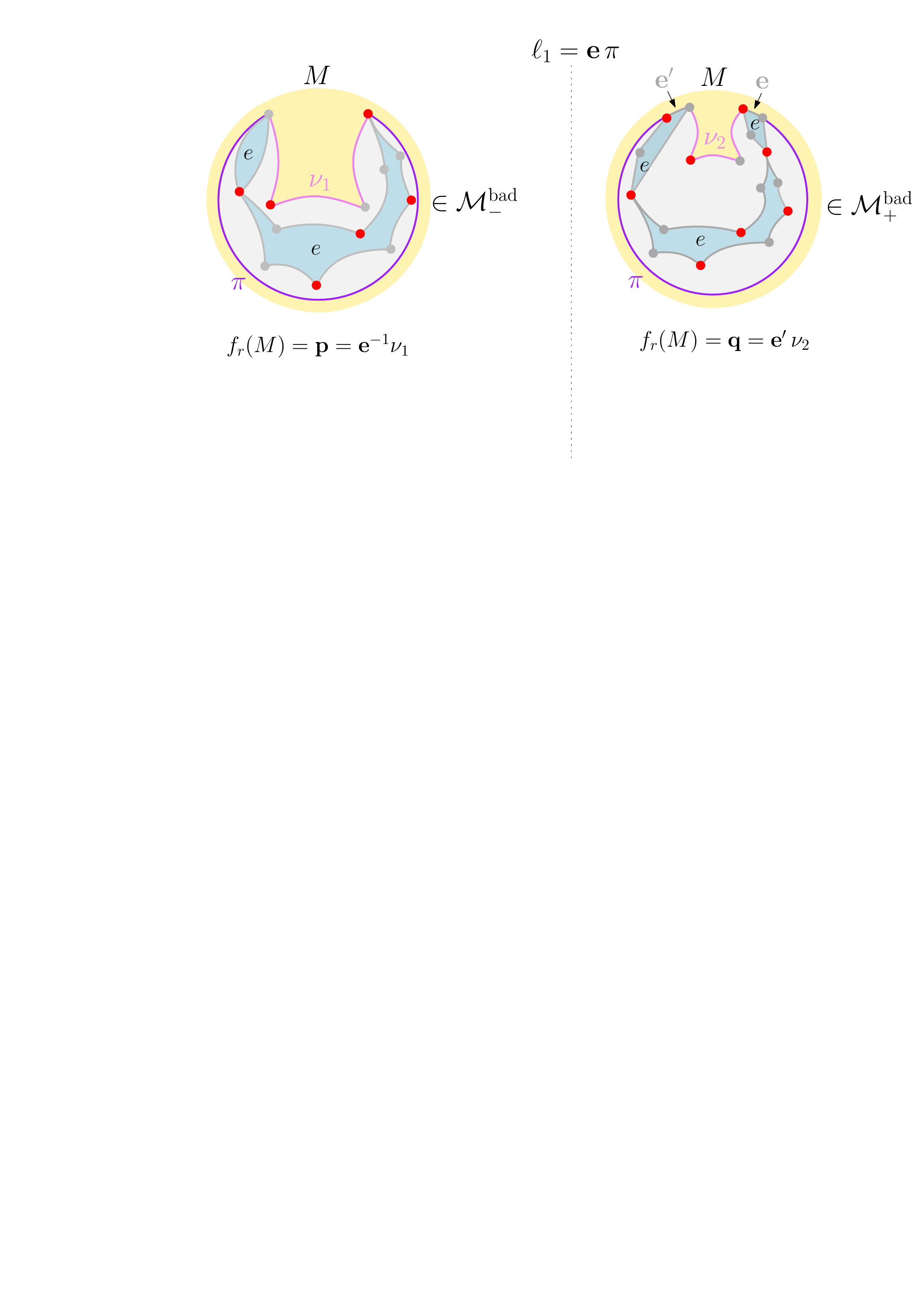}  
	\caption{\label{fig-bad-maps} Two examples of embedded maps, both obtained starting from a loop $\ell$ such that $\ell=\mathbf{e} \, \pi$, where $\mathbf{e}$ is a copy of the lattice edge $e$ and a fixed plaquette assignment $K$. \textbf{Left}: A map in $\cM^{\text{bad}}_{-}$ obtained from a map in $\npe^{\text{bad}}(\ell\ominus_{\mathbf{e}}p , K\sm\{p\})$ with $\mathbf{p}=\textbf{e}^{-1} \, \nu_1\in \cP_{\ZZ^d}(e^{-1},K)$.
		\textbf{Right}: A map in $\cM^{\text{bad}}_{+}$ obtained from a map in $\npe^{\text{bad}}(\ell\oplus_{\mathbf{e}}q , K\sm\{q\})$ with $\mathbf{q}=\textbf{e}'\, \nu_2\in \cP_{\ZZ^d}(e,K)$.}
\end{center}
\vspace{-3ex}
\end{figure}

\medskip

From now on, whenever we write $\pm$, we mean that the results that we are explaining hold if we replace all the $\pm$ by all $+$ or by all $-$.

Recall that each map in $\cM^{\text{bad}}_{-}$ has a connected sequence of blue faces sent to the lattice edge $e$ that connect the starting and final vertex of $\pi$, while each map in $\cM^{\text{bad}}_{+}$ has a connected sequence of blue faces sent to the lattice edge $e$ that connect the edge $\textbf{e}$ and $\textbf{e}'$. We denote by $\cM^{\text{bad}}_{\text{u},{\pm}}$ the set of embedded maps in $\cM^{\text{bad}}_{\pm}$ such that the aforementioned connected sequence of blue faces is formed by a \emph{single} blue face sent to the lattice edge $e$ which is not connected (through vertices) to any other blue face sent to the lattice edge $e$. We will always denote this specific single blue face by $B_{\pm}$.  See Figure~\ref{fig-bad-maps-2} for some examples.

\begin{figure}[ht!]
\begin{center}
	\includegraphics[width=.99\textwidth]{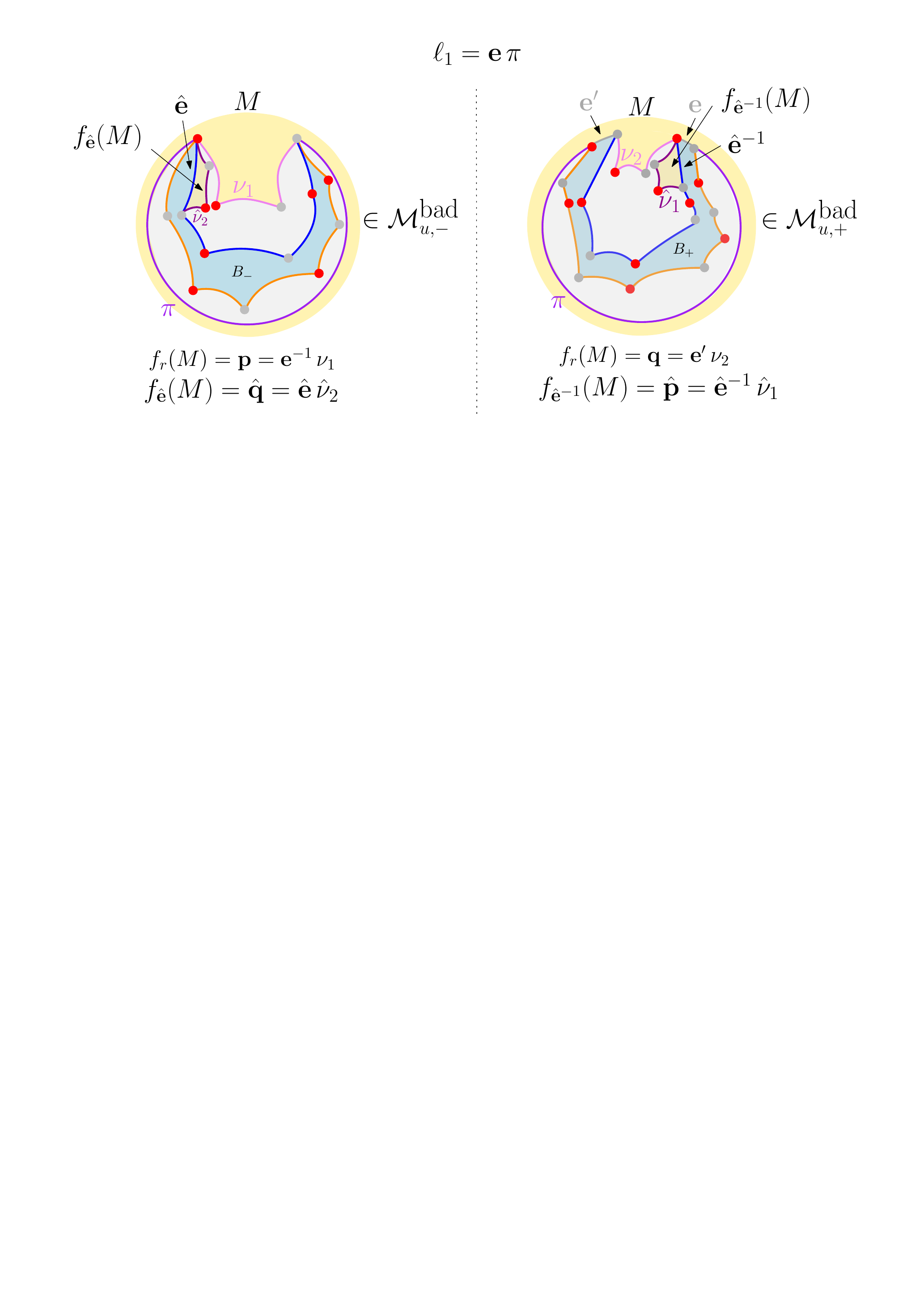}  
	\caption{\label{fig-bad-maps-2}
		Examples of embedded maps with a unique blue face $B_{\pm}$ sent to the lattice edge $e$. We are also assuming that $B_{\pm}$ is not connected (through vertices) to any other blue face sent to the lattice edge $e$. The top boundary of $B_{\pm}$ is highlighted in blue, while the bottom boundary is highlighted in orange.
		\textbf{Left}: An embedded map in $\cM^{\text{bad}}_{\text{u},-}$.
		\textbf{Right}: An embedded map in $\cM^{\text{bad}}_{\text{u},+}$.}
\end{center}
\vspace{-3ex}
\end{figure}

Recalling the notation $\VM(M,B)$ introduced below Observation~\ref{obs:valid-pinching}, we have the following partitions of the sets $\cM^{\text{bad}}_{\pm}$ introduced in \eqref{eq:new_notation_maps}:
\begin{equation*}
\cM^{\text{bad}}_{\pm}=\bigsqcup_{M\in\cM^{\text{bad}}_{\text{u},\pm}} \VM(M,B_{\pm}).
\end{equation*}
Note to obtain the aforementioned partition we used the fact that the blue face $B_{\pm}$ is not connected (through vertices) to any other blue face sent to the lattice edge $e$.

Next we further partition based on the size of the unique connecting face. Notice the boundary of $B_{-}$ (resp.\ $B_{+}$) is split into two pieces by the starting and final vertex of $\pi$ (resp.\ by the two edges $\textbf{e}$ and $\textbf{e}'$): the \textbf{bottom boundary} of $B_{\pm}$ which is on the same side as $\pi$ and the \textbf{top boundary} of $B_{\pm}$ which is on the same side as $\nu_1$ or $\nu_2$. We point out that the edges $\textbf{e}$ and $\textbf{e}'$ are not included in the top or bottom boundary of $B_+$. See Figure~\ref{fig-bad-maps-2} for some examples.

Let $\cM^{\text{bad}}_{\text{u},\pm}(t,b)$ denote the set of embedded maps in  $\cM^{\text{bad}}_{\text{u},\pm}$ with $t$ edges in the top boundary of $B_{\pm}$ and $b$ edges in the bottom boundary of $B_{\pm}$. Then, we get that
\begin{equation*}
\cM^{\text{bad}}_{\pm}=\bigsqcup_{t,b\geq 1}\bigsqcup_{M\in\cM^{\text{bad}}_{\text{u},\pm}(t,b)} \VM(M,B_{\pm}),
\end{equation*}
where we note that the sets $\cM^{\text{bad}}_{\text{u},\pm}(t,b)$ are non-empty only if $t$ and $b$ are both odd and $t+b\leq 2n_e(\ell,K)-2$. 

The next partition we construct will allow us to relate maps in $\cM^{\text{bad}}_{\text{u},-}$ to maps in $\cM^{\text{bad}}_{\text{u},+}$. For a map in  $\cM^{\text{bad}}_{\text{u},-}$, let $\hat{\textbf{e}}$ denote the edge of the top boundary incident to the starting vertex of $\pi$. Similarly, for a map in  $\cM^{\text{bad}}_{\text{u},+}$, let $\hat{\textbf{e}}^{-1}$ denote the edge of the top boundary which shares a vertex with $\mathbf{e}$. Our notation is justified by the following observation: the starting vertex of $\pi$ is sent by the embedding to the lattice vertex at the end of $e$ (because $\ell_1=\textbf{e} \, \pi$) and so $\hat{\textbf{e}}$ must be a copy of the lattice edge $e$. Similar reasoning gives that $\hat{\textbf{e}}^{-1}$ must be a copy of the lattice edge $e^{-1}$.

Since embedded maps have yellow/blue bipartite faces, the edge $\hat{\textbf{e}}$ must be on the boundary of an internal yellow face $\hat{\mathbf{q}}=\hat{\textbf{e}} \, \hat{\nu}_2\in\cP_{\ZZ^d}(e,K)$ for some path $\hat{\nu}_2$.  We denote the plaquette $\hat{\mathbf{q}}$ by $f_{\hat{\textbf{e}}}(M)$.

Similarly, the edge $\hat{\textbf{e}}^{-1}$ must be on the boundary of an internal yellow face $\hat{\mathbf{p}}=\hat{\textbf{e}}^{-1} \, \hat{\nu}_1\in\cP_{\ZZ^d}(e^{-1},K)$ for some path $\hat{\nu}_1$. We denote the plaquette $\hat{\mathbf{p}}$ by $f_{\hat{\textbf{e}}^{-1}}(M)$.

For $p\in\cP_{\ZZ^d}(e^{-1},K)$ and $q\in\cP_{\ZZ^d}(e,K)$, we set\footnote{We recall that, for instance, with $f_r(M)=\mathbf{p}$ we mean that $f_r(M)$ is an internal yellow face mapped to the lattice plaquette $p\in\cP_{\ZZ^d}(e^{-1},K)$.}
\begin{align*}
\cM^{\text{bad}}_{\text{u},-}(t,b,p,q)&:=
\left\{
M\in\cM^{\text{bad}}_{\text{u},-}(t,b)\,:\,
f_r(M)=\mathbf{p}\text{ and }  f_{\hat{\textbf{e}}}(M)=\mathbf{q}
\right\},\\
\cM^{\text{bad}}_{\text{u},+}(t,b,q,p)&:=
\left\{
M\in\cM^{\text{bad}}_{\text{u},+}(t,b)\,:\,
f_r(M)=\mathbf{q}\text{ and }  f_{\hat{\textbf{e}}^{-1}}(M)=\mathbf{p}
\right\}.
\end{align*}

This gives us the partition of the sets $\cM^{\text{bad}}_{\pm}$:
\begin{align}\label{eq:ineedaref}
\begin{split}
	\cM^{\text{bad}}_{-}&=
	\bigsqcup_{\substack{p\in\cP_{\ZZ^d}(e^{-1},K)\\ q\in\cP_{\ZZ^d}(e,K)}}
	\bigsqcup_{t,b\geq 1}\bigsqcup_{M\in\cM^{\text{bad}}_{\text{u},-}(t,b,p,q)} \VM(M,B_{-}),\\
	\cM^{\text{bad}}_{+}&=
	\bigsqcup_{\substack{p\in\cP_{\ZZ^d}(e^{-1},K)\\ q\in\cP_{\ZZ^d}(e,K)}}
	\bigsqcup_{t,b\geq 1}\bigsqcup_{M\in\cM^{\text{bad}}_{\text{u},+}(t,b,q,p)} \VM(M,B_{+}).
\end{split}
\end{align}
Lastly, we re-express the above partition in terms of the region external to $B_{\pm}$ and the plaquette $f_{\hat{\textbf{e}}}(M)/f_{\hat{\textbf{e}}^{-1}}(M)$. In particular, given an embedded map  $M\in\cM^{\text{bad}}_{\text{u},-}(t,b,p,q)$,
let $\un_-(M)$ be the (unknown) embedded map obtained by (c.f.\ with Figure~\ref{fig-bad-maps-4}) 
\begin{enumerate}
\item splitting the starting and final vertex of $\pi$ (in such a way that $B_-$ and the yellow face containing $\pi$ completely separate the map.);
\item removing the interior of the blue face $B_{-}$ and the external yellow face containing $\pi$ on the boundary; 
\item removing the edge $\hat{\mathbf{e}}$ and the interior of the yellow face $f_{\hat{\mathbf{e}}}(M)$.
\end{enumerate}
Similarly, given an embedded map  $M\in\cM^{\text{bad}}_{\text{u},+}(t,b,q,p)$,
let $\un_+(M)$ be the (unknown) embedded map obtained by 
\begin{enumerate}
\item removing the edges $\textbf{e}$ and $\textbf{e}'$ 
\item removing the interior of the blue face $B_{+}$ and of the external yellow face containing $\pi$ on the boundary. 
\item removing  the edge $\hat{\mathbf{e}}^{-1}$ and the interior of the yellow face $f_{\hat{\mathbf{e}}^{-1}}(M)$.
\end{enumerate} 
Note that $\un_{\pm}(M)$ always consists of two connected components, one including the top boundary of $B_{\pm}$, denoted by  $\un_{\pm}^t(M)$, and one including the bottom boundary of $B_{\pm}$, denoted by  $\un_{\pm}^b(M)$. Hence $\un_{\pm}(M)=(\un_{\pm}^t(M),\un_{\pm}^b(M))$.

\begin{figure}[ht!]
\begin{center}
	\includegraphics[width=.99\textwidth]{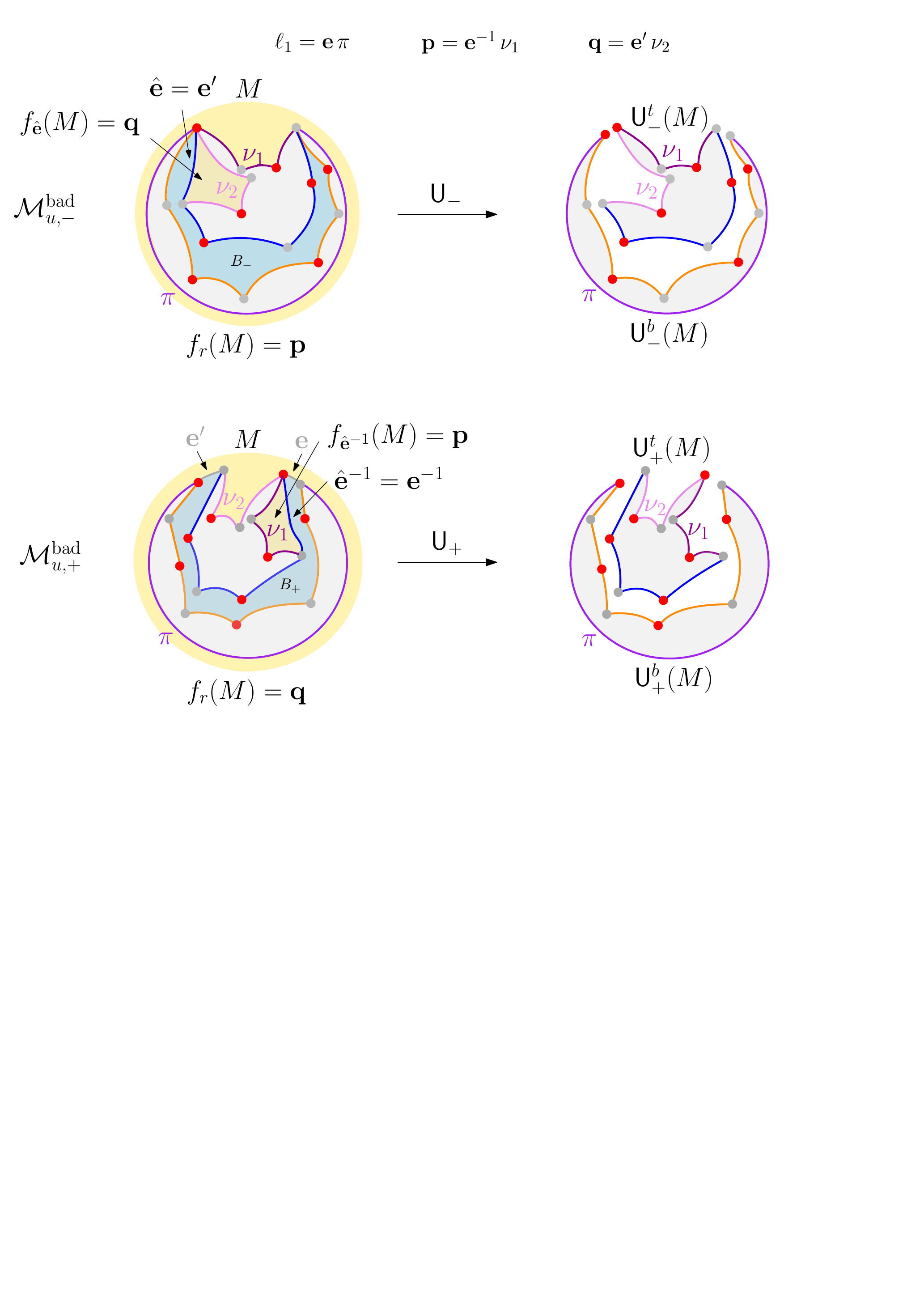}  
	\caption{\label{fig-bad-maps-4} Two examples on how the functions $\un_{\pm}$ act on embedded maps.
	}
\end{center}
\vspace{-3ex}
\end{figure}

Clearly the function $\un_{-}(\cdot)$ is a bijection from $$\cM^{\text{bad}}_{\text{u},-}(t,b,p,q)\quad\text{to}\quad\cU^{\text{bad}}_{\text{u},-}(t,b,p,q):=\un_-\left(
\cM^{\text{bad}}_{\text{u},-}(t,b,p,q)
\right)$$ 
and the function $\un_+(\cdot)$ is a bijection from 
$$\cM^{\text{bad}}_{\text{u},+}(t,b,q,p)\quad\text{to}\quad\cU^{\text{bad}}_{\text{u},+}(t,b,q,p):=\un_+\left(
\cM^{\text{bad}}_{\text{u},+}(t,b,q,p)
\right).$$
With this, we can rewrite (\ref{eq:ineedaref}) as:
\begin{align}\label{eq:partitions}
\begin{split}
	\cM^{\text{bad}}_{-}&=
	\bigsqcup_{\substack{p\in\cP_{\ZZ^d}(e^{-1},K)\\ q\in\cP_{\ZZ^d}(e,K)}}
	\bigsqcup_{t,b\geq 1}\bigsqcup_{U\in\cU^{\text{bad}}_{\text{u},-}(t,b,p,q)} \VM(\un^{-1}_-(U),B_{+}),\\
	\cM^{\text{bad}}_{+}&=
	\bigsqcup_{\substack{p\in\cP_{\ZZ^d}(e^{-1},K)\\ q\in\cP_{\ZZ^d}(e,K)}}
	\bigsqcup_{t,b\geq 1}\bigsqcup_{U\in\cU^{\text{bad}}_{\text{u},+}(t,b,q,p)} \VM(\un^{-1}_+(U),B_{-}).
\end{split}
\end{align}

\subsection{Proof of the Master surface cancellation lemma}\label{sect:proofoflemma}

In this section we prove the Master surface cancellation lemma~\ref{lemma:master-cancellation}.
The main reasons to introduce the above partitions in \eqref{eq:partitions} are the following two results whose proofs are postponed to Section~\ref{sect:lemmas-one}. The first tells us that the sets of possible unknown regions are the same.

\begin{lem}\label{lem:cance1}
Fix $t,b\geq 1$, $p\in\cP_{\ZZ^d}(e^{-1},K)$ and $q\in\cP_{\ZZ^d}(e,K)$. Then
\begin{equation*}
	\cU^{\text{bad}}_{\text{u},-}(t,b,p,q)
	=
	\cU^{\text{bad}}_{\text{u},+}(t,b,q,p).
\end{equation*}
\end{lem}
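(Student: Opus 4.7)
The plan is to construct an explicit bijection $\Phi : \cM^{\text{bad}}_{\text{u},-}(t,b,p,q) \to \cM^{\text{bad}}_{\text{u},+}(t,b,q,p)$ that intertwines the two ``unknowing'' maps, i.e.\ satisfies $\un_+ \circ \Phi = \un_-$. Since $\un_\pm$ are bijections onto their respective image sets $\cU^{\text{bad}}_{\text{u},\pm}$, the existence of such a $\Phi$ yields the desired equality of unknown regions.

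First I would carefully unpack the ``core'' that is removed by $\un_-$ and by $\un_+$. In both cases the core consists of the same three faces of the embedded map: the plaquette $p$, the plaquette $q$, and a single blue face sent to the lattice edge $e$. The two cases differ only in how these three faces are glued together: in the $-$ case, $p$ plays the role of the external plaquette (contributing the three edges of $\nu_1$), the blue face $B_-$ has degree $t+b$ and touches the external boundary at two vertices, and $q$ is an internal plaquette glued to $B_-$ along the edge $\hat{\mathbf{e}}$; in the $+$ case the roles of $p$ and $q$ are interchanged, and the blue face $B_+$ has degree $t+b+2$, with two of its edges $\mathbf{e},\mathbf{e}'$ now lying on the external boundary.

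Second I would verify that after applying $\un_\pm$, the resulting pair $(U^t, U^b)$ has identical boundary data in the two cases. Specifically, $U^b$ is a planar non-separable embedded map with boundary $\pi$ concatenated with $b$ alternating copies of $e$ and $e^{-1}$ (coming from the bottom of the blue face), and $U^t$ is a planar non-separable embedded map whose boundary of length $t+5$ decomposes, up to a cyclic rotation, as three edges of $p$ (those other than $e^{-1}$), followed by $t-1$ alternating copies of $e/e^{-1}$, followed by three edges of $q$ (those other than $e$). Furthermore, the induced plaquette assignment on $U$ equals $K \sm \{p,q\}$ in both cases, since the three excised faces consist of a blue face (which contributes nothing to the plaquette count) together with the plaquettes $p$ and $q$.

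Given this matching, $\Phi$ is defined by applying $\un_-$ to $M$ to obtain $U = (U^t, U^b)$ and then re-gluing a ``$+$-type core'' (a blue face $B_+$ of degree $t+b+2$, the internal plaquette $p$ attached via $\hat{\mathbf{e}}^{-1}$, and the external plaquette $q$), with the inverse defined symmetrically. The main obstacle is verifying that $\Phi(M)$ indeed lies in $\cM^{\text{bad}}_{\text{u},+}(t,b,q,p)$: the ``bad'' condition is automatic since $B_+$ itself provides a connecting $e$-blue face between $\mathbf{e}$ and $\mathbf{e}'$; for non-separability and the ``u'' uniqueness/isolation condition, one argues that any putative $e$-enclosure loop or extra $e$-connection involving $B_+$ in $\Phi(M)$ would descend, via the same regluing recipe, to one involving $B_-$ in $M$, contradicting $M \in \cM^{\text{bad}}_{\text{u},-}$. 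This symmetry of obstructions between the two cores is what ultimately makes the two sets of unknown regions coincide.
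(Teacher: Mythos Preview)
Your approach is essentially the same as the paper's: you build the map that swaps the ``$-$-core'' for a ``$+$-core'' while keeping the unknown pair $(U^t,U^b)$ fixed, and you correctly identify the boundary data of $U^t$ and $U^b$. However, there is a genuine gap in your treatment of well-definedness, and it is precisely the point where the paper has to work hardest.

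The two directions are \emph{not} symmetric. In the $+$-core, the blue face $B_+$ is attached to the rest of the map via two \emph{edges} $\mathbf{e},\mathbf{e}'$, so $B_+$ is the only face meeting both $U^t$ and $U^b$; any enclosure loop not contained in a single region must therefore pass through $B_+$, and your ``descends to $B_-$'' argument handles this. But in the $-$-core, $B_-$ is attached via two \emph{identified vertices} (the start and end of $\pi$). When you glue the $-$-core onto $U$ to form $\Phi^{-1}(M_+)$, a new kind of enclosure loop can appear: two arcs of blue faces, one inside $U^t$ and one inside $U^b$, each joining the two identified vertices, together forming a closed loop that does not touch $B_-$ at all. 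Such a loop does \emph{not} descend to anything in $M_+$, because in $M_+$ those two vertices are separated by the edges $\mathbf{e},\mathbf{e}'$. So your ``symmetry of obstructions'' claim fails here.

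The paper resolves this (its Case~3) with one extra observation: the two identified vertices are sent by the lattice embedding to the two endpoints of the lattice edge $e$, so any arc of blue faces joining them must be sent to $e$ itself. Hence a Case~3 enclosure loop would furnish, back in $M_+$, an $e$-blue face touching $B_+$ along one of its boundary vertices, contradicting the ``$\mathrm{u}$'' isolation hypothesis on $B_+$. You need this lattice-embedding argument; it does not follow from the purely topological symmetry you invoke.
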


As a consequence, we are allowed to introduce the new simplified notation
\begin{equation*}
\cU^{\text{bad}}_{\text{u}}(t,b,p,q):=
\cU^{\text{bad}}_{\text{u},-}(t,b,p,q)
=
\cU^{\text{bad}}_{\text{u},+}(t,b,q,p).
\end{equation*}
\begin{lem}\label{lem:cance2}
Fix $t,b\geq 1$, $p\in\cP_{\ZZ^d}(e^{-1},K)$ and $q\in\cP_{\ZZ^d}(e,K)$.
Fix $U\in \cU^{\text{bad}}_{\text{u}}(t,b,p,q)$. Then
\begin{equation*}
	\sum_{M\in\VM(\un^{-1}_-(U),B_{-})}\upbeta^{\area(M)}w_{\infty}(M)
	=
	\sum_{M\in\VM(\un^{-1}_+(U),B_{+})}\upbeta^{\area(M)}w_{\infty}(M).
\end{equation*}
\end{lem}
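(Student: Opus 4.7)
The plan is to apply the Master cancellation Theorem~\ref{thm:master-sum-blue-faces} to each side of the claimed identity and reduce the lemma to an equivalence of ``invalid pinching'' conditions. Two bookkeeping facts are key: (a) $\area(\un^{-1}_-(U)) = \area(\un^{-1}_+(U)) = \area(K)-1$, since the underlying plaquette assignments are $K\sm\{p\}$ and $K\sm\{q\}$ respectively, so the common factor $\upbeta^{\area(K)-1}$ can be pulled outside each sum; and (b) by the construction of $\un_\pm$ given just before Lemma~\ref{lem:cance1}, the only blue face altered by $\un_\pm$ is $B_\pm$ itself, so $\BF(\un^{-1}_-(U))\sm\{B_-\} = \BF(\un^{-1}_+(U))\sm\{B_+\} = \BF(U)$, where $\BF(U)$ denotes the set of blue faces of the common unknown region $U = (U^t, U^b)$. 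Applying Theorem~\ref{thm:master-sum-blue-faces} to each side therefore yields
\[
\sum_{M\in\VM(\un^{-1}_\pm(U),B_\pm)}\upbeta^{\area(M)}w_\infty(M)=\begin{cases}\upbeta^{\area(K)-1}\displaystyle\prod_{f\in\BF(U)}w_{\deg(f)/2}, & \text{if all pinchings of $B_\pm$ are valid},\\ 0, & \text{otherwise},\end{cases}
\]
and the lemma reduces to showing the equivalence, which I regard as the main obstacle: $B_-$ admits an invalid pinching in $\un^{-1}_-(U)$ if and only if $B_+$ does in $\un^{-1}_+(U)$.

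To prove this equivalence, I would set up a natural correspondence between the vertices of $B_-$ and those of $B_+$. The $t-1$ intermediate top vertices (respectively the $b-1$ intermediate bottom vertices) of $B_-$ correspond bijectively, and in the same partite class, with those of $B_+$, since these are precisely the vertices of $U^t$ (respectively $U^b$) shared with $B_\pm$. The two corners $v_0,v_t$ of $B_-$ correspond to pairs of corners of $B_+$: $v_0$ (sent to the lattice endpoint of $e$) corresponds to $\{u_0^t,u_t^b\}$, the two corners of $B_+$ in the same partite class and mapped to the same lattice vertex, and similarly $v_t$ corresponds to $\{u_0^b,u_t^t\}$. Geometrically, passing from $\un^{-1}_+(U)$ to $\un^{-1}_-(U)$ amounts to identifying the two corners in each such pair, which has the effect of contracting the boundary edges $\mathbf{e}$ and $\mathbf{e}'$.

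Under this correspondence, the arcs of blue faces through $U$ -- which are literally the same collection of blue faces in both $\un^{-1}_-(U)$ and $\un^{-1}_+(U)$ -- translate arcs joining vertices of $B_-$ in the same partite class to arcs joining corresponding vertices of $B_+$ in the same partite class, and vice versa. The key technical observation is that no arc in $U$ can join $u_0^t$ to $u_t^b$ (or $u_0^b$ to $u_t^t$): such an arc would, after identification in $\un^{-1}_-(U)$, form an enclosure loop and render $\un^{-1}_-(U)$ separable, contradicting the fact that $\un^{-1}_-(U)\in\cM^{\text{bad}}_{\text{u},-}\subset\npe^{\text{bad}}$, whose non-separability is guaranteed by Lemma~\ref{lem:cance1}. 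Hence every invalid pinching of $B_-$ corresponds to an invalid pinching of $B_+$ and vice versa, establishing the equivalence and completing the plan.
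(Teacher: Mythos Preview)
Your proposal is correct and follows essentially the same approach as the paper's proof: pull out the common $\upbeta$-factor, apply Theorem~\ref{thm:master-sum-blue-faces} to each side, note that the products $\prod_{f\in\BF\setminus\{B_\pm\}}w_{\deg(f)/2}$ agree because the blue faces outside $B_\pm$ are literally the blue faces of $U$, and reduce to the equivalence ``$B_-$ has an invalid pinching iff $B_+$ does.'' The paper dispatches this last equivalence in one line by referring back to the explicit constructions $F$ and $G$ in the proof of Lemma~\ref{lem:cance1} (and Figure~\ref{fig-bad-maps-8}); you instead spell out the vertex correspondence and arc-transfer argument, which is a legitimate way to unpack that reference. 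One small remark: your ``key technical observation'' that no arc can join $u_0^t$ to $u_t^b$ (or $u_0^b$ to $u_t^t$) is actually immediate from the fact that these two corners lie in different connected components $U^t$ and $U^b$ of $U$, so any arc through the blue faces of $U$ cannot connect them---the non-separability argument you give is valid but more than is needed.
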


We can now prove the Master surface cancellation lemma~\ref{lemma:master-cancellation}

\begin{proof}[Proof of the Master surface cancellation lemma~\ref{lemma:master-cancellation} assuming Lemmas~\ref{lem:cance1}~and~\ref{lem:cance2}]
Fix a loop $\ell$ such that $\ell=\mathbf{e} \, \pi$, where $\mathbf{e}$ is a copy of the lattice edge $e$. Also fix a plaquette assignment $K$ such that $(\ell,K)$ is balanced.

It is enough to prove that:
\begin{align}\label{eq:cancellation-one-2}
	\sum_{M\in \cM^{\text{bad}}_-}\upbeta^{\area(M)}w_{\infty}(M)
	-
	\sum_{M\in \cM^{\text{bad}}_+}\upbeta^{\area(M)}w_{\infty}(M)=0.
\end{align}
Using the partition in \eqref{eq:partitions}, we can rewrite the left-hand side of \eqref{eq:cancellation-one-2} as
\begin{multline*}
	\sum_{\substack{p\in\cP_{\ZZ^d}(e^{-1},K)\\ q\in\cP_{\ZZ^d}(e,K)}}
	\sum_{t,b\geq 1}
	\sum_{U\in\cU^{\text{bad}}_{\text{u},-}(t,b,p,q)} 
	\sum_{M\in \VM(\un^{-1}_-(U),B_{-})}
	\upbeta^{\area(M)}w_{\infty}(M)\\
	-
	\sum_{\substack{p\in\cP_{\ZZ^d}(e^{-1},K)\\ q\in\cP_{\ZZ^d}(e,K)}}
	\sum_{t,b\geq 1}
	\sum_{U\in\cU^{\text{bad}}_{\text{u},+}(t,b,q,p)} 
	\sum_{M\in\VM(\un^{-1}_+(U),B_{+})}\upbeta^{\area(M)}w_{\infty}(M).
\end{multline*}
The fact that the above sum is zero is now a simple consequence of Lemmas~\ref{lem:cance1}~and~\ref{lem:cance2}.
\end{proof}

\subsection{Proofs of the two remaining lemmas}\label{sect:lemmas-one}

We give the proof of Lemma~\ref{lem:cance1}.

\begin{proof}[Proof of Lemma~\ref{lem:cance1}]
Fix $t,b\geq 1$ (both odd) such that $t+b \leq 2n_e(\ell,K)-2$, $p\in\cP_{\ZZ^d}(e^{-1},K)$ and $q\in\cP_{\ZZ^d}(e,K)$.
We want to show that
\begin{equation*}
	\cU^{\text{bad}}_{\text{u},-}(t,b,p,q)
	=
	\cU^{\text{bad}}_{\text{u},+}(t,b,q,p),
\end{equation*}
where we recall that (recall also Figure~\ref{fig-bad-maps-4}),
\begin{equation*}
	\cU^{\text{bad}}_{\text{u},-}(t,b,p,q)=\un_-\left(
	\cM^{\text{bad}}_{\text{u},-}(t,b,p,q)
	\right), \quad\cU^{\text{bad}}_{\text{u},+}(t,b,q,p)=\un_+\left(
	\cM^{\text{bad}}_{\text{u},+}(t,b,q,p)
	\right).
\end{equation*}

To show the desired equality, we show both inclusions.
Fix $U=(U^t,U^b)\in\cU^{\text{bad}}_{\text{u},+}(t,b,q,p)$ such that $U=\un_{+}(M_+)$ for some $M_+\in \cM^{\text{bad}}_{\text{u},+}(t,b,q,p)$ that has internal yellow faces $f_r(M)=\textbf{q}=\mathbf{e}'\, \nu_2$ and $f_{\hat{\textbf{e}}^{-1}}(M)=\textbf{p}=\mathbf{e}^{-1} \, \nu_1$ mapped to $q$ and $p$, respectively. Then (c.f.\ the middle picture in Figure~\ref{fig-bad-maps-6})
\begin{itemize}
	\item $U^t$ has boundary $\nu_1 \, \nu_2\, e^{-1}\,e\,\dots\,e^{-1}\,e$, where $e^{-1}\,e$ is repeated $(t-1)/2$ times;
	\item $U^b$ has boundary $\pi \, e^{-1}\,e\,\dots\,e^{-1}\,e\,e^{-1}$, where $e^{-1}\,e$ is repeated $(b-1)/2$ times;
	\item $U^t$ and $U^b$ are connected and planar.
\end{itemize}

We want to show that $U\in\cU^{\text{bad}}_{\text{u},-}(t,b,p,q)$, that is, that there exists $M_-\in \cM^{\text{bad}}_{\text{u},-}(t,b,p,q)$ such that $\un_{-}(M_-)=U$. 
We first construct a map $F:\cU^{\text{bad}}_{\text{u},+}(t,b,q,p)\to\cM^{\text{bad}}_{\text{u},-}(t,b,p,q)$ as follows (c.f.\ Figure~\ref{fig-bad-maps-6}):
\begin{enumerate}
	\item Start with $U$ and add an edge $\hat{\textbf{e}}$ (a copy of the lattice edge $e$) on the exterior of $U^t$ between the starting and ending vertices of $\nu_2$. This creates a new face, declare it to be an internal yellow face;
	\item Identify the vertex shared by $\nu_1$ and $\nu_2$ with the starting vertex of $\pi$; 
	\item Identify the starting vertex of $\nu_1$ with the ending vertex of $\pi$;
	\item The last two steps create two new faces, declare the one containing $\hat{\textbf{e}}$ on the boundary to be a blue face and call it $B_{-}$. Declare the other face to be an external yellow face.
\end{enumerate} 

\begin{figure}[ht!]
	\begin{center}
		\includegraphics[width=.99\textwidth]{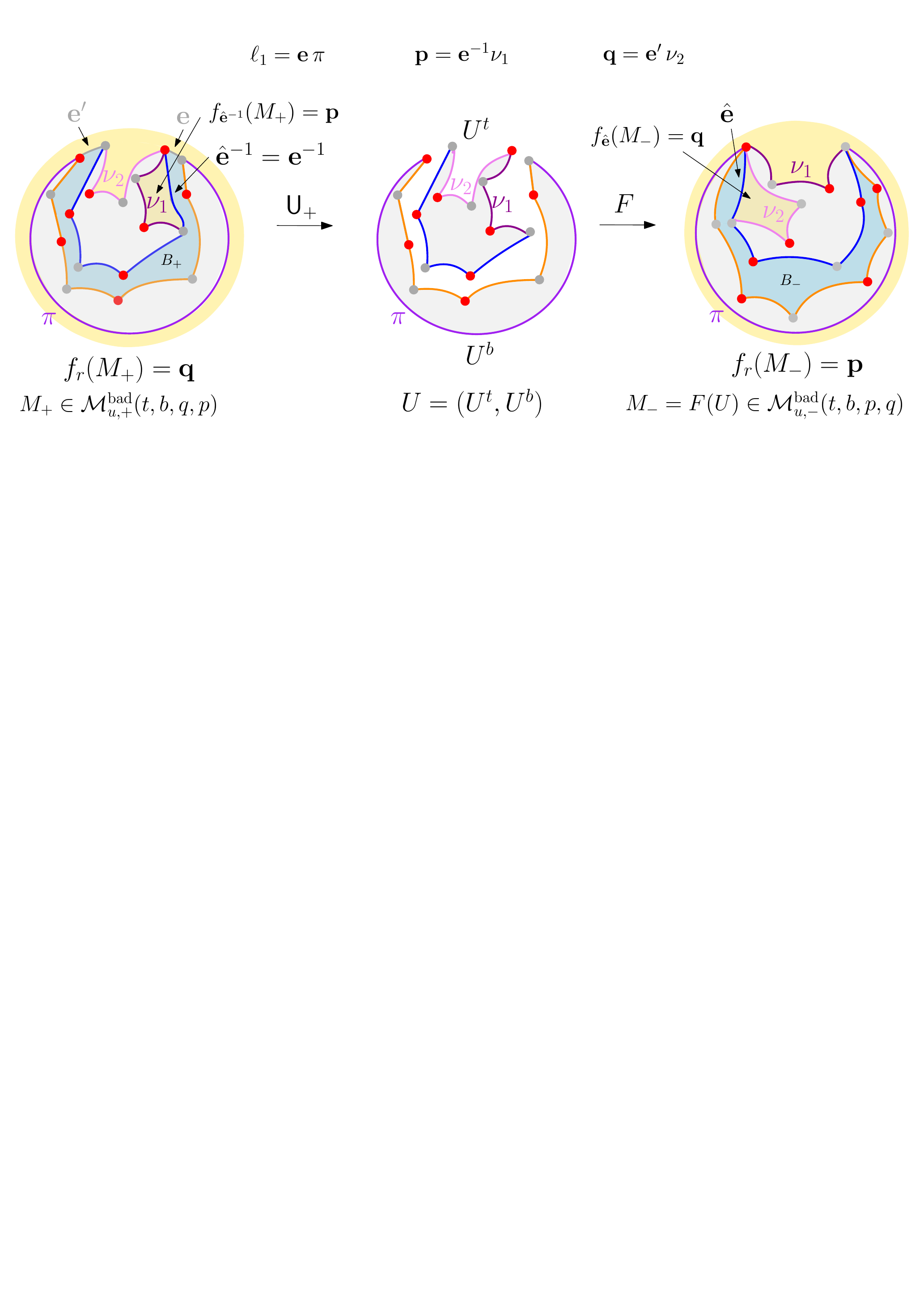}  
		\caption{\label{fig-bad-maps-6} A schema showing how to construct $M_-=F(U)$. 
		}
	\end{center}
	\vspace{-3ex}
\end{figure}

Assume for a moment that $M_-:=F(U)\in \cM^{\text{bad}}_{\text{u},-}(t,b,p,q)$. Recalling the description of the function $\un_-$ from the discussion below \eqref{eq:ineedaref}, we note that $F$ acts as $\un_{-}^{-1}$. Then, we immediately have that $\un_{-}(M_-)=U$. 

Hence we are only left to check that $M_-\in \cM^{\text{bad}}_{\text{u},-}(t,b,p,q)$. From the constructions above and the properties of the map $U$ described above, the only non-trivial property of $M_-$ that we need to check to be sure that $M_-\in \cM^{\text{bad}}_{\text{u},-}(t,b,p,q)$ is that $M_-$ is non-separable. Indeed, all the other necessary properties are simple to check.

Suppose, for contradiction, that $M_-$ is separable. Let $\EL$ denote the faces on an enclosure loop in $M_-$ corresponding to a lattice edge $e'$. Note that there are three different cases:
\begin{enumerate}
	\item $\EL$ does not include $B_{-}$ and is contained in one of the two regions of the embedded map $M_-$ corresponding to $U^t$ or $U^b$. See the left-hand side of Figure~\ref{fig-bad-maps-7}.
	\item $\EL$ includes $B_{-}$ and so is equal to $B_{-}$ plus a connected sequence of blue faces connecting two vertices the top or bottom boundary of $B_{-}$. On this case it must be that $e'=e$. See the middle picture of Figure~\ref{fig-bad-maps-7}.
	\item $\EL$ does not includes $B_{-}$ and is not contained in one of the two regions of the embedded map $M_-$ corresponding to $U^t$ or $U^b$. If this is the case then $\EL$ is formed by two connected sequence of blue faces, both connecting the starting and ending vertex of $\pi$, one included in the regions of the embedded map $M_-$ corresponding to $U^t$ and the other one included in the regions of the embedded map $M_-$ corresponding to $U^b$. See the right-hand side of Figure~\ref{fig-bad-maps-7}.
\end{enumerate}

\begin{figure}[ht!]
	\begin{center}
		\includegraphics[width=.99\textwidth]{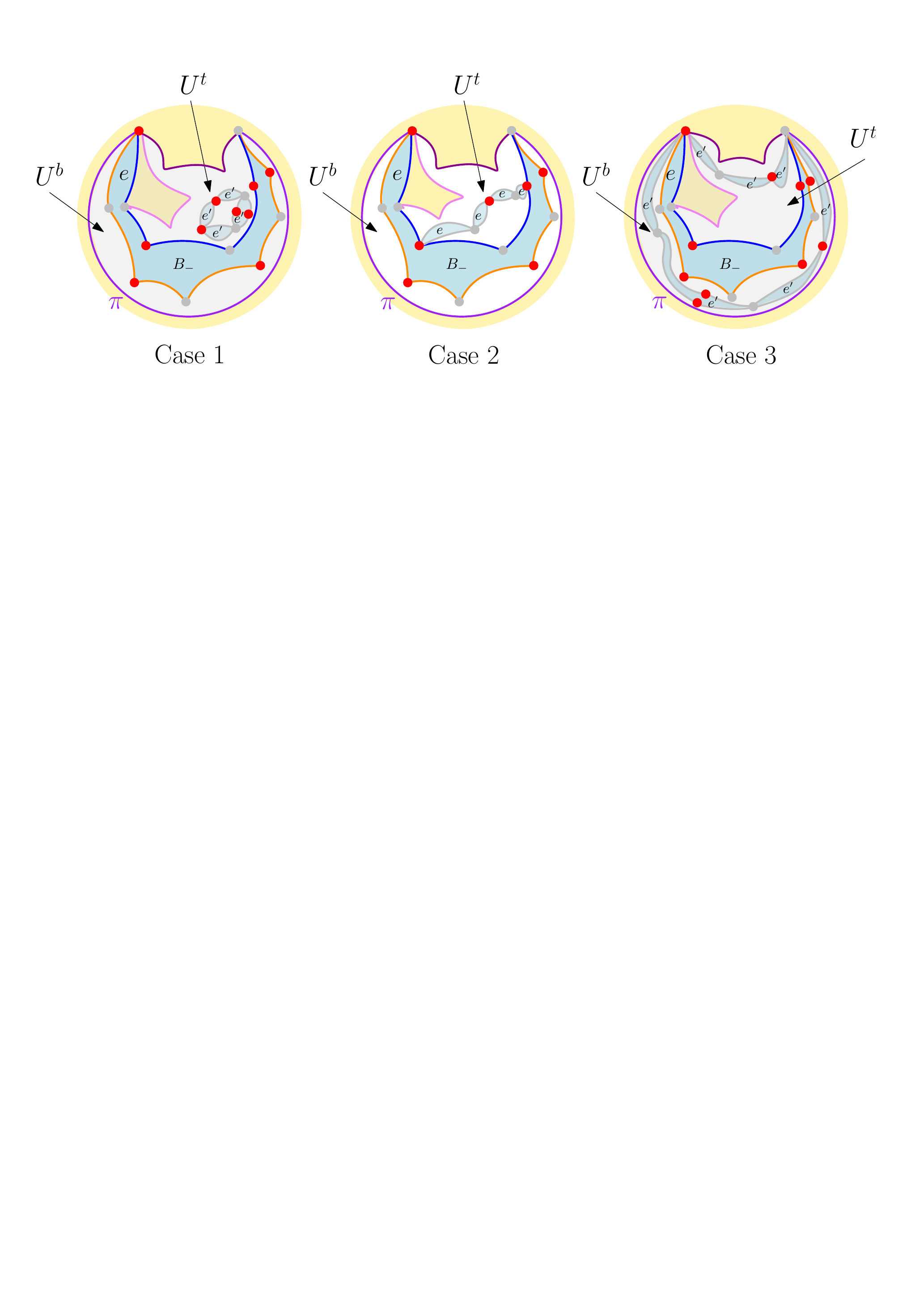}  
		\caption{\label{fig-bad-maps-7} 
			Three potential enclosure loops that might prevent $M_-$ from being non-separable.
		}
	\end{center}
	\vspace{-3ex}
\end{figure}

For Cases 1 and 2, it is straightforward to derive a contradiction. Indeed, in both cases, by examining the embedded map $U_+^{-1}(F^{-1}(M_-))=M_+\in \cM^{\text{bad}}_{\text{u},+}(t,b,q,p)$, one finds that $M_+$ would have an enclosure loop. This leads to a contradiction.

Case 3 is more subtle. Indeed, by examining the embedded map $\un_+^{-1}(F^{-1}(M_-))=M_+\in \cM^{\text{bad}}_{\text{u},+}(t,b,q,p)$, one cannot deduce that $M_+$ would have an enclosure loop (since the vertices at the beginning and end of $\pi$ in $M_-$ are split in $M_+$). To solve this issue, one has to note the following fact.

\medskip

\noindent\textbf{Claim}. In Case 3, the enclosure loop $\EL$ must correspond to the same  lattice edge $e$ as the one of $B_{-}$. 

\medskip

Note that it is simple to get a new contradiction from the claim which follows by noting that the vertices at the beginning and end of $\pi$ are sent to the lattice vertices at the beginning and end of $e$. Therefore $e'=e$.

\bigskip

We now proceed with the other inclusion (the proof is quite similar, but simpler). Fix $U=(U^t,U^b)\in\cU^{\text{bad}}_{\text{u},-}(t,b,p,q)$ such that $U=\un_{-}(M)$ for some $M_-\in \cM^{\text{bad}}_{\text{u},-}(t,b,p,q)$ that has internal yellow faces $f_r(M)=\textbf{p}=\mathbf{e}^{-1} \, \nu_1$ and $f_{\hat{\textbf{e}}}(M)=\textbf{q}=\mathbf{e}'\, \nu_2$ mapped to $p$ and $q$, respectively. Then it is simple to realize that $U^t$ and $U^b$ have the same properties as in the previous inclusion.

We want to show that $U\in\cU^{\text{bad}}_{\text{u},+}(t,b,p,q)$, that is, that there exists $M_+\in \cM^{\text{bad}}_{\text{u},+}(t,b,q,p)$ such that $\un_{+}(M_+)=U$. 
We first construct a map $G: \cU^{\text{bad}}_{\text{u},-}(t,b,p,q) \to \cM^{\text{bad}}_{\text{u},+}(t,b,q,p)$ as follows (c.f.\ Figure~\ref{fig-bad-maps-5}):
\begin{enumerate}
	\item Start with $U$ and add an edge $\hat{\textbf{e}}^{-1}$ (a copy of the lattice edge $e^{-1}$) on the exterior of $U^t$ between the starting and ending vertices of $\nu_1$. This creates a new face, declare it to be a yellow face. 
	\item Add an edge $\mathbf{e}'$ (a copy of the lattice edge $e$) from the starting vertex of $\pi$ to the final vertex of $\nu_2$.
	\item Add an edge $\mathbf{e}$ (a copy of the lattice edge $e$) from the ending vertex of $\pi$ to the vertex shared by $\nu_1$ and $\nu_2$. 
	\item The last two steps creates two new faces, declare the one containing $\hat{\textbf{e}}^{-1}$ on the boundary to be a blue face and call it $B_{+}$. Declare the other face to be a external yellow face.
\end{enumerate} 

\begin{figure}[ht!]
	\begin{center}
		\includegraphics[width=.99\textwidth]{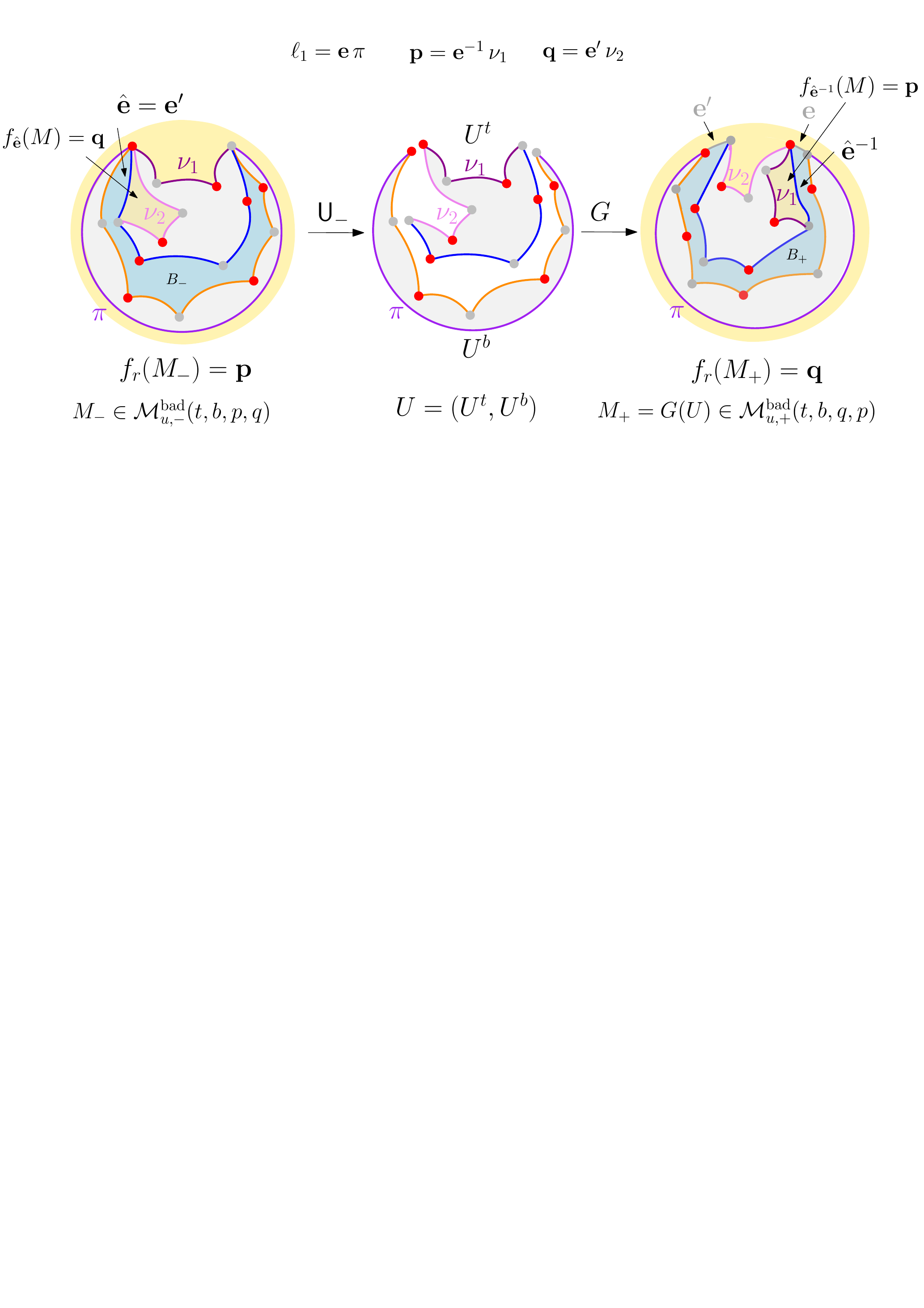}  
		\caption{\label{fig-bad-maps-5} A schema showing how to construct $M_+=G(U)$.
		}
	\end{center}
	\vspace{-3ex}
\end{figure}

Assume for a moment that $M_+:= G(U)\in \cM^{\text{bad}}_{\text{u},+}(t,b,q,p)$. Recalling the description of the function $\un_+$ from the discussion below \eqref{eq:ineedaref}, we note that $G$ acts as $\un_{+}^{-1}$. Then, we immediately have that $\un_{+}(M_+)=U$. 

Hence we are only left to check that $M_+\in \cM^{\text{bad}}_{\text{u},+}(t,b,q,p)$. This follows using the same proof used for the previous inclusion (with the advantage that the third type of enclosure loops no longer need to be considered). This ends the proof of the lemma.
\end{proof}

We finally give the proof of Lemma~\ref{lem:cance2}. 

\begin{proof}
Fix $t,b\geq 1$ such that $t+b\leq 2n_e(\ell,K)-2$, $p\in\cP_{\ZZ^d}(e^{-1},K)$ and $q\in\cP_{\ZZ^d}(e,K)$.
Fix $U\in \cU^{\text{bad}}_{\text{u}}(t,b,p,q)$, where we recall that
\begin{equation*}
	\cU^{\text{bad}}_{\text{u}}(t,b,p,q):=
	\cU^{\text{bad}}_{\text{u},-}(t,b,p,q)
	=
	\cU^{\text{bad}}_{\text{u},+}(t,b,q,p).
\end{equation*}
To show that
\begin{equation}\label{eq:ewouewbf}
	\sum_{M\in\VM(\un^{-1}_-(U),B_{-})}\upbeta^{\area(M)}w_{\infty}(M)
	=
	\sum_{M\in\VM(\un^{-1}_+(U),B_{+})}\upbeta^{\area(M)}w_{\infty}(M),
\end{equation}
it is enough to note that thanks to Theorem~\ref{thm:master-sum-blue-faces},
\begin{equation*}
	\sum_{M\in \VM(\un^{-1}_{\pm}(U),B_{\pm})}w_{\infty}(M)=
	\begin{cases}
		C_{\pm}, \,&\text{if all the pinchings of $B_{\pm}$ are valid},\\
		0, \,&\text{if $B_{\pm}$ has at least one invalid pinching},
	\end{cases}
\end{equation*}
where $C_{\pm}:=\prod_{f\in \BF(\un^{-1}_{\pm}(U))\sm \{B_{\pm}\}}w_{\deg(f)/2}$. Indeed, \eqref{eq:ewouewbf} immediately follows from the last displayed equation by noting that, thanks to the descriptions of the maps $\un^{-1}_{\pm}$ given in the above proof of Lemma~\ref{lem:cance1} (see also Figure~\ref{fig-bad-maps-8}), all the pinchings of $B_{-}$ are valid if and only if all the pinchings of $B_{+}$ are valid, and moreover, $C_{-}=C_{+}$ since the blue faces in $\BF(\un^{-1}_{-}(U))\sm \{B_{-}\}$ are identical to the blue faces in $\BF(\un^{-1}_{+}(U))\sm \{B_{+}\}$.
\end{proof}

\begin{figure}[ht!]
\begin{center}
	\includegraphics[width=.99\textwidth]{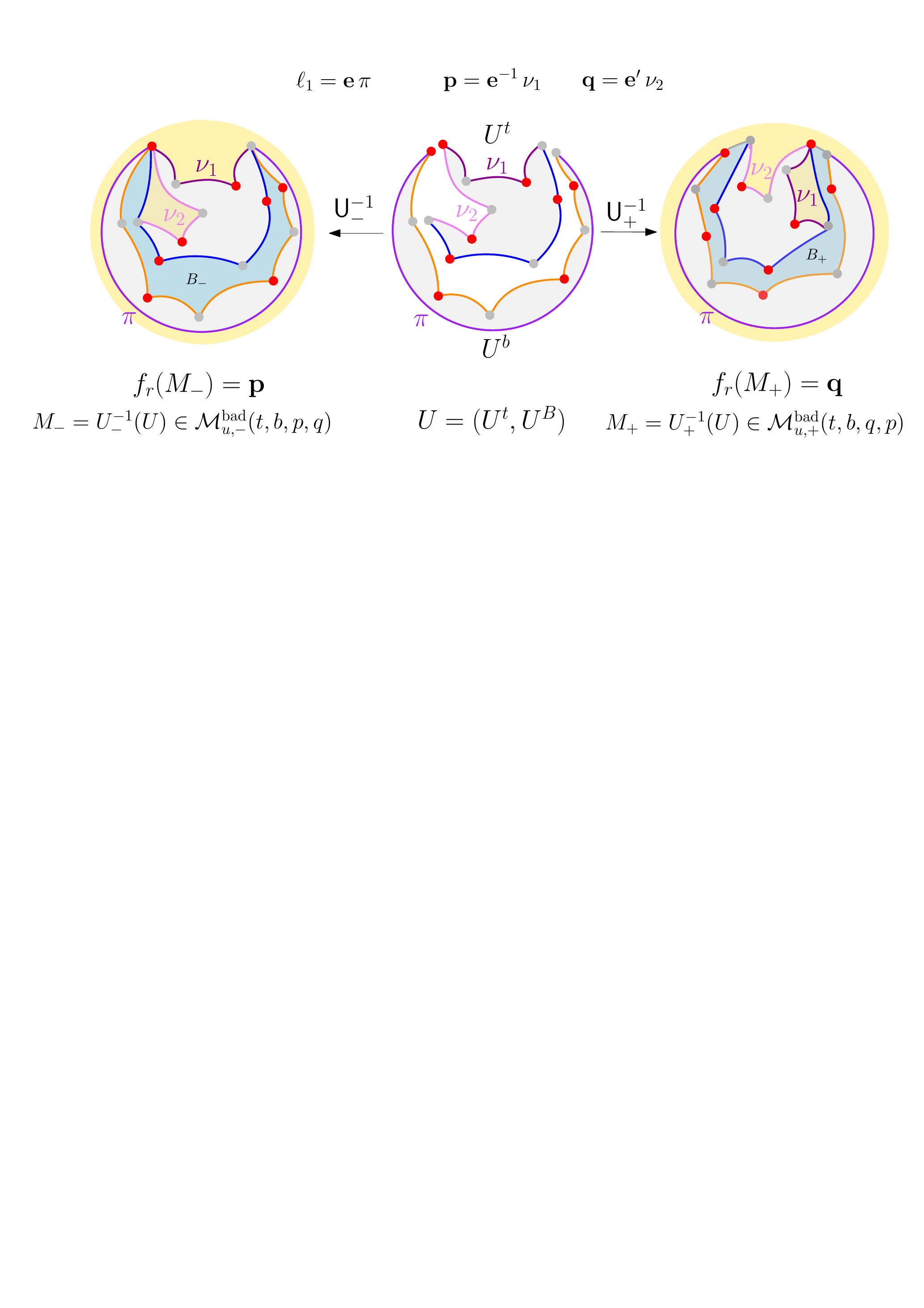}  
	\caption{\label{fig-bad-maps-8} 
		In the middle we fixed $U\in \cU^{\text{bad}}_{\text{u}}(t,b,p,q)$. On the left we showed the embedded map $M_-:=\un^{-1}_-(U)\in \cM^{\text{bad}}_{\text{u},-}(t,b,p,q)$ and on the right we showed the map $M_+:=\un^{-1}_+(U)\in \cM^{\text{bad}}_{\text{u},+}(t,b,q,p)$.
	}
\end{center}
\vspace{-3ex}
\end{figure}

\appendix

\section{Large-\texorpdfstring{$N$}{N} limit surface sum ansatz}\label{subsec: large N surface sum guess}

Recall that the proof of Theorem~\ref{thm: surface sum representation in 't hooft limit} in Section~\ref{sec: Large N Limit} does not provide any intuition for guessing the expression for $\phi(s) = \prod_{i=1}^n \phi(\ell_i)$ given in the statement of Theorem~\ref{thm: surface sum representation in 't hooft limit}.  In this appendix, we offer insight and intuition as to why this expression should be the correct ansatz. We acknowledge that several deductions and claims in this appendix are not rigorous; their primary purpose is to provide an intuitive justification for our ansatz. Additionally, none of the results in this appendix are used in any other part of the paper.
As a result, readers who are not interested in our intuitive explanation can simply skip this entire appendix.

\subsection{Limiting heuristic}\label{subsubsec: Limiting Heuristic}

First let's recall the heuristic given in \cite[Section 3]{cao2023random} for the large-$N$ limit of $\phi_N(s)=\phi_{\Lambda_N,N,\upbeta}(s)$. While the correct expression for $\phi(s)$ given in Theorem~\ref{thm: surface sum representation in 't hooft limit} is slightly different than the proposed one in \cite{cao2023random}, from their heuristic one can correctly identify the factorization $\phi(s)=\prod_{i=1}^n\phi(\ell_i)$, the form of the weights $w_{\infty}(M)$ associated with each map $M$, and illustrate why $\phi(\ell)$ only involves planar embedded maps with boundary $\ell$ (but their heuristic cannot predict that one has to further restrict to non-separable planar embedded maps). 

\medskip

In this heuristic, we assume that $s =\{\ell_1,\dots,\ell_n\}$. The idea is to start with the formula (from Theorem~\ref{thm: finite N surface sum})
\begin{equation}\label{eq:starting-point}
\lim_{N\to\infty}\phi_{\Lambda_N,N,\upbeta}(s) = \lim_{N\to\infty} Z_{\Lambda_N,N,\upbeta}^{-1}\sum_{K:\cP_{\Lambda_N}\to \NN}\sum_{M\in \cM(s,K)}\upbeta^{\area(M)}
\cdot
w_N(M)
\cdot
N^{\gec(M)},
\end{equation}
and simply to interchange sums and limits as desired and take the limit of the Weingarten weights separately from the $N^{\gec(M)}$ term, allowing us to factor out a copy of the partition function. To see this, let's first deal with the Weingarten weights. Recall from \eqref{eq:w-wei} that $	w_N(M)=\prod_{e\in E_{\Lambda}^+(s,K)} \nWg_N(\mu_e(M))$ is defined in terms of the normalized Weingarten function $\nWg_N$ introduced in \eqref{eq:wein-fct}. The nice thing about the latter function is that it has a nice formula in the $N\to\infty$ limit. That is (see e.g. \cite[Corollary 2.6]{collins_integration_2006}),
\begin{align*}
\overline{Wg}_N(\sigma) = \mob(\sigma) + O(N^{-2}) \quad\text{ as $N\to\infty$,}
\end{align*}where if $\sigma$ decomposes into cycles of length $C_1,\dots,C_k$, then \begin{align*}
\mob(\sigma) =\prod_{i=1}^k(-1)^{C_i-1}\cat(C_i-1).
\end{align*}
Also, recalling \eqref{eq:w-wei-inf}, we have that
\begin{align*}
w_{\infty}(M)=\prod_{e\in E_{\Lambda}^+(s,K)}\mob(\mu_e(M)).
\end{align*}
Thus, replacing the $w_N(M)$ terms in \eqref{eq:starting-point} by $w_{\infty}(M)$ we get 
\begin{align*}
\phi_{\text{ans}}(s) =\lim_{N\to\infty} Z_{\Lambda_N, N, \upbeta}^{-1}\sum_{K:\cP_{\Lambda_N}\to \NN}\sum_{M\in \cM(s,K)}\upbeta^{\area(M)}w_{\infty}(M)N^{\gec(M)}.
\end{align*}
(We will use $\phi_{\text{ans}}(s)$ as our ansatz for the large-$N$ limit throughout this section.) Next, since $w_{\infty}(M)$ is defined in \eqref{eq:w-wei-inf} as a product over each blue face of $M$, we can obtain the following factorization: if $M$ splits into $k$ connected components $\{M_i\}_{1\leq i\leq k}$, then 
\begin{align*}
w_{\infty}(M) = \prod_{i=1}^kw_{\infty}(M_i).
\end{align*}
This fact allows us to factor out a copy of the partition function from $\phi_{\text{ans}}(s)$ as we are going to explain. 
Any embedded map $M\in \cM(s,K)$ can be split into the union of two embedded maps:
\begin{itemize}
\item one embedded map $M_b$ where every component of $M_b$ has at least one boundary which is sent to one loop in $s$;
\item another embedded map $M_\emptyset$ where every component of $M_\emptyset$ has no boundary.
\end{itemize}
Hence, denoting by $\cM_b(s,K)$ the collection of embedded maps  in $\cM(s,K)$ where every component has at least one boundary sent to one loop in $s$, and by $\cM_\emptyset(\emptyset,K)$ the collection of embedded maps  in $\cM(s,K)$ having all the components with no boundary, we get that (using the same notation as in Theorem~\ref{thm: fixed K 't Hooft master loop equation for surface sum}),
\begin{equation*}
\cM(s,K)=\bigsqcup_{K_b+K_\emptyset=K}\cM_b(s,K_b) \times \cM_\emptyset(\emptyset,K_\emptyset).
\end{equation*}
Setting $W(M) := \upbeta^{\area(M)}w_{\infty}(M)$, this gives us that we can rewrite $\phi_{\text{ans}}(s)$ as
\begin{align*}
&Z_{\Lambda_N, N, \upbeta}^{-1}\sum_{K:\cP_{\Lambda_N}\to \NN} \sum_{K_b+K_\emptyset=K}\sum_{M_b\in \cM_b(s,K_b)}\sum_{M_\emptyset\in \cM(\emptyset,K_\emptyset)}W(M_b)W(M_\emptyset)N^{\gec(M_b)}N^{\gec(M_\emptyset)}\\
=&Z_{\Lambda_N, N, \upbeta}^{-1}\sum_{K_b:\cP_{\Lambda_N}\to \NN}\sum_{K_\emptyset:\cP_{\Lambda_N}\to \NN}\sum_{M_b\in \cM_b(s,K_b)}\sum_{M_\emptyset\in \cM(\emptyset,K_\emptyset)}W(M_b)W(M_\emptyset)N^{\gec(M_b)}N^{\gec(M_\emptyset)} \\
=&\sum_{K:\cP_{\Lambda_N}\to \NN}\sum_{M\in \cM_b(s,K_b)}\upbeta^{\area(M)}\cdot w_{\infty}(M) \cdot N^{\gec(M)},
\end{align*}
where to get the last equality we used that 
\[Z_{\Lambda_N,N, \upbeta} = \sum_{K:\cP_{\Lambda_N}\to \NN} \sum_{M\in \cM(\emptyset,K)} W(M)N^{\gec(M)}.\] 
Therefore, since we have been able to factor out a copy of the partition function, in the large-$N$ limit we should only consider embedded maps where every connected component has a boundary.

Finally, we need to understand the limiting behavior of the factor $N^{\gec(M)}$ when $M\in \cM_b(s,K_b)$. To do this, suppose that $M$ consists of $k$ connected components which we denote by $\{M_i\}_{i=1}^k$ and recall that every component has at least one boundary sent to one loop in $s$. Then recalling that $\gec(M) = \sum_{i=1}^k\left[2-2g(M_i)-b(M_i)\right]\leq 0$, we get that
\[\lim_{N\to\infty}N^{\gec(M)}= \lim_{N\to\infty}N^{\sum_{i=1}^k\left[2-2g(M_i)-2b(M_i)\right]}\in \{0,1\}.\] 
Note the only possible way to get the zero exponent is\footnote{Note that also embedded maps $M$ having a single connected component, genus $1$ and no boundary (i.e.\ tori) have $\gec(M)=0$, but this maps have been previously canceled when we factored out a copy of the partition function.} if  
\begin{equation}
2-2g(M_i)-2b(M_i)=0,\qquad\text{for each connected component }M_i.
\end{equation} 
That is, if the embedded map $M$ has $n$ connected components and each component has the topology of the disk with its boundary sent by the embedding to a single distinct loop of $s=\{\ell_1,\dots,\ell_n\}$. This observation, combined with the fact that the weights factor along components, gives us the expression
\begin{align*}
\phi_{\text{ans}}(s) = \prod_{i=1}^n\phi_{\text{ans}}(\ell_i), \quad\text{ with }\quad \phi_{\text{ans}}(\ell_i)= \sum_{K:\cP_{\ZZ^d}\to \NN}\sum_{M\in \cP\cM(\ell_i,K)}\upbeta^{\area(M)}w_{\infty}(M),
\end{align*}
where we recall that the set $\cP\cM(\ell,K)$ has been introduced in Definition~\ref{defn:planar-embedded}.

Note that the above expression is similar to the correct expression given in Theorem~\ref{thm: surface sum representation in 't hooft limit}, with the only difference being that we are summing over planar embedded maps in $\cP\cM(\ell_i,K)$ instead of over non-separable planar embedded maps in $\npe(\ell_i,K)$. This fact will be clarified in the next section.

\subsection{Necessity of extra condition}\label{sect:counter-exemp-sep}

Before explaining why we only consider non-separable maps, we present a simple proof showing that we must impose some further condition on the planar maps in $\cP\cM(\ell,K)$. That is, we will show that
\begin{align}\label{eq:kjvwiefbo}
\phi_{\text{ans}}(\ell) := \sum_{K:\cP_{\ZZ^d}\to \NN}\sum_{M\in \cP\cM(\ell,K)}\upbeta^{\area(M)}w_{\infty}(M)
\end{align}
contradicts facts about Wilson loop expectations. In particular, we will show that in dimension two, $\phi_{\text{ans}}(p)\neq \upbeta$ for $p\in \cP$. This contradicts the fact that the Wilson loop expectation of a single plaquette is $\upbeta$ in the large-$N$ limit in dimension two~\cite[Theorem 2.7]{Basu:2016dnp}\footnote{While \cite{Basu:2016dnp} considers $\SO(N)$ lattice Yang--Mills, their results also hold for $\unitary(N)$ lattice Yang--Mills because the limiting master loop equation is the same for both groups.}

Recall that if $M$ is a map with plaquette assignment $K$, then $\area(M) = \sum_{p\in \cP_{\Lambda}}K(p)$. Thus
\begin{align*}
\phi_{\text{ans}}(\ell) = \sum_{A=0}^{\infty}\upbeta^{A}\sum_{\substack{K:\cP_{\ZZ^d}\to \NN\\\sum_{p\in \cP_{\Lambda}}K(p) = A}}\sum_{M\in \cP\cM(\ell,K)}w_{\infty}(M).
\end{align*}
As $\phi_{\text{ans}}(p)$ should equal $\upbeta$, we must have that only the $A=1$ term is non-zero (this can be rigorously justified by taking derivatives in $\upbeta$). That being said, with our current definition in \eqref{eq:kjvwiefbo}, it is not hard to show that the $A=3$ term is non-zero.

Indeed, notice that there are only $5$ possible connected $K:\cP_{\ZZ^d}\to \NN$ (recall Definition~\ref{defn:connected}) such that $\sum_{p\in \cP_{\Lambda}}K(p) = 3$ and $(p,K)$ is balanced. These are: $K_0$ defined by  $K_0(p)=1$, $K_0(p^{-1})=2$ and $K_0(q)=0$ for all $q\in \cP_{\Lambda}\sm\{p,p^{-1}\}$; and $K_i$ for $i\in \{1,2,3,4\}$, defined by $K_i(p^{-1})=K_i(p_i)=K_i(p_i^{-1})=1$,  where $p_i$ is one of the four positively oriented plaquettes that share an edge with $p$.

With this, it is a simple computation to check that $\\sum_{M \in \cP\cM(p, K_i)} w_\infty(M) = -1$ for $i\in \{1,2,3,4\}$ and $\sum_{M \in \cP\cM(p, K_0)} w_\infty(M)=-4$. 
Thus, we get that 
\begin{align*}
\sum_{\substack{K:\cP_{\ZZ^d}\to \NN\\\sum_{p\in \cP_{\Lambda}}K(p) = 3}}\sum_{M\in \cP\cM(p,K)}w_{\infty}(M) = -8\neq 0.
\end{align*}
Thus we cannot simply consider all planar maps in $\cP\cM(\ell,K)$.

\subsection{Backtrack erasure}\label{subsubsec: backtrack Condition}

While the above discussion shows that we need to enforce more conditions than the ones found in Section~\ref{subsubsec: Limiting Heuristic}, it does not illuminate why we want to only sum over non-separable embedded maps (recall their definition from Definition~\ref{defn:non-separableplanar-embedded}). It turns out that the non-separable condition emerges if one looks for conditions to ensure that the result in Lemma~\ref{lemma: backtrack cancellations} holds, i.e.\ the ability to erase backtracks.

More precisely, if $\phi_{\text{ans}}(\ell)$ is the Wilson loop expectation in the large-$N$ limit, it is natural to require that 
\begin{equation}\label{eq:requested-coeff}
\phi_{\text{ans}}(\pi_1 \, \mathbf{e} \, \mathbf{e}^{-1} \, \pi_2) = \phi_{\text{ans}}(\pi_1 \, \pi_2),
\end{equation}
where $\pi_1$ and $\pi_2$ are two paths of edges, $\mathbf{e}$ corresponds to the oriented edge $e\in E_{\Lambda}$, and $\mathbf{e}^{-1}$ corresponds to $e^{-1}$.
While such invariance is clear for Wilson loop expectations at finite $N$, it is not so trivial to establish for the surface sum perspective for the large-$N$ limit.
To understand what is needed for such an invariance, let's first parse through what the condition  in \eqref{eq:requested-coeff} imposes: We need that the sum of weights for all the embedded maps with boundary $\pi_1 \, \mathbf{e} \, \mathbf{e}^{-1} \, \pi_2$ is equivalent to the sum of weights for all the embedded maps with boundary $\pi_1 \, \pi_2$. Observe that any map $M \in \cP\cM(\pi_1 \, \pi_2,K)$ with boundary $\pi_1 \, \pi_2$ can be naturally associated to the map $M' \in \cP\cM(\pi_1 \, \mathbf{e} \, \mathbf{e}^{-1} \, \pi_2,K)$ with boundary $\pi_1 \, \mathbf{e} \, \mathbf{e}^{-1} \, \pi_2$ where the two edges $\mathbf{e}$ and $\mathbf{e}^{-1}$ are in a blue 2-gon and the rest of the map is precisely $M$; see Figure~\ref{fig-map-bij} for a schematic illustration of this correspondence. 

\begin{figure}[ht!]
\begin{center}
	\includegraphics[width=.79\textwidth]{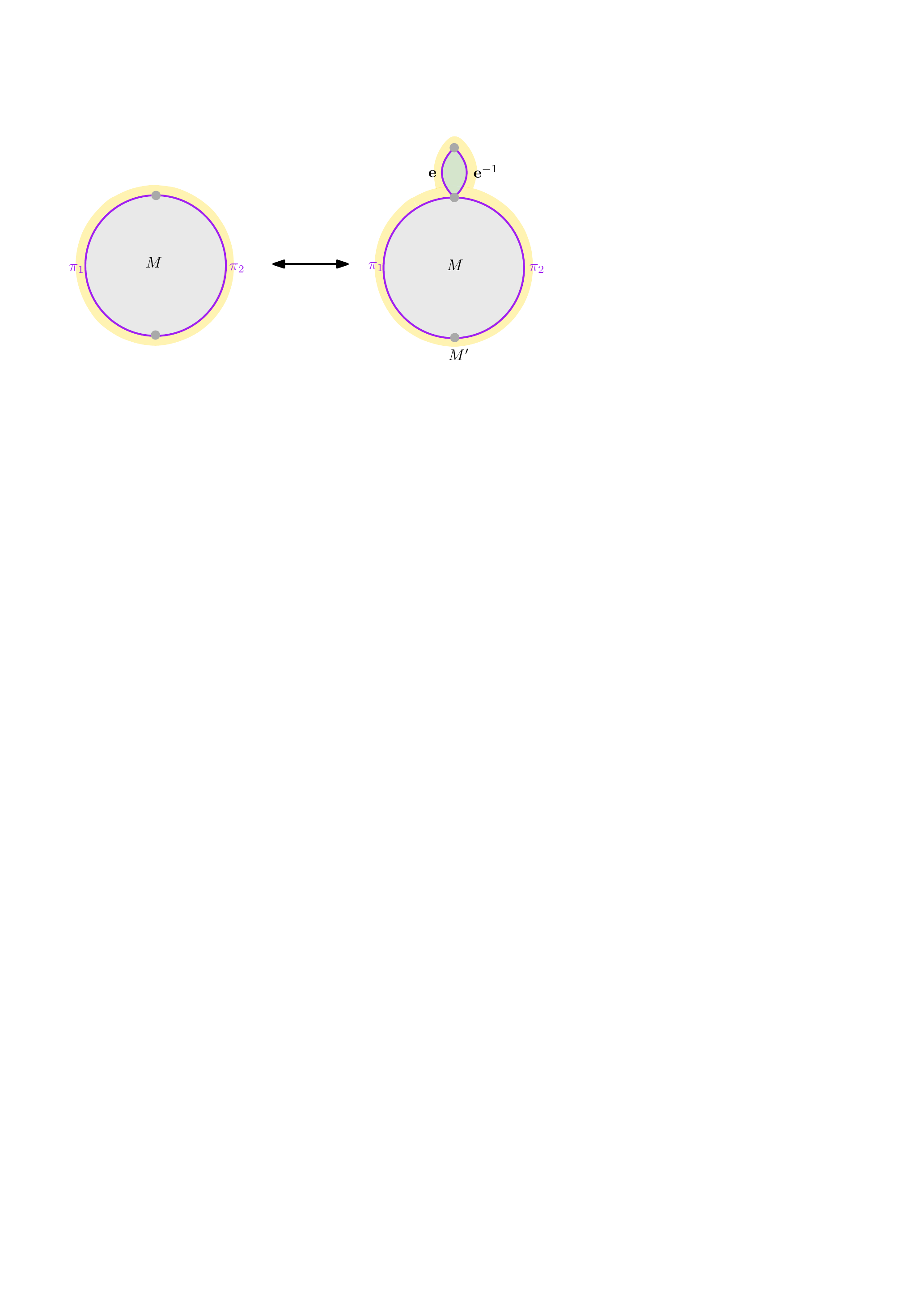}  
	\caption{\label{fig-map-bij} An example of two maps  $M \in \cP\cM(\pi_1 \, \pi_2,K)$ and $M' \in \cP\cM(\pi_1 \, \mathbf{e} \, \mathbf{e}^{-1} \, \pi_2,K)$ which are in correspondence. Note that the two maps have the same weight since the blue 2-gon has weight 1.}
\end{center}
\vspace{-3ex}
\end{figure}

Notice these two maps have the same weight. Indeed, they only differ by the blue 2-gon with boundary $\mathbf{e} \, \mathbf{e}^{-1}$ which has weight 1 (recall the definition of weights for the blue faces from \eqref{eq:w-wei-inf}). Thus, if we can show that the weights of all the embedded maps with boundary $\ell$ that do not have $\mathbf{e} \, \mathbf{e}^{-1}$ in a 2-gon sum to zero, then we get the condition in \eqref{eq:requested-coeff} holds.

These desired cancellations of weights have been established in  Lemma~\ref{lemma: single vertex pinching cancellations} for non-separable maps.
Notice the cancellations described in such lemma are only possible if the blue face $B$ has disjoint vertices. This is because if $B$ does not have disjoint vertices, the pinching from a single vertex does not necessarily provide weights that satisfy the Catalan number recursion \eqref{eq:catalan number recurstion}. For instance, in Figure~\ref{fig-btcancel-counter-2} we show one example where the proof of Lemma~\ref{lemma: single vertex pinching cancellations} would fail.

\begin{figure}[ht!]
\begin{center}
	\includegraphics[width=.35\textwidth]{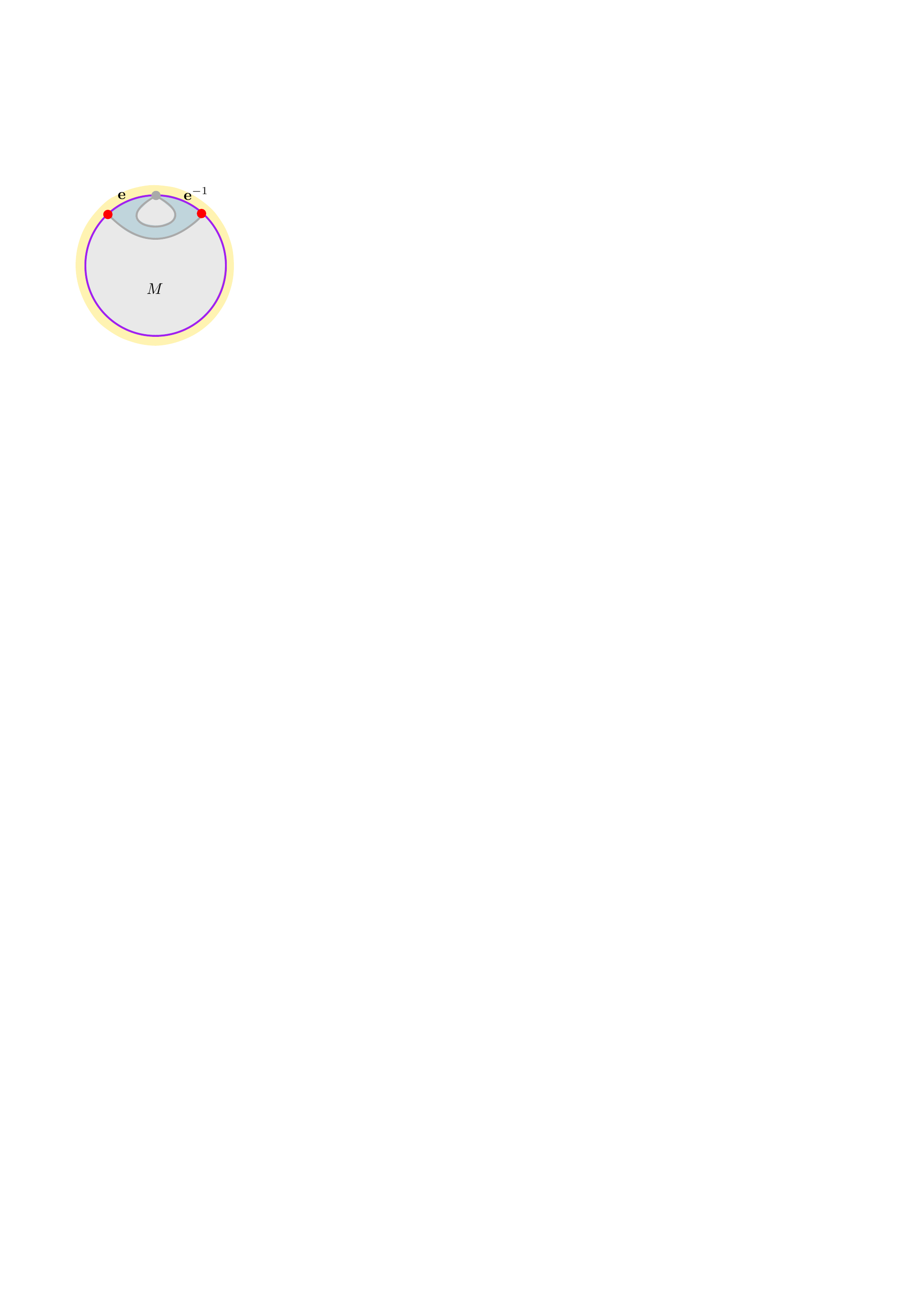}  
	\caption{\label{fig-btcancel-counter-2} An example of a separable map that would break the proof of Lemma~\ref{lemma: single vertex pinching cancellations}.}
\end{center}
\vspace{-3ex}
\end{figure}

By analyzing several examples similar to Figure~\ref{fig-btcancel-counter-2}, it becomes apparent that restricting our attention to the set of non-separable maps $\npe(\ell, K)$ is quite natural to get the cancellations in Lemma~\ref{lemma: single vertex pinching cancellations}. Implementing this restriction on $\phi_{\text{ans}}$ from \eqref{eq:kjvwiefbo}, we arrive at the claimed form $\phi_{\text{ans}}(\ell) = \phi(\ell)$, where the latter quantity has been introduced in the statement of Theorem~\ref{thm: surface sum representation in 't hooft limit}.

\begin{rmk}
We finally provide an alternative explanation for why the signed Catalan numbers should be considered the appropriate weights. 
If we assume that 
\begin{itemize}
	\item our limit $\phi(\ell)$ should have weights of the form $w_{\infty}(M) = \prod_{f\in \BF(M)}w(f)$, where $w(f)$ only depends on $\deg(f)$;
	\item and the cancellations of the weights of the maps holds in terms of the groupings we used above, that is, we assume that \eqref{eq: single pinching cancellation equation form} holds;
\end{itemize}
then the correct weights must be our proposed weights $w_{\infty}(M)$. Indeed, repeating the same arguments as above, we would get the equation
\begin{align*}
	w(2k) + \sum_{h=1}^{k-1}w(2h)w(2(k-h))=0.
\end{align*}
Now this recursion, with initial condition $w(2)=1$, is satisfied if and only if $w(2h)$ are the signed Catalan numbers.
\end{rmk}

\newpage
% \clearpage

\addcontentsline{toc}{section}{References}
\bibliography{mybib}
\bibliographystyle{hmralphaabbrv}
 
\end{document}